\renewcommand{\phi}{\varphi}
\newcommand{\C}{{\mathbb{C}}}
\newcommand{\N}{{\mathbb{N}}}
\newcommand{\R}{{\mathbb{R}}}
\newcommand{\Q}{{\mathbb{Q}}}
\newcommand{\Z}{{\mathbb{Z}}}
\newcommand{\1}{{\mathbf{1}}}
\renewcommand{\epsilon}{\varepsilon}
\renewcommand{\theta}{\vartheta}
\renewcommand{\SS}{{\mathbb{S}}}
\newcommand{\T}{{\mathbb{T}}}
\newcommand{\complex}{{\mathcal{J}}}
\newcommand{\Disk}[1][{}]{{\mathbb{D}_{#1}}}
\newcommand{\moduli}{{\mathcal{M}}}
\newcommand{\lie}[1]{{\mathcal{L}_{#1}}}
\newcommand{\pairing}[2]{{\langle{#1},{#2}\rangle}}
\newcommand{\abs}[1]{{\left\lvert #1\right\rvert}}
\newcommand{\restricted}[2]{{\left.{#1}\right|_{#2}}}
\newcommand{\overtwisted}{{\mathbb{D}_\mathrm{OT}}}
\newcommand{\lcan}{{\lambda_{\mathrm{can}}}}
\newcommand{\acan}{{\alpha_{\mathrm{can}}}}
\newcommand{\dR}{{\operatorname{dR}}}
\newcommand{\p}{\partial}
\newcommand{\PD}{{\operatorname{PD}}}
\newcommand{\uU}{{\mathcal U}}
\newcommand{\secref}[1]{{\S\ref{#1}}}
\newcommand{\nbhd}{\mathcal{N}}
\newcommand{\interface}{\mathcal{I}}
\newcommand{\handle}{{H}} % weil mir nichts Besseres einf\"allt
\newcommand{\Ring}{\mathcal{R}}
\newcommand{\HC}[1]{{\operatorname{HC}_*}\big(#1\big)}
\newcommand{\CC}[1]{{\operatorname{CC}_*}\big(#1\big)}
\newcommand{\annulus}{\mathbb{A}}
\newcommand{\Hom}{\operatorname{Hom}}
\newcommand{\dbar}{\bar{\p}}
\definecolor{Chris}{rgb}{0.5,0,1}
\DeclareMathOperator{\CZ}{CZ}
\DeclareMathOperator{\interior}{int}
\DeclareMathOperator{\Torsion}{Tor}
\numberwithin{equation}{section}
\theoremstyle{plain}
\newtheorem{theorem}{Theorem}
\newtheorem{corollary}{Corollary}
\newtheorem{question}{Question}
\newtheorem*{thmu}{Theorem}
\newtheorem{proposition}{Proposition}[section]
\newtheorem{thm}[proposition]{Theorem}
\newtheorem{lemma}[proposition]{Lemma}
\theoremstyle{remark}
\newtheorem{remark}[proposition]{Remark}
\newtheorem{example}[proposition]{Example}
\newtheorem*{remarku}{Remark}
\theoremstyle{definition}
\newtheorem{definition}{Definition}
\newtheorem{defn}[proposition]{Definition}
\begin{document}

\bibliographystyle{amsalpha}

\title{Weak Symplectic Fillings and Holomorphic Curves}

\author[K.\ Niederkrüger]{Klaus Niederkrüger}

\email[K.\ Niederkrüger]{niederkr@math.univ-toulouse.fr}

\address[K.\ Niederkrüger]{
  Institut de mathématiques de Toulouse\\
  Université Paul Sabatier -- Toulouse III\\
  118 route de Narbonne\\
  F-31062 Toulouse Cedex 9\\
  FRANCE}

\author[C.\ Wendl]{Chris Wendl}

\email[C.\ Wendl]{wendl@math.hu-berlin.de}

\address[C.\ Wendl]{Institut für Mathematik \\
  Humboldt-Universität zu Berlin \\
  10099 Berlin \\
  Germany}

\begin{abstract}
  \textbf{English:} We prove several results on weak symplectic
  fillings of contact $3$--manifolds, including: (1) Every weak
  filling of any planar contact manifold can be deformed to a blow up
  of a Stein filling.  (2) Contact manifolds that have fully
  separating planar torsion are not weakly fillable---this gives many
  new examples of contact manifolds without Giroux torsion that have
  no weak fillings.  (3) Weak fillability is preserved under splicing
  of contact manifolds along symplectic pre-Lagrangian tori---this
  gives many new examples of contact manifolds without Giroux torsion
  that are weakly but not strongly fillable.

  We establish the obstructions to weak fillings via two parallel
  approaches using holomorphic curves.  In the first approach, we
  generalize the original Gromov-Eliashberg ``Bishop disk'' argument
  to study the special case of Giroux torsion via a Bishop family of
  holomorphic annuli with boundary on an ``anchored overtwisted
  annulus''.  The second approach uses punctured holomorphic curves,
  and is based on the observation that every weak filling can be
  deformed in a collar neighborhood so as to induce a stable
  Hamiltonian structure on the boundary.  This also makes it possible
  to apply the techniques of Symplectic Field Theory, which we
  demonstrate in a test case by showing that the distinction between
  weakly and strongly fillable translates into contact homology as the
  distinction between twisted and untwisted coefficients.

  \begin{otherlanguage*}{francais}
    \textbf{Français:} On montre plusieurs résultats concernant les
    remplissages faibles de variétés de contact de dimension $3$,
    notamment : (1) Les remplissages faibles des variétés de contact
    planaires sont à déformation près des éclatements de remplissages
    de Stein.  (2) Les variétés de contact ayant de la torsion
    planaire et satisfaisant une certaine condition homologique
    n'admettent pas de remplissages faibles -- de cette manière on
    obtient des nouveaux exemples de variétés de contact qui ne sont
    pas faiblement remplissables.  (3) La remplissabilité faible est
    préservée par l'opération de somme connexe le long de tores
    pré-Lagrangiens --- ce qui nous donne beaucoup de nouveaux
    exemples de variétés de contact sans torsion de Giroux qui sont
    faiblement, mais pas fortement remplissables.

    On établit une obstruction à la remplissabilité faible avec deux
    approches qui utilisent des courbes holomorphes.  La première
    méthode se base sur l'argument original de Gromov-Eliashberg des
    \og disques de Bishop \fg{}.  On utilise une famille d'anneaux
    holomorphes s'appuyant sur un \og anneau vrillé ancré \fg{} pour
    étudier le cas spécial de la torsion de Giroux.  La deuxième
    méthode utilise des courbes holomorphes à pointes, et elle se base
    sur l'observation que dans un remplissage faible, la structure
    symplectique peut être déformée au voisinage du bord, en une
    structure Hamiltonienne stable.  Cette observation permet aussi
    d'appliquer les méthodes à la théorie symplectique de champs, et
    on montre dans un cas simple que la distinction entre les
    remplissabilités faible et forte se traduit en homologie de
    contact par une distinction entre coefficients tordus et non
    tordus.
  \end{otherlanguage*}
\end{abstract}

\maketitle

\setcounter{tocdepth}{2}
\tableofcontents

\setcounter{section}{-1}
\section{Introduction}

The study of symplectic fillings via $J$--holomorphic curves goes back
to the foundational result of Gromov \cite{Gromov_Kurven} and
Eliashberg \cite{Eliashberg_HoloDiscs}, which states that a closed
contact $3$--manifold that is overtwisted cannot admit a weak
symplectic filling.  Let us recall some important definitions: in the
following, we always assume that $(W,\omega)$ is a symplectic
$4$--manifold, and $(M,\xi)$ is an oriented $3$--manifold with a
positive and cooriented contact structure.  Whenever a contact form
for~$\xi$ is mentioned, we assume it is compatible with the given
coorientation.

\begin{definition}
  A contact $3$--manifold $(M,\xi)$ embedded in a symplectic
  $4$--manifold $(W,\omega)$ is called a \textbf{contact hypersurface}
  if there is a contact form $\alpha$ for~$\xi$ such that $d\alpha =
  \restricted{\omega}{TM}$.  In the case where $M = \p W$ and its
  orientation matches the natural boundary orientation, we say that
  $(W,\omega)$ has \textbf{contact type boundary} $(M,\xi)$, and if
  $W$ is also compact, we call $(W,\omega)$ a \textbf{strong
    symplectic filling} of $(M,\xi)$.
\end{definition}

\begin{definition}
  A contact $3$--manifold $(M,\xi)$ embedded in a symplectic
  $4$--manifold $(W,\omega)$ is called a \textbf{weakly contact
    hypersurface} if $\restricted{\omega}{\xi} > 0$, and in the
  special case where $M = \p W$ with the natural boundary orientation,
  we say that $(W,\omega)$ has \textbf{weakly contact boundary}
  $(M,\xi)$.  If $W$ is also compact, we call $(W,\omega)$ a
  \textbf{weak symplectic filling} of $(M,\xi)$.
\end{definition}

It is easy to see that a strong filling is also a weak filling.  In
general, a strong filling can also be characterized by the existence
in a neighborhood of $\p W$ of a transverse, outward pointing
\emph{Liouville vector field}, i.e.~a vector field $Y$ such that
$\lie{Y}\omega = \omega$.  The latter condition makes it possible to
identify a neighborhood of $\p W$ with a piece of the symplectization
of $(M,\xi)$; in particular, one can then enlarge $(W,\omega)$ by
symplectically attaching to $\p W$ a cylindrical end.

The Gromov-Eliashberg result was proved using a so-called \emph{Bishop
  family} of pseudoholomorphic disks: the idea was to show that in any
weak filling $(W,\omega)$ whose boundary contains an overtwisted disk,
a certain \emph{noncompact} $1$--parameter family of $J$--holomorphic
disks with boundary on $\p W$ must exist, but yields a contradiction
to Gromov compactness.  In \cite{Eliashberg_HoloDiscs}, Eliashberg
also used these techniques to show that all weak fillings of the tight
$3$--sphere are diffeomorphic to blow-ups of a ball.  More recently,
the Bishop family argument has been generalized by the first author
\cite{NiederkruegerPlastikstufe} to define the \emph{plastikstufe},
the first known obstruction to symplectic filling in higher
dimensions.

In the mean time, several finer obstructions to symplectic filling in
dimension three have been discovered, including some which obstruct
strong filling but not weak filling.  Eliashberg
\cite{Eliashberg_torus} used some of Gromov's classification results
for symplectic $4$--manifolds \cite{Gromov_Kurven} to show that on the
$3$--torus, the standard contact structure is the only one that is
strongly fillable, though Giroux had shown \cite{Giroux_plusOuMoins}
that it has infinitely many distinct weakly fillable contact
structures.  The first examples of tight contact structures without
weak fillings were later constructed by Etnyre and Honda
\cite{EtnyreHonda_weakly}, using an obstruction due to Paolo Lisca
\cite{Lisca_fillings} based on Seiberg-Witten theory.

The simplest filling obstruction beyond overtwisted disks is the
following.  Define for each $n\in \N$ the following contact
$3$--manifolds with boundary:
\begin{equation*}
  T_n :=  \bigl(\T^2\times [0,n],\,
  \sin (2\pi z)\, d\phi + \cos (2\pi z)\,d\theta\bigr)\; ,
\end{equation*}
where $(\phi, \theta)$ are the coordinates on $\T^2 = \SS^1 \times
\SS^1$, and $z$ is the coordinate on $[0,n]$.  We will refer to $T_n$
as a \textbf{Giroux torsion domain}.

\begin{figure}[htbp]
  \centering
  \includegraphics[width=5cm,keepaspectratio]{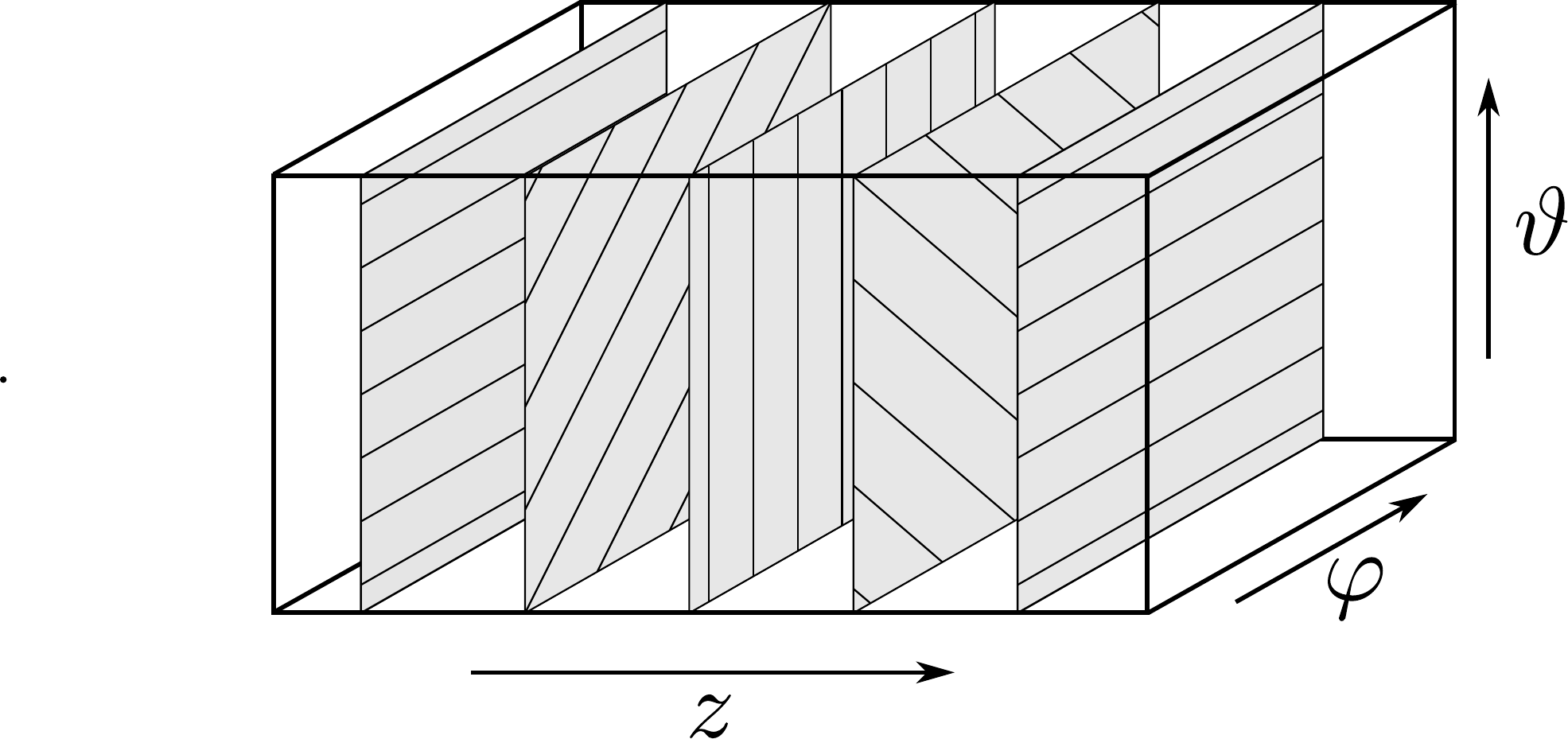}
  \caption{The region between the grey planes on either side
    represents half a Giroux torsion domain.  The grey planes are
    pre-Lagrangian tori with their characteristic foliations, which
    show the contact structure turning along the $z$--axis as we move
    from left to right.  Domains with higher Giroux torsion can be
    constructed by gluing together several half-torsion
    domains.}\label{fig: half-torsion domain}
\end{figure}

\begin{definition}
  Let $(M,\xi)$ be a $3$--dimensional contact manifold.  The
  \textbf{Giroux torsion} $\Torsion(M, \xi) \in \Z\cup \{\infty\}$ is
  the largest number $n \ge 0$ for which we can find a contact
  embedding of the Giroux torsion domain $T_n \hookrightarrow M$.  If
  this is true for arbitrarily large $n$, then we define $\Torsion(M,
  \xi) = \infty$.
\end{definition}

\begin{remarku}
  Due to the classification result of Eliashberg
  \cite{Eliashberg_Overtwisted}, overtwisted contact manifolds have
  infinite Giroux torsion, and moreover, one can assume in this case
  that the torsion domain $T_n \subset M$ separates~$M$.  It is not
  known whether a contact manifold with infinite Giroux torsion must
  be overtwisted in general.
\end{remarku}

The present paper was motivated partly by the following fairly recent
result.

\begin{thmu}[Gay~\cite{Gay_GirouxTorsion} and
  Ghiggini-Honda~\cite{GhigginiHonda_twisted}]
  A closed contact $3$--manifold $(M,\xi)$ with positive Giroux
  torsion does not have a strong symplectic filling.  Moreover, if it
  contains a Giroux torsion domain $T_n$ that splits $M$ into separate
  path components, then $(M,\xi)$ does not even admit a \emph{weak}
  filling.
\end{thmu}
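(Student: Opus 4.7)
The plan is to adapt the original Gromov--Eliashberg ``Bishop disk'' argument, replacing holomorphic disks with holomorphic annuli attached to what the paper calls an \emph{anchored overtwisted annulus} sitting inside the Giroux torsion domain. The strong fillability statement will follow from essentially the same argument (indeed it was the original proof in \cite{Gay_GirouxTorsion}), so I will concentrate on the weak case, where the separating hypothesis is what gives us the crucial area control.

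First I construct the obstruction surface. Inside a Giroux torsion domain $T_1 = \T^2\times [0,1] \subset M$, the two tori $L_0 = \T^2\times\{0\}$ and $L_{1/2} = \T^2\times\{1/2\}$ are pre-Lagrangian, with characteristic foliations by parallel circles in the $\phi$-- and $-\phi$--directions, respectively. Fixing a value of $\theta$, the annulus $\Sigma := \{\theta_0\}\times \SS^1_\phi\times[0,1/2]$ is totally real for any $\omega$--compatible $J$ chosen appropriately; its two Legendrian boundary circles are leaves of the characteristic foliations of $L_0$ and $L_{1/2}$, and its own characteristic foliation consists of $\p_z$--arcs running from one boundary circle to the other. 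A local Bishop model around $\Sigma$ produces a $1$--parameter family of small $J$--holomorphic annuli $u_t\colon \annulus \to W$ with $u_t(\p\annulus)\subset L_0\cup L_{1/2}$, degenerating as $t \to 0$ to a double cover of $\Sigma$. This is the seed of the Bishop family.

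Next I study the moduli space $\moduli$ of all somewhere-injective $J$--holomorphic annuli $(\annulus, \p\annulus) \to (W, L_0\cup L_{1/2})$ in the relative homotopy class of the seed. Standard transversality (generic $J$ away from $\Sigma$, with an automatic regularity input near the seed as in Hofer--Lizan--Sikorav or its relatives) makes $\moduli$ a smooth $1$--manifold with boundary, containing the seed family as one end. The heart of the argument is the area bound: because $T_n$ separates $M$, each torus $L_i$ is null-homologous in $W$, so $\int_{L_i}\omega$ is determined by the symplectic area of whatever each $L_i$ bounds. A small deformation of $\omega$ supported in a collar of $\p W$ (permitted by the weak filling condition and $\restricted{\omega}{\xi}>0$) allows us to arrange $\omega$ to be exact in a neighborhood of $L_0\cup L_{1/2}$; Stokes then controls $\int u_t^*\omega$ uniformly in terms of the homology class of $u_t(\p\annulus)$, which is fixed up to a discrete choice of leaf.

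With the area bound in hand, Gromov compactness applies. I then show that no limit of the Bishop family can be realized: sphere and disk bubbles are excluded by exactness of $\omega$ near $L_0\cup L_{1/2}$, boundary nodes on $L_0\cup L_{1/2}$ are obstructed by the fact that the boundary circles represent a nonzero class in the characteristic foliation, and escape to $\p W\setminus T_1$ is prevented by positivity of intersection with $\Sigma$ together with the contact-type geometry there. Since the Bishop family has only one end (the seed), $\moduli$ must be a compact $1$--manifold with exactly one boundary point---a topological contradiction.

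The main obstacle is exactly the area bound in Step~3: in the purely weak setting one has no global Liouville form, and the only leverage comes from the separating hypothesis, which turns the topological condition ``$[L_i]=0$ in $H_2(W;\R)$'' into genuine exactness of $\omega$ in a neighborhood of the tori after a compactly supported deformation. The remaining compactness and transversality steps are technical but follow familiar patterns; by contrast, the argument genuinely fails without the separation hypothesis, and one expects (as the later sections of the paper presumably show) non-separating Giroux torsion domains to be compatible with weak fillings.
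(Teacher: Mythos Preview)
Your proposal differs substantially from the paper's argument and has genuine gaps in the three load-bearing steps: the seed, the index, and the contradiction.

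The paper does \emph{not} take the pre-Lagrangian tori $L_0,L_{1/2}$ as boundary conditions.  Its holomorphic annuli have one boundary on a single fixed \emph{half-twisted annulus} $\annulus^+$ and the other on one member $\annulus^-_\theta$ of an $\SS^1$--family (the ``anchor'').  This is not cosmetic: the normal Cauchy--Riemann operator with a fixed totally real annular boundary condition has index~$0$, and it is precisely the extra $\theta$--parameter that raises the virtual dimension to~$1$.  You never compute an index, and with two fixed tori there is no reason to expect a $1$--manifold.  Your seed is also not a Bishop family: the paper constructs its annuli explicitly from complex lines in $T^*\T^2\cong\C^2/i\Z^2$ near the torus of singular boundaries, and they collapse to a Legendrian circle as the modulus goes to~$0$.  ``Degenerating to a double cover of~$\Sigma$'' is not a recognizable holomorphic degeneration and does not produce an open end of~$\moduli$.

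Most seriously, you have no mechanism for the contradiction.  In the paper, compactness forces a subsequence whose boundary drifts toward the \emph{Legendrian} edge $\p_L\annulus^\pm$ of a half-twisted annulus; the limit then touches $\p W$ tangentially to the characteristic foliation, which is forbidden by $J$--convexity (Lemma~\ref{lemma:Jconvex}).  Your pre-Lagrangian tori have no such forbidden edge, so nothing prevents $\moduli$ from simply closing up.  Two smaller errors: local exactness of $\omega$ near $L_0\cup L_{1/2}$ does not exclude sphere bubbles, which can form anywhere in~$W$ --- the paper rules them out as codimension~$2$ phenomena in a $1$--dimensional moduli space; and the energy bound in the paper comes from exactness of $\omega$ on the solid anchor region $\bigcup_\theta\annulus^-_\theta$ (equivalently $\int_{\T^2\times\{c\}}\omega=0$, which is automatic in the separating case), not from any deformation of $\omega$ near the tori.
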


The first part of this statement was proved originally by David Gay
with a gauge theoretic argument, and the refinement for the separating
case follows from a computation of the Ozsváth-Szabó contact invariant
due to Paolo Ghiggini and Ko Honda.  Observe that due to the remark
above on overtwistedness and Giroux torsion, the result implies the
Eliashberg-Gromov theorem.

As this brief sampling of history indicates, holomorphic curves have
not been one of the favorite tools for defining filling obstructions
in recent years.  One might argue that this is unfortunate, because
holomorphic curve arguments have a tendency to seem more geometrically
natural and intuitive than those involving the substantial machinery
of Seiberg-Witten theory or Heegaard Floer homology---and in higher
dimensions, of course, they are still the only tool available.  A
recent exception was the paper \cite{ChrisGirouxTorsion}, where the
second author used families of holomorphic cylinders to provide a new
proof of Gay's result on Giroux torsion and strong fillings.  By
similar methods, the second author has recently defined a more general
obstruction to strong fillings \cite{ChrisOpenBook2}, called
\emph{planar torsion}, which provides many new examples of contact
manifolds $(M,\xi)$ with $\Torsion(M,\xi) = 0$ that are nevertheless
not strongly fillable.  The reason these results apply primarily to
\emph{strong} fillings is that they depend on moduli spaces of
\emph{punctured} holomorphic curves, which live naturally in the
noncompact symplectic manifold obtained by attaching a cylindrical end
to a strong filling.  By contrast, the Eliashberg-Gromov argument
works also for weak fillings because it uses compact holomorphic
curves with boundary, which live naturally in a compact almost complex
manifold with boundary that is pseudoconvex, but not necessarily
convex in the \emph{symplectic} sense.  The Bishop family argument
however has never been extended for any compact holomorphic curves
more general than disks, because these tend to live in moduli spaces
of nonpositive virtual dimension.

In this paper, we will demonstrate that both approaches, via compact
holomorphic curves with boundary as well as punctured holomorphic
curves, can be used to prove much more general results involving
\emph{weak} symplectic fillings.  As an illustrative example of the
compact approach, we shall begin in \secref{sec:annulus} by presenting
a new proof of the above result on Giroux torsion, as a consequence of
the following.

\begin{theorem}\label{thm:BishopGirouxTorsion}
  Let $(M, \xi)$ be a closed $3$--dimensional contact manifold
  embedded into a closed symplectic $4$--manifold $(W,\omega)$ as a
  weakly contact hypersurface.  If $(M,\xi)$ contains a Giroux torsion
  domain $T_n \subset M$, then the restriction of the symplectic form
  $\omega$ to $T_n$ cannot be exact.
\end{theorem}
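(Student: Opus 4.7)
The strategy is to derive a contradiction by constructing a noncompact one-parameter family of $J$-holomorphic annuli in $W$ whose boundaries are anchored on two pre-Lagrangian tori inside the torsion domain $T_n$. Suppose for contradiction that $\omega|_{T_n} = d\lambda$ is exact.

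First I would use the primitive $\lambda$ to deform $\omega$ near $T_n$ into a locally convex structure. Taking a two-sided collar $T_n \times (-\epsilon, \epsilon) \subset W$, one perturbs $\omega$ cohomologously within the collar to a symplectic form $\omega'$ of the form $d(e^s\alpha_0 + \lambda)$ on an inner portion of the collar while keeping $\omega' = \omega$ outside. The pre-Lagrangian tori $T_z := \T^2 \times \{z\} \subset T_n$ then satisfy $\omega'|_{T_z}$ exact with an explicit primitive, so by Stokes' theorem one obtains uniform area bounds for any holomorphic curve with boundary on these tori whose boundary homotopy class is fixed.

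Next, construct the \emph{anchored overtwisted annulus}. At heights $z = 0$ and $z = 1/2$ the contact form is $\pm d\theta$, so the Legendrian rulings on $T_0$ and $T_{1/2}$ are $\phi$-circles. Fix $\theta_0 \in \SS^1$ and let $A := \{\theta_0\} \times \SS^1_\phi \times [0, 1/2]$; its boundary components are Legendrian circles on $T_0$ and $T_{1/2}$, and its characteristic foliation has these boundary circles as singular leaves (where the contact planes coincide with $TA$), joined by transverse arcs in the interior. This annulus plays for the torsion domain the role the overtwisted disk plays in the Gromov--Eliashberg argument: the two singular Legendrian circles are the analogues of the elliptic singularity from which a Bishop family emanates. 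Pick an $\omega'$-compatible $J$ of symplectization type near $T_n$ which makes $T_0, T_{1/2}$ totally real, and consider the moduli space $\mathcal{M}$ of $J$-holomorphic annuli $u:(\Sigma,\p\Sigma)\to(W, T_0\cup T_{1/2})$ in the relative homotopy class of $A$. Near $\p A$, a direct local construction produces a one-parameter family of thin holomorphic annuli following the characteristic foliation, launching a component $\mathcal{M}_0 \subset \mathcal{M}$ that a standard Fredholm/transversality argument identifies as a smooth one-manifold of expected dimension one.

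Finally, Gromov compactness forces $\mathcal{M}_0$ to extend continuously until some degeneration occurs. The heart of the argument, and its main obstacle, is showing that no degeneration is possible, so that $\mathcal{M}_0$ becomes a noncompact open $1$-manifold inside the compact manifold $W$ -- the desired contradiction. The degenerations to rule out are: bubbling of pseudoholomorphic spheres or disks with boundary on $T_0 \cup T_{1/2}$, blow-up of the conformal modulus of the annulus, and breaking along a closed orbit of the Reeb-type vector field transverse to the tori inside $T_n$. Each case is attacked using (i) the uniform area bound from $\omega'$, which prevents area loss to spheres or disks carrying nontrivial homology, (ii) the classification of closed orbits tangent to $\ker\alpha_0$ inside $T_n$ (they are $\phi$- or $\theta$-circles lying on characteristic tori $T_z$), and (iii) the homological constraints imposed by the starting class $[A]$, which is not a multiple of any such orbit. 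This degeneration analysis -- rather than the Fredholm setup or the construction of $\mathcal{M}_0$ -- is where the real work lies.
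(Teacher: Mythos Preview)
Your boundary value problem is not the one that works, and the moduli space you define is actually empty. On the pre-Lagrangian tori $T_0$ and $T_{1/2}$ the characteristic foliation consists precisely of the $\phi$--circles (since $\alpha=\pm d\theta$ there), and the annulus $A=\{\theta_0\}\times\SS^1_\phi\times[0,1/2]$ has boundary in exactly that leaf class. But $J$--convexity forces the boundary of any nonconstant holomorphic curve to be \emph{positively transverse} to the characteristic foliation on the totally real surface it sits on. A closed curve on $T_0$ in the homotopy class of a leaf cannot be everywhere positively transverse to the linear foliation by $\phi$--circles: the algebraic intersection number with a transverse $\theta$--circle would have to be nonzero, contradicting the class. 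Hence there are no $J$--holomorphic annuli in the relative class of $A$ with boundary on $T_0\cup T_{1/2}$, and in particular there is no Bishop family to launch.

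The paper resolves this by taking the boundary conditions to be not the pre-Lagrangian tori but an $\SS^1$--family of \emph{half-twisted annuli} $\{\annulus^-_\theta\}$ together with a single half-twisted annulus $\annulus^+$; these are surfaces on which the characteristic foliation is regular and \emph{transverse} to the $\phi$--direction, so the Bishop annuli (with boundary winding once in $\phi$) do exist. The $\SS^1$--parameter in the family $\{\annulus^-_\theta\}$ is what makes the index equal to~$1$; with a single totally real surface on each side the index would be~$0$. The contradiction is then obtained not from ``a noncompact $1$--manifold in a compact $W$'' (which is not a contradiction by itself) but from the concrete geometric fact that the limit annulus at the far end of the moduli space must touch the \emph{Legendrian boundary} $\p_L\annulus^\pm$, where the characteristic foliation has a closed leaf---and a holomorphic boundary tangent to a characteristic leaf violates $J$--convexity. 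Your setup has no analogue of this Legendrian edge to run into, so even if the moduli space were nonempty you would have no mechanism for the contradiction.

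Finally, your proposed symplectic deformation in the first paragraph is both unnecessary and risky: the paper works directly with the weak filling, using only $\restricted{\omega}{\xi}>0$ to obtain $J$--convexity of $\p W$, and derives the energy bound from exactness of $\omega$ on the anchor via Stokes' theorem applied to the family of curves itself. There is no need to make the boundary locally convex in the strong sense.
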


By a theorem of Eliashberg \cite{Eliashberg_capping} and Etnyre
\cite{Etnyre_capping}, every weak filling can be capped to produce a
closed symplectic $4$--manifold.  The above statement thus implies a
criterion for $(M,\xi)$ to be not weakly fillable---our proof will in
fact demonstrate this directly, without any need for the capping
result.  We will use the fact that every Giroux torsion domain
contains an object that we call an \emph{anchored overtwisted
  annulus}, which we will show serves as a filling obstruction
analogous to an overtwisted disk.  Note that for a torsion domain $T_n
\subset M$, the condition that $\omega$ is exact on~$T_n$ is
equivalent to the vanishing of the integral
\begin{equation*}
  \int_{\T^2\times\{c\}} \omega
\end{equation*}
on any slice $T^2 \times \{c\} \subset T_n$.  For a strong filling
this is \emph{always} satisfied since $\omega$ is exact on the
boundary, and it is also always satisfied if $T_n$ separates~$M$.

The proof of Theorem~\ref{thm:BishopGirouxTorsion} is of some interest
in itself for being comparatively low-tech, which is to say that it
relies only on technology that was already available as of 1985.  As
such, it demonstrates new potential for well established techniques,
in particular the Gromov-Eliashberg Bishop family argument, which we
shall generalize by considering a ``Bishop family of holomorphic
annuli'' with boundaries lying on a $1$--parameter family of so-called
\emph{half-twisted annuli}.  Unlike overtwisted disks, a single
overtwisted annulus does not suffice to prove anything: the boundaries
of the Bishop annuli must be allowed to vary in a nontrivial family,
called an \emph{anchor}, so as to produce a moduli space with positive
dimension.  One consequence of this extra degree of freedom is that
the required energy bounds are no longer automatic, but in fact are
only satisfied when $\omega$ satisfies an extra cohomological
condition.  This is one way to understand the geometric reason why
Giroux torsion always obstructs strong fillings, but only obstructs
weak fillings in the presence of extra topological conditions.  This
method also provides some hope of being generalizable to higher
dimensions, where the known examples of filling obstructions are still
very few.

In \secref{sec:punctured}, we will initiate the study of weak fillings
via punctured holomorphic curves in order to obtain more general
results.  The linchpin of this approach is Theorem~\ref{theorem:
  stableHypersurface} in \secref{sec: collar neighborhood weak
  boundary}, which says essentially that any weak filling can be
deformed so that its boundary carries a stable Hamiltonian structure.
This is almost as good as a strong filling, as one can then
symplectically attach a cylindrical end---but extra cohomological
conditions are usually needed in order to do this without losing the
ability to construct nice holomorphic curves in the cylindrical end.
It turns out that the required conditions are \emph{always} satisfied
for planar contact manifolds, and we obtain the following surprising
generalization of a result proved for strong fillings in
\cite{ChrisGirouxTorsion}.

\begin{theorem}\label{thm:planar}
  If $(M,\xi)$ is a planar contact $3$--manifold, then every weak
  filling of $(W,\omega)$ is symplectically deformation equivalent to
  a blow up of a Stein filling of $(M,\xi)$.
\end{theorem}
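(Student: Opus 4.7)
The plan is to reduce Theorem~\ref{thm:planar} to the strong-filling version established in \cite{ChrisGirouxTorsion} by using the stable Hamiltonian structure technology of Theorem~\ref{theorem: stableHypersurface} to convert a weak filling into one on which the SFT-type machinery behaves correctly. First I would apply Theorem~\ref{theorem: stableHypersurface} to deform $\omega$ in a collar neighborhood of $M = \p W$ so that the symplectic form is compatible with a stable Hamiltonian structure on $M$ adapted to some fixed supporting planar open book of $(M,\xi)$. This produces a symplectic manifold $(W',\omega')$, deformation equivalent to $(W,\omega)$, whose boundary now behaves as in the strong case to the extent that a cylindrical end $M \times [0,\infty)$ can be attached to obtain a noncompact symplectic manifold $(\widehat{W}, \widehat{\omega})$ with well-defined asymptotics along the binding orbits.

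Next I would choose an almost complex structure $J$ on $\widehat W$ which is $\widehat\omega$-tame, cylindrical in the end, and adapted to the chosen planar open book, and study the moduli space $\moduli$ of embedded finite-energy punctured $J$-holomorphic curves asymptotic to covers of the binding Reeb orbits with the topology of a planar page. In the strong-filling case \cite{ChrisGirouxTorsion}, such a moduli space is shown (via intersection theory and automatic transversality in dimension four) to form a smooth foliation of $\widehat W$ whose leaves are the fibers of a symplectic Lefschetz fibration extending the open book on $M$; once this fibration is in hand, a standard argument identifies $(W',\omega')$ with a symplectic blow-up of a Stein filling of $(M,\xi)$.

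The main obstacle is that in the weak setting $\widehat\omega$ is not exact in the cylindrical end, so neither the compactness theory nor the existence of the initial curves in $\moduli$ is automatic. The essential observation to push through must be that, because the pages of a planar open book have genus zero, the relative homology class represented by a page modulo its binding boundary pairs trivially with $[\omega|_M]$ for cohomological reasons (every embedded planar page is homologous to a disk-with-holes capped off by copies of Seifert surfaces of the individual binding components, and these integrals cancel). Equivalently, after the collar deformation one may arrange $\omega'|_M$ to be cohomologous to a form vanishing on the pages, so that the Hofer-type energy of any $J$-holomorphic curve asymptotic to binding orbits in the homology class of a page is a topological constant independent of the curve.

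With this energy bound secured, Gromov--SFT compactness applies verbatim as in the strong case: no breaking, bubbling, or escape to infinity is possible for curves in the relevant component of $\moduli$, transversality is automatic since the curves are embedded index-$2$ spheres with punctures asymptotic to simply covered Reeb orbits, and the evaluation map exhibits $\widehat W$ as the total space of a Lefschetz fibration over a disk with planar fibers. Blowing down the exceptional spheres arising from nodal fibers and invoking the Eliashberg--Gompf correspondence between allowable planar Lefschetz fibrations and Stein fillings then yields the desired deformation equivalence of $(W,\omega)$ with a blow-up of a Stein filling of $(M,\xi)$.
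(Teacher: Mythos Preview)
Your overall architecture is right and matches the paper: deform via Theorem~\ref{theorem: stableHypersurface} to a stable Hamiltonian structure, attach a cylindrical end, lift pages of a planar open book to index~$2$ punctured holomorphic spheres, and run the Lefschetz fibration argument from \cite{ChrisGirouxTorsion}. The gap is in the step you flag as ``the essential observation.''

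Your claim that the relative homology class of a planar page pairs trivially with $[\restricted{\omega}{TM}]$ ``for cohomological reasons'' is false as stated and in any case not what drives the argument. Genus zero does not force $\int_{\text{page}} \omega = 0$: already on the tight $3$--tori one has weak fillings with $\restricted{\omega}{TM}$ nonexact, and annular pages on which the integral is nonzero. More importantly, energy bounds for curves in a \emph{fixed} relative homology class with fixed asymptotics are automatic once the curves exist (the $\Omega$--integral is a homological invariant), so this is not where the difficulty lies. The actual obstruction is \emph{existence}: Theorem~\ref{theorem: stableHypersurface} forces the contact form $\lambda$ to be in standard symmetric form near a positively transverse link $K$ with $\sum c_i [K_i] = \PD[\restricted{\omega}{TM}]$, while the construction of holomorphic pages (Theorem~\ref{thm:holOpenbook}) requires $\lambda$ to be a very specific Morse-Bott form near the binding and near at least one page. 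These two constraints on $\lambda$ are a priori in conflict.

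The paper resolves this with Lemma~\ref{lemma:outsidePlanar} (using Lemma~\ref{lemma:mappingDuTore} on the homology of a mapping torus): one can always choose $K$ to be a union of some oriented binding components together with a link lying inside a \emph{single} page. Near the binding the Giroux form is already in standard symmetric form, and the page containing the rest of~$K$ can be avoided, so $\lambda$ may be taken equal to the form of Theorem~\ref{thm:holOpenbook} on an open set containing the binding and some other page. That page then lifts to a $J$--holomorphic curve in the cylindrical end, and compactness follows because its asymptotic orbits have period much smaller than every other Reeb orbit. Your proposal needs this homological placement of~$K$; the genus-zero capping argument you sketch does not substitute for it.
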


\begin{corollary}\label{cor:planar}
  If $(M,\xi)$ is weakly fillable but not Stein fillable, then it is
  not planar.
\end{corollary}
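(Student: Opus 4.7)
The plan is to argue by contrapositive: assume $(M,\xi)$ is planar and weakly fillable, and produce a Stein filling. Let $(W,\omega)$ be an arbitrary weak symplectic filling of $(M,\xi)$. By Theorem~\ref{thm:planar}, $(W,\omega)$ is symplectically deformation equivalent to a blow-up of some Stein filling $(W_0, J_0)$ of $(M,\xi)$. The Stein filling $(W_0,J_0)$ is supplied directly by the conclusion of that theorem, so no additional construction is required to exhibit one.

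Since symplectic blow-ups are local modifications performed in the interior of a $4$-manifold, the contact boundary of the blow-up agrees with that of $(W_0, J_0)$, which by hypothesis is $(M,\xi)$ itself. Hence $(M,\xi)$ is Stein fillable, establishing the contrapositive. Given Theorem~\ref{thm:planar}, the corollary is essentially a restatement of its conclusion together with the elementary observation that a blow-up operation does not alter the contact boundary; all of the real difficulty lies in Theorem~\ref{thm:planar}, so there is no genuine obstacle at this step.
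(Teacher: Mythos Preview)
Your proof is correct and matches the paper's approach: the corollary is stated in the paper without a separate proof because it follows immediately from Theorem~\ref{thm:planar} by contrapositive, exactly as you argue. Your remark about blow-ups not altering the contact boundary is harmless but unnecessary, since the conclusion of Theorem~\ref{thm:planar} already states that the Stein filling in question is a Stein filling \emph{of $(M,\xi)$}.
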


\begin{corollary}\label{cor:Dehntwists}
  Given any planar open book supporting a contact manifold $(M,\xi)$,
  the manifold is weakly fillable if and only if the monodromy of the
  open book can be factored into a product of positive Dehn twists.
\end{corollary}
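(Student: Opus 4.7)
The plan is to deduce both implications from Theorem~\ref{thm:planar} combined with the well-known dictionary between factorizations of a mapping class into positive Dehn twists and Lefschetz fibrations over the disk.

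For the ``if'' direction, assume that the monodromy $\phi$ of the planar open book admits a factorization $\phi = \tau_{\gamma_k} \circ \cdots \circ \tau_{\gamma_1}$, where each $\gamma_i$ is a simple closed curve on the planar page. Following the standard construction of Loi--Piergallini and Akbulut--Ozbagci, I would build a compact Lefschetz fibration $W \to \Disk$ whose regular fiber is the planar page and whose vanishing cycles are exactly the $\gamma_i$. The total space carries a natural Stein structure inducing the contact structure $\xi$ on $M = \p W$, so $(M,\xi)$ is Stein fillable, and in particular weakly fillable.

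For the ``only if'' direction, suppose $(M,\xi)$ admits a weak symplectic filling $(W,\omega)$. By Theorem~\ref{thm:planar}, $(W,\omega)$ is symplectically deformation equivalent to a blow-up of some Stein filling $(W_0,J_0)$ of $(M,\xi)$; in particular $(M,\xi)$ is Stein fillable. I would then invoke the second author's classification of strong fillings of planar contact manifolds from~\cite{ChrisGirouxTorsion}: any such filling admits the structure of an allowable Lefschetz fibration over $\Disk$ whose regular fiber is the planar page and whose vanishing cycles lie on that page, in a way compatible with the given open book. Reading off those vanishing cycles yields the sought factorization of~$\phi$ into a product of positive Dehn twists.

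The genuine content here is packaged inside Theorem~\ref{thm:planar}; the remainder is essentially bookkeeping via the standard bijection between Lefschetz fibrations with fixed fiber and positive Dehn twist factorizations of the monodromy. The only step I can foresee as even mildly subtle is checking that the Lefschetz fibration produced from~\cite{ChrisGirouxTorsion} really does recover the originally chosen open book, so that its vanishing cycles give a factorization of the correct mapping class rather than of one merely conjugate to it; however this compatibility is built into the classification cited, so no new argument beyond Theorem~\ref{thm:planar} should be required.
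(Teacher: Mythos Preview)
Your proposal is correct and follows essentially the same route as the paper: the ``if'' direction is the standard Lefschetz fibration construction, and the ``only if'' direction combines Theorem~\ref{thm:planar} with the result of \cite{ChrisGirouxTorsion} that any planar open book on a strongly fillable contact manifold extends to a Lefschetz fibration of the filling over the disk. The paper's own justification is the one-sentence remark immediately following the corollary, which is exactly what you have unpacked.
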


The second corollary follows easily from the result proved in
\cite{ChrisGirouxTorsion}, that every planar open book on a strongly
fillable contact manifold can be extended to a Lefschetz fibration of
the filling over the disk.  This fact was used in recent work of Olga
Plamenevskaya and Jeremy Van Horn-Morris \cite{PlamenevskayaVanHorn}
to find new examples of planar contact manifolds that have either
unique fillings or no fillings at all.  Theorem~\ref{thm:planar} in
fact reduces the classification question for weak fillings of planar
contact manifolds to the classification of Stein fillings, and as
shown in \cite{ChrisFiberSums} using the results in
\cite{ChrisGirouxTorsion}, the latter reduces to an essentially
combinatorial question involving factorizations of monodromy maps into
products of positive Dehn twists.  Note that most previous
classification results for weak fillings
(e.g.~\cite{Eliashberg_HoloDiscs, Lisca_Lens, PlamenevskayaVanHorn})
have applied to rational homology spheres, as it can be shown
homologically in such settings that weak fillings are always
deformable to strong ones.  Theorem~\ref{thm:planar} makes no such
assumption about the topology of~$M$.

\begin{remarku}
  It is easy to see that nothing like Theorem~\ref{thm:planar} holds
  for non-planar contact manifolds in general.  There are of course
  many examples of weakly but not strongly fillable contact manifolds;
  still more will appear in the results stated below.  There are also
  Stein fillable contact manifolds with weak fillings that cannot be
  deformed into blown up Stein fillings: for instance, Giroux shows in
  \cite{Giroux_plusOuMoins} that the standard contact $3$--torus
  $(\T^3,\xi_1)$ admits weak fillings diffeomorphic to $\Sigma \times
  \T^2$ for any compact oriented surface $\Sigma$ with connected
  boundary.  As shown in \cite{ChrisGirouxTorsion} however,
  $(\T^3,\xi_1)$ has only one Stein filling, diffeomorphic to $\Disk
  \times \T^2$, and if $\Sigma \ne \Disk$ then $\Sigma\times \T^2$ is
  not homeomorphic to any blow-up of $\Disk \times \T^2$, since
  $\pi_2(\Sigma \times \T^2) = 0$.
\end{remarku}

Using similar methods, \secref{sec:punctured} will also generalize
Theorem~\ref{thm:BishopGirouxTorsion} to establish a new obstruction
to weak symplectic fillings in dimension three.  We will recall in
\S\ref{subsec:review} the definition of a planar torsion domain, which
is a generalization of a Giroux torsion domain that furnishes an
obstruction to strong filling by a result in \cite{ChrisOpenBook2}.
The same will not be true for weak fillings, but becomes true after
imposing an extra homological condition: for any closed $2$--form
$\Omega$ on~$M$, one says that~$M$ has \emph{$\Omega$--separating}
planar torsion if
\begin{equation*}
  \int_L \Omega = 0
\end{equation*}
for every torus~$L$ in a certain special set of disjoint tori in the
torsion domain.

\begin{theorem}\label{theorem: planarTorsion}
  Suppose $(M,\xi)$ is a closed contact $3$--manifold with
  $\Omega$--separating planar torsion for some closed $2$--form
  $\Omega$ on~$M$.  Then $(M,\xi)$ admits no weakly contact type
  embedding into a closed symplectic $4$--manifold $(W,\omega)$ with
  $\restricted{\omega}{TM}$ cohomologous to~$\Omega$.  In particular,
  $(M,\xi)$ has no weak filling $(W,\omega)$ with
  $[\restricted{\omega}{TM}] = [\Omega]$.
\end{theorem}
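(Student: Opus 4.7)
The plan is to extend the strong-filling obstruction for planar torsion proved in \cite{ChrisOpenBook2} to the weak setting, with Theorem~\ref{theorem: stableHypersurface} serving as the bridge and the $\Omega$--separating condition supplying the cohomological control that a strong filling would provide automatically. Suppose for contradiction that $(M,\xi) \hookrightarrow (W,\omega)$ is a weakly contact embedding into a closed symplectic $4$--manifold with $[\restricted{\omega}{TM}] = [\Omega]$. Splitting $W$ along $M$ yields a weak filling piece $(W_+,\omega_+)$ with $\p W_+ = M$, to which I would apply Theorem~\ref{theorem: stableHypersurface} to deform $\omega_+$ in a collar so that $M$ carries a stable Hamiltonian structure $(\alpha,\omega_M)$ compatible with $\xi$. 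The cohomological hypothesis $[\restricted{\omega}{TM}]=[\Omega]$ is precisely what allows this deformation to be arranged so that the stabilizing $2$--form $\omega_M$ represents~$[\Omega]$; with that in place, one can symplectically attach a positive cylindrical end $[0,\infty)\times M$ to obtain a noncompact completion $W_+^\infty$ carrying a tame $J$--holomorphic setup on the end.

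Next, I would invoke the structure of a planar torsion domain (as recalled in \S\ref{subsec:review}): it contains a \emph{planar piece} built from an open book whose binding consists of closed Reeb orbits of~$\alpha$, and whose pages give rise to finite-energy punctured holomorphic curves in the symplectization $\R\times M$. Transplanting these pages into the cylindrical end of $W_+^\infty$ produces a family of seed curves, and I would then consider the moduli space $\moduli$ of punctured $J$--holomorphic curves in $W_+^\infty$ asymptotic to the same binding orbits and in the relative homology class of a page. Exactly as in \cite{ChrisOpenBook2}, genericity yields a smooth oriented moduli space of positive dimension near the seeds, and the topology of the planar piece shows that no boundary configuration (breaking, bubbling, collision with the binding) can occur in a way consistent with the planar combinatorics---so a compactness theorem would yield the desired contradiction.

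The heart of the matter, and the reason a cohomological hypothesis must enter, is the energy bound. For a curve $u \in \moduli$ the $\omega$--energy decomposes into two pieces: a contribution $\sum \int \alpha$ from the asymptotic binding orbits (which is uniformly bounded since the orbits are prescribed), plus a topological term that pairs $[\omega_M] = [\Omega]$ against the tori bounding the pages of the planar piece. The $\Omega$--separating condition says precisely that $\int_L \Omega = 0$ for each of the special tori~$L$ in the planar torsion domain, so this topological term vanishes and uniform energy is restored. Without this hypothesis, ``flux'' across the tori can allow the energy to escape, which is the correct geometric explanation of why Giroux torsion and planar torsion obstruct strong fillings unconditionally but only obstruct weak fillings under a cohomological condition.

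The main obstacle I foresee is not the energy bound itself but making precise that the compactness/moduli-space analysis of \cite{ChrisOpenBook2} carries over verbatim once the boundary has been replaced by a stable-Hamiltonian cylindrical end built out of a weak filling rather than a strong one. In particular, one must verify that $J$ can be chosen both to tame $\omega_+$ on the (modified) compact part and to be cylindrical on the end, while keeping the seed curves and the combinatorial exclusion of limit buildings intact; this is the technical payoff of Theorem~\ref{theorem: stableHypersurface}, and once it is in hand the rest of the proof is a direct translation of the strong-filling obstruction argument.
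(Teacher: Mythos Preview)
Your overall architecture is right---use Theorem~\ref{theorem: stableHypersurface} to stabilize the boundary, attach a cylindrical end, import the holomorphic pages of Theorem~\ref{thm:holOpenbook}, and run the moduli space argument from \cite{ChrisOpenBook2}---but you have misidentified where the $\Omega$--separating hypothesis enters, and this is a genuine gap.

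The condition $\int_L \Omega = 0$ is \emph{not} used to secure energy bounds in the way you describe. Once the contact form~$\lambda$ on the planar piece agrees with the one supplied by Theorem~\ref{thm:holOpenbook}, the asymptotic orbits of all curves in the connected moduli space lie in Morse-Bott families of period less than~$\epsilon$, while every other Reeb orbit has period at least~$1$; since the stable Hamiltonian structure has the form $(\lambda, F\,d\lambda)$ with~$F$ bounded, uniform energy bounds and the compactness theorem of \cite{ChrisOpenBook2} follow automatically. The real issue is upstream: Theorem~\ref{theorem: stableHypersurface} forces~$\lambda$ into standard symmetric form near a transverse link~$K$ with $\PD[\restricted{\omega}{TM}] = \sum c_i [K_i]$, and you have \emph{no freedom} to choose~$\lambda$ near~$K$. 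If~$K$ must cross the interface or boundary tori of the planar piece, you cannot arrange~$\lambda$ to match the contact form of Theorem~\ref{thm:holOpenbook} there, and the seed curves simply do not exist. The $\Omega$--separating condition is exactly what guarantees (via a homological argument the paper isolates as Lemma~\ref{lemma:outsidePlanar}) that~$K$ can be chosen disjoint from $\interface^P \cup \p M^P$, with its remaining components lying in the binding, in a single page, or outside~$M^P$ entirely---leaving an open set containing a page and all of $B^P \cup \interface^P \cup \p M^P$ on which~$\lambda$ is unconstrained.

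Two smaller points. First, the contradiction is not that compactness fails: the compactified moduli space forms the fibers of a Lefschetz fibration $W^\infty \to [0,1]\times\SS^1$ whose boundary is a symmetric summed open book on~$M$, and such fibrations always admit strong fillings (via \cite{ChrisFiberSums}), contradicting Theorem~\ref{thm:holOpenbook}. Second, your opening move ``splitting~$W$ along~$M$ yields a weak filling piece'' presumes~$M$ separates~$W$; the nonseparating case requires the infinite-chain trick of \cite{AlbersBramhamWendl} (cf.~Remark~\ref{remark:hypersurface}).
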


As is shown in \cite{ChrisOpenBook2}, any Giroux torsion domain
embedded in a closed contact manifold has a neighborhood that contains
a planar torsion domain, thus Theorem~\ref{theorem: planarTorsion}
implies another proof of Theorem~\ref{thm:BishopGirouxTorsion}.  If
each of the relevant tori $L \subset M$ separates~$M$, then $\int_L
\Omega = 0$ for all~$\Omega$ and we say that $(M,\xi)$ has \emph{fully
  separating} planar torsion.

\begin{corollary}\label{cor:obstruction}
  If $(M,\xi)$ is a closed contact $3$--manifold with fully separating
  planar torsion, then it admits no weakly contact type embedding into
  any closed symplectic $4$--manifold.  In particular, $(M,\xi)$ is
  not weakly fillable.
\end{corollary}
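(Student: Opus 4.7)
The plan is to deduce this corollary directly from Theorem~\ref{theorem: planarTorsion} by showing that the ``fully separating'' hypothesis makes the cohomological condition vacuous: no matter what closed $2$-form $\Omega$ one picks on $M$, the distinguished tori automatically have vanishing $\Omega$-period, so $\Omega$-separating planar torsion holds for every $\Omega$ simultaneously.

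First, I would argue by contradiction: suppose $(M,\xi)$ admits a weakly contact type embedding into a closed symplectic $4$-manifold $(W,\omega)$, and set $\Omega := \omega|_{TM}$. Since $\omega$ is closed on $W$, $\Omega$ is a closed $2$-form on $M$. Now, for each of the distinguished tori $L$ appearing in the planar torsion domain, the hypothesis of full separation means that $L$ bounds a $3$-chain $C \subset M$. Stokes' theorem then gives
\begin{equation*}
  \int_L \Omega = \int_C d\Omega = 0.
\end{equation*}
Thus $(M,\xi)$ has $\Omega$-separating planar torsion in the sense of Theorem~\ref{theorem: planarTorsion}, and trivially $[\omega|_{TM}] = [\Omega]$. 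Applying Theorem~\ref{theorem: planarTorsion} produces the desired contradiction, ruling out any weakly contact type embedding into a closed $(W,\omega)$.

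Second, for the ``in particular'' clause: if $(W,\omega)$ were merely a weak filling of $(M,\xi)$, then by the capping theorem of Eliashberg and Etnyre, $(W,\omega)$ embeds into a closed symplectic $4$-manifold in which $(M,\xi)$ remains a weakly contact hypersurface, reducing this case to what was proved in the previous paragraph. Alternatively, one may apply Theorem~\ref{theorem: planarTorsion} directly to the weak filling, since that theorem is stated for weak fillings as well.

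There is essentially no hard step here: all of the analytic work is packaged inside Theorem~\ref{theorem: planarTorsion}, and the only observation required is the topological one that a separating torus in a $3$-manifold is null-homologous, hence integrates any closed $2$-form to zero. The conceptual point is simply that ``fully separating'' is the $\Omega$-independent strengthening of ``$\Omega$-separating,'' so the obstruction applies with no restriction on the ambient symplectic form.
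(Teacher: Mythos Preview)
Your proposal is correct and matches the paper's (implicit) argument exactly: the corollary is stated immediately after Theorem~\ref{theorem: planarTorsion} with the observation that fully separating means each torus is nullhomologous in $H_2(M;\R)$, so $\int_L \Omega = 0$ for every closed $2$-form~$\Omega$, making the $\Omega$-separating hypothesis automatic. One tiny remark: the paper's Definition~\ref{defn:planarSeparating} phrases ``fully separating'' as nullhomologous in $H_2(M;\R)$ rather than literally bounding an integral $3$-chain, but this is exactly what is needed for the pairing with $[\Omega]$ to vanish, so your Stokes argument goes through unchanged.
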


\begin{remarku}
  The statement about non-fillability in
  Corollary~\ref{cor:obstruction} also follows from a recent
  computation of the twisted ECH contact invariant that has been
  carried out in parallel work of the second author
  \cite{ChrisOpenBook2}.  The proof via ECH is however extremely
  indirect, as according to the present state of technology it
  requires the isomorphism established by Taubes \cite{Taubes_ECH5}
  from ECH to monopole Floer homology, together with results of
  Kronheimer and Mrowka \cite{KronheimerMrowka_contact} that relate
  the monopole invariants to weak fillings.  Our proof on the other
  hand will require no technology other than holomorphic curves.
\end{remarku}

We now show that there are many contact manifolds without Giroux
torsion that satisfy the above hypotheses.  Consider a closed oriented
surface
\begin{equation*}
  \Sigma = \Sigma_+ \cup_\Gamma \Sigma_-
\end{equation*}
obtained as the union of two (not necessarily connected) surfaces
$\Sigma_\pm$ with boundary along a multicurve $\Gamma \ne \emptyset$.
By results of Lutz \cite{Lutz_CircleActions}, the $3$--manifold $\SS^1
\times \Sigma$ admits a unique (up to isotopy) $\SS^1$--invariant
contact structure $\xi_\Gamma$ such that the surfaces $\{*\} \times
\Sigma$ are all convex and have $\Gamma$ as the dividing set.  If
$\Gamma$ has no component that bounds a disk, then the manifold
$(\SS^1 \times \Sigma, \xi_\Gamma)$ is tight
\cite[Proposition~4.1]{Giroux_cercles}, and
% Moreover, one can find a contact form for $\xi_\Gamma$ such that the
% Reeb vector field is always tangent to $\{*\} \times \Gamma$ along
% $\SS^1 \times \Gamma$ and everywhere else has a nontrivial
% $\SS^1$--component, hence if $\Gamma$ has no component that is
% contractible in~$\Sigma$, then there are no contractible periodic Reeb
% orbits.  An explicit argument The proof of the Weinstein conjecture for overtwisted
% manifolds by Hofer \cite{HoferWeinstein} then implies that $(\SS^1
% \times \Sigma, \xi_\Gamma)$ is tight
if $\Gamma$ also has no two connected components that are isotopic
in~$\Sigma$, then it follows from arguments due to Giroux (see
\cite{Massot_vanishing}) that $(\SS^1 \times \Sigma, \xi_\Gamma)$ does
not even have Giroux torsion.  But as we will review in
\secref{subsec:review}, it is easy to construct examples that satisfy
these conditions and have planar torsion.
  
\begin{corollary}\label{cor:notWeakly}
  For the $\SS^1$--invariant contact manifold $(\SS^1 \times
  \Sigma,\xi_\Gamma)$ described above, suppose the following
  conditions are satisfied (see Figure~\ref{fig:notWeakly}):
  \begin{enumerate}
  \item $\Gamma$ has no contractible components and no pair of
    components that are isotopic in~$\Sigma$.
  \item $\Sigma_+$ contains a connected component $\Sigma_P \subset
    \Sigma_+$ of genus zero, whose boundary components each
    separate~$\Sigma$.
  \end{enumerate}
  Then $(\SS^1 \times \Sigma,\xi_\Gamma)$ has no Giroux torsion and is
  not weakly fillable.
\end{corollary}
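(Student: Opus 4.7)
The statement splits into two claims: absence of Giroux torsion, and non-fillability. The first claim is already essentially granted by the discussion preceding the corollary: condition~(1) matches exactly the hypothesis of Giroux's criterion (as worked out by Massot \cite{Massot_vanishing}) that rules out torsion in $(\SS^1\times\Sigma,\xi_\Gamma)$, so there is nothing more to do there. The real task is to deduce non-fillability from Corollary~\ref{cor:obstruction}, i.e.\ to exhibit a \emph{fully separating planar torsion domain} inside $(\SS^1\times\Sigma,\xi_\Gamma)$.

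My plan is to build the planar torsion domain directly from the distinguished component $\Sigma_P\subset\Sigma_+$. Since $\Sigma_P$ has genus zero and its boundary lies in $\Gamma$, the product $\SS^1\times \Sigma_P$ is a circle bundle over a planar surface, and the restriction of the $\SS^1$--invariant contact structure $\xi_\Gamma$ is modeled on the standard picture: $\Sigma_P$ is a ``positive region'' bounded by dividing curves, and attaching a thin collar from the adjacent negative region $\SS^1\times(\Sigma_-\cap \nbhd(\p\Sigma_P))$ across each boundary circle gives exactly the local model for a planar piece in the sense of \S\ref{subsec:review}. Condition~(1), which forbids contractible and parallel dividing curves, is precisely what is needed to ensure that this planar piece is ``irreducible'' in the technical sense required for it to qualify as a planar torsion domain, as opposed to the trivial case of a planar open book supporting a fillable contact structure.

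Once the planar torsion domain is in place, the ``fully separating'' condition is where assumption~(2) enters. The interface tori of this domain are exactly the pre-Lagrangian tori $\SS^1\times\gamma$ for $\gamma$ a boundary component of $\Sigma_P$. Such a torus separates $\SS^1\times \Sigma$ if and only if $\gamma$ separates $\Sigma$, which is guaranteed by~(2). Consequently $\int_L\Omega=0$ for every such torus $L$ and every closed $2$--form $\Omega$ on $\SS^1\times\Sigma$, so the planar torsion is fully separating and Corollary~\ref{cor:obstruction} applies to forbid any weakly contact embedding into a closed symplectic $4$--manifold, and in particular any weak filling.

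The one step I expect to require genuine care, rather than bookkeeping, is the identification of $\SS^1\times\Sigma_P$ (plus collar) with a planar torsion domain as defined in \S\ref{subsec:review}: one must check that the $\SS^1$--invariant model on $\SS^1\times\Sigma_P$ really is contactomorphic to the planar piece built from a genus-zero page with the appropriate binding, and that the ``no parallel dividing curves'' hypothesis rules out the degenerate cases in which the resulting open book on the piece would support a fillable contact structure and hence fail to be planar torsion. Everything else—the no-torsion conclusion and the verification of full separation—is a direct application of already-quoted results.
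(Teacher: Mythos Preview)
Your approach matches the paper's: the corollary is not given a separate proof because it follows directly from Example~\ref{ex:S1invariant2} (which identifies $(\SS^1\times\Sigma,\xi_\Gamma)$ as a planar torsion domain satisfying the fully separating condition under these hypotheses) together with Corollary~\ref{cor:obstruction}, while the absence of Giroux torsion is the Giroux/Massot statement already quoted. One small correction worth making: what you call ``irreducibility'' is really the \emph{non-symmetry} requirement in Definition~\ref{defn:planarTorsion} (irreducibility of the planar piece is automatic since $\Sigma_P$ is connected), and it is not the ``no parallel components'' clause of~(1) that excludes the symmetric case but rather ``no contractible components'' together with~(2): if the summed open book were symmetric one would have $\Sigma_+\cong\Sigma_-\cong\Sigma_P$, forcing $\Sigma_P$ to be either a disk (a contractible $\Gamma$-component) or a $k$-holed sphere with $k\ge 2$, in which case each boundary circle of $\Sigma_P$ fails to separate $\Sigma$.
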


\begin{wrapfigure}{r}{0.4\textwidth}
  \vspace{-10pt}
  \begin{center}
    \includegraphics[width=0.38\textwidth,
    keepaspectratio]{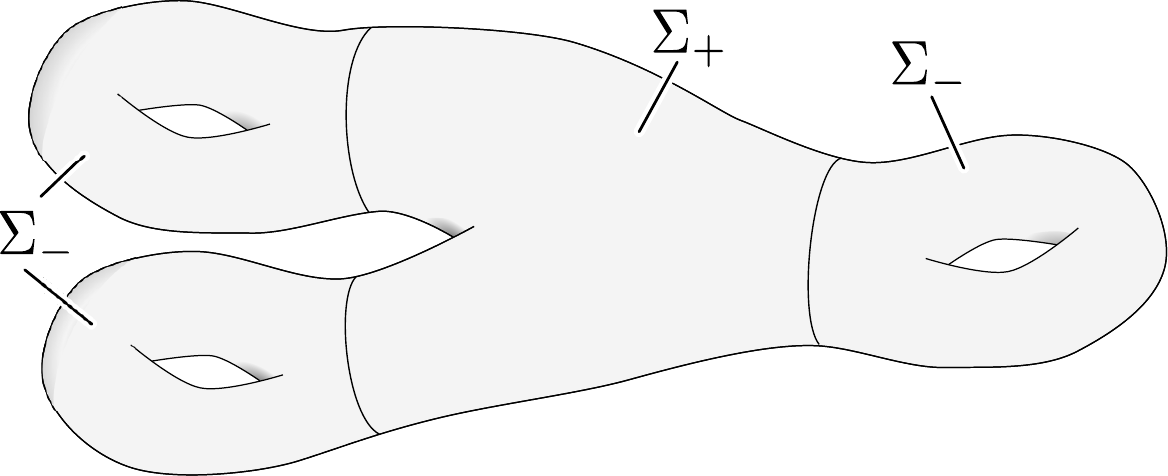}
  \end{center}
  \vspace{-10pt}
  \caption{An example of a surface $\Sigma$ and multicurve $\Gamma
    \subset \Sigma$ satisfying the conditions of
    Corollary~\ref{cor:notWeakly}.}\label{fig:notWeakly}
  \vspace{-10pt}
\end{wrapfigure}

The example of the tight $3$--tori shows that the homological
condition in the Giroux torsion case cannot be relaxed, and indeed,
the first historical examples of weakly but not strongly fillable
contact structures can in hindsight be understood via the distinction
between separating and non-separating Giroux torsion.  In
\secref{sec:handles}, we will introduce a new symplectic handle
attachment technique that produces much more general examples of weak
fillings:

\begin{theorem}\label{thm:weakConstruction}
  Suppose $(W,\omega)$ is a (not necessarily connected) weak filling
  of a contact $3$--manifold $(M,\xi)$, and $T \subset M$ is an
  embedded oriented torus which is pre-Lagrangian in $(M,\xi)$ and
  symplectic in $(W,\omega)$.  Then:
  \begin{enumerate}
  \item $(W,\omega)$ is also a weak filling of every contact manifold
    obtained from $(M,\xi)$ by performing finitely many Lutz twists
    along~$T$.
  \item If $T' \subset M$ is another torus satisfying the stated
    conditions, disjoint from~$T$, such that $\int_T \omega =
    \int_{T'} \omega$, then the contact manifold obtained from
    $(M,\xi)$ by splicing along $T$ and $T'$ is also weakly fillable.
  \end{enumerate}
\end{theorem}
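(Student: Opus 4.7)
The plan is to establish both parts by attaching explicit symplectic ``handles'' to $(W,\omega)$ along collar neighborhoods of the given pre-Lagrangian tori; the handles are modelled on $\T^2 \times \Sigma$ for suitable planar surfaces $\Sigma$, equipped with a split symplectic form.

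First I would normalize models near $T$ (and $T'$, in part~(2)). Since $T$ is pre-Lagrangian in $(M,\xi)$, a collar of $T$ in $M$ can be identified with $\bigl(\T^2 \times (-\delta,\delta),\,\cos(2\pi z)\,d\phi + \sin(2\pi z)\,d\theta\bigr)$ with $T = \T^2 \times \{0\}$, up to constant rescaling of the framing. Since $T$ is also symplectic in $(W,\omega)$, combining the weak filling condition with the symplectic neighborhood theorem for symplectic submanifolds yields a one-sided tubular model in which $\omega = A\,d\phi\wedge d\theta + \omega_\nu$ near $T$, where $A = \frac{1}{\mathrm{vol}(\T^2)}\int_T \omega > 0$ and $\omega_\nu$ is a standard area form in the normal half-disk directions. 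An identical model applies near $T'$ in part~(2), with $A' := \frac{1}{\mathrm{vol}(\T^2)}\int_{T'}\omega$.

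For part~(1), I would construct the Lutz twist handle $\handle_L := \T^2 \times \Sigma_L$, where $\Sigma_L$ is an annulus carrying an area form $\sigma_L$ which is standard near its boundary. The total symplectic form $A\, d\phi\wedge d\theta + \sigma_L$ weakly dominates the contact distribution on each boundary component $\T^2 \times \p_i \Sigma_L$ because $A>0$. The two boundary components are arranged to model, respectively, a collar of $T$ in $(M,\xi)$ and a collar of $T$ in the $k$-fold Lutz-twisted contact manifold; this is possible because a Lutz twist along a pre-Lagrangian torus is supported in an arbitrarily small collar where the contact form merely undergoes additional rotations by integer multiples of $\pi$, and the split symplectic form on $\T^2 \times \Sigma_L$ is insensitive to such rotations of $\phi,\theta$ coordinates. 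Gluing $\handle_L$ to $(W,\omega)$ along the model neighborhood of $T$ in $M$ then yields the desired weak filling.

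For part~(2), the splicing handle is $\handle_S := \T^2 \times \Sigma_S$, where $\Sigma_S$ is a pair of pants: two ``incoming'' boundary circles, to be glued to the model collars of $T$ and $T'$, and one ``outgoing'' boundary circle that carries the spliced contact structure (obtained by reversing one Lutz-type model and joining the two collars). For the split form $A\,d\phi\wedge d\theta + \sigma_S$ to be globally well defined, the coefficient $A$ must agree on both incoming ends, which is exactly the hypothesis $\int_T\omega = \int_{T'}\omega$. The existence of a suitable area form $\sigma_S$ on the pair of pants reduces to a Stokes-type balance of boundary areas that carries no obstruction on a planar surface. Gluing $\handle_S$ to $(W,\omega)$ -- possibly disconnected, if $T$ and $T'$ lie in different components of $M$ -- along the two model neighborhoods yields the claimed weak filling of the spliced contact manifold. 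The main obstacle is verifying that the new boundary is genuinely weakly contact everywhere, in particular across the regions where $\xi$ rotates or transitions between the two sides; this hinges on choosing $\sigma_L, \sigma_S$ to interpolate correctly between the standardized collars, and the cohomological hypothesis in part~(2) is precisely what makes such an interpolation exist globally.
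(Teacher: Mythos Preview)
Your handle models have the wrong topology. For part~(2), splicing cuts $M$ open along both $T$ and $T'$ and reglues the four resulting torus boundaries in a swapped pattern, so the new manifold $M'$ contains \emph{two} distinguished pre-Lagrangian tori, not one. The correct toroidal $1$--handle is $\T^2 \times [-\delta,\delta] \times [-\delta,\delta]$, i.e.\ $\T^2$ times a \emph{square}: two opposite faces $\T^2 \times [-\delta,\delta] \times \{\pm\delta\}$ are attached to collar neighborhoods $\nbhd(T), \nbhd(T') \cong \T^2 \times [-\delta,\delta]$ inside $M = \p W$, and the remaining two faces, together with $M \setminus (\nbhd(T) \cup \nbhd(T'))$, form the spliced boundary $M'$. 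A pair of pants $\Sigma_S$ has three \emph{circle} boundary components, so $\p(\T^2 \times \Sigma_S)$ consists of three copies of $\T^3$; there is no way to identify these with collars $\T^2 \times I$ of $T$ and $T'$ in $M$, and the outgoing piece would be a single $\T^3$ rather than the spliced manifold. The same dimensional mismatch afflicts your annulus handle in part~(1): its boundary components are $3$--tori, not thickened $2$--tori.

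In fact the paper does \emph{not} attach a handle for part~(1) at all. After a local deformation near $T$ (this is the genuinely nontrivial step you are skipping), the symplectic form becomes $A\,d\phi\wedge d\theta + C\,dt\wedge dr$ and is positive on a \emph{confoliation} $\bar\xi = \ker dr$; any contact structure $C^0$--close to $\bar\xi$ is then also dominated, and both $\xi$ and its Lutz-twisted versions admit such approximations, so the \emph{same} $(W,\bar\omega)$ fills all of them. Your appeal to the symplectic neighborhood theorem does not produce this normal form: one needs a controlled homotopy of $\xi$ through confoliations on which $\Omega := \omega|_{TM}$ stays positive, together with a cohomologous replacement of $\Omega$ by $A\,d\phi\wedge d\theta$ near $T$, and these require the interpolation lemmas set up earlier in the paper. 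The matching condition $\int_T\omega = \int_{T'}\omega$ does play exactly the role you identify --- it forces the constants $A$ to agree so that the split form $A\,d\phi\wedge d\theta + C\,dr'\wedge dr$ extends over the square handle --- but the surrounding argument needs to be rebuilt with the correct handle topology and the deformation step made explicit.
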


See \secref{sec:handles} for precise definitions of the Lutz twist and
splicing operations, as well as more precise versions of
Theorem~\ref{thm:weakConstruction}.  We will use the theorem to
explicitly construct new examples of contact manifolds that are weakly
but not strongly fillable, including some that have planar torsion but
no Giroux torsion.  Let
\begin{equation*}
  \Sigma = \Sigma_+ \cup_\Gamma \Sigma_-
\end{equation*}
be a surface divided by a multicurve $\Gamma$ into two parts as
described above.  The principal circle bundles $P_{\Sigma,e}$ over
$\Sigma$ are distinguished by their Euler number $e = e(P) \in \Z$
which can be easily determined by removing a solid torus around a
fiber of $P_{\Sigma,e}$, choosing a section outside this neighborhood,
and computing the intersection number of the section with a meridian
on the torus.  The Euler number thus measures how far the bundle is
from being trivial.  Lutz \cite{Lutz_CircleActions} also showed that
every nontrivial $\SS^1$--principal bundle $P_{\Sigma,e}$ with Euler
number $e$ over $\Sigma$ admits a unique (up to isotopy)
$\SS^1$--invariant contact structure $\xi_{\Gamma, e}$ that is tangent
to fibers over the multicurve $\Gamma$ and is everywhere else
transverse.  For simplicity, we will continue to write $\xi_\Gamma$
for the corresponding contact structure $\xi_{\Gamma,0}$ on the
trivial bundle $P_{\Sigma,0} = \SS^1 \times \Sigma$.

\begin{theorem}\label{thm:weakFillings}
  Suppose $\bigl(P_{\Sigma,e},\xi_{\Gamma,e}\bigr)$ is the
  $\SS^1$--invariant contact manifold described above, for some
  multicurve $\Gamma \subset \Sigma$ whose connected components are
  all non-separating.  Then $\bigl(P_{\Sigma,e},\xi_{\Gamma,e}\bigr)$
  is weakly fillable.
\end{theorem}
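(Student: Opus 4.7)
The plan is to construct weak fillings of $(P_{\Sigma,e}, \xi_{\Gamma,e})$ by starting from a basic model filling that exploits the non-separating hypothesis on~$\Gamma$, and then iterating the operations supplied by Theorem~\ref{thm:weakConstruction} to reach the full dividing set.

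First I would treat the base case in which the dividing set is a minimal sub-multicurve $\Gamma_0 \subset \Gamma$ containing a single parallel pair of components from each non-separating isotopy class present in~$\Gamma$. Take $W$ to be a disk bundle over~$\Sigma$ of Euler number~$e$, so that $\p W = P_{\Sigma,e}$, and equip $W$ with a closed 2-form $\omega$ assembled as the sum of the pullback of a base 2-form $\omega_\Sigma$ whose periods on each non-separating class $[\gamma_j]$ are freely prescribed positive numbers, a fiberwise area form, and correction terms localized in the preimage of a tubular neighborhood of~$\Gamma_0$. A direct local computation, generalizing Giroux's weak filling of $(\T^3,\xi_1)$, will then show that $\omega$ restricts to a positive 2-form on $\xi_{\Gamma_0,e}$ along the entire boundary. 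The non-separating hypothesis is precisely what gives the cohomological freedom needed to realize the prescribed positive periods simultaneously: since $[\gamma_j]\neq 0 \in H_1(\Sigma;\R)$, any assignment of periods can be represented by some closed 2-form on~$\Sigma$, and a small perturbation promotes it to an area form.

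Once $(P_{\Sigma,e}, \xi_{\Gamma_0,e})$ has been weakly filled, the remaining components of~$\Gamma$ are all parallel copies of components of~$\Gamma_0$. For each such extra component, I would apply Theorem~\ref{thm:weakConstruction}(1) to perform a Lutz twist along the pre-Lagrangian torus $T_\gamma = \pi^{-1}(\gamma) \subset \p W$, which inserts a further parallel pair into the dividing set; applicability rests on $T_\gamma$ being symplectic in~$W$, and this was arranged by the choice of periods in the base step. If the construction in Step~1 must be carried out piecewise across distinct isotopy classes, Theorem~\ref{thm:weakConstruction}(2) can be invoked to splice the pieces along matched symplectic pre-Lagrangian tori, with equal $\omega$-areas again guaranteed by the freedom from the non-separating condition.

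The hard part will be the direct construction of $\omega$ on the model disk bundle in the base case, specifically verifying positivity of $\omega|_{\xi_{\Gamma_0,e}}$ across the neighborhood of~$\Gamma_0$ where the contact planes rotate through tangency to the $\SS^1$-fibers. This requires a careful local model along each component of $\Gamma_0$ -- the Giroux model, with the base-area contribution forced to dominate the fiber-area contribution on precisely the degenerate locus -- glued to the global construction by a partition of unity. The non-separating hypothesis is exactly what keeps this local-to-global matching free of obstructions: were some component of $\Gamma_0$ separating, Stokes' theorem would impose constraints on the periods of $\omega_\Sigma$ incompatible with positivity, and the construction would fail in a manner predicted by the $\Omega$-separating obstruction of Theorem~\ref{theorem: planarTorsion}.
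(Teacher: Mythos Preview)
Your proposal has a genuine structural gap. The inductive reduction to a ``minimal'' $\Gamma_0$ by stripping off parallel components (to be restored via Lutz twists) does not cover the general case: $\Gamma$ need not contain any parallel components. A dividing multicurve is only required to be null-homologous mod~$2$, and the most interesting examples---precisely those in Corollary~\ref{cor:weakNotStrong}---explicitly forbid isotopic pairs. For such $\Gamma$ your Lutz-twist step is vacuous, $\Gamma_0 = \Gamma$, and the entire argument collapses onto the ``base case'' on the disk bundle, which you do not construct. That base case is not a routine generalization of the Giroux model: the naive candidate $\omega = \pi^*\omega_\Sigma + \omega_{\text{fiber}}$ fails because the orientation of the projection $\xi \to T\Sigma$ reverses across~$\Gamma$, so $\pi^*\omega_\Sigma|_\xi$ changes sign between $\Sigma_+$ and $\Sigma_-$. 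The unspecified ``correction terms localized near $\Gamma_0$'' would have to absorb this sign reversal globally, and you give no mechanism for doing so.

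The paper's approach is quite different and avoids both problems. Rather than working on a disk bundle, it decomposes along $\Gamma$ itself: for each connected component $\Sigma_j$ of $\Sigma \setminus \Gamma$ it forms the symmetric double $\Sigma_j^D$ and constructs an explicit \emph{strong} filling of $(\SS^1 \times \Sigma_j^D, \xi_{\Gamma_j})$, namely a sublevel set of a plurisubharmonic function on $(\R \times \SS^1) \times \Sigma_j$. The non-separating hypothesis is then used, not to prescribe periods of a base form, but to find a cohomology class on $P_{\Sigma,e}$ pairing positively with every torus $T_\gamma$; pulling back a representative and adding a small multiple to each $\omega_j$ perturbs the strong fillings to weak ones in which every pre-Lagrangian boundary torus becomes symplectic with matched areas. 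The toroidal $1$--handle attachment (Theorem~\ref{thm:handles}) then splices the doubled pieces together along these tori to produce the spliced boundary $(P_{\Sigma,e}, \xi_{\Gamma,e}) \sqcup (P_{\Sigma,-e}, \xi_{\Gamma,-e})$, and one component is capped off. So the splicing of Theorem~\ref{thm:weakConstruction}(2) is the main engine, not Lutz twists, and the building blocks are strong fillings of doubles rather than a single disk bundle.
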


\begin{corollary}\label{cor:weakNotStrong}
  There exist contact $3$--manifolds without Giroux torsion that are
  weakly but not strongly fillable.  In particular, this is true for
  the $\SS^1$--invariant contact manifold $(\SS^1 \times
  \Sigma,\xi_\Gamma)$ whenever all of the following conditions are
  met:
  \begin{enumerate}
  \item $\Gamma$ has no connected components that separate~$\Sigma$,
    and no pair of connected components that are isotopic in~$\Sigma$,
  \item $\Sigma_+$ has a connected component of genus zero,
  \item Either of the following is true:
    \begin{enumerate}
    \item $\Sigma_+$ or $\Sigma_-$ is disconnected,
    \item $\Sigma_+$ and $\Sigma_-$ are not diffeomorphic to each
      other.
    \end{enumerate}
  \end{enumerate}
\end{corollary}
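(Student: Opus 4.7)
The plan is to verify three independent properties of $(\SS^1 \times \Sigma, \xi_\Gamma)$: weak fillability, vanishing of Giroux torsion, and non-existence of a strong filling. The first two follow directly from results already established; the third is where the work lies.

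For \emph{weak fillability}, I would apply Theorem~\ref{thm:weakFillings} to the trivial bundle $P_{\Sigma,0} = \SS^1 \times \Sigma$: the hypothesis that every component of~$\Gamma$ is non-separating is exactly what that theorem needs. For the \emph{absence of Giroux torsion}, note that any contractible loop in~$\Sigma$ bounds a disk and hence separates, so condition~(1) rules out both contractible and pairwise-isotopic components of~$\Gamma$, and the Giroux--Massot criterion recalled just before Corollary~\ref{cor:notWeakly} (see \cite{Massot_vanishing}) then gives $\Torsion(\SS^1 \times \Sigma, \xi_\Gamma) = 0$.

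To rule out strong fillings, I would exhibit a planar torsion domain inside $(\SS^1 \times \Sigma, \xi_\Gamma)$ built from the genus-zero component $\Sigma_P \subset \Sigma_+$ in the sense reviewed in~\S\ref{subsec:review}, and then invoke the strong-filling obstruction from \cite{ChrisOpenBook2}. Concretely, $\SS^1 \times \Sigma_P$ is $\SS^1$-invariant, carries a tautological planar open book with pages $\{*\} \times \Sigma_P$ and binding $\SS^1 \times (\p\Sigma_P \cap \Gamma)$, and its boundary tori $\SS^1 \times \p\Sigma_P$ are pre-Lagrangian and serve as the interface tori. Because every component of~$\Gamma$ is non-separating in~$\Sigma$, these interface tori are non-separating in $\SS^1 \times \Sigma$, so the resulting planar torsion is \emph{not} fully separating---consistent with the fact that weak fillings exist. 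Hypothesis~(3) is present precisely to ensure that $\Sigma_P$ is a \emph{proper} subpiece of the cut-up surface: in the excluded case where $\Sigma_\pm$ are both connected and diffeomorphic with $\Sigma_+$ planar, the whole of $(\SS^1 \times \Sigma, \xi_\Gamma)$ would itself be supported by a global planar open book with trivial monodromy and would be Stein fillable, so no torsion would be available.

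The principal obstacle is the combinatorial step of matching $\SS^1 \times \Sigma_P$, together with a complementary region of $\SS^1 \times (\Sigma \setminus \interior\Sigma_P)$ that supplies the auxiliary pages, to the precise combinatorial template of a planar torsion domain from~\S\ref{subsec:review}. All of the analytic machinery---both Theorem~\ref{thm:weakFillings} for constructing the weak filling and the punctured-holomorphic-curve obstruction of \cite{ChrisOpenBook2} for ruling out the strong one---is already in place, so the real content of the proof is this topological bookkeeping.
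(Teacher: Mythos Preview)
Your three-pronged strategy is exactly the paper's implicit argument, and the combinatorial bookkeeping you flag as the principal obstacle is already carried out in Example~\ref{ex:S1invariant2}. Two points of confusion to clean up: first, the summed open book supporting $\xi_\Gamma$ on $\SS^1\times\Sigma$ has \emph{empty} binding---the tori $\SS^1\times\gamma$ for $\gamma\subset\Gamma$ are all interface tori (see Example~\ref{ex:S1invariant}), not binding circles; second, hypothesis~(3) is not there to make $\Sigma_P$ a proper subpiece of the decomposition (it always is, since $\Sigma_-\ne\emptyset$) but to ensure the induced summed open book on $\SS^1\times\Sigma$ is not \emph{symmetric} in the sense of Definition~\ref{defn:planarTorsion}, which is precisely the condition that fails when $\Sigma_+$ and $\Sigma_-$ are both connected and mutually diffeomorphic.
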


\begin{remarku}
  Our proof of Theorem~\ref{thm:weakFillings} will actually produce
  not just a weak filling of $\bigl(P_{\Sigma,e},\xi_{\Gamma,e}\bigr)$
  but also a connected weak filling of a disjoint union of this with
  another contact $3$--manifold.  By Etnyre's obstruction
  \cite{Etnyre_planar} (or by Theorem~\ref{thm:planar}), it follows
  that $\bigl(P_{\Sigma,e}, \xi_{\Gamma,e}\bigr)$ is not planar
  whenever $\Gamma \subset \Sigma$ has no separating component.
\end{remarku}

\begin{figure}
  \centering
  \subfloat[]{\includegraphics[width=0.4\textwidth,
    keepaspectratio]{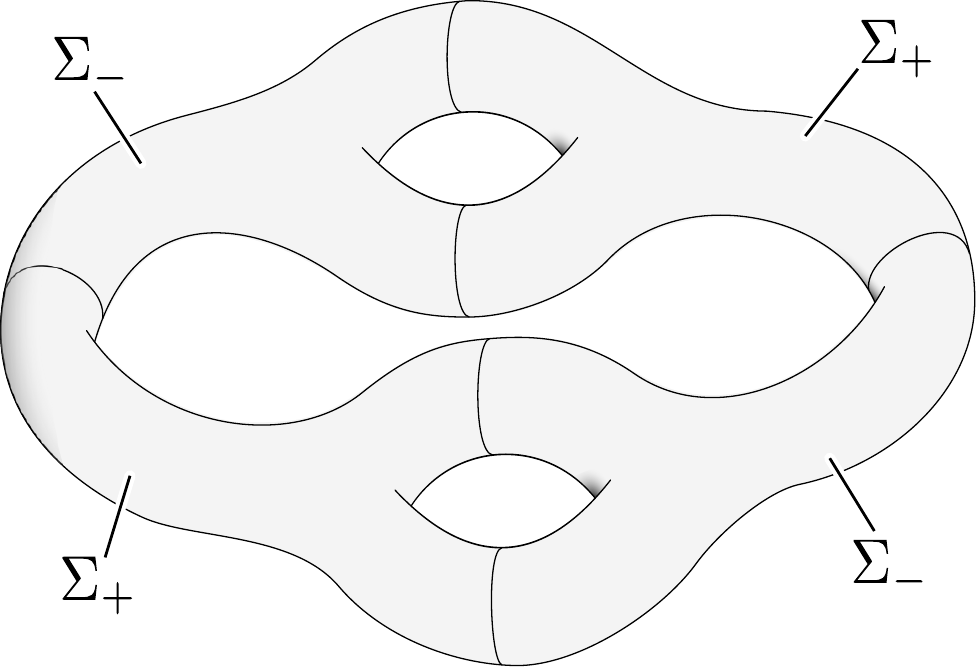}}
  \parbox{0.05\textwidth}{\quad}
  \subfloat[]{\includegraphics[width=0.4\textwidth,
    keepaspectratio]{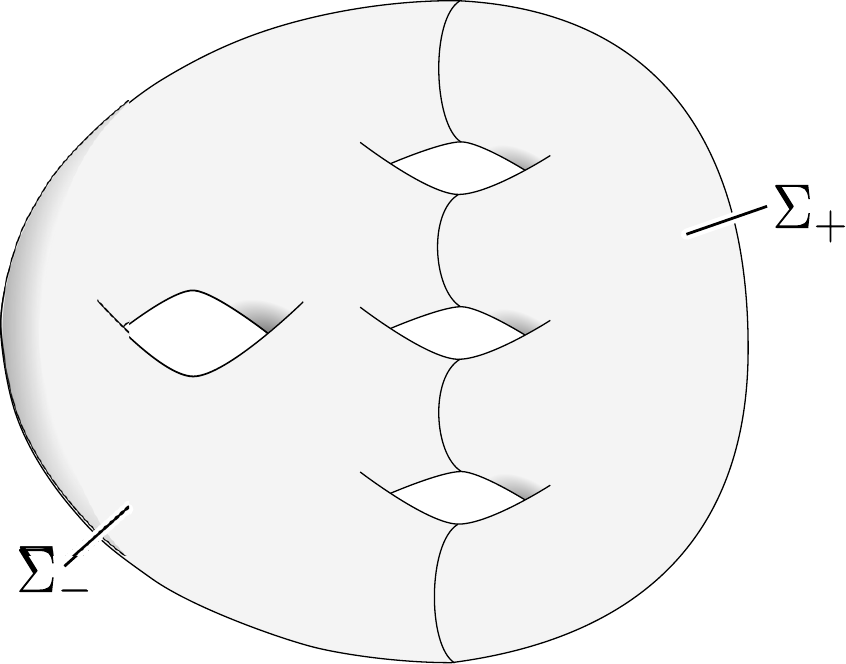}}
  \caption{Surfaces $\Sigma = \Sigma_+ \cup_\Gamma \Sigma_-$ which
    yield $\SS^1$--invariant contact manifolds $(\SS^1 \times
    \Sigma,\xi_\Gamma)$ that are weakly but not strongly fillable due
    to Corollary~\ref{cor:weakNotStrong}.}\label{fig:WeakNotStrong}
\end{figure}

One further implication of the techniques introduced in
\secref{sec:punctured} is that weak fillings can now be studied using
the technology of Symplectic Field Theory.  The latter is a general
framework introduced by Eliashberg, Givental and Hofer
\cite{SymplecticFieldTheory} for defining contact invariants by
counting $J$--holomorphic curves in symplectizations and in noncompact
symplectic cobordisms with cylindrical ends.  In joint work of the
second author with Janko Latschev \cite{LatschevWendl}, it is shown
that SFT contains an algebraic variant of planar torsion, which gives
an infinite hierarchy of obstructions to the existence of strong
fillings and exact symplectic cobordisms in all
dimensions.\footnote{Examples are as yet only known in dimension
  three, with the exception of \emph{algebraic overtwistedness}, see
  \cite{PSOvertwistedIsAlgebraicallyOvertwisted} and
  \cite{ContactHomologyLeftHanded}.}  Stable Hamiltonian structures
can be used to incorporate weak fillings into this picture as well:
analogously to the situation in Heegaard Floer homology, the
distinction between strong and weak is then seen algebraically via
twisted (i.e.~group ring) coefficients in SFT.

We will explain a special case of this statement in
\secref{subsec:SFT}, focusing on the simplest and most widely known
invariant defined within the SFT framework: contact homology.  Given a
contact manifold $(M,\xi)$, the contact homology $\HC{M,\xi}$ can be
defined as a $\Z_2$--graded supercommutative algebra with unit: it is
the homology of a differential graded algebra generated by Reeb orbits
of a nondegenerate contact form, where the differential counts rigid
$J$--holomorphic spheres with exactly one positive end and arbitrarily
many negative ends.  (See \secref{subsec:SFT} for more precise
definitions.)  We say that the homology \emph{vanishes} if it
satisfies the relation $\1 = 0$, which implies that it contains only
one element.  In defining this algebra, one can make various choices
of coefficients, and in particular for any linear subspace $\Ring
\subset H_2(M ; \R)$, one can define contact homology as a module over
the group ring\footnote{In the standard presentation of contact
  homology, one usually requires the subspace $\Ring \subset
  H_2(M;\R)$ to lie in the kernel of $c_1(\xi)$, however this is only
  needed if one wants to lift the canonical $\Z_2$--grading to a
  $\Z$--grading, which is unnecessary for our purposes.}
\begin{equation*}
  \Q[H_2(M;\R) / \Ring] = \left\{ \sum_{i=1}^N c_i e^{A_i}
    \ \Bigm|\ c_i \in \Q,\ A_i \in H_2(M;\R) / \Ring \right\} \;,
\end{equation*}
with the differential ``twisted'' by inserting factors of $e^A$ to
keep track of the homology classes of holomorphic curves.  We will
denote the contact homology algebra defined in this way for a given
subspace $\Ring \subset H_2(M;\R)$ by
\begin{equation*}
  \HC{M,\xi ;\, \Q[H_2(M;\R) / \Ring]} \;.
\end{equation*}
There are two obvious special cases that must be singled out: if
$\Ring = H_2(M;\R)$, then the coefficients reduce to~$\Q$, and we
obtain the \textbf{untwisted} contact homology $\HC{M,\xi ;\, \Q}$, in
which the group ring does not appear.  If we instead set $\Ring =
\{0\}$, the result is the \textbf{fully twisted} contact homology
$\HC{M,\xi ;\, \Q[H_2(M;\R)]}$, which is a module over
$\Q[H_2(M;\R)]$.  There is also an intermediately twisted version
associated to any cohomology class $\beta \in H^2_\dR(M)$, namely
$\HC{M,\xi ;\, \Q[H_2(M;\R) / \ker\beta]}$, where we identify $\beta$
with the induced linear map $H_2(M;\R) \to \R, \, A \mapsto
\pairing{\beta}{A}$.  Observe that the canonical projections
$\Q[H_2(M;\R)] \to \Q[H_2(M;\R) / \ker\beta] \to \Q$ yield algebra
homomorphisms
\begin{equation*}
  \HC{M,\xi;\, \Q[H_2(M;\R)]} \to \HC{M,\xi;\,
    \Q[H_2(M;\R) / \ker\beta]} \to \HC{M,\xi;\, \Q} \;,
\end{equation*}
implying in particular that whenever the fully twisted version
vanishes, so do all the others.  The choice of twisted coefficients
then has the following relevance for the question of fillability.

\begin{theorem}\footnote{While the fundamental concepts of Symplectic
    Field Theory are now a decade old, its analytical foundations
    remain work in progress (cf.~\cite{Hofer_polyfoldSurvey}), and it
    has meanwhile become customary to gloss over this fact while using
    the conceptual framework of SFT to state and ``prove'' theorems.
    We do not entirely mean to endorse this custom, but at the same
    time we have followed it in the discussion surrounding
    Theorem~\ref{thm:SFT}, which really should be regarded as a
    \emph{conjecture} for which we will provide the essential elements
    of the proof, with the expectation that it will become fully
    rigorous as soon as the definition of the theory is complete.}
\label{thm:SFT}
Suppose $(M,\xi)$ is a closed contact $3$--manifold with a cohomology
class $\beta \in H^2_\dR(M)$ for which $\HC{M,\xi ;\, \Q[H_2(M;\R) /
  \ker\beta]}$ vanishes.  Then $(M,\xi)$ does not admit any weak
symplectic filling $(W,\omega)$ with $[\restricted{\omega}{TM}] =
\beta$.
\end{theorem}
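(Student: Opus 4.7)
\emph{Proof proposal.} The strategy is the standard SFT argument that the existence of a symplectic filling induces an augmentation of the contact DGA, combined with the stable Hamiltonian structure deformation that makes this argument work in the weak setting.

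First, starting from a weak filling $(W,\omega)$ with $[\restricted{\omega}{TM}] = \beta$, invoke Theorem~\ref{theorem: stableHypersurface} to deform $\omega$ in a collar neighborhood of $\p W$ so that the boundary carries a stable Hamiltonian structure whose hyperplane distribution is $\xi$. Concretely, one arranges that near $\p W$ the form $\omega$ looks like $d(f(t)\alpha) + \pi^* \Omega$, where $\alpha$ is a contact form for $\xi$, $\pi : [-\epsilon,0]\times M \to M$ is the projection, and $\Omega$ is a closed $2$--form on $M$ with $[\Omega] = \beta$. This model extends to a completion $(\widehat{W}, \widehat{\omega}) = (W,\omega) \cup \bigl([0,\infty)\times M,\, d(f(t)\alpha) + \pi^*\Omega\bigr)$ on which we can choose a compatible almost complex structure $J$ that is $\R$--invariant and cylindrical at infinity. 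Nondegenerate Reeb orbits on $(M,\alpha)$ then give the usual asymptotic data for punctured holomorphic curves in $\widehat{W}$.

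Second, define a linear map $\epsilon$ on the contact DGA generator $\gamma \mapsto \epsilon(\gamma)$ by counting rigid $J$--holomorphic planes in $\widehat{W}$ positively asymptotic to~$\gamma$, each weighted by the monomial $e^{[u]} \in \Q[H_2(M;\R)/\ker\beta]$, where $[u] \in H_2(M;\R)/\ker\beta$ is the class represented by capping $u$ off with any chosen surface in $M$ bounded by $\gamma$. Extended multiplicatively, this should be an augmentation of $\CC{M,\xi;\, \Q[H_2(M;\R)/\ker\beta]}$, i.e.~$\epsilon \circ \p = 0$ and $\epsilon(\1) = 1$, which follows from the standard analysis of the boundary of the one--dimensional component of the moduli space of planes via SFT compactness and gluing. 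On passing to homology, $\epsilon$ descends to an algebra homomorphism from $\HC{M,\xi;\, \Q[H_2(M;\R)/\ker\beta]}$ to $\Q[H_2(M;\R)/\ker\beta]$ sending $\1$ to~$1$; if the homology vanishes then $\1=0$ there, whence $1=0$ in the group ring, a contradiction.

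The main technical obstacle is the energy bound for punctured holomorphic curves in $\widehat W$, which is exactly what forces the appearance of twisted coefficients and the hypothesis $[\restricted{\omega}{TM}] = \beta$. A plane $u$ carries two kinds of energy: a $d\alpha$--type piece bounded by the action of its asymptotic orbit, and an $\Omega$--type piece $\int u^*\pi^*\Omega$ coming from the non--exact weak boundary data. This second contribution is not controlled by the orbit alone: it depends on the relative homology class of $u$, and changing the capping of $\gamma$ by a class $A \in H_2(M;\R)$ changes it by $\pairing{\beta}{A}$. The role of the group ring $\Q[H_2(M;\R)/\ker\beta]$ is precisely to absorb this ambiguity, so that the sum over curves in a given homotopy class is well defined after weighting by~$e^{[u]}$, and Gromov--Hofer compactness for stable Hamiltonian structures then yields finiteness of $\epsilon(\gamma)$ and the chain--map property. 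Verifying this compactness and the usual SFT transversality (modulo the caveat in the footnote to the theorem) for the cylindrical end built from a deformed weak filling is the core of the argument.
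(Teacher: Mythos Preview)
Your overall strategy matches the paper's: deform the weak filling to have stable boundary, complete, and count planes in the completion to get an augmentation of the contact DGA, then derive a contradiction from $\1=0$. But there is a genuine gap in your finiteness claim, and a related issue with where your augmentation takes values.

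You assert that compactness ``yields finiteness of $\epsilon(\gamma)$'', i.e.~that the count of rigid planes asymptotic to a fixed orbit $\gamma$ gives a \emph{finite} sum in the group ring $\Q[H_2(M;\R)/\ker\beta]$. This is false in general. Compactness gives a uniform energy bound, and hence finitely many rigid curves, only once the homology class is fixed: for a plane $u$ with asymptotic orbit $\gamma$, the $\Omega$--part of the energy is $\pairing{[\omega]}{[u]}$, which varies with $[u]$. There can be infinitely many classes $A$ with nonzero counts, and the sum $\sum_A \#\moduli^A(\gamma)\, e^A$ need not be finite unless $\omega$ is exact (the strong filling case). The paper handles this by letting the augmentation take values not in the group ring but in its \emph{Novikov completion}
\[
\Lambda_\omega = \Bigl\{\, \sum_i c_i\, e^{A_i} \ \Bigm|\ \pairing{[\omega]}{A_i} \to +\infty \,\Bigr\} \;,
\]
where the contradiction $\1=0$ is still impossible.

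A second, related point: your definition of $[u] \in H_2(M;\R)/\ker\beta$ does not make sense as written. The plane $u$ lives in $W^\infty$, not in $M$; capping its asymptotic end with a surface in $M$ produces a closed cycle in $W$, hence a class in $H_2(W;\R)$, and there is no natural map back to $H_2(M;\R)$. The paper deals with this by introducing an intermediate contact homology with coefficients in $\Q[H_2(W;\R)/\ker\omega]$, noting that the inclusion-induced map $H_2(M;\R)/\ker\beta \to H_2(W;\R)/\ker\omega$ (which is well defined since $\ker\beta$ maps into $\ker\omega$) gives an algebra homomorphism between the two versions. The augmentation is then the composite
\[
\HC{M,\xi;\, \Q[H_2(M;\R)/\ker\beta]} \longrightarrow
\HC{M,\xi;\, \Q[H_2(W;\R)/\ker\omega]} \stackrel{\Phi_W}{\longrightarrow}
\Lambda_\omega \;,
\]
and the argument concludes as you describe.
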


Since weak fillings that are exact near the boundary are equivalent to
strong fillings up to symplectic deformation (cf.~Proposition~3.1 in
\cite{EliashbergContactProperties}), the special case $\beta=0$ means
that the \emph{untwisted} contact homology gives an obstruction to
strong filling, and we similarly obtain an obstruction to weak filling
from the \emph{fully twisted} contact homology:

\begin{corollary}\label{cor:twisted}
  For any closed contact $3$--manifold $(M,\xi)$:
  \begin{enumerate}
  \item If $\HC{M,\xi ;\, \Q}$ vanishes, then $(M,\xi)$ is not
    strongly fillable.
  \item If $\HC{M,\xi ;\, \Q[H_2(M;\R)]}$ vanishes, then $(M,\xi)$ is
    not weakly fillable.
  \end{enumerate}
\end{corollary}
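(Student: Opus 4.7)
The plan is to deduce both parts of the corollary as direct specializations of Theorem~\ref{thm:SFT}, using the coefficient-change homomorphisms already discussed in the preceding paragraph. The whole argument is formal once Theorem~\ref{thm:SFT} is granted; the real work lies in that theorem, not here.

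For part (1), I would apply Theorem~\ref{thm:SFT} with $\beta = 0 \in H^2_\dR(M)$. In that case $\ker\beta = H_2(M;\R)$, so the quotient $H_2(M;\R)/\ker\beta$ is trivial and the coefficient ring $\Q[H_2(M;\R)/\ker\beta]$ collapses to $\Q$. Thus vanishing of $\HC{M,\xi;\,\Q}$ excludes any weak filling $(W,\omega)$ with $[\restricted{\omega}{TM}] = 0$. By Proposition~3.1 of \cite{EliashbergContactProperties} (invoked in the paragraph preceding the corollary), every strong filling is symplectically deformation equivalent to a weak filling that is exact in a collar of $\partial W$; in particular $[\restricted{\omega}{TM}] = 0$ for such a filling. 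Hence strong fillability of $(M,\xi)$ is ruled out.

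For part (2), suppose toward contradiction that $(M,\xi)$ admits a weak filling $(W,\omega)$, and set $\beta := [\restricted{\omega}{TM}] \in H^2_\dR(M)$. The canonical projection $\Q[H_2(M;\R)] \to \Q[H_2(M;\R)/\ker\beta]$ induces a \emph{unital} algebra homomorphism
\begin{equation*}
  \HC{M,\xi;\, \Q[H_2(M;\R)]} \longrightarrow \HC{M,\xi;\, \Q[H_2(M;\R)/\ker\beta]},
\end{equation*}
exactly as displayed in the paragraph preceding the statement. Unitality forces $\1 \mapsto \1$, so if the source satisfies $\1 = 0$, so does the target. Theorem~\ref{thm:SFT} then prohibits the existence of a weak filling with $[\restricted{\omega}{TM}] = \beta$, contradicting our assumption.

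The only conceptual step, beyond invoking Theorem~\ref{thm:SFT} as a black box, is the observation that a vanishing result for fully twisted contact homology propagates to every intermediately twisted version via a unital algebra map, which is why the fully twisted theory is the strongest among the variants and genuinely obstructs weak (not merely strong) fillability. I expect no serious obstacle in the corollary itself; the genuine difficulty is deferred to Theorem~\ref{thm:SFT}, where one must actually construct the moduli spaces of punctured holomorphic spheres in the completed filling and show that the $\1 = 0$ relation would produce a rigid curve whose count must nevertheless vanish.
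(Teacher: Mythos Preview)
Your proposal is correct and follows essentially the same approach as the paper: the paper also derives the corollary directly from Theorem~\ref{thm:SFT}, using $\beta = 0$ together with Eliashberg's Proposition~3.1 for part~(1), and the unital coefficient-change homomorphism from fully twisted to $\beta$-twisted coefficients for part~(2). The paper's argument is contained entirely in the paragraph preceding the corollary statement, and your write-up simply makes that paragraph explicit.
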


This result does not immediately yield any new knowledge about contact
topology, as so far the overtwisted contact manifolds are the only
examples in dimension~$3$ for which any version (in particular the
twisted version) of contact homology is known to vanish,
cf.~\cite{YauContactHomologyVanishes} and \cite{ChrisOpenBook2}.
We've included it here merely as a ``proof of concept'' for the use of
SFT with twisted coefficients to study weak fillings.  For the higher
order algebraic filling obstructions defined in \cite{LatschevWendl},
there are indeed examples where the twisted and untwisted theories
differ, corresponding to tight contact manifolds that are weakly but
not strongly fillable.

We conclude this introduction with a brief discussion of open
questions.  

Insofar as planar torsion provides an obstruction to weak filling, it
is natural to wonder how sharp the homological condition in
Theorem~\ref{theorem: planarTorsion} is.  The most obvious test cases
are the $\SS^1$--invariant product manifolds $(\SS^1 \times \Sigma,
\xi_\Gamma)$, under the assumption that $\Sigma \setminus \Gamma$
contains a connected component of genus zero, as for these the
question of strong fillability is completely understood by results in
\cite{ChrisOpenBook2} and \cite{ChrisFiberSums}.
Theorems~\ref{theorem: planarTorsion} and~\ref{thm:weakFillings} give
criteria when such manifolds either are or are not weakly fillable,
but there is still a grey area in which neither result applies,
e.g.~neither is able to settle the following:

\begin{question}
  Suppose $\Sigma = \Sigma_+ \cup_\Gamma \Sigma_-$, where $\Sigma
  \setminus \Gamma$ contains a connected component of genus zero and
  some connected components of $\Gamma$ separate $\Sigma$, while
  others do not.  Is $(\SS^1 \times \Sigma, \xi_\Gamma)$ weakly
  fillable?
\end{question}

Another question concerns the classification of weak fillings: on
rational homology spheres this reduces to a question about strong
fillings, and Theorem~\ref{thm:planar} reduces it to the Stein case
for all planar contact manifolds, which makes general classification
results seem quite realistic.  But already in the simple case of the
tight $3$--tori, one can combine explicit examples such as $\Sigma
\times \T^2$ with our splicing technique to produce a seemingly
unclassifiable zoo of inequivalent weak fillings.  Note that the
splicing technique can be applied in general for contact manifolds
that admit fillings with homologically nontrivial pre-Lagrangian tori,
and these are \emph{never} planar, because due to an obstruction of
Etnyre \cite{Etnyre_planar} fillings of planar contact manifolds must
have trivial~$b_2^0$.

\begin{question}
  Other than rational homology spheres, are there any non-planar
  weakly fillable contact $3$--manifolds for which weak fillings can
  reasonably be classified?
\end{question}

On the algebraic side, it would be interesting to know whether
Theorem~\ref{thm:SFT} actually implies any contact topological results
that are not known; this relates to the rather important open question
of whether there exist tight contact $3$--manifolds with vanishing
contact homology.  In light of the role played by twisted coefficients
in the distinction between strong and weak fillings, this question can
be refined as follows:

\begin{question}
  Does there exist a tight contact $3$--manifold with vanishing
  (twisted or untwisted) contact homology?  In particular, is there a
  weakly fillable contact $3$--manifold with vanishing untwisted
  contact homology?
\end{question}

The generalization of overtwistedness furnished by planar torsion
gives some evidence that the answer to this last question may be no.
In particular, planar torsion as defined in \cite{ChrisOpenBook2}
comes with an integer-valued \emph{order} $k \ge 0$, and for every $k
\ge 1$, our results give examples of contact manifolds with planar
$k$--torsion that are weakly but not strongly fillable.  This
phenomenon is also detected algebraically both by Embedded Contact
Homology \cite{ChrisOpenBook2} and by Symplectic Field Theory
\cite{LatschevWendl}, where in each case the untwisted version
vanishes and the twisted version does not.  Planar $0$--torsion,
however, is fully \emph{equivalent} to overtwistedness, and thus
always causes the twisted theories to vanish.  Thus on the $k=0$
level, there is a conspicuous lack of candidates that could answer the
above question in the affirmative.

Relatedly, the distinction between twisted and untwisted contact
homology makes just as much sense in higher dimensions, yet the
distinction between weak and strong fillings apparently does not.  The
simplest possible definition of a weak filling in higher dimensions,
that $\p W = M$ with $\restricted{\omega}{\xi}$ symplectic, is not
very natural and probably cannot be used to prove anything.  A better
definition takes account of the fact that $\xi$ carries a natural
conformal symplectic structure, and $\omega$ should be required to
define the same conformal symplectic structure on~$\xi$: in this case
we say that $(M,\xi)$ is \textbf{dominated} by $(W,\omega)$.  In
dimension three this notion is equivalent to that of a weak filling,
but surprisingly, in higher dimensions it is equivalent to
\emph{strong} filling, by a result of McDuff
\cite{McDuff_contactType}.  It is thus extremely unclear whether any
sensible distinct notion of weak fillability exists in higher
dimensions, except algebraically:

\begin{question}
  In dimensions five and higher, are there contact manifolds with
  vanishing untwisted but nonvanishing twisted contact homology (or
  similarly, algebraic torsion as in \cite{LatschevWendl})?  If so,
  what does this mean about their symplectic fillings?
\end{question}

Another natural question in higher dimensions concerns the variety of
possible filling obstructions, of which very few are yet known.  There
are obstructions arising from the \emph{plastikstufe}
\cite{NiederkruegerPlastikstufe}, designed as a higher dimensional
analog of the overtwisted disk, as well as from left handed
stabilizations of open books \cite{ContactHomologyLeftHanded}.  Both
of these cause contact homology to vanish, and there is as yet no
known example of a ``higher order'' filling obstruction in higher
dimensions, i.e.~something analogous to Giroux torsion or planar
torsion, which might obstruct symplectic filling without killing
contact homology.  One promising avenue to explore in this area would
be to produce a higher dimensional generalization of the anchored
overtwisted annulus, though once an example is constructed, it may be
far from trivial to show that it has nonvanishing contact homology.

\begin{question}
  Is there any higher dimensional analog of the anchored overtwisted
  annulus, and can it be used to produce examples of nonfillable
  contact manifolds with nonvanishing contact homology?
\end{question}

\subsubsection*{Acknowledgments}

We are grateful to Emmanuel Giroux, Michael Hutchings and Patrick
Massot for enlightening conversations.

During the initial phase of this research, K.~Niederkrüger was working
at the \emph{ENS de Lyon} funded by the project \emph{Symplexe}
06-BLAN-0030-01 of the \emph{Agence Nationale de la Recherche} (ANR).
Currently he is employed at the \emph{Université Paul Sabatier --
  Toulouse~III}.

C.~Wendl is supported by an Alexander von Humboldt Foundation research
fellowship.

\section{Giroux torsion and the overtwisted annulus}
\label{sec:annulus}

In this section, which can be read independently of the remainder of
the paper, we adapt the techniques used in the non-fillability proof
for overtwisted manifolds due to Eliashberg and Gromov to prove
Theorem~\ref{thm:BishopGirouxTorsion}.

We begin by briefly sketching the original proof for overtwisted
contact structures.  Assume $(M,\xi)$ is a closed overtwisted contact
manifold with a weak symplectic filling $(W,\omega)$.  The condition
$\restricted{\omega}{\xi} > 0$ implies that we can choose an almost
complex structure~$J$ on~$W$ which is tamed by~$\omega$ and makes the
boundary $J$--convex.  The elliptic singularity in the center of the
overtwisted disk $\overtwisted \subset M$ is the source of a
$1$--dimensional connected moduli space $\moduli$ of $J$--holomorphic
disks
\begin{equation*}
  u:\, \bigl(\Disk, \p\Disk\bigr) \to \bigl(W,\overtwisted\bigr)
\end{equation*}
that represent homotopically trivial elements in $\pi_2\bigl(W,
\overtwisted\bigr)$, and whose boundaries encircle the singularity of
$\overtwisted$ once.  The space $\moduli$ is diffeomorphic to an open
interval, and as we approach one limit of this interval the
holomorphic curves collapse to the singular point in the center of the
overtwisted disk $\overtwisted$.

We can add to any holomorphic disk in $\moduli$ a capping disk in
$\overtwisted$, such that we obtain a sphere that bounds a ball, and
hence the $\omega$--energy of any disk in $\moduli$ is equal to the
symplectic area of the capping disk.  This implies that the energy of
any holomorphic disk in $\moduli$ is bounded by the integral of
$\abs{\omega}$ over $\overtwisted$, so that we can apply Gromov
compactness to understand the limit at the other end of $\moduli$.  By
a careful study, bubbling and other phenomena can be excluded, and the
result is a limit curve that must have a boundary point tangent to the
characteristic foliation at $\p\overtwisted$; but this implies that it
touches $\p W$ tangentially, which is impossible due to
$J$--convexity.

Below we will work out an analogous proof for the situation where $(M,
\xi)$ is a closed $3$--dimensional contact manifold that contains a
different object, called an anchored overtwisted annulus.  Assuming
$(M, \xi)$ has a weak symplectic filling or is a weakly contact
hypersurface in a closed symplectic $4$--manifold, we will choose an
adapted almost complex structure and instead of using holomorphic
disks, consider holomorphic annuli with boundaries varying along a
$1$--dimensional family of surfaces.  The extra degree of freedom in
the boundary condition produces a moduli space of positive dimension.
If $\omega$ is also exact on the region foliated by the family of
boundary conditions, then we obtain an energy bound, allowing us to
apply Gromov compactness and derive a contradiction.

\subsection{The overtwisted annulus}

We begin by introducing a geometric object that will play the role of
an overtwisted disk.  Recall that for any oriented surface $S
\hookrightarrow M$ embedded in a contact $3$--manifold $(M,\xi)$, the
intersection $TS \cap \xi$ defines an oriented singular
foliation~$S_\xi$ on~$S$, called the \emph{characteristic foliation}.
Its leaves are oriented $1$--dimensional submanifolds, and every point
where $\xi$ is tangent to~$S$ yields a singularity, which can be given
a sign by comparing the orientations of $\xi$ and $TS$.

\begin{defn}
  Let $(M,\xi)$ be a $3$--dimensional contact manifold.  A submanifold
  $\annulus\cong [0,1] \times \SS^1 \hookrightarrow M$ is called a
  \textbf{half-twisted annulus} if the characteristic foliation
  $\annulus_\xi$ has the following properties:
  \begin{enumerate}
  \item $\annulus_\xi$ is singular along~$\{0\} \times \SS^1$ and
    regular on $(0,1] \times \SS^1$.
  \item $\{1\} \times \SS^1$ is a closed leaf.
  \item $(0,1) \times \SS^1$ is foliated by an $\SS^1$--invariant
    family of characteristic leaves that each meet $\{0\} \times
    \SS^1$ transversely and approach $\p\annulus$ asymptotically.
  \end{enumerate}
  We will refer to the two boundary components $\p_L\annulus := \{1\}
  \times \SS^1$ and $\p_S\annulus := \{0\} \times \SS^1$ as the
  \textbf{Legendrian} and \textbf{singular} boundaries respectively.
  An \textbf{overtwisted annulus} is then a smoothly embedded annulus
  $\annulus \subset M$ which is the union of two half-twisted annuli
  \begin{equation*}
    \annulus = \annulus^- \cup \annulus^+
  \end{equation*}
  along their singular boundaries (see Figure~\ref{fig: characteristic
    foliation annuli}).
\end{defn}

\begin{wrapfigure}{r}{0.35\textwidth}
  \vspace{-10pt}
  \begin{center}
    \includegraphics[width=0.3\textwidth,
    keepaspectratio]{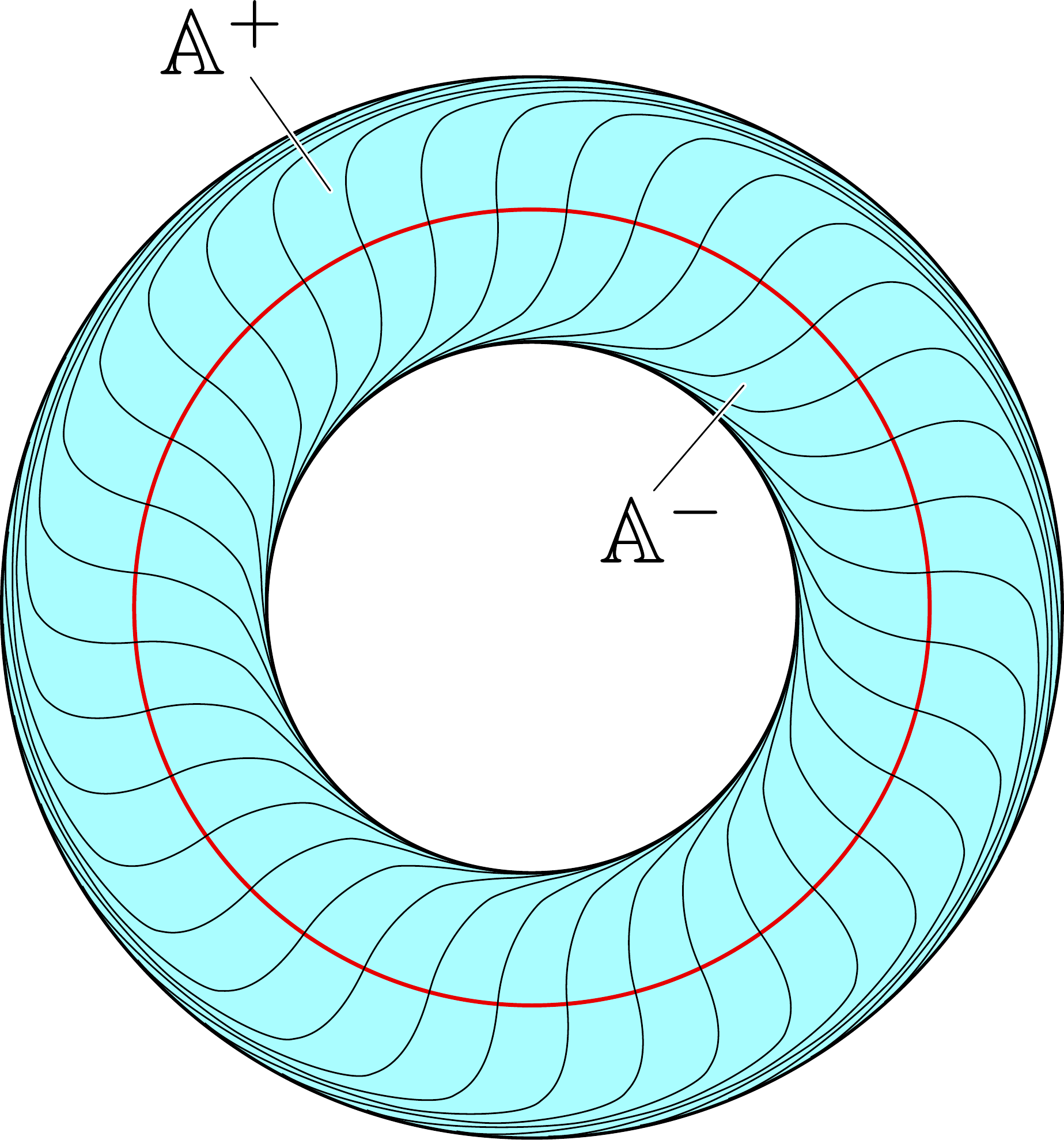}
  \end{center}
  \vspace{-10pt}
  \caption{An overtwisted annulus $\annulus = \annulus^- \cup
    \annulus^+$ with its singular characteristic
    foliation.}  \label{fig: characteristic foliation annuli}
  \vspace{-10pt}
\end{wrapfigure}

\begin{remark}
  As pointed out to us by Giroux, every neighborhood of a point in a
  contact manifold contains an overtwisted annulus.  Indeed, any knot
  admits a $C^0$--small perturbation to a Legendrian knot, which then
  has a neighborhood contactomorphic to the solid torus $\SS^1 \times
  \Disk \ni (\theta ; x,y)$ with contact structure $\ker\left( dy -
    x\,d\theta \right)$.  A small torus $\T^2 \cong \SS^1 \times
  \bigl\{(x,y)\bigm|\, x^2 + y^2 = \epsilon \bigr\}$ is composed of
  two annuli glued to each other along their boundaries, and the
  characteristic foliation on each of these is linear on the interior
  but singular at the boundary.  By pushing one of these annuli
  slightly inward along one boundary component and the other slightly
  outward along the corresponding boundary component, we obtain an
  overtwisted annulus.
\end{remark}

The above remark demonstrates that a single overtwisted annulus can
never give any contact topological information.  We will show however
that the following much more restrictive notion carries highly
nontrivial consequences.

\begin{defn}
  We will say that an overtwisted annulus $\annulus = \annulus^- \cup
  \annulus^+ \subset (M,\xi)$ is \textbf{anchored} if $(M,\xi)$
  contains a smooth $\SS^1$--parametrized family of half-twisted
  annuli $\bigl\{ \annulus^-_\theta \bigr\}_{\theta\in\SS^1}$ which
  are disjoint from each other and from $\annulus^+$, such that
  $\annulus^-_0 = \annulus^-$.  The region foliated by $\bigl\{
  \annulus^-_\theta \bigr\}_{\theta\in\SS^1}$ is then called the
  \textbf{anchor}.
\end{defn}

\begin{example}\label{ex: overtwisted annuli in torsion}
  Recall that we defined a Giroux torsion domain $T_n$ as the
  thickened torus $\T^2\times [0,n] = \bigl\{(\phi,\theta; z)\bigr\}$
  with contact structure given as the kernel of
  \begin{equation*}
    \sin (2\pi z)\, d\phi + \cos (2\pi z)\,d\theta  \;.
  \end{equation*}
  For every $\theta \in S^1$, such a torsion domain contains an
  overtwisted annulus $\annulus_\theta$ which we obtain by bending the
  image of
  \begin{equation*}
    [0,1] \times \SS^1  \hookrightarrow T_n,\,
    \bigl(z,\phi \bigr) \mapsto \bigl(\phi,
    \theta; z \bigr)
  \end{equation*}
  slightly downward along the edges $\{0,1\} \times \SS^1$ so that
  they become regular leaves of the foliation.  This can be done in
  such a way that $\T^2 \times [0,1]$ is foliated by an
  $\SS^1$--family of overtwisted annuli,
  \begin{equation*}
    \T^2 \times [0,1] = \bigcup_{\theta \in \SS^1} \annulus_\theta \;,
  \end{equation*}
  all of which are therefore anchored.
  \begin{figure}%{r}{0.35\textwidth}
    \begin{center}
      \includegraphics[width=0.3\textwidth,
      keepaspectratio]{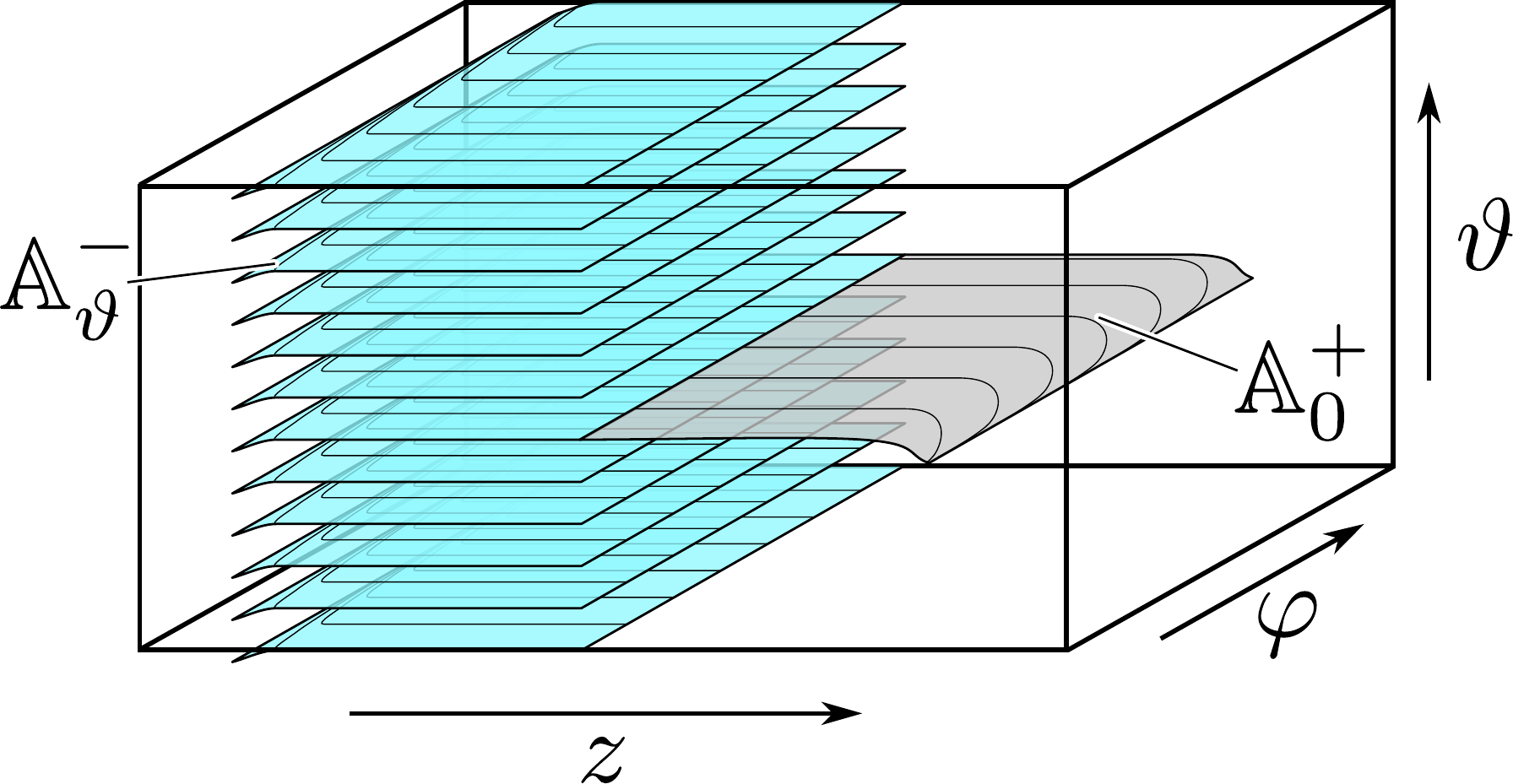}
    \end{center}
    \caption{An anchored overtwisted annulus $\annulus = \annulus_0^-
      \cup \annulus_0^+$ in a Giroux torsion domain $T_1$.}
    \label{fig: annuli in torsion domain}
  \end{figure}
\end{example}

The example shows that every contact manifold with positive Giroux
torsion contains an anchored overtwisted annulus, but in fact, as John
Etnyre and Patrick Massot have pointed out to us, the converse is also
true: it follows from deep results concerning the classification of
tight contact structures on thickened tori \cite{GirouxBifurcations}
that a contact manifold \emph{must} have positive Giroux torsion if it
contains an anchored overtwisted annulus.

We will use an anchored overtwisted annulus as a boundary condition
for holomorphic annuli.  By studying the moduli space of such
holomorphic curves, we find certain topological conditions that have
to be satisfied by a weak symplectic filling, and which will imply
Theorem~\ref{thm:BishopGirouxTorsion}.

\subsection{The Bishop family of holomorphic annuli}

In the non-fillability proof for overtwisted manifolds, the source of
the Bishop family is an elliptic singularity at the center of the
overtwisted disk.  For an anchored overtwisted annulus, holomorphic
curves will similarly emerge out of singularities of the
characteristic foliation, in this case the singular boundaries of the
half-twisted annuli in the anchor, which all together trace out a
pre-Lagrangian torus.  We shall first define a boundary value problem
for pseudoholomorphic annuli with boundary in an anchored overtwisted
annulus, and then choose a special almost complex structure near the
singularities for which solutions to this problem can be constructed
explicitly.  If $\omega$ is exact on the anchor, then the resulting
energy bound and compactness theorem for the moduli space will lead to
a contradiction.

For the remainder of \secref{sec:annulus}, suppose $(W,\omega)$ is a
weak filling of $(M,\xi)$, and the latter contains an anchored
overtwisted annulus $\annulus = \annulus^- \cup \annulus^+$ with
anchor $\{\annulus^-_\theta \}_{\theta \in \SS^1}$ such that
$\annulus^-_0 = \annulus^-$.  The argument will require only minor
modifications for the case where $(W,\omega)$ is closed and contains
$(M,\xi)$ as a weakly contact hypersurface; see
Remark~\ref{remark:hypersurface}.

\subsubsection{A boundary value problem for anchored overtwisted
  annuli}
\label{moduliSpace}

We will say that an almost complex structure $J$ on $W$ is
\textbf{adapted to the filling} if it is tamed by $\omega$ and
preserves~$\xi$.  The fact that $\xi$ is a positive contact structure
implies that any~$J$ adapted to the filling makes the boundary $\p W$
pseudoconvex, with the following standard consequences:

\begin{lemma}[cf.~\cite{Zehmisch_Diplomarbeit}, Theorem~4.2.3]
  \label{lemma:Jconvex}
  If $J$ is adapted to the filling $(W,\omega)$ of $(M,\xi)$, then:
  \begin{enumerate}
  \item Any embedded surface $S \subset M = \p W$ on which the
    characteristic foliation is regular is a totally real submanifold
    of $(W,J)$.
  \item Any connected $J$--holomorphic curve whose interior
    intersects~$\p W$ must be constant.
  \item If $S \subset \p W$ is a totally real surface as described
    above and $u :\, \Sigma \to W$ is a $J$--holomorphic curve
    satisfying the boundary condition $u(\p\Sigma) \subset S$, then
    $u|_{\p\Sigma}$ is immersed and positively transverse to the
    characteristic foliation on~$S$.
  \end{enumerate}
\end{lemma}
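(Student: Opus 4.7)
All three conclusions stem from a single ingredient: the $J$-convexity of $\partial W$. My plan is to first produce a strictly $J$-plurisubharmonic defining function $\psi$ for $\partial W$ in a collar neighborhood, with $\psi^{-1}(0) = \partial W$ and $\psi < 0$ on the interior side, and then to read each of the three statements off from $\psi$ via variants of the maximum principle.

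Part (1) requires no $\psi$ and is a pointwise linear-algebra fact. For $p \in S$ where $S_\xi$ is regular, the line $L := T_pS \cap \xi_p$ is $1$-dimensional. If $T_pS$ were $J$-invariant, then $L$, as the intersection of two $J$-invariant subspaces, would also be $J$-invariant; but $J$ restricts to a genuine complex structure on the real $2$-plane $\xi_p$, which admits no real $1$-dimensional invariant subspace, a contradiction.

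Parts (2) and (3) follow from classical arguments applied to $\psi \circ u$, which is subharmonic and strictly so wherever $du \neq 0$. For (2), if $u$ meets $\partial W$ at an interior point of the connected surface $\Sigma$, the strong maximum principle forces $\psi \circ u \equiv 0$, so $u(\Sigma) \subset \partial W$; the image of $du$ then lies in $TM \cap J(TM) = \xi$, but strict subharmonicity at regular points requires $du \equiv 0$, and unique continuation finishes the argument. For (3), the totally real condition from (1) lets us extend $u$ smoothly across $\partial\Sigma$, and at every $p \in \partial\Sigma$ the Hopf boundary lemma gives a strict outward normal inequality for $\psi \circ u$. In local $J$-holomorphic coordinates $(x,y)$ with $\partial\Sigma = \{y=0\}$ and $u_y = J u_x$, this translates to $d^c\psi(u_x)(p) > 0$, where $d^c := -d \circ J$. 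Since $d^c\psi|_{TM}$ is by construction a positive contact form for $\xi$, this simultaneously gives $u_x(p) \neq 0$ (immersion at the boundary) and positive transversality of $u_x(p)$ to the characteristic foliation $S_\xi$.

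The main technical obstacle lies in constructing $\psi$ under the weak filling hypothesis, where $\omega$ need not be exact near $\partial W$. The right approach is to fix a contact form $\alpha$ for $\xi$ on $M$, extend it into a collar, and set $\psi$ equal to something like $-e^{-t}$ in a normal coordinate $t$ vanishing on $\partial W$; the Levi form positivity on $\xi$ reduces to the fact that $d\alpha|_\xi$ tames $J|_\xi$, which holds because $\omega|_\xi$ tames $J|_\xi$ (combining $\omega$-tameness of $J$ with $J$-invariance of $\xi$) and $d\alpha|_\xi$ agrees with $\omega|_\xi$ up to a positive conformal factor, both inducing the positive coorientation of $\xi$. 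Positivity of $-dd^c\psi$ on the full complex tangent space along $\partial W$ is then arranged by letting $\psi$ vary sufficiently fast in the normal direction, after which parts (2) and (3) reduce to the applications of the strong maximum principle and Hopf lemma described above.
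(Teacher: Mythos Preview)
The paper does not actually prove this lemma: it is stated with a reference to Zehmisch's thesis and no argument is given.  Your proposal supplies exactly the standard $J$--convexity proof that the citation points to, and it is correct.  The linear-algebra argument for (1) is clean; the construction of a strictly $J$--plurisubharmonic defining function from a contact form $\alpha$, using that $d\alpha|_\xi$ and $\omega|_\xi$ induce the same orientation on~$\xi$ so that $J|_\xi$ is tamed by both, is precisely the point that distinguishes the weak-filling case from the strong one and you have identified it correctly.  Parts (2) and (3) then follow from the strong maximum principle and the Hopf lemma as you describe.

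Two small remarks that would tighten the write-up.  In part~(2), since $\psi$ is only defined on a collar, one should phrase the argument as an open--closed argument on~$\Sigma$: the set where $u$ hits $\partial W$ is closed, and the strong maximum principle applied in small coordinate balls shows it is also open, hence all of~$\Sigma$ by connectedness; after that your $du \equiv 0$ step using strict plurisubharmonicity along $\partial W$ goes through.  In part~(3), the statement implicitly assumes $u$ is nonconstant (otherwise the conclusion that $u|_{\partial\Sigma}$ is immersed fails trivially); you should invoke part~(2) to guarantee that $\psi \circ u$ is not identically zero near any boundary point, so that the Hopf lemma indeed applies there.
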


Given any adapted almost complex structure~$J$ on $(W,\omega)$, the
above lemma implies that the interiors $\interior{\annulus^+} \subset
\annulus^+$ and $\interior{\annulus^-_\theta} \subset
\annulus^-_\theta$ are all totally real submanifolds of $(W,J)$.  We
shall then consider a moduli space of $J$--holomorphic annuli defined
as follows.  Denote by $A_r$ the complex annulus
\begin{equation*}
  A_r = \bigl\{z\in\C\bigm|\, 1\le \abs{z} \le 1+r\bigr\}\subset \C
\end{equation*}
of modulus $r > 0$, and write its boundary components as $\p_r^- :=
\bigl\{z\in\C\bigm|\, \abs{z} = 1 \bigr\}$ and $\p_r^+ :=
\bigl\{z\in\C\bigm|\, \abs{z} = 1+r\bigr\}$.  We then define the space
\begin{equation*}
  \begin{split}
    \moduli(J) = \bigcup_{r > 0} \bigl\{ u : A_r \to W \ \bigm|\ & Tu
    \circ i = J \circ Tu,
    \text{ $u(\p_r^+) \subset \interior{\annulus^+}$,} \\
    &\text{$u(\p_r^-) \subset \interior{\annulus^-_\theta}$ for any
      $\theta \in \SS^1$} \bigl\} \Big/ \SS^1,
  \end{split}
\end{equation*}
where $\tau \in \SS^1$ acts on maps $u :\, A_r \to W$ by $\tau \cdot
u(z) := u(e^{2\pi i\tau} z)$.  This space can be given a natural
topology by fixing a smooth family of diffeomorphisms from a standard
annulus to the domains~$A_r$,
\begin{equation}
  \label{eqn:psir}
  \psi_r :\, [0,1] \times \SS^1 \to A_r : (s,t) \mapsto e^{s\log(1+r) + 2\pi it} \;,
\end{equation}
and then saying that a sequence $u_k : A_{r_k} \to W$ converges to
$u:\, A_r \to W$ in $\moduli(J)$ if $r_k \to r$ and
\begin{equation*}
  u_k \circ \psi_{r_k}(s,t + \tau_k) \to u \circ \psi_r(s,t)
\end{equation*}
for some sequence $\tau_k \in \SS^1$, with $C^\infty$--convergence on
$[0,1] \times \SS^1$.

We will show below that $J$ can be chosen to make $\moduli(J)$ a
nonempty smooth manifold of dimension one.  This explains why the
``anchoring'' condition is necessary: it introduces an extra degree of
freedom in the boundary condition, without which the moduli space
would generically be zero-dimensional and the Bishop family could
never expand to reach the edge of the half-twisted annuli.

\subsubsection{Special almost complex structures near the boundary}

Suppose $\alpha$ is a contact form for $(M,\xi)$.  The standard way to
construct compatible almost complex structures on the symplectization
$\bigl(\R\times M, d(e^t \alpha)\bigr)$ involves choosing a compatible
complex structure $J_\xi$ on the symplectic vector bundle
$\bigl(\restricted{\xi}{\{0\}\times M}, d\alpha\bigr)$, extending it
to a complex structure on $\bigl( \restricted{T(\R\times
  M)}{\{0\}\times M}, d(e^t\alpha)\bigr)$ such that
\begin{equation*}
  J X_\alpha = - \p_t \text{ and } J \p_t = X_\alpha
\end{equation*}
for the Reeb vector field $X_\alpha$ of $\alpha$, and finally defining
$J$ as the unique $\R$--invariant almost complex structure on
$\R\times M$ that has this form at $\{0\} \times M$.  Almost complex
structures of this type will be essential for the arguments of
\secref{sec:punctured}.  For the remainder of this section, we will
drop the $\R$--invariance condition but say that an almost complex
structure on $\R\times M$ is \textbf{compatible with~$\alpha$} if it
takes the above form on $\{0\} \times M$; in this case it is tamed by
$d(e^t\alpha)$ on any sufficiently small neighborhood of $\{0\} \times
M$.  It is sometimes useful to know that an adapted~$J$ on any weak
filling can be chosen to match any given~$J$ of this form near the
boundary.

\begin{proposition}\label{emdedding symplectization into weak collar}
  Let $(M,\xi)$ be a contact $3$--manifold with weak filling
  $(W,\omega)$.  Choose any contact form $\alpha$ for $\xi$ and an
  almost complex structure $J$ on $\R\times M$ compatible with
  $\alpha$.  Then for sufficiently small $\epsilon > 0$, the canonical
  identification of $\{0\} \times M$ with $\p W$ can be extended to a
  diffeomorphism from $(-\epsilon,0] \times M$ to a collar
  neighborhood of $\p W$ such that the push-forward of~$J$ is tamed
  by~$\omega$.

  In particular, this almost complex structure can then be extended to
  a global almost complex structure on $W$ that is tamed by $\omega$,
  and is thus adapted to the filling.
\end{proposition}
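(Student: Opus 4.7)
The plan is to reduce everything to a pointwise linear-algebra problem by observing that specifying the collar is, to first order along $\p W$, equivalent to choosing a smooth transverse vector field $\nu$ along $\p W$, which will play the role of the pushforward of $\p_t$. Once $\nu$ is fixed, the pushforward $\Phi_*J$ is already determined on $TW|_{\p W}$: it acts as $J_\xi$ on $\xi$, sends $X_\alpha \mapsto -\nu$, and sends $\nu \mapsto X_\alpha$. The task is therefore to choose $\nu$ so that this endomorphism is $\omega$-tame at every boundary point, after which openness of tameness together with compactness will give tameness on a full one-sided collar.

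Expanding the taming integrand with $v = v_\xi + aX_\alpha + b\nu$ at a point $p \in \p W$ gives
\[
  \omega(v, Jv) = \omega(v_\xi, J_\xi v_\xi) + (a^2 + b^2)\,\omega(\nu, X_\alpha) + a\,\Lambda_1(v_\xi) + b\,\Lambda_2(v_\xi),
\]
with $\Lambda_1, \Lambda_2$ linear functionals on $\xi_p$ depending on $\nu$. The leading $\xi$-term is positive for $v_\xi \neq 0$ because the weak filling condition forces $\omega|_\xi = c\,d\alpha|_\xi$ for some positive function $c$, while $J_\xi$ is $d\alpha|_\xi$-compatible. A short calculation shows that both cross functionals vanish identically on $\xi_p$ precisely when $\nu$ satisfies the single linear constraint
\[
  \omega(\nu, w) = -\omega(X_\alpha, J_\xi w) \quad \text{for all } w \in \xi_p .
\]
Since $\omega$ is nondegenerate on $T_pW$, the map $v \mapsto \omega(v, \cdot)|_{\xi_p}$ is surjective onto $\xi_p^*$ with kernel the $2$-dimensional symplectic complement $\xi_p^{\omega\perp}$, so the set of solutions is a nonempty affine $2$-plane parallel to $\xi_p^{\omega\perp}$.

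Within that affine plane, the remaining requirements $\omega(\nu, X_\alpha) > 0$ and transversality to $T_pM$ carve out a nonempty open convex subset: positivity is an open half-plane, and is nonempty because $X_\alpha \not\in \xi_p$ forces its $\xi_p^{\omega\perp}$-component to be nonzero, making $\mu \mapsto \omega(\mu, X_\alpha)$ surjective onto $\R$ on $\xi_p^{\omega\perp}$; and the non-transversality locus is at most an affine line, since $\xi_p^{\omega\perp} \not\subset T_pM$ (otherwise $T_pW = \xi_p + \xi_p^{\omega\perp} \subseteq T_pM$). A standard orientation comparison against the contact coorientation then shows that any such $\nu$ is automatically outward-pointing, so no separate sign argument is needed. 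Convexity and openness of the valid set together with compactness of $\p W$ allow a partition-of-unity argument to produce a smooth transverse vector field $\nu$ satisfying all three conditions; flowing $-\nu$ inward for short time defines the collar $\Phi : (-\epsilon, 0] \times M \hookrightarrow W$, and by openness of taming plus compactness one obtains $\omega$-tameness on the whole collar after shrinking $\epsilon > 0$.

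The final sentence of the proposition is then a standard consequence of the fact that the fiberwise space of $\omega$-tame complex structures on $TW$ is nonempty and contractible (see McDuff--Salamon), so a tame $J$ defined on a closed subset extends globally, for instance by cut-off interpolation with any background tame structure. I expect the main obstacle to be the pointwise linear-algebra step: in the strong filling case $\omega|_{TM} = d\alpha$ would force $X_\alpha \in \xi^{\omega\perp}$ and pin $\nu$ down up to positive scaling, whereas under the weak condition one must genuinely exploit the $2$-dimensional freedom of the symplectic complement $\xi^{\omega\perp}$ to kill the $\xi$-direction cross terms while still keeping $\omega(\nu, X_\alpha) > 0$.
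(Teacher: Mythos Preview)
Your argument is correct, but the paper's route is shorter and less computational. Instead of expanding $\omega(v,Jv)$ and solving for~$\nu$, the paper builds an auxiliary $\omega$-\emph{compatible} complex structure $J_{\mathrm{aux}}$ on $TW|_M$ as the direct sum of $J_\xi$ on $(\xi,\omega)$ with any compatible structure on the symplectic complement $\xi^{\omega\perp}$, and then simply takes $Y := -J_{\mathrm{aux}} X_\alpha$ as the transverse vector field. One checks in a line that the pushed-forward $J$ then equals $J_{\mathrm{aux}}$ along~$M$, so tameness is immediate from compatibility and no partition-of-unity step is needed---the global formula already gives a smooth~$Y$. Your approach instead isolates the affine $2$-plane of $\nu$'s for which the cross terms $\Lambda_1,\Lambda_2$ vanish identically; this is more explicit about what is really being used, and it is worth noting that your $\nu$ and the paper's $Y$ are genuinely different in the non-exact case (the paper's $Y$ satisfies $\omega(Y,w)=+\omega(X_\alpha,J_\xi w)$ on~$\xi$, so it does \emph{not} kill your cross terms). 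One point to tighten: the ``standard orientation comparison'' is doing real work and depends on the cross-term constraint already imposed. If $\nu$ lies in your affine plane and also in $T_pM$, then a direct computation in a $J_\xi$-adapted basis $e_1,\,e_2=J_\xi e_1$ of~$\xi$ with $\omega(e_1,e_2)=1$ gives $\omega(\nu,X_\alpha)=-\omega(X_\alpha,e_1)^2-\omega(X_\alpha,e_2)^2\le 0$, which is what actually forces transversality (and outwardness) under your positivity hypothesis; this short calculation deserves to be written out rather than merely asserted.
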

\begin{proof}
  Writing $J_\xi := \restricted{J}{\xi}$, construct an auxiliary
  complex structure $J_{\mathrm{aux}}$ on $\restricted{TW}{M}$ as the
  direct sum of $J_\xi$ on the symplectic bundle
  $\bigl(\restricted{\xi}{\{0\}\times M}, \omega\bigr)$ with a
  compatible complex structure on its $\omega$--symplectic complement
  $\bigl(\restricted{\xi^{\perp \omega}}{\{0\}\times M},
  \omega\bigr)$.  Clearly this complex structure is tamed by
  $\restricted{\omega}{M}$.

  Define an outward pointing vector field along the boundary by
  setting
  \begin{equation*}
    Y =  - J_{\mathrm{aux}}\cdot X_\alpha \;.
  \end{equation*}
  Extend $Y$ to a smooth vector field on a small neighborhood of $M$
  in $W$, and use its flow to define an embedding of a subset of the
  symplectization
  \begin{equation*}
    \Psi:\, (-\epsilon, 0] \times M \to W,\, \bigl(t, p\bigr)
    \mapsto \Phi_Y^t(p)
  \end{equation*}
  for sufficiently small $\epsilon > 0$.  The restriction of $\Psi$ to
  $\{0\}\times M$ is the identity on $M$, and the push-forward of $J$
  under this map coincides with $J_{\mathrm{aux}}$ along $M$, because
  $\Psi_* \p_t = Y$.  It follows that the push-forward of~$J$ is tamed
  by $\omega$ on a sufficiently small neighborhood of $M= \p W$, and
  we can then extend it to~$W$ as an almost complex structure tamed
  by~$\omega$.
\end{proof}

\subsubsection{Generation of the Bishop family}
\label{sec:Bishop family}

We shall now choose an almost complex structure $J_0$ on the
symplectization of $M$ that allows us to write down the germ of a
Bishop family in $\R\times M$ which generates a component
of~$\moduli(J_0)$.  At the same time, $J_0$ will prevent other
holomorphic curves in the same component of~$\moduli(J_0)$ from
approaching the singular boundaries of the half-twisted annuli
$\annulus^-_\theta$.  We can then apply Proposition~\ref{emdedding
  symplectization into weak collar} to identify a neighborhood of
$\{0\}\times M$ in the symplectization with a boundary collar of $W$,
so that $W$ contains the Bishop family.

The singular boundaries of $\annulus^-_\theta$ define closed leaves of
the characteristic foliation on a torus
\begin{equation*}
  T := \bigcup_{\theta \in \SS^1} \p_S\annulus^-_\theta \subset M \;,
\end{equation*}
which is therefore a pre-Lagrangian torus.  We then obtain the
following by a standard Moser-type argument.
\begin{lemma}\label{lemma:preLagCoords}
  For sufficiently small $\epsilon > 0$, a tubular neighborhood
  $\nbhd(T) \subset M$ of~$T$ can be identified with $\T^2 \times
  (-\epsilon,\epsilon)$ with coordinates $(\phi,\theta ; r)$ such
  that:
  \begin{itemize}
  \item $T = \T^2 \times \{0\}$,
  \item $\xi = \ker \left[ \cos(2\pi r)\, d\theta + \sin(2\pi r)\,
      d\phi \right]$,
  \item $\annulus \cap \nbhd(T) = \{ \theta = 0 \}$, and
    $\annulus^-_{\theta_0} \cap \nbhd(T) = \{ \theta=\theta_0,\ r \in
    (-\epsilon,0] \}$ for all $\theta_0 \in \SS^1$.
  \end{itemize}
\end{lemma}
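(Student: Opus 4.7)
The proof is a Moser-type argument, proceeding in three stages.

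Stage 1 (Pre-Lagrangian normal form and relabeling): I would invoke the standard pre-Lagrangian tubular neighborhood theorem to obtain preliminary coordinates $(\phi, \theta, r)$ on a tubular neighborhood $\nbhd(T) \cong \T^2 \times (-\epsilon, \epsilon)$ in which $T = \T^2 \times \{0\}$ and $\xi = \ker[\cos(2\pi r)\, d\theta + \sin(2\pi r)\, d\phi]$, so that the characteristic foliation on $T$ is along $\p_\phi$. Each singular boundary $\p_S \annulus^-_\theta$ is a closed characteristic leaf on $T$ and hence has the form $\{\theta = c(\theta)\}$ for some diffeomorphism $c : \SS^1 \to \SS^1$; relabeling the $\SS^1$-parameter of the annulus family by $c^{-1}$ arranges $\p_S \annulus^-_\theta = \{\theta = \theta\}$. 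This already achieves the first two bullet points and the singular-boundary part of the third.

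Stage 2 (Straightening the half-annuli): The remaining task is to arrange that the full half-annulus $\annulus^-_{\theta_0}$, not merely its singular boundary, coincides with the model $\{\theta = \theta_0, r \in (-\epsilon, 0]\}$. Both are half-twisted annuli sharing the same singular boundary in $T$, with characteristic foliations transverse to it (for the model, tangent to $\p_r$), so each individual $\annulus^-_{\theta_0}$ is isotopic to the model by a diffeomorphism fixing the singular boundary and matching the characteristic foliations. Because the family $\{\annulus^-_\theta\}_{\theta \in \SS^1}$ depends smoothly on $\theta$ and foliates a one-sided neighborhood of $T$, these isotopies assemble into a single diffeomorphism $\Phi$ of $\{r \leq 0\}$ that fixes $T$ pointwise and carries $\annulus^-_{\theta_0}$ to $\{\theta = \theta_0, r \leq 0\}$; extend $\Phi$ to $\{r > 0\}$ by any smooth extension. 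In the new coordinates, the annuli are straight, and the pulled-back contact structure $\widetilde\xi := \Phi^* \xi$ still agrees with the standard model along $T$ and makes $\p_r$ tangent to $\widetilde\xi$ along each $\{\theta = \theta_0, r \leq 0\}$ by construction.

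Stage 3 (Restoring the contact normal form by Moser): After Stage 2, $\widetilde\xi$ satisfies the desired annular alignment but may no longer equal $\xi_{\mathrm{std}} := \ker[\cos(2\pi r)\, d\theta + \sin(2\pi r)\, d\phi]$ off $T$. Choose contact forms $\widetilde\alpha$ for $\widetilde\xi$ and $\alpha_{\mathrm{std}}$ for $\xi_{\mathrm{std}}$ with $\widetilde\alpha|_T = \alpha_{\mathrm{std}}|_T = d\theta$, and consider the linear interpolation $\alpha_t := (1-t)\widetilde\alpha + t\alpha_{\mathrm{std}}$. On a sufficiently small tubular neighborhood of $T$ each $\alpha_t$ is contact, and a relative Moser/Gray argument produces an isotopy $\Psi_t$ fixing $T$ pointwise with $\Psi_1^* \alpha_{\mathrm{std}} = f\,\widetilde\alpha$ for some positive function $f$; composing with $\Phi$ and relabeling then yields the coordinates claimed by the lemma.

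Main obstacle: The decisive technical point will be in Stage 3, namely ensuring that $\Psi_t$ also preserves the foliation $\{\theta = \mathrm{const}\}$ on $\{r \leq 0\}$, so that the alignment established in Stage 2 is not destroyed. Decomposing $\alpha_t = A_t\, d\phi + B_t\, d\theta + C_t\, dr$, the fact that both $\widetilde\alpha$ and $\alpha_{\mathrm{std}}$ make $\p_r$ tangent to the respective contact structure along the straightened annuli forces $C_t \equiv 0$ on $\{r \leq 0\}$, so $\dot\alpha_t$ has no $dr$-component there; this is exactly the cohomological input needed to solve the Moser equation with a vector field tangent to the $\p_\phi\p_r$-plane on $\{r \leq 0\}$, yielding the required foliation-preserving isotopy.
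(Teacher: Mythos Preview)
The paper's own proof is the single sentence ``We then obtain the following by a standard Moser-type argument,'' so your three-stage outline is precisely the kind of elaboration intended, and the key observation in your ``main obstacle'' paragraph---that $C_t \equiv 0$ on $\{r \le 0\}$ forces the Moser vector field to lie in $\R\,\p_r$ there---is the right mechanism for making the relative Gray isotopy preserve the straightened foliation.

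There is one omission. The third bullet also asserts $\annulus \cap \nbhd(T) = \{\theta = 0\}$, which on the side $r \ge 0$ means $\annulus^+ \cap \nbhd(T) = \{\theta = 0,\, r \in [0,\epsilon)\}$. Your Stage~2 extends $\Phi$ to $\{r > 0\}$ ``by any smooth extension,'' which does not place $\annulus^+$ anywhere in particular, and your Stage~3 argument only controls the Moser field on $\{r \le 0\}$. The fix is the obvious one: in Stage~2 also straighten the single half-annulus $\annulus^+$ to $\{\theta = 0,\, r \ge 0\}$, matching its characteristic foliation to $\p_r$. Then $C_t = 0$ and $\dot C_t = 0$ along that surface as well, and the same linear-algebra computation you give (at each point $\p_r \in \xi_t$ and $\dot\alpha_t(\p_r) = 0$, so the nondegeneracy of $d\alpha_t|_{\xi_t}$ forces $X_t \in \R\,\p_r$) shows that the Moser field is tangent to $\{\theta = 0\}$ on $\{r \ge 0\}$ too. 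No new idea is needed, but as written the argument establishes only two and a half of the three bullets.
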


Using the coordinates given by the lemma, we can reflect the
half-twisted annuli $\annulus^-_{\theta_0}$ across~$T$ within this
neighborhood to define the surfaces
\begin{equation*}
  \annulus^+_{\theta_0} := \bigl\{ \theta = \theta_0,\ 
  r \in [0,\epsilon) \bigr\} \subset M \;.
\end{equation*}
Each of these surfaces looks like a collar neighborhood of the
singular boundary in a half-twisted annulus.  Now choose for $\xi$ a
contact form $\alpha$ on~$M$ that restricts on $\nbhd(T)$ to
\begin{equation}
  \label{eqn:standardT3}
  \restricted{\alpha}{\nbhd(T)} = \cos (2\pi r)\,d\theta +
  \sin (2\pi r)\, d\phi \;.
\end{equation}
The main idea of the construction is to identify the set $\nbhd(T)$
with an open subset of the unit cotangent bundle $\T^3 =
\SS\bigl(T^*\T^2\bigr)$ of $\T^2$, with its canonical contact
form~$\acan$.  We will then use an integrable complex structure on
$T^*\T^2$ to find explicit families of holomorphic curves that give
rise to holomorphic annuli in $\R\times M$.

The cotangent bundle of $\T^2 = \R^2 / \Z^2$ can be identified
naturally with
\begin{equation*}
  \C^2 / i\Z^2 = \R^2 \oplus i(\R^2 / \Z^2)
\end{equation*}
such that the canonical $1$--form takes the form $\lcan = p_1\,dq_1 +
p_2\,dq_2$ in coordinates $[z_1,z_2] = \bigl[p_1 + iq_1, p_2 +i
q_2\bigr]$.  The unit cotangent bundle $\SS\bigl(T^*\T^2\bigr) =
\bigl\{[p_1 + iq_1, p_2 + iq_2] \in T^*\T^2\bigm| \, \abs{p_1}^2 +
\abs{p_2}^2 = 1 \bigr\}$ can then be parametrized by the map
\begin{equation*}
  \T^3 = \T^2 \times \SS^1 \ni (\phi,\theta;r) \mapsto
  \bigl[\sin 2\pi r + i\phi, \cos 2\pi r + i \theta\bigr] \in T^*\T^2 \;,
\end{equation*}
and the pull-back of $\lcan$ to $\T^3$ gives
\begin{equation*}
  \acan := \restricted{\lcan}{T\SS(T^*\T^2)} = \cos (2\pi r)\,d\theta
  + \sin (2\pi r)\, d\phi \;.
\end{equation*}
The Liouville vector field dual to $\lcan$ is $p_1\, \p_{p_1} + p_2\,
\p_{p_2}$, and we can use its flow to identify $T^*\T^2 \setminus
\T^2$ with the symplectization of $\SS\bigl(T^*\T^2\bigr)$:
\begin{equation*}
  \Phi :\, (\R\times \SS\bigl(T^*\T^2\bigr), d(e^t \acan))
  \to (T^*\T^2 \setminus \T^2, d\lcan),\, (t;p + iq) \mapsto e^t p + iq \;.
\end{equation*}
Then it is easy to check that the restriction of the complex structure
$\Phi^*i$ to $\{0\} \times \T^3$ preserves $\ker\acan$ and maps $\p_t$
to the Reeb vector field of~$\acan$, hence $\Phi^*i$ is compatible
with~$\acan$.  Now for the neighborhood $\nbhd(T) \cong \T^2 \times
(-\epsilon,\epsilon)$, denote by
\begin{equation*}
  \Psi :\, (-\epsilon,0] \times \nbhd(T) \hookrightarrow \R\times \T^3
\end{equation*}
the natural embedding determined by the coordinates $(\phi,\theta;r)$.
Proposition~\ref{emdedding symplectization into weak collar} then
implies:

\begin{lemma}\label{lemma:J0}
  There exists an almost complex structure $J_0$ adapted to the
  filling $(W,\omega)$ of $(M,\xi)$, and a collar neighborhood
  $\nbhd(\p W) \cong (-\epsilon,0] \times M$ of $\p W$ such that on
  $(-\epsilon,0] \times \nbhd(T) \subset W$, $J_0 = \Psi^*\Phi^*i$.
\end{lemma}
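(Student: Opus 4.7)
The plan is to invoke Proposition~\ref{emdedding symplectization into weak collar} on an almost complex structure $J$ on $\R\times M$ that has been arranged to coincide with $\Psi^*\Phi^*i$ on a half-collar of $\{0\}\times\nbhd(T)$. The core observation is essentially tautological: the Proposition produces a collar embedding $(-\epsilon,0]\times M\hookrightarrow W$, and the push-forward $J_0$ of $J$ under this embedding pulls back to $J$ itself. Whatever local form $J$ has on $(-\epsilon,0]\times\nbhd(T)$ therefore transfers verbatim to $J_0$ on the corresponding subset of $W$ under the collar identification.

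To construct such a $J$, I would first simply declare $J := \Psi^*\Phi^*i$ on $(-\epsilon,0]\times\nbhd(T)\subset\R\times M$. Since $\alpha|_{\nbhd(T)} = \acan$ by Lemma~\ref{lemma:preLagCoords} and $\Phi^*i$ was verified just above to be compatible with $\acan$, the restriction $J|_{\{0\}\times\nbhd(T)}$ is automatically $\alpha$-compatible in the sense required by the Proposition. Next, extend the complex structure $J|_\xi$ on $\nbhd(T)$ to a $d\alpha$-compatible complex structure $J_\xi$ over all of $\xi$, using contractibility of the space of $d\alpha$-compatible complex structures on a symplectic vector bundle, and complete by the standard recipe $J\p_t = X_\alpha$ and $JX_\alpha = -\p_t$ along $\{0\}\times M$ to obtain an $\alpha$-compatible extension of $J$ to $\{0\}\times M$ that agrees with what was already specified on $\{0\}\times\nbhd(T)$. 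Finally, a cutoff in the $t$-direction patches this boundary-slice extension with the already-defined $J$ on $(-\epsilon,0]\times\nbhd(T)$, yielding a smooth almost complex structure $J$ on all of $\R\times M$ that is compatible with $\alpha$ and equals $\Psi^*\Phi^*i$ on some possibly smaller half-collar $(-\epsilon',0]\times\nbhd(T)$.

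Applying Proposition~\ref{emdedding symplectization into weak collar} to this $J$ then produces a collar neighborhood $\nbhd(\p W)\cong (-\epsilon',0]\times M$ on which the push-forward $J_0$ is $\omega$-tame, and I would extend $J_0$ to all of $W$ as an $\omega$-tame almost complex structure using the standard contractibility of the space of $\omega$-tame almost complex structures. The desired identity $J_0 = \Psi^*\Phi^*i$ on $(-\epsilon',0]\times\nbhd(T)\subset W$ then follows immediately from the tautology that the pullback of $J_0$ via the collar identification is $J$ itself. The only slightly delicate technical point is the cutoff interpolation when extending $J$ off $(-\epsilon,0]\times\nbhd(T)\cup\{0\}\times M$, but this is completely routine since the space of $d\alpha$-compatible complex structures on each fiber of $\xi$ is a contractible Siegel-type space, so partition-of-unity arguments suffice.
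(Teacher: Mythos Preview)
Your proposal is correct and follows exactly the approach the paper intends: the paper's entire proof is the single line ``Proposition~\ref{emdedding symplectization into weak collar} then implies,'' and you have spelled out the routine details of constructing the $\alpha$-compatible $J$ on $\R\times M$ (agreeing with $\Psi^*\Phi^*i$ near $T$) that serves as input to that Proposition. One small quibble: the patching needed to extend $J$ off $(-\epsilon,0]\times\nbhd(T)\cup\{0\}\times M$ is really a cutoff in the $M$-direction (interpolating between $\Psi^*\Phi^*i$ and an $\R$-invariant extension across a shell in $\nbhd(T)$), not the $t$-direction, but as you note this is routine via contractibility of the relevant fiber.
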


Consider the family of complex lines $L_\zeta :=
\bigl\{(z_1,z_2)\bigm|\, z_2 = \zeta\bigr\}$ in $\C^2$.  The
projection of these curves into $T^*\T^2 \cong \C^2 /i\Z^2$ are
holomorphic cylinders, whose intersections with the unit disk bundle
$\Disk(T^*\T^2) = \bigl\{ p + iq \in \C^2 / i\Z^2 \ \bigm|\ \abs{p}^2
\le 1 \bigr\}$ define holomorphic annuli.  In particular, for
sufficiently small $\delta > 0$ and any
\begin{equation*}
  (c,\tau) \in (0,\delta] \times \SS^1 \;,
\end{equation*}
the intersection $L_{(1-c) + i\tau} \cap \Disk(T^*\T^2)$ is a
holomorphic annulus in $\Phi \circ \Psi\bigl((-\epsilon,0] \times
\nbhd(T)\bigr)$, which therefore can be identified with a
$J_0$--holomorphic annulus
\begin{equation*}
  u_{(c,\tau)} :\, A_{r_c} \to W
\end{equation*}
with image in the neighborhood $(-\epsilon,0] \times \nbhd(T)$, where
the modulus $r_c > 0$ depends on~$c$ and approaches zero as $c \to 0$.
It is easy to check that the two boundary components of $u_{(c,\tau)}$
map into the interiors of the surfaces $\annulus^+_\tau$ and
$\annulus^-_\tau$ respectively in~$\p W$.  Observe that all of these
annuli are obviously embedded, and they foliate a neighborhood of~$T$
in~$W$.  We summarize the construction as follows.

\begin{proposition}\label{prop:BishopFamily}
  For the almost complex structure $J_0$ given by
  Lemma~\ref{lemma:J0}, there exists a smooth family of properly
  embedded $J_0$--holomorphic annuli
  \begin{equation*}
    \bigl\{ u_{(c,\tau)} :\, A_{r_c} \to W \bigr\}_{(c,\tau)
      \in (0,\delta] \times \SS^1}
  \end{equation*}
  which foliate a neighborhood of~$T$ in $W \setminus T$ and satisfy
  the boundary conditions
  \begin{equation*}
    u_{(c,\tau)}\left(\p_{r_c}^+\right) \subset \interior{\annulus^+_\tau},
    \qquad
    u_{(c,\tau)}\left(\p_{r_c}^-\right) \subset \interior{\annulus^-_\tau}.
  \end{equation*}
  In particular the curves $u_{(c,0)}$ for $c \in (0,\delta]$ all
  belong to the moduli space $\moduli(J_0)$.
\end{proposition}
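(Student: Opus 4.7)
My plan is to construct the family $u_{(c,\tau)}$ explicitly by pulling back standard holomorphic curves from the integrable model. By Lemma~\ref{lemma:J0}, a neighborhood of the pre-Lagrangian torus $T \subset \p W$ is biholomorphic to an open subset of $T^*\T^2 \cong \C^2/i\Z^2$ carrying its standard integrable complex structure, via the map $\Phi \circ \Psi$. In this model, for each pair $(c,\tau) \in (0,\delta] \times \SS^1$ with $\delta > 0$ small, I would take the complex line $L_{(1-c)+i\tau} = \{z_2 = (1-c) + i\tau\} \subset \C^2$, project it to $\C^2/i\Z^2$ (which yields a holomorphic cylinder, since only the imaginary parts $q_1, q_2$ get quotiented), and intersect with the closed unit disk bundle $\{p_1^2 + p_2^2 \le 1\}$. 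The result is a closed holomorphic annulus of the form $\{|p_1| \le \sqrt{2c - c^2},\ q_1 \in \R/\Z\}$; a standard biholomorphism identifies it with the model domain $A_{r_c}$, where the modulus $r_c > 0$ depends smoothly on~$c$ and tends to $0$ as $c \to 0$. Pulling back through $(\Phi \circ \Psi)^{-1}$ then produces the desired $J_0$-holomorphic annulus $u_{(c,\tau)} : A_{r_c} \to W$, living inside the boundary collar.

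Next I would verify the boundary conditions directly. The boundary of the model annulus lies on $|p|^2 = 1$, which under the parametrization $p_1 = \sin 2\pi r$, $p_2 = \cos 2\pi r$ corresponds to $r = \pm r_c$ with $r_c > 0$ the unique small positive solution of $\cos 2\pi r_c = 1 - c$; since $q_2 = \theta = \tau$ is fixed and $q_1 = \phi$ sweeps~$\SS^1$, the two boundary circles sit respectively in $\annulus^+_\tau = \{\theta = \tau,\ r \in [0,\epsilon)\}$ and $\annulus^-_\tau = \{\theta = \tau,\ r \in (-\epsilon,0]\}$, as required. Embeddedness is immediate because a fundamental domain $q_1 \in [0,1)$ inside each line $L_\zeta$ maps injectively into the quotient. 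For the foliation property, distinct lines $L_\zeta \subset \C^2$ project to pairwise disjoint holomorphic cylinders in $\C^2/i\Z^2$, and intersecting each with the closed disk bundle produces a family of annuli that, as $(c,\tau)$ varies over $(0,\delta] \times \SS^1$, sweeps out exactly the set $\{(z_1,z_2) \in \Disk(T^*\T^2) : \Re z_2 \in [1-\delta,1)\}$; pulled back to $W$, this is precisely a neighborhood of $T$ in $W \setminus T$. Smoothness in $(c,\tau)$ is immediate from smoothness of $\zeta \mapsto L_\zeta$, and taking $\tau = 0$ gives boundaries in $\annulus^+_0 \subset \annulus^+$ and $\annulus^-_0 = \annulus^-$, so $u_{(c,0)}$ satisfies the boundary conditions defining $\moduli(J_0)$.

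I do not anticipate any substantial obstacle: once Lemma~\ref{lemma:J0} provides the integrable model $J_0 = \Psi^*\Phi^*i$ on $(-\epsilon, 0] \times \nbhd(T)$, the entire proposition reduces to the explicit calculation above. The only point requiring a modicum of care is the choice of $\delta > 0$, which must be taken small enough that $r_\delta < \epsilon$ (so the annuli remain inside $\nbhd(T)$) and that their full images stay within the collar $(-\epsilon, 0] \times M$ on which Lemma~\ref{lemma:J0} applies. Both conditions are easily arranged by shrinking~$\delta$.
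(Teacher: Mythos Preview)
Your proposal is correct and follows exactly the same approach as the paper: the paper constructs the family $u_{(c,\tau)}$ precisely by intersecting the projected complex lines $L_{(1-c)+i\tau}$ with the unit disk bundle in $T^*\T^2 \cong \C^2/i\Z^2$ and pulling back through $(\Phi\circ\Psi)^{-1}$, then observes that embeddedness, the boundary conditions, and the foliation property are easy to verify. You have simply supplied the explicit verifications that the paper leaves to the reader.
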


\begin{figure}
  \begin{center}
    \includegraphics[width=0.35\textwidth,
    keepaspectratio]{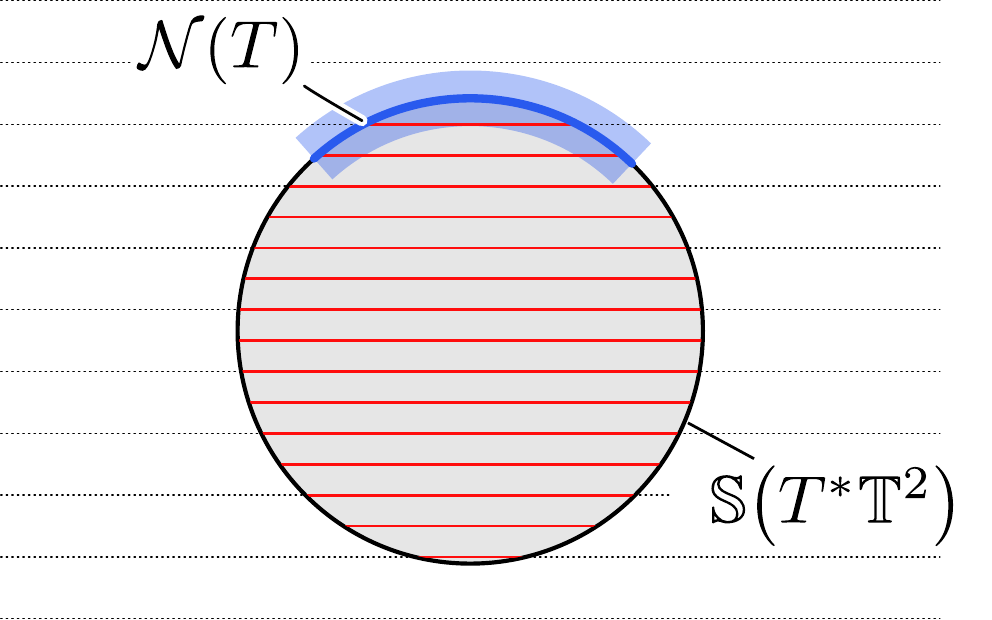}
  \end{center}
  \vspace{-10pt}
  \caption{The unit disk bundle in $T^*\T^2$ is foliated by a family
    of holomorphic annuli obtained from the complex planes $L_\zeta$.
    The neighborhood $\nbhd(T)$ can be identified with a subset of the
    unit disk bundle $\SS\bigl( T^*\T^2\bigr)$.}
\end{figure}

Denote the neighborhood foliated by the curves $u_{(c,\tau)}$ by
\begin{equation*}
  \uU = \bigcup_{(c,\tau) \in (0,\delta]\times\SS^1}  u_{(c,\tau)}(A_{r_c}) \;,
\end{equation*}
and define the following special class of almost complex structures,
\begin{equation*}
  \complex_\uU(\omega,\xi) = \bigl\{ \text{almost complex structures~$J$
    adapted to the filling $(W,\omega)$ such that $J \equiv J_0$
    on~$\overline{\uU}$} \bigr\} \;.
\end{equation*}
The annuli $u_{(c,\tau)}$ are thus $J$--holomorphic for any $J \in
\complex_\uU(\omega,\xi)$, and the space $\moduli(J)$ is therefore
nonempty.  In this case, denote by
\begin{equation*}
  \moduli_0(J) \subset \moduli(J)
\end{equation*}
the connected component of $\moduli(J)$ that contains the curves
$u_{(c,0)}$.

\begin{lemma}\label{lemma:embeddedBoundary}
  Every curve $u : A_r \to W$ in $\moduli_0(J)$ is proper, and its
  restriction to $\p A_r$ is embedded.
\end{lemma}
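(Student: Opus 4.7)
My plan is as follows. Properness of $u$ is almost free from what has already been established. By the definition of $\moduli(J)$, a curve $u \in \moduli_0(J)$ is a $J$--holomorphic map $A_r \to W$ whose two boundary circles land in the interiors of two different half-twisted annuli inside $\p W$, namely $\interior{\annulus^+}$ and $\interior{\annulus^-_\theta}$ for some $\theta \in \SS^1$. By the anchoring condition $\annulus^+$ is disjoint from every $\annulus^-_\theta$, so no constant map can satisfy both boundary conditions; hence every curve in the moduli space is non-constant. Lemma~\ref{lemma:Jconvex}(2) then forces $u(\interior A_r) \subset \interior W$, which is precisely properness of $u$ as a relative map $(A_r, \p A_r) \to (W, \p W)$.

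For the embeddedness of $\restricted{u}{\p A_r}$, I would use a winding-number argument applied separately to each boundary component. Lemma~\ref{lemma:Jconvex}(3) already gives that each boundary arc is immersed and positively transverse to the characteristic foliation on its target surface. In the interior of a half-twisted annulus the characteristic foliation consists of a smooth $\SS^1$--parametrized family of arcs connecting the singular boundary to the Legendrian boundary, so the leaf space is diffeomorphic to $\SS^1$. Projection to this leaf space turns each restriction $\restricted{u}{\p_r^\pm}$ into a local diffeomorphism $\SS^1 \to \SS^1$, hence an orientation-preserving covering map of some positive degree $d_\pm$. This integer is locally constant on $\moduli_0(J)$, so to pin it down I would compute it on the Bishop family $u_{(c,0)}$ constructed in Proposition~\ref{prop:BishopFamily}, where by inspection of the explicit model $L_\zeta \cap \Disk(T^*\T^2)$ each boundary circle wraps exactly once around the annulus: thus $d_\pm = 1$. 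A degree-one covering $\SS^1 \to \SS^1$ is a diffeomorphism, so each boundary circle hits every leaf of the characteristic foliation transversely in exactly one point and is therefore embedded. The two boundary circles cannot collide with each other either, since they are confined to the disjoint surfaces $\annulus^+$ and $\annulus^-_\theta$.

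The main delicate point in this argument is the invariance of the winding numbers $d_\pm$ along the connected component $\moduli_0(J)$. The definition of $\moduli(J)$ forbids the boundary circles from ever touching the singular edge or the Legendrian edge of a half-twisted annulus, which is exactly the obstacle that could otherwise let $d_\pm$ jump as one deforms through the moduli space; without this openness of the boundary condition the argument would not close. Given it, the winding number is a genuine homotopy invariant on $\moduli_0(J)$, and the normalization at the Bishop family propagates to the entire component.
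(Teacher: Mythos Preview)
Your argument is correct and is essentially the same as the paper's own proof, which likewise derives properness from Lemma~\ref{lemma:Jconvex}(2) and then uses positive transversality to the characteristic foliation together with the fact that the homotopy class of $\restricted{u}{\p_r^\pm}$ agrees with that of the Bishop curves $u_{(c,0)}$, which intersect each characteristic leaf exactly once. You have simply made explicit the winding-number interpretation of this homotopy class and added the (necessary) remark that the two boundary components lie in disjoint surfaces, both of which the paper leaves implicit.
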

\begin{proof}
  Properness follows immediately from Lemma~\ref{lemma:Jconvex}, and
  due to our assumptions on the characteristic foliation of a
  half-twisted annulus, embeddedness at the boundary also follows from
  the lemma after observing that the homotopy class of $u|_{\p_r^\pm}$
  is the same as for the curves $u_{(c,0)}$, whose boundaries
  intersect every characteristic leaf once.
\end{proof}

\begin{proposition}\label{prop:intersection}
  For $J \in \complex_\uU(\omega,\xi)$, suppose $u \in \moduli_0(J)$
  is not one of the curves $u_{(c,0)}$.  Then $u$ does not intersect
  the interior of~$\uU$.
\end{proposition}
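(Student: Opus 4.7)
The plan is to use positivity of intersections, exploiting the fact that the foliation $\{u_{(c,\tau)}\}_{(c,\tau)\in(0,\delta]\times\SS^1}$ of $\uU$ is actually a \emph{holomorphic} foliation. In the local model $T^*\T^2/i\Z^2$, in which $J_0$ agrees with the standard integrable complex structure, the leaves $L_\zeta$ are precisely the fibers of the holomorphic projection $(z_1,z_2)\mapsto z_2$; so $\uU$ carries a genuine holomorphic submersion $\pi\colon\uU\to P$ onto a complex annulus $P\subset\C/i\Z$, whose fibers are the leaves $u_{(c,\tau)}$. Since $J\equiv J_0$ on $\overline{\uU}$ for every $J\in\complex_\uU(\omega,\xi)$, the composition $\pi\circ u$ is a genuine holomorphic map wherever $u$ enters $\uU$.

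Setting $V:=u^{-1}(\uU)\subset A_r$, I would first establish the following dichotomy via unique continuation: either $\pi\circ u|_V$ is locally constant on some open subset of $V$, in which case $u$ must factor (up to reparametrization) through a single leaf $u_{(c_0,\tau_0)}$; or $\pi\circ u$ is nowhere locally constant, in which case it is an open map on each component of $V$, and by positivity of intersections $u$ meets each leaf at isolated interior points with strictly positive local intersection index. The first possibility is ruled out immediately by the boundary condition defining $\moduli(J)$: since $u(\p_r^+)\subset\annulus^+$, and $\annulus^+$ coincides with $\annulus^+_0$ inside $\nbhd(T)$, we must have $\tau_0=0$, forcing $u=u_{(c_0,0)}$, contrary to the hypothesis.

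The second possibility requires more work. The natural attack is to pair $u$ with a single generic leaf $u_{(c_0,\tau_0)}$ chosen so that its boundary on $\annulus^+_{\tau_0}\cup\annulus^-_{\tau_0}$ is disjoint from the boundary of $u$ on $\annulus^+\cup\annulus^-_\theta$; the algebraic intersection number $u\cdot u_{(c_0,\tau_0)}$ is then a well-defined integer, strictly positive by positivity of intersections. On the other hand, since $\moduli_0(J)$ is connected, $u$ can be deformed through $\moduli_0(J)$ to a Bishop curve $u_{(c_1,0)}$, which is a distinct leaf of the foliation and hence disjoint from $u_{(c_0,\tau_0)}$; provided boundary disjointness is preserved along the deformation, homotopy invariance forces the intersection number to be zero, yielding a contradiction. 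The main obstacle is precisely this boundary-disjointness condition: along a path $u_s\in\moduli_0(J)$ the parameter $\theta(s)$ of the $-$ boundary varies in $\SS^1$ and a priori could sweep out the entire circle, leaving no room to choose $\tau_0$ simultaneously avoiding $0$ and the range of $\theta(s)$. To overcome this I would bypass the global homotopy and instead work directly with $\pi\circ u\colon V\to P$: its restriction to $\p V\cap\p A_r$ takes values in the arcs $\{\tau=0\}\cup\{\tau=\theta\}$, and its restriction to $\p V\cap\operatorname{int}A_r$ takes values in the outer circle of $P$; an argument-principle count over a generic $(c_0,\tau_0)\in P$ should give degree zero, directly contradicting positivity and thus forcing $V=\emptyset$.
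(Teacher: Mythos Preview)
Your dichotomy and the locally-constant case are correct, and you have rightly identified the obstacle to the naive homotopy argument. The gap is in the argument-principle sketch at the end: knowing that $\pi\circ u$ sends $\p V\cap\p A_r$ into the radial arcs $\{\tau=0\}\cup\{\tau=\theta\}$ and $\p V\cap\operatorname{int}A_r$ into the outer circle $\{c=\delta\}$ does not by itself force the degree over a generic interior point of $P$ to vanish---a piece of $\p V$ whose image runs out along $\{\tau=0\}$, wraps once around $\{c=\delta\}$, and returns along $\{\tau=0\}$ already contributes nonzero winding. What is missing is control near the \emph{inner} circle $\{c=0\}$ (which corresponds to $T$): one would need to show that $\pi\circ u(\overline V)$ cannot accumulate there at any $\tau\notin\{0,\theta\}$, and then run an open-and-closed argument in each complementary rectangle of $P$. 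This can be carried out, but you have not indicated it.

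The paper bypasses all of this by capping both curves to closed cycles in $W$. After perturbing the intersected leaf so that $\tau\notin\{0,\theta_0\}$ (the isolated intersection is stable), cap $u_{(c,\tau)}$ by the annular region in $\annulus^+_\tau\cup\annulus^-_\tau$ between its two boundary circles; the resulting $\widehat u_{(c,\tau)}$ is visibly null in $H_2(W)$. Cap $u$ by three pieces in $\p W$: the region in $\annulus^+$ between $\p_S\annulus^+$ and $u(\p_r^+)$, the analogous region in $\annulus^-_{\theta_0}$, and an annulus in $T$ swept out as $\theta$ traverses a path in $\SS^1$ from $0$ to $\theta_0$ avoiding $\tau$. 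The two caps are disjoint by construction and lie in $\p W$, so $\widehat u\cdot\widehat u_{(c,\tau)}$ equals the count of holomorphic intersections and is positive, contradicting $[\widehat u_{(c,\tau)}]=0$. The freedom to choose a path from $0$ to $\theta_0$ avoiding the \emph{single} value $\tau$ is the homological substitute for your attempted choice of $\tau_0$ avoiding the whole range of $\theta(s)$---and no homotopy through $\moduli_0(J)$ is needed, only homology.
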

\begin{proof}
  The proof is based on an intersection argument.  Each of the curves
  $u_{(c,\tau)}$ foliating $\uU$ can be capped off to a cycle
  $\widehat u_{(c,\tau)}$ that represents the trivial homology class
  in $H_2(W)$.  We shall proceed in a similar way to obtain a cycle
  $\widehat{u}$ for $u$, arranged such that intersections between the
  cycles $\widehat{u}$ and $\widehat u_{(c,\tau)}$ can only occur when
  the actual holomorphic curves $u$ and $u_{(c,\tau)}$ intersect.
  Then if $u$ is not any of the curves $u_{(c,0)}$ but intersects the
  interior of~$\uU$, it also is not a multiple cover of any
  $u_{(c,0)}$ due to Lemma~\ref{lemma:embeddedBoundary}, and therefore
  must have an isolated positive intersection with some curve
  $u_{(c,\tau)}$.  It follows that $[\widehat u_{c_0}] \bullet
  [\widehat{u}] > 0$, but since $[\widehat u_{c_0}] = 0 \in H_2(W)$,
  this is a contradiction.

  We construct the desired caps as follows.  Suppose $u(\p^-_r)
  \subset \annulus^-_{\theta_0}$.  We may assume without loss of
  generality that $u$ and $u_{(c,\tau)}$ intersect each other in the
  \emph{interior}, and since this intersection will not disappear
  under small perturbations, we can adjust~$\tau$ so that it equals
  neither~$0$ nor~$\theta_0$.  A cap for $u_{(c,\tau)}$ can then be
  constructed by filling in the space in $\annulus^-_\tau \cup
  \annulus^+_\tau$ between the two boundary components of
  $u_{(c,\tau)}$; clearly the resulting homology class $[\widehat
  u_{(c,\tau)}$] is trivial.

  The cap for $u$ will be a piecewise smooth surface in $\p W$
  constructed out of three smooth pieces:
  \begin{itemize}
  \item A subset of $\annulus^+$ filling the space between the
    singular boundary $\p_S\annulus^+$ and $u(\p^+_r)$,
  \item A subset of $\annulus^-_{\theta_0}$ filling the space between
    the singular boundary $\p_S\annulus^-_{\theta_0}$ and $u(\p^-_r)$,
  \item An annulus in~$T = \{ r = 0\}$ defined by letting $\theta$
    vary over a path in $\SS^1$ that connects~$0$ to~$\theta_0$ by
    moving in a direction such that it does not hit~$\tau$.
  \end{itemize}
  By construction, the two caps are disjoint, and since both are
  contained in $\p W$, neither intersects the interior of either
  curve.
\end{proof}

\subsubsection{Local structure of the moduli space}
\label{subsubsec:index}

We now show that $\moduli_0(J)$ can be given a nice local structure
for generic data.

\begin{proposition}
  For generic $J \in \complex_\uU(\omega,\xi)$, the moduli space
  $\moduli_0(J)$ is a smooth $1$--dimensional manifold.
\end{proposition}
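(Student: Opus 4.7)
The plan is to cast $\moduli_0(J)$ as the zero set of a parametric Cauchy--Riemann section on an appropriate Banach manifold, compute the Fredholm index, and then obtain regularity from a Sard--Smale argument applied to the universal moduli space over $\complex_\uU(\omega,\xi)$. The crucial point is that the constraint $J\equiv J_0$ on $\overline{\uU}$ forces us to handle the generating curves $u_{(c,0)}$ and the ``other'' curves in $\moduli_0(J)$ by different transversality arguments; Proposition~\ref{prop:intersection} is precisely what makes the latter accessible to the standard machinery.

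First I would set up the Fredholm problem. Let
\begin{equation*}
  \mathcal{B} = \bigl\{(r,\theta,u) \bigm|  r>0,\ \theta\in\SS^1,\ u\in W^{k,p}(A_r,W),\ u(\p_r^+)\subset\interior{\annulus^+},\ u(\p_r^-)\subset\interior{\annulus^-_\theta}\bigr\}\big/\SS^1,
\end{equation*}
where the annuli $A_r$ are trivialized uniformly by the diffeomorphisms $\psi_r$ of~\eqref{eqn:psir}. The map $(r,\theta,u)\mapsto \bar\p_J u$ defines a smooth section of the obvious $L^p$ bundle, whose linearization at a solution is the direct sum of the boundary-value Cauchy--Riemann operator $D_u$ on $u^*TW$, with totally real boundary bundles $T\interior{\annulus^+}$ and $T\interior{\annulus^-_\theta}$, together with two one-dimensional contributions from varying $r$ and $\theta$, quotiented by the one-dimensional infinitesimal $\SS^1$ action. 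This operator is Fredholm.

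Next comes the index computation. Since $\chi(A_r)=0$, Riemann--Roch for totally real boundary conditions gives
\begin{equation*}
  \mathrm{ind}(D_u) = \mu\bigl(u^*TW; T\annulus^+,T\annulus^-_\theta\bigr),
\end{equation*}
and for the explicit Bishop curves $u_{(c,0)}$ this Maslov number is zero: the family of holomorphic lines $L_\zeta$ furnishes a holomorphic trivialization of $u^*TW$ along $\p A_{r_c}$ in which both boundary loops are constant totally real subspaces. The relative homotopy class is constant on the component $\moduli_0(J)$, so this index is zero for every $u\in\moduli_0(J)$. Adding $+1$ for $r$, $+1$ for $\theta$ and subtracting $1$ for the $\SS^1$ reparametrization gives expected dimension~$1$.

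For transversality I would form the universal moduli space $\moduli_0^{\mathrm{univ}} = \bigl\{(u,J) \bigm|  J\in\complex_\uU(\omega,\xi),\ u\in\moduli_0(J)\bigr\}$, working with a suitable Banach (or $C^\epsilon$) completion of $\complex_\uU(\omega,\xi)$, and show that the universal section is transverse to the zero section at every $(u,J)$. There are two cases. \emph{Case (i):} if $u=u_{(c,0)}$ lies in $\overline{\uU}$, the $J$-variations have no support on the image of $u$ and the classical argument fails; here I would invoke automatic transversality, using the fact that the two-parameter family $u_{(c,\tau)}$ produces a pair of linearly independent infinitesimal deformations whose projection to the normal bundle is a holomorphic section that does not vanish, which in the four-dimensional totally real setting suffices to force surjectivity of $D_u$ in index one. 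Equivalently, in the integrable local model $T^*\T^2$ the operator is computable by hand and is plainly surjective for the lines $L_\zeta$. \emph{Case (ii):} for any other $u\in\moduli_0(J)$, Proposition~\ref{prop:intersection} says $u(A_r)$ does not meet $\interior{\uU}$, so there is an open set of boundary-disjoint points where $J$ may be perturbed freely; combined with somewhere injectivity (which follows from Lemma~\ref{lemma:embeddedBoundary}, since a curve that agreed with another on an open set would produce a non-embedded boundary), the standard McDuff--Salamon argument applies and gives surjectivity. Sard--Smale applied to the projection $\moduli_0^{\mathrm{univ}}\to\complex_\uU(\omega,\xi)$ then produces a comeager set of regular $J$, for each of which $\moduli_0(J)$ is a smooth $1$-manifold.

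The main obstacle is exactly case~(i): the constraint $J\equiv J_0$ on $\overline{\uU}$ is forced on us (it is used both to generate the Bishop family and, through $J_0$-holomorphicity, to make the intersection-theoretic argument of Proposition~\ref{prop:intersection} run), so the usual $J$-perturbation trick is unavailable at precisely the curves we most need to control. The geometric input that rescues us is the existence of the explicit $2$-parameter foliation $\{u_{(c,\tau)}\}$, which is rigid enough to supply the kernel dimensions that automatic transversality in dimension four requires.
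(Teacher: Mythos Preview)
Your proposal is correct and follows essentially the same route as the paper: both split the transversality problem into curves contained in~$\overline{\uU}$ versus those that are not, handle the latter by the standard Sard--Smale argument (using somewhere injectivity from Lemma~\ref{lemma:embeddedBoundary} and the fact, from Proposition~\ref{prop:intersection}, that such curves leave the region where~$J$ is constrained), and handle the former by automatic transversality.  The index computations also agree; you work with the full bundle $u^*TW$ while the paper reduces directly to the rank-$1$ normal bundle~$N_u$, but for embedded curves these are equivalent packagings.

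The one place where the paper is sharper is the automatic transversality step.  Your phrasing ``a pair of linearly independent infinitesimal deformations \ldots\ suffices to force surjectivity of~$D_u$ in index one'' is not quite right as stated: two independent kernel elements for an index-$1$ operator would give a nontrivial cokernel, the opposite of what you want.  Note also that $\p_\tau u_{(c,\tau)}|_{\tau=0}$ moves the image of $\p_r^+$ into $\annulus^+_\tau$ rather than $\annulus^+$, so it does not satisfy the boundary condition defining~$\moduli(J)$; only the $c$-variation is genuinely tangent to~$\moduli_0(J)$.  The paper sidesteps this by working with the normal operator $\mathbf{D}^N_u$ on the complex line bundle~$N_u$, constructing an explicit trivialization in which the totally real boundary subbundle~$\ell$ has Maslov index zero on both components, and then citing a general automatic transversality criterion for rank-$1$ Cauchy--Riemann operators (Theorem~4.5.36 in \cite{ChrisThesis}).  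Your fallback---computing the operator by hand in the integrable model $T^*\T^2$---would also work and is in the same spirit, but the clean statement really rests on~$N_u$ being a line bundle.
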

\begin{proof}
  Since $\moduli_0(J)$ is connected by assumption, the dimension can
  be derived by computing the Fredholm index of the associated
  linearized Cauchy-Riemann operator for any of the curves $u_{(c,0)}
  \in \moduli_0(J)$.  By Lemma~\ref{lemma:embeddedBoundary}, every
  curve $u \in \moduli_0(J)$ is somewhere injective, thus standard
  arguments as in \cite{McDuffSalamonJHolo} imply that for generic $J
  \in \complex_\uU(\omega,\xi)$, the subset of curves in
  $\moduli_0(J)$ that are not completely contained in $\overline{\uU}$
  is a smooth manifold of the correct dimension.
  Proposition~\ref{prop:intersection} implies that the remaining
  curves all belong to the family $u_{(c,0)}$, and for these we will
  have to examine the Cauchy-Riemann operator more closely since $J$
  cannot be assumed to be generic in~$\overline{\uU}$.

  Abbreviate $u = u_{(c,0)} :\, A_r \to W$ for any $c \in (0,\delta]$.
  Since $u$ is embedded, a neighborhood of~$u$ in $\moduli_0(J)$ can
  be described via the \emph{normal Cauchy-Riemann operator}
  (cf.~\cite{ChrisTransversality}),
  \begin{equation}
    \label{eqn:DuN}
    \mathbf{D}_u^N :\, W^{1,p}_{\ell,\zeta}(N_u) \to
    L^p\bigl(\overline{\Hom}_\C(TA_r,N_u)\bigr)\;,
  \end{equation}
  where $p > 2$, $N_u \to A_r$ is the complex normal bundle of~$u$,
  $\mathbf{D}_u^N$ is the normal part of the restriction of the usual
  linearized Cauchy-Riemann operator $D\dbar_J(u)$ (which acts on
  sections of $u^*TW$) to sections of~$N_u$, and the subscripts~$\ell$
  and~$\zeta$ represent a boundary condition to be described below.
  We must define the normal bundle $N_u$ so that at the boundary its
  intersection with $T\annulus$ has real dimension one, thus defining
  a totally real subbundle
  \begin{equation*}
    \ell = N_u|_{\p A_r} \cap (u|_{\p A_r})^*T\annulus
    \subset N_u|_{\p A_r} \;.
  \end{equation*}
  To be concrete, note that in the coordinates $(\phi,\theta;r)$ on
  $\nbhd(T)$, the image of~$u$ can be parametrized by a map of the
  form
  \begin{equation*}
    v:\, [-r_0,r_0] \times \SS^1 \to (-\epsilon,0] \times \nbhd(T),\,
    (\sigma,\tau) \mapsto (a(\sigma); \tau , 0 ; \sigma)
  \end{equation*}
  for some $r_0 > 0$, where $a(\sigma)$ is a smooth, convex and even
  function.  Choose a vector field along~$v$ of the form
  \begin{equation*}
    \nu(\sigma,\tau) = \nu_1(\sigma) \, \p_r + \nu_2(\sigma) \, \p_t
  \end{equation*}
  which is everywhere transverse to the path $\sigma\mapsto
  (a(\sigma),\sigma)$ in the $tr$--plane, and require
  \begin{equation*}
    \nu(\pm r_0,\tau) = \mp \p_r \;.
  \end{equation*}
  Then the vector fields $\nu$ and $i\nu$ along~$v$ span a complex
  line bundle that is everywhere transverse to~$v$, and its
  intersection with $T\annulus$ at the boundary is spanned by $\p_r$.
  We define this line bundle to be the normal bundle $N_u$ along~$u$,
  which comes with a global trivialization defined by the vector
  field~$\nu$, for which we see immediately that both components of
  the real subbundle $\ell$ along~$\p A_r$ have vanishing Maslov
  index.  To define the proper linearized boundary condition, we still
  must take account of the fact that the image of $\p^-_r$ for nearby
  curves in the moduli space may lie in different half-annuli
  $\annulus^-_\theta$: this means there is a smooth section $\zeta \in
  \Gamma(N_u|_{\p^-_r})$ which is everywhere transverse to~$\ell$,
  such that the domain for $\mathbf{D}_u^N$ takes the form
  \begin{equation*}
    \begin{split}
      W^{1,p}_{\ell,\zeta}(N_u) := \bigl\{ \eta \in W^{1,p}(N_u) \
      \bigm|\
      &\text{$\eta(z)\in \ell_z$ for all $z \in \p^+_r$},\\
      &\text{$\eta(z) + c\, \zeta(z) \in \ell_z$ for all $z \in
        \p^-_r$ and any constant $c \in \R$} \bigr\} \; .
    \end{split}
  \end{equation*}
  Leaving out the section $\zeta$, we obtain the standard totally real
  boundary condition
  \begin{equation*}
    W^{1,p}_\ell(N_u) := \{ \eta \in W^{1,p}(N_u) \ |\ 
    \text{$\eta(z)\in \ell_z$ for all $z \in \p A_r$} \} \;,
  \end{equation*}
  and the Riemann-Roch formula implies that the restriction of
  $\mathbf{D}^N_u$ to this smaller space has Fredholm index~$0$.
  Since the smaller space has codimension one in
  $W^{1,p}_{\ell,\zeta}(N_u)$, the index of~$\mathbf{D}^N_u$ on the
  latter is~$1$, which proves the dimension formula for
  $\moduli_0(J)$.  Moreover, since $N_u$ has complex rank one, there
  are certain \emph{automatic transversality} theorems that apply: in
  particular, Theorem~4.5.36 in \cite{ChrisThesis} implies that
  \eqref{eqn:DuN} is always surjective, and $\moduli_0(J)$ is
  therefore a smooth manifold of the correct dimension, even in the
  region where $J$ is not generic.
\end{proof}

\subsubsection{Energy bounds}
\label{sec:energy_bound}

Assume now that $\omega$ is exact on the anchor, i.e.~there exists a
$1$--form $\beta$ on the region $\bigcup_{\theta \in \SS^1}
\annulus^-_\theta$ with $d\beta = \omega$.  The aim of this section is
to find a uniform bound on the $\omega$--energy
\begin{equation*}
  E_\omega (u) = \int_{A_r} u^* \omega
\end{equation*}
for all curves
\begin{equation*}
  u:\, \bigl(A_r, \partial_r^- \cup \partial_r^+ \bigr)
  \to (W,  \annulus^-_\theta \cup \annulus^+)
\end{equation*}
in the connected moduli space $\moduli_0(J)$ generated by the Bishop
family.

Given such a curve~$u \in \moduli_0(J)$, there exists a smooth
$1$--parameter family of maps
\begin{equation*}
  \{  u_t:\, A_{r} \to W \}_{t\in [\epsilon,1]} \;,
\end{equation*}
such that $u_\epsilon$ is a reparametrization one of the explicitly
constructed curves $u_{(c,0)}$ that foliate~$\uU$, and $u_1 = u$.  The
map $\bar{u} : [\epsilon,1] \times A_r \to W : (t,z) \mapsto u_t(z)$
then represents a $3$--chain, and applying Stokes' theorem to the
integral of $d(\bar{u}^*\omega)=0$ over $[\epsilon,1] \times A_r$
gives
\begin{equation*}
  E_\omega(u) =  E_\omega(u_\epsilon)
  - \int_{[\epsilon,1]\times \p A_{r}} \bar{u}^*\omega \;.
\end{equation*}
The image $\bar{u}\bigl([\epsilon,1]\times \p A_{r}\bigr)$ has two
components $\bar{u}\bigl([\epsilon,1]\times \p_{r}^+\bigr)$ and
$\bar{u}\bigl([\epsilon,1]\times \p_{r}^-\bigr)$.  The first lies in a
single half-twisted annulus $\annulus^+$, and thus the absolute value
of $\int_{[\epsilon,1]\times \p_{r}^+} \bar{u}^*\omega$ can be bounded
by $\int_{\annulus^+} \abs{\omega}$.  For the second component, the
image $\bar{u}\bigl([\epsilon,1] \times \p_{r}^-\bigr)$ lies in the
anchor $\bigcup_{\theta \in \SS^1} \annulus^-_\theta$, so we can write
\begin{equation*}
  E_\omega(u) \le  E_\omega(u_\epsilon) + \int_{\annulus^+} \abs{\omega}
  + \int_{\p_\epsilon^-} u_\epsilon^*\beta - \int_{\p_r^-} u^*\beta \;.
\end{equation*}

\begin{figure}[htbp]
  \centering
  \includegraphics[width=0.3\textwidth,
  keepaspectratio]{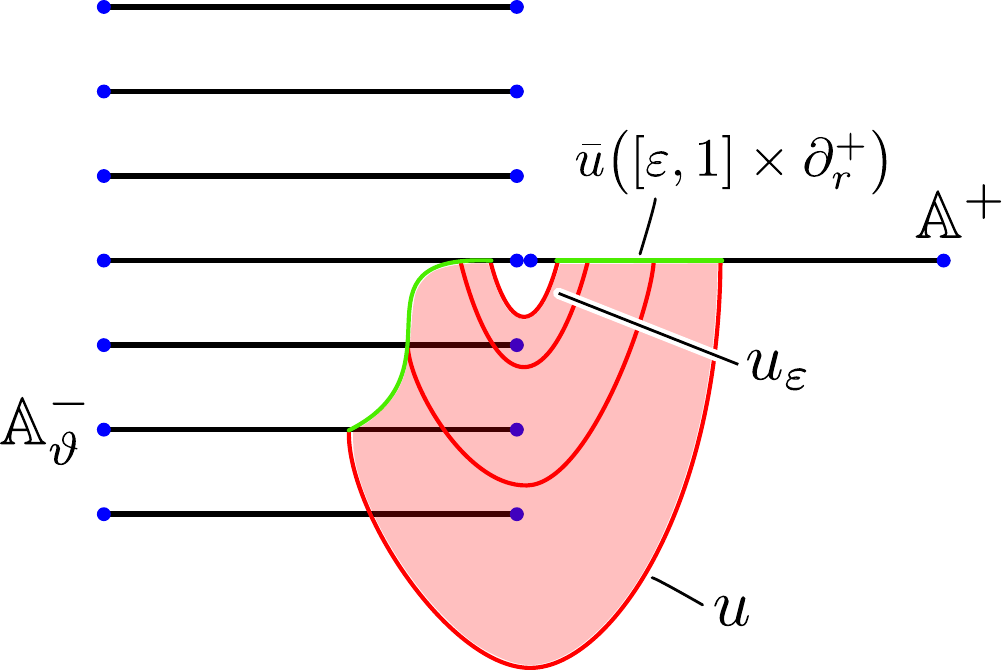}
  \caption{The holomorphic annulus $u:\, \bigl(A_r, \partial_r^-
    \cup \partial_r^+ \bigr) \to (W, \annulus_\theta^- \cup
    \annulus^+)$ is part of a $1$--parameter family $u_t$ of curves
    that start at an annulus $u_\epsilon$ that lies in the Bishop
    family.}\label{fig: energy argument}
\end{figure}

It remains only to find a uniform bound on the last term in this sum,
$\int_{\p_r^-} u^*\beta$.  Observe that $u(\partial_r^-)$ and the
singular boundary $\p_S\annulus^-_\theta$ enclose an annulus within
$\annulus^-_\theta$, thus
\begin{equation*}
  \abs{\int_{\partial_r^+} u^*\beta} \le   \int_{\p_S \annulus^-_\theta}
  \abs{\beta} +  \int_{\annulus^-_\theta}\abs{\omega} \;.
\end{equation*}
This last sum is uniformly bounded since the surfaces
$\annulus^-_\theta$ for $\theta \in \SS^1$ form a compact family.

\subsubsection{Gromov compactness for the holomorphic annuli}

The main technical ingredient still needed for the proof of
Theorem~\ref{thm:BishopGirouxTorsion} is the following application of
Gromov compactness.

\begin{proposition}\label{prop:compactness}
  Suppose $J$ is generic in $\complex_\uU(\omega,\xi)$, $\omega$ is
  exact on the anchor, and
  \begin{equation*}
    u_k:\, \bigl(A_{r_k}, \partial_{r_k}^- \cup \partial_{r_k}^+ \bigr) \to
    (W, \annulus^-_{\theta_k} \cup \annulus^+)
  \end{equation*}
  is a sequence of curves in $\moduli_0(J)$ with images not contained
  in~$\uU$.  Then there exist $r > 0$, $\theta \in \SS^1$ and a
  sequence $\tau_k \in \SS^1$ such that after passing to a
  subsequence, $r_k \to r$, $\theta_k \to \theta$ and the maps
  \begin{equation*}
    z \mapsto u_k(e^{2\pi i\tau_k} z)
  \end{equation*}
  are $C^\infty$--convergent to a $J$--holomorphic annulus $u:\, A_r
  \to W$ satisfying $u(\p^-_r) \subset \annulus^-_\theta$ and
  $u(\p^+_r) \subset \annulus^+$.
\end{proposition}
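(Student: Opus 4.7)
The essential input is the uniform energy bound $E_\omega(u_k) \le C$ from section \ref{sec:energy_bound}, where $C$ depends only on the anchored overtwisted annulus and a primitive $\beta$ of $\omega$ on the anchor. Together with Lemma \ref{lemma:Jconvex}, which ensures that $\interior{\annulus^+}$ and $\interior{\annulus^-_\theta}$ are totally real, this allows me to apply the standard Gromov compactness theorem for $J$-holomorphic curves with totally real boundary. Since $\theta_k$ and the reparametrization parameters $\tau_k$ both live in the compact group $\SS^1$, I may pass to a subsequence so that $\theta_k \to \theta$, and Gromov compactness produces a (possibly nodal, possibly broken) limit configuration. The task is then to show that this configuration is in fact a single smooth $J$-holomorphic annulus of finite positive modulus.

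Three kinds of degeneration must be excluded. \emph{Interior sphere bubbles:} by assumption the $u_k$ are not contained in $\uU$, so Proposition \ref{prop:intersection} shows they avoid $\interior{\uU}$ entirely; outside $\uU$ the almost complex structure $J$ is chosen generically within $\complex_\uU(\omega,\xi)$, so that simple sphere bubbles sweep out only a codimension-two subset and generically miss the one-dimensional $\moduli_0(J)$. Inside $\overline{\uU}$ no bubbles can form because $J_0$ there is pulled back from the integrable complex structure on a subset of $T^*\T^2$, which carries no nonconstant closed holomorphic curves. \emph{Boundary disk bubbles:} such a bubble would be a nonconstant $J$-holomorphic disk with boundary on a single $\annulus^\pm$. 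By Lemma \ref{lemma:Jconvex}(3), its boundary loop is positively transverse to the characteristic foliation of that half-twisted annulus; but that foliation has no closed leaves in its interior (only the Legendrian boundary $\p_L\annulus$ is a closed leaf), and a winding-number analysis against this foliation rules out any such loop bounding a homotopically nontrivial disk in $W$ relative to $\annulus$. \emph{Conformal degeneration:} if $r_k \to 0$ the two boundary circles of $A_{r_k}$ collapse together, contradicting the fact that their images lie in the disjoint surfaces $\annulus^+$ and $\annulus^-_\theta$. If $r_k \to \infty$ a neck degeneration produces two half-infinite cylinders, each with a single boundary component; removal of the puncture would give a finite-energy disk with boundary on a single $\annulus^\pm$, reducing to the case of boundary bubbles already excluded.

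With all three degenerations ruled out, elliptic boundary regularity yields the claimed $C^\infty$-convergence of $z \mapsto u_k(e^{2\pi i \tau_k} z)$ to a $J$-holomorphic annulus $u : A_r \to W$ with $u(\p^-_r) \subset \annulus^-_\theta$ and $u(\p^+_r) \subset \annulus^+$. I expect the genuinely delicate step to be the exclusion of boundary disk bubbles: translating the schematic description of the characteristic foliation on a half-twisted annulus into a rigorous winding-number obstruction requires care, much as in the classical Eliashberg--Gromov argument, and is complicated here by the presence of two boundary components of $\annulus$ and by the fact that the curves $u_k$ may in principle approach the Legendrian boundary $\p_L\annulus^\pm$, which must then be ruled out using $J$-convexity together with the explicit local model of the characteristic foliation near $\p_L\annulus^\pm$.
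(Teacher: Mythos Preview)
Your overall strategy matches the paper's: invoke the energy bound from \secref{sec:energy_bound}, apply Gromov compactness, and rule out degenerations. Your treatment of boundary nodes via winding numbers against the characteristic foliation is essentially the paper's argument, though phrased less precisely: the point is that each boundary circle of any component of a nodal limit must be positively transverse to the foliation and hence wind at least once around the core of the relevant half-twisted annulus, while the total winding number is~$1$, so no splitting can occur.

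There is, however, a genuine gap in your handling of the conformal degeneration $r_k \to \infty$. You claim this reduces to the boundary-bubble case, but it does not. When the neck pinches, the limit is a pair of disks joined at an \emph{interior} node, each disk having one boundary circle that winds exactly once around its half-twisted annulus. The winding-number argument says nothing against such a disk individually: there is no topological obstruction to a nonconstant $J$--holomorphic disk with boundary winding once around $\interior{\annulus^\pm}$, and your phrase ``rules out any such loop bounding a homotopically nontrivial disk in $W$ relative to $\annulus$'' does not supply one. The paper instead treats this case together with sphere bubbling under the heading of \emph{interior} nodal degenerations and excludes both by a transversality argument: any interior-nodal configuration lives in a stratum of codimension at least~$2$, hence is generically empty in the $1$--dimensional moduli space~$\moduli_0(J)$. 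For this one needs that each component of the limit is regular, and the paper supplies the two necessary inputs: (i) any component with nonempty boundary is embedded at the boundary (same argument as Lemma~\ref{lemma:embeddedBoundary}) and hence somewhere injective, so it achieves transversality for generic~$J$; (ii) $(W,\omega)$ is semipositive, as is automatic in dimension~$4$, so sphere bubbles of negative index do not occur. Your sphere-bubble argument omits~(ii), and your pair-of-disks argument omits the entire dimension-counting mechanism.
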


The energies $\int_{A_{r_k}} u_k^*\omega$ are uniformly bounded due to
the exactness assumption, and the proof is then essentially the same
as in the disk case, cf.~\cite{Eliashberg_HoloDiscs} or
\cite{Zehmisch_Diplomarbeit}.  A priori, $u_k$ could converge to a
nodal holomorphic annulus, with nodes on both the boundary and the
interior.  Boundary nodes are impossible however for topological
reasons, as each boundary component of $u_k$ must pass exactly once
through each leaf in an $\SS^1$--family of characteristic leaves, and
any boundary component in a nodal annulus will also pass \emph{at
  least} once through each of these leaves.  Having excluded boundary
nodes, $u_k$ could converge to a bubble tree consisting of holomorphic
spheres and either an annulus or a pair of disks, all connected to
each other by interior nodes.  This however is a codimension~$2$
phenomenon, and thus cannot happen for generic~$J$ since
$\moduli_0(J)$ is $1$--dimensional.  Here we make use of two important
facts:
\begin{enumerate}
\item Any component of the limit that has nonempty boundary must be
  somewhere injective, as it will be embedded at the boundary by the
  same argument as in Lemma~\ref{lemma:embeddedBoundary}.  Such
  components therefore have nonnegative index.
\item $(W,\omega)$ is semipositive (as is always the case in
  dimension~$4$), hence holomorphic spheres of negative index cannot
  bubble off.
\end{enumerate}
With this, the proof of Proposition~\ref{prop:compactness} is
complete.

\subsubsection{Proof of Theorem~\ref{thm:BishopGirouxTorsion}}

Assume $(W,\omega)$ is a weak filling of $(M,\xi)$ and the latter has
positive Giroux torsion.  As shown in Example~\ref{ex: overtwisted
  annuli in torsion}, $(M,\xi)$ contains an anchored overtwisted
annulus.  For this setting, we defined in \secref{moduliSpace} a
moduli space of $J$--holomorphic annuli $\moduli(J)$ with a
$1$--parameter family of totally real boundary conditions.  In
\secref{sec:Bishop family}, we found a special almost complex
structure $J_0$ which admits a Bishop family of holomorphic annuli,
and thus generates a nonempty connected component $\moduli_0(J_0)
\subset \moduli(J_0)$.  This space remains nonempty after perturbing
$J_0$ generically outside the region foliated by the Bishop family,
thus producing a new almost complex structure~$J$ and nonempty moduli
space $\moduli_0(J)$.  We then showed in \secref{subsubsec:index} that
$\moduli_0(J)$ is a smooth $1$--dimensional manifold, which is
therefore diffeomorphic to an open interval, one end of which
corresponds to the collapse of the Bishop annuli into the singular
circle at the center of the overtwisted annulus.  In particular, this
implies that $\moduli_0(J)$ is not compact, and the key is then to
understand its behavior at the other end.  The assumption that
$\omega$ is exact on the anchor provides a uniform energy bound, with
the consequence that if all curves in~$u$ remain a uniform positive
distance away from the Legendrian boundaries of $\annulus^+$ and
$\annulus^-_\theta$, Proposition~\ref{prop:compactness} implies
$\moduli_0(J)$ is compact.  But since the latter is already known to
be false, this implies that $\moduli_0(J)$ contains a sequence of
curves drawing closer to the Legendrian boundary, and applying
Proposition~\ref{prop:compactness} again, a subsequence converges to a
$J$--holomorphic annulus that touches the Legendrian boundary of
$\annulus^+$ or $\annulus^-_\theta$ tangentially.  That is impossible
by Lemma~\ref{lemma:Jconvex}, and we have a contradiction.  Together
with the following remark, this completes the proof of
Theorem~\ref{thm:BishopGirouxTorsion}.

\begin{remark}\label{remark:hypersurface}
  If $(M,\xi) \subset (W,\omega)$ is a separating hypersurface of weak
  contact type, then half of $(W,\omega)$ is a weak filling of
  $(M,\xi)$ and the above argument provides a contradiction.  To
  finish the proof of the theorem, it thus remains to show that
  $(M,\xi)$ under the given assumptions can never occur as a
  \emph{nonseparating} hypersurface of weak contact type in any closed
  symplectic $4$--manifold $(W,\omega)$.  This follows from almost the
  same argument, due to the following trick introduced in
  \cite{AlbersBramhamWendl}.  If $M$ does not separate~$W$, then we
  can cut $W$ open along~$M$ to produce a connected symplectic
  cobordism $(W_0,\omega_0)$ between $(M,\xi)$ and itself, and then
  attach an infinite chain of copies of this cobordism to obtain a
  \emph{noncompact} symplectic manifold $(W_\infty,\omega_\infty)$
  with weakly contact boundary $(M,\xi)$.  Though noncompact,
  $(W_\infty,\omega_\infty)$ is \emph{geometrically bounded} in a
  certain sense, and an argument in \cite{AlbersBramhamWendl} uses the
  monotonicity lemma to show that for a natural class of adapted
  almost complex structures on~$W_\infty$, any connected moduli space
  of $J$--holomorphic curves with boundary on $\p W_\infty$ and
  uniformly bounded energy also satisfies a uniform $C^0$--bound.  In
  light of this, the above argument for the compact filling also works
  in the ``noncompact filling'' furnished by
  $(W_\infty,\omega_\infty)$, thus proving that $(M,\xi)$ cannot occur
  as a nonseparating weakly contact hypersurface.

  We will use this same trick again in the proof of
  Theorem~\ref{theorem: planarTorsion}.  In relation to
  Theorem~\ref{thm:planar}, it also implies that in any closed
  symplectic $4$--manifold, a weakly contact hypersurface that is
  planar must always be separating.  This is closely related to
  Etnyre's theorem \cite{Etnyre_planar} that planar contact manifolds
  never admit weak semifillings with disconnected boundary, which also
  can be shown using holomorphic curves, by a minor variation on the
  proof of Theorem~\ref{thm:planar}.
\end{remark}

\begin{remark}
  It should be possible to generalize the Bishop family idea still
  further by considering ``overtwisted planar surfaces'' with
  arbitrarily many boundary components (Figure~\ref{fig:OTplanar}).
  The disk or annulus would then be replaced by a $k$--holed sphere
  $\Sigma$ for some integer $k \ge 1$, with Legendrian boundary, of
  which $k-1$ of the boundary components are ``anchored'' by
  $\SS^1$--families of half-twisted annuli.  The characteristic
  foliation on~$\Sigma$ must in general have $k-2$ hyperbolic singular
  points.  One would then find Bishop families of annuli near the
  anchored boundary components, which eventually must collide with
  each other and could be glued at the hyperbolic singularities to
  produce more complicated $1$--dimensional families of rational
  holomorphic curves with multiple boundary components, leading in the
  end to a more general filling obstruction.

  One situation where such an object definitely exists is in the
  presence of planar torsion (see \secref{subsec:review}), though we
  will not pursue this approach here, as that setting lends itself
  especially well to the punctured holomorphic curve techniques
  explained in the next section.

  \begin{figure}[htbp]
    \centering
    \includegraphics[width=0.3\textwidth,
    keepaspectratio]{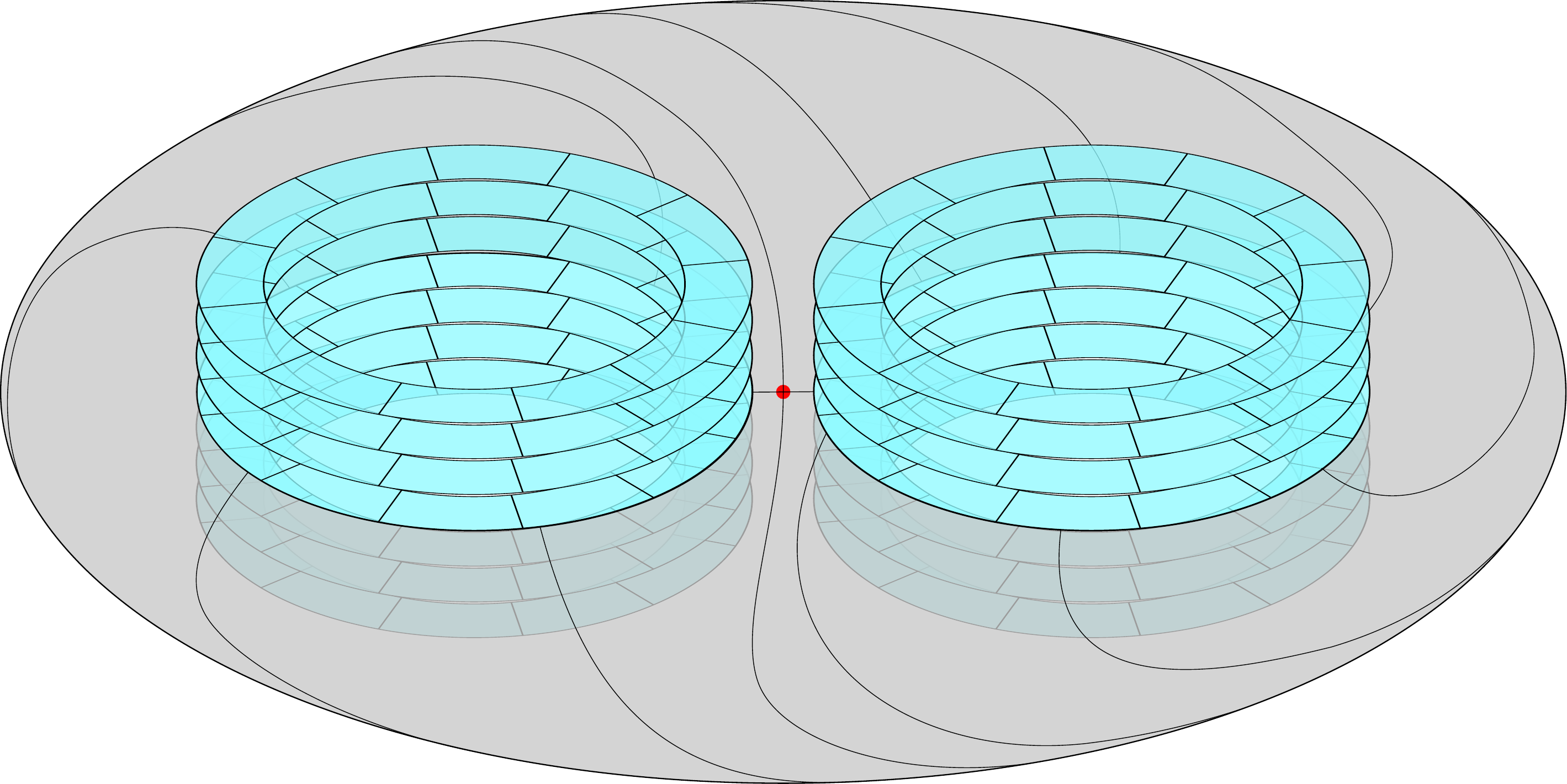}
    \caption{An overtwisted planar surface anchored at two boundary
      components.}\label{fig:OTplanar}
  \end{figure}
\end{remark}

\section{Punctured pseudoholomorphic curves and weak
  fillings}\label{sec:punctured}

We begin this section by showing that up to symplectic deformation,
every weak filling can be enlarged by symplectically attaching a
cylindrical end in which the theory of finite energy punctured
$J$--holomorphic curves is well behaved.  This fact is standard in the
case where the symplectic form is exact near the boundary: indeed,
Eliashberg \cite{EliashbergContactProperties} observed that if
$(W,\omega)$ is a weak filling of $(M,\xi)$ and $H^2_\dR(M) = 0$, then
one can always deform $\omega$ in a collar neighborhood of $\p W$ to
produce a strong filling of $(M,\xi)$, which can then be attached
smoothly to a half-symplectization of the form $\bigl([0,\infty)
\times M, d(e^t\alpha)\bigr)$.  For obvious cohomological reasons,
this is not possible whenever $[\restricted{\omega}{M}] \ne 0 \in
H^2_\dR(M)$.  The solution is to work in the more general context of
stable Hamiltonian structures, in which~$M$ carries a closed maximal
rank $2$--form that is not required to be exact.  We will recall in
\secref{sec: weak to stable} the important properties of stable
hypersurfaces and stable Hamiltonian structures, proving in particular
(Proposition~\ref{prop:cohomology}) that there exist stable
Hamiltonian structures representing every de Rham cohomology class.
We will then use this in \secref{sec: collar neighborhood weak
  boundary} to prove Theorem~\ref{theorem: stableHypersurface}, that
weak boundaries can always be deformed to stable hypersurfaces.  A
quick review of the definition and essential facts about planar
torsion will then be given in \secref{subsec:review}, leading in
\secref{subsec:proof} to the proofs of Theorems~\ref{thm:planar}
and~\ref{theorem: planarTorsion}.

\subsection{Stable hypersurfaces and stable Hamiltonian structures}
\label{sec: weak to stable}

Let us recall some important definitions.  The first originates in
\cite{HoferZehnder}.

\begin{defn}\label{def: stableHypersurface}
  Given a symplectic manifold $(W,\omega)$, a hypersurface $M$ is
  called \textbf{stable} if it is transverse to a vector field $Y$
  defined near $M$ whose flow $\Phi^t_Y$ for small $\abs{t}$ preserves
  characteristic line fields, i.e.~if $M_t := \Phi^t_Y(M)$ and $\ell_t
  \subset TM_t$ is the kernel of $\restricted{\omega}{TM_t}$, then
  $(\Phi^t_Y)_* \ell_0 = \ell_t$.
\end{defn}

As an important special case, if $(W,\omega)$ is a strong filling of
$(M,\xi)$, then $\p W$ is stable, as it is transverse to an outward
pointing Liouville vector field which dilates $\omega$ and therefore
preserves characteristic line fields.  In this case we say the
boundary of~$W$ is \textbf{convex}; if $\p W$ is instead transverse to
an \emph{inward} pointing Liouville vector field, we say it is
\textbf{concave}.

Stable hypersurfaces were initially introduced in order to study
dynamical questions, but it was later recognized that they also yield
suitable settings for the theory of punctured $J$--holomorphic curves.
In this context, the following more intrinsic notion was introduced in
\cite{BourgeoisCompactness}.

\begin{defn}
  A \textbf{stable Hamiltonian structure} on an oriented $3$--manifold
  $M$ is a pair
  \begin{equation*}
    \mathcal{H} = (\lambda,\Omega)
  \end{equation*}
  consisting of a $1$--form $\lambda$ and $2$--form $\Omega$ such that
  \begin{enumerate}
  \item $d\Omega = 0$,
  \item $\lambda \wedge \Omega > 0$,
  \item $\ker\Omega \subset \ker(d\lambda)$.
  \end{enumerate}
\end{defn}

The second condition implies that $\Omega$ has maximal rank and is
nondegenerate on the distribution
\begin{equation*}
  \xi := \ker\lambda \;,
\end{equation*}
so that $(\xi,\Omega)$ is a symplectic vector bundle.  There is then a
positively transverse vector field~$X$ uniquely determined by the
conditions
\begin{equation*}
  \Omega(X,\cdot) = 0, \qquad \lambda(X) = 1\;,
\end{equation*}
and the flow of~$X$ preserves both~$\xi$ and $\Omega$.  Conversely, a
triple $(X,\xi,\Omega)$ satisfying these properties uniquely
determines $(\lambda,\Omega)$, and thus can be taken as an alternative
definition of a stable Hamiltonian structure.

If $M \subset (W,\omega)$ is a stable hypersurface and $Y$ is the
transverse vector field of Definition~\ref{def: stableHypersurface},
then we can orient~$M$ in accordance with the coorientation determined
by~$Y$ and assign to it a stable Hamiltonian structure
$(\lambda,\Omega)$ defined as follows:
\begin{equation}
  \label{eqn:SHS}
  \lambda := \restricted{\bigl(\iota_Y \omega\bigr)}{TM},
  \qquad\text{ and }\qquad \Omega := \restricted{\omega}{TM} \;.
\end{equation}
Now $\Omega$ is obviously closed and nondegenerate on $\xi :=
\ker\lambda$, and the stability condition implies that for any vector
$X$ in the characteristic line field on $M$,
\begin{equation*}
  \restricted{\bigl(\lie{Y} \omega\bigr)(X,\cdot)}{\xi} = 0 \;.
\end{equation*}
From this it is an easy exercise to verify that the pair
$(\lambda,\Omega)$ satisfies the conditions of a stable Hamiltonian
structure.

Given a $3$--manifold $M$ with stable Hamiltonian structure
$(\lambda,\Omega)$, the $2$--form
\begin{equation}\label{eqn:SHSomega}
  \omega := \Omega + d(t\lambda)
\end{equation}
on $(-\epsilon,\epsilon) \times M$ is symplectic for sufficiently
small $\epsilon > 0$.  Conversely, and more generally (cf.\ Lemma~2.3
in \cite{CieliebakMohnkeCompactness}):

\begin{lemma}\label{normalform for weak collars}
  Let $(W,\omega)$ be a symplectic $4$--manifold whose interior
  contains a closed oriented hypersurface $M \subset W$, and let
  $\lambda$ be a nonvanishing $1$--form on~$M$ that defines a
  cooriented (and thus also oriented) $2$--plane distribution $\xi$.
  Assume $\restricted{\omega}{\xi} > 0$.  Then writing $\Omega =
  \restricted{\omega}{TM}$, there exists an embedding
  \begin{equation*}
    \Phi:\, (-\epsilon,\epsilon) \times M \hookrightarrow W
  \end{equation*}
  for sufficiently small $\epsilon > 0$, such that $\Phi(0,\cdot)$ is
  the inclusion and
  \begin{equation*}
    \Phi^*\omega = \Omega + d(t\lambda) \;.
  \end{equation*}
\end{lemma}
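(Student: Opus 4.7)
The plan is a Moser-type normal form argument. First, I would construct an initial collar embedding $\Phi_0 : (-\epsilon,\epsilon) \times M \hookrightarrow W$ whose pullback of $\omega$ agrees with the target model $\omega_{\mathrm{mod}} := \Omega + d(t\lambda)$ along $\{0\}\times M$ on all of $T((-\epsilon,\epsilon)\times M)|_{t=0}$; then I would interpolate between $\Phi_0^*\omega$ and $\omega_{\mathrm{mod}}$ by Moser's trick, correcting $\Phi_0$ by a diffeomorphism that is the identity on $\{0\}\times M$.

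For the first step, the idea is to find a vector field $Y$ along $M$ that is transverse to $M$ in $W$ and satisfies $\iota_Y\omega|_{TM} = \lambda$, so that the flow of an extension of $Y$ produces $\Phi_0$. To see $Y$ exists, I would study the bundle map $\Psi : TW|_M \to T^*M$, $v \mapsto \iota_v \omega|_{TM}$. Nondegeneracy of $\omega$ identifies $\ker\Psi$ with the $\omega$-orthogonal complement of $TM$, which coincides with the characteristic line field $\ell := \ker(\omega|_{TM}) \subset TM$, so $\Psi$ is fiberwise surjective and the preimage $\Psi^{-1}(\lambda)$ is an affine line bundle over $M$ modeled on $\ell$. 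The hypothesis $\omega|_\xi > 0$ forces $\omega|_{TM}$ to have rank two and $\ell$ to be transverse to $\xi = \ker\lambda$, so $\lambda|_\ell$ is nowhere zero; since $\iota_v\omega|_{TM}$ vanishes on $\ell$ whenever $v \in TM$, this forces $\Psi^{-1}(\lambda)$ to lie entirely off of $TM$, i.e.\ to consist of vectors transverse to $M$. A smooth section $Y$ then yields $\Phi_0$, and a direct computation using $\iota_Y\omega|_{TM} = \lambda$ shows that $\Phi_0^*\omega$ and $\omega_{\mathrm{mod}}$ agree on $T((-\epsilon,\epsilon)\times M)|_{t=0}$.

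For the Moser step, set $\omega_s := (1-s)\omega_{\mathrm{mod}} + s\Phi_0^*\omega$ for $s \in [0,1]$. Each $\omega_s$ is closed, and after shrinking $\epsilon$ each is nondegenerate in view of the $t=0$ match. The difference $\eta := \Phi_0^*\omega - \omega_{\mathrm{mod}}$ is a closed $2$-form vanishing along $\{0\}\times M$. Writing $\eta = dt \wedge \alpha_t + \gamma_t$ with $\gamma_0 = 0$ and exploiting $d\eta = 0$ (which yields $\partial_t \gamma_t = d_M \alpha_t$), I would set $\beta := \int_0^t \alpha_s\, ds$ to obtain a primitive $d\beta = \eta$ with $\beta|_{t=0} = 0$. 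The time-dependent vector field $X_s$ defined by $\iota_{X_s}\omega_s = -\beta$ then vanishes along $\{0\}\times M$, and compactness of $M$ ensures its flow $\Psi_s$ is defined on a possibly smaller collar for all $s \in [0,1]$ while fixing $\{0\}\times M$ pointwise. The standard computation gives $\Psi_1^*(\Phi_0^*\omega) = \omega_{\mathrm{mod}}$, so $\Phi := \Phi_0 \circ \Psi_1$ is the desired embedding.

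The one subtle geometric point is the transversality of $\Psi^{-1}(\lambda)$ to $M$, which is the genuine content of the hypothesis $\omega|_\xi > 0$ (translating into $\lambda$ not vanishing on the characteristic line field). Once the initial $\Phi_0$ is arranged so that the two symplectic forms agree at $t=0$, the remaining Moser interpolation is completely routine, and the proof reduces to the standard template.
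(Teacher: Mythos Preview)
Your proof is correct and follows essentially the same strategy as the paper: construct a vector field $Y$ transverse to $M$ with $\iota_Y\omega|_{TM} = \lambda$, use its flow as an initial collar, and then apply a Moser deformation. The only cosmetic difference is that the paper builds $Y$ explicitly by first normalizing the characteristic direction $X_\omega$ via $\lambda(X_\omega)=1$ and then picking $Y$ in the $\omega$--complement of $\xi$ with $\omega(Y,X_\omega)=1$, whereas you reach the same conclusion by the bundle-theoretic surjectivity argument for $v \mapsto \iota_v\omega|_{TM}$; the paper also simply invokes ``the usual Moser deformation argument'' where you spell out the primitive and the isotopy.
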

\begin{proof}
  Since $\omega$ is nondegenerate on~$\xi$, there is a unique vector
  field $X_\omega$ on~$M$ determined by the conditions
  $\omega(X_\omega,\cdot) \equiv 0$ and $\lambda(X_\omega) \equiv 1$.
  Choose a smooth section $Y$ of $\restricted{TW}{M}$ such that $Y$
  also lies in the $\omega$--complement of $\xi$ and $\omega(Y,
  X_\omega) \equiv 1$.  Extend this arbitrarily as a nowhere zero
  vector field on some neighborhood of $M$.  Then $Y$ is transverse
  to~$M$, and $\restricted{(\iota_Y\omega)}{TM} = \lambda$.

  Using the flow $\Phi_Y^t$ of~$Y$, we can define for sufficiently
  small $\epsilon > 0$ an embedding
  \begin{equation*}
    \Phi:\, (-\epsilon,\epsilon) \times M \to W, \,
    (t,p) \mapsto \Phi_Y^t(p)\;,
  \end{equation*}
  and compare $\omega_0 := \Phi^*\omega$ with the model $\omega_1 :=
  d(t\,\lambda) + \Omega$ on $(-\epsilon,\epsilon) \times M$,
  shrinking~$\epsilon$ if necessary so that $\omega_1$ is symplectic.
  Then $\omega_1$ and $\omega_0$ are symplectic forms that match
  identically along $\{0\} \times M$, and the usual Moser deformation
  argument provides an isotopy between them on a neighborhood of
  $\{0\} \times M$.
\end{proof}

This result has an obvious analog for the case $\p W = M$.  Given
this, if $(W,\omega)$ is any symplectic manifold with stable boundary
$\p W = M$ and $\mathcal{H} = (\lambda, \Omega)$ is an induced stable
Hamiltonian structure, then one can glue a cylindrical end $[0,\infty)
\times M$ symplectically to the boundary as follows.  Choose $\epsilon
> 0$ sufficiently small so that
\begin{equation}\label{eqn:makeItSymplectic}
  \left(\Omega +
    t\,d\lambda \right)|_\xi > 0 \quad \text{ for all $\abs{t} \le
    \epsilon$},
\end{equation}
and let $\mathcal{T}$ denote the set of smooth functions
\begin{equation*}
  \phi:\, [0,\infty) \to [0,\epsilon)
\end{equation*}
which satisfy $\phi(t) = t$ for $t$ near~$0$ and $\phi' > 0$
everywhere.  Then if a neighborhood of $\p W$ is identified with
$(-\epsilon,0] \times M$ as above, we can define the completed
manifold
\begin{equation*}
  W^\infty := W \cup \bigl( [0,\infty) \times M \bigr)
\end{equation*}
by the obvious gluing, and assign to it a $2$--form
\begin{equation}\label{eqn:completion}
  \omega_\phi := \begin{cases}
    \omega & \text{ in $W$},\\
    \Omega + d(\phi\lambda) & \text{ in $[0,\infty) \times M$}
  \end{cases}
\end{equation}
which is symplectic for any $\phi \in \mathcal{T}$ due to
\eqref{eqn:makeItSymplectic}.  There is also a natural class
$\complex(\omega, \mathcal{H})$ of almost complex structures on
$W^\infty$, where we define $J$ to be in $\complex(\omega,
\mathcal{H})$ if
\begin{enumerate}
\item $J$ is compatible with $\omega$ on~$W$,
\item $J$ is $\R$--invariant on $[0,\infty) \times M$, maps $\p_t$ to
  $X$ and restricts to a complex structure on $\xi$ compatible with
  $\restricted{\Omega}{\xi}$.
\end{enumerate}
Then any $J \in \complex(\omega, \mathcal{H})$ is compatible with any
$\omega_\phi$ for $\phi \in \mathcal{T}$.  Observe that whenever
$\lambda$ is a contact form, the conditions characterizing $J \in
\complex(\omega, \mathcal{H})$ on the cylindrical end depend on
$\lambda$, but \emph{not on $\Omega$}, as $\restricted{J}{\xi}$ is
compatible with $\restricted{\Omega}{\xi}$ if and only if it is
compatible with $\restricted{d\lambda}{\xi}$.  In this case we simply
say that $J$ is \textbf{compatible} with~$\lambda$ on the cylindrical
end.

For $J \in \complex(\omega, \mathcal{H})$, we define the
\textbf{energy} of a $J$--holomorphic curve $u:\, \dot{\Sigma} \to
W^\infty$ by
\begin{equation*}
  E(u) = \sup_{\phi \in \mathcal{T}} \int u^*\omega_\phi \;.
\end{equation*}
Then $E(u) \ge 0$, with equality if and only if $u$ is constant.  It
is straightforward to show that this notion of energy is equivalent to
the one defined in \cite{BourgeoisCompactness}, in the sense that
uniform bounds on either imply uniform bounds on the other.  Thus if
$\dot{\Sigma}$ is a punctured Riemann surface, finite energy
$J$--holomorphic curves have \emph{asymptotically cylindrical}
behavior at nonremovable punctures, i.e.~they approach closed orbits
of the vector field~$X$ at $\{+\infty\} \times M$.

The most popular example of a stable Hamiltonian structure is
$(\lambda,\Omega) = (\alpha,d\alpha)$, where $\alpha$ is a contact
form; this is the case that arises naturally on the boundary of a
strong filling.  One can then obtain other stable Hamiltonian
structures in the form
\begin{equation}\label{eqn:SHScontact}
  (\lambda,\Omega) = (\alpha,F\,d\alpha) \;,
\end{equation}
for any function $F:\, M \to (0,\infty)$ such that $dF \wedge d\alpha
= 0$.  In fact, since $\ker(d\alpha)$ is a vector bundle of rank~$1$
whenever $\xi = \ker\alpha$ is contact, \emph{every} stable
Hamiltonian structure in this case has the form of
\eqref{eqn:SHScontact}, and the vector field $X$ is the usual Reeb
vector field $X_\alpha$.  In this context it will be useful to know
that one can choose $F$ so that $F\,d\alpha$ may lie in any desired
cohomology class.  In order to formulate a sufficiently general
version of this statement, we will need the following definition.

\begin{defn}\label{defn:symmetric}
  Suppose $K \subset (M,\xi)$ is a transverse knot.  We will say that
  a contact form $\alpha$ for~$\xi$ is in \textbf{standard symmetric
    form near~$K$} if a neighborhood $\nbhd(K) \subset M$ of $K$ can
  be identified with a solid torus $\SS^1 \times \Disk \ni (\theta ;
  \rho,\phi)$, thus defining positively oriented cylindrical
  coordinates in which $K = \{ \rho = 0 \}$ and $\alpha$ takes the
  form
  \begin{equation*}
    \alpha = f(\rho)\, d\theta + g(\rho)\, d\phi
  \end{equation*}
  for some smooth functions $f , g :\, [0,1] \to \R$ with $f(0) > 0$
  and $g(0) = 0$.
\end{defn}

Recall that by the contact neighborhood theorem, there always exists a
contact form in standard symmetric form near any knot transverse to
the contact structure.  The condition that $\alpha$ is a positive
contact form in these coordinates then amounts to the condition
$f(\rho) g'(\rho) - f'(\rho) g(\rho) > 0$ for $\rho > 0$, and $g''(0)
> 0$.  An oriented knot is called \textbf{positively transverse} if
its orientation matches the coorientation of the contact structure; in
this case its orientation must always match the orientation of the
$\theta$--coordinate in the above definition.

\begin{remark}\label{remark:nondegenerate}
  Recall that a contact form $\alpha$ is called \textbf{nondegenerate}
  whenever its Reeb vector field $X_\alpha$ admits only nondegenerate
  periodic orbits.  The transverse knot $K \subset M$ is always the
  image of a periodic orbit if $\alpha$ is in standard symmetric form
  near~$K$.  Then after multiplying $\alpha$ by a smooth function that
  depends only on~$\rho$, one can always arrange without loss of
  generality that $K$ and all its multiple covers are
  \emph{nondegenerate} orbits and are the only periodic orbits in a
  small neighborhood of~$K$.  In this way we can always find
  nondegenerate contact forms that are in standard symmetric form
  near~$K$.
\end{remark}

\begin{proposition}\label{prop:cohomology}
  Suppose $(M,\xi)$ is a contact $3$--manifold,
  \begin{equation*}
    K = K_1 \cup \dotsb \cup K_n \subset  M
  \end{equation*}
  is an oriented positively transverse link, $N_K \subset M$ is a
  neighborhood of $K$ and $\alpha$ is a contact form for~$\xi$ that is
  in standard symmetric form near~$K$.  Then for any set of positive
  real numbers $c_1,\dotsc,c_n > 0$, there exists a smooth function
  $F:\, M \to (0,\infty)$ such that the following conditions are
  satisfied:
  \begin{enumerate}
  \item $(\alpha, F\,d\alpha)$ is a stable Hamiltonian structure.
  \item $F \equiv 1$ on $M\setminus N_K$ and $F$ is a positive
    constant on a smaller neighborhood of~$K$.
  \item $[ F\,d\alpha] \in H^2_\dR(M)$ is Poincaré dual to $c_1\,
    [K_1] + \dotsb + c_n\, [K_n] \in H_1(M ; \R)$.
  \end{enumerate}
\end{proposition}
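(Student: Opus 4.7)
My plan is to build $F$ as a radial function in each standard symmetric neighborhood of the components of~$K$, and then solve a linear equation for the free parameters so that the periods are correct. The starting observation is that for a contact form~$\alpha$, the stable Hamiltonian structure conditions on $(\alpha, F\,d\alpha)$ reduce to $F > 0$ and $dF\wedge d\alpha = 0$: positivity $\alpha\wedge F\,d\alpha > 0$ gives $F>0$, closedness of $F\,d\alpha$ is exactly $dF\wedge d\alpha = 0$, and the inclusion $\ker(F\,d\alpha)\subseteq \ker d\alpha$ is automatic. Since $\ker d\alpha$ is spanned by the Reeb vector field~$X_\alpha$, the condition $dF\wedge d\alpha = 0$ means $F$ is invariant under the Reeb flow. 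In the coordinates of Definition~\ref{defn:symmetric} one has $d\alpha = f'(\rho)\,d\rho\wedge d\theta + g'(\rho)\,d\rho\wedge d\phi$, so any function depending only on~$\rho$ satisfies this automatically.

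For each~$i$ I would pick a solid torus $V_i \cong \SS^1\times\Disk$ of outer radius~$R_i$, contained in $N_K$ and in the region where $\alpha$ has standard symmetric form, shrunk small enough that $g_i'(\rho) > 0$ on $(0,R_i]$; this is possible because $g_i(0) = g_i'(0) = 0$ (the latter being necessary for $\alpha$ to be smooth at~$K_i$) while $g_i''(0) > 0$. Fixing a smooth cutoff $\chi\colon[0,R_i]\to[0,1]$ equal to~$1$ near $\rho=0$ and vanishing near $\rho=R_i$, define
\begin{equation*}
 F(\rho) := 1 + t_i\,\chi(\rho) \quad\text{on $V_i$,}\qquad F\equiv 1 \quad\text{outside $\bigsqcup_i V_i$,}
\end{equation*}
for parameters $t_i > 0$ still to be chosen. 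The resulting~$F$ is smooth, everywhere positive, equal to the constant $1+t_i$ near each~$K_i$, identically~$1$ outside~$N_K$, and Reeb invariant, so $(\alpha, F\,d\alpha)$ is a stable Hamiltonian structure.

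To fix the cohomology class, note that $\alpha$ is a global $1$-form, so $[d\alpha]=0\in H^2_\dR(M)$ and $[F\,d\alpha] = [(F-1)\,d\alpha]$, where the latter representative is compactly supported in $\bigsqcup_i V_i$. Poincar\'e--Lefschetz duality identifies $H^2_c(V_i)\cong \R$ via integration over a meridian disk~$D_i$, and the extension-by-zero map $H^2_c(V_i)\to H^2_\dR(M)$ sends the generator to $\PD([K_i])$. Hence $[F\,d\alpha] = \sum_i c_i\,\PD([K_i])$ is equivalent to $\int_{D_i}(F-1)\,d\alpha = c_i$ for each~$i$. Computing directly on $D_i = \{\theta=\mathrm{const}\}$, where $d\alpha|_{D_i} = g_i'(\rho)\,d\rho\wedge d\phi$, yields
\begin{equation*}
 \int_{D_i}(F-1)\,d\alpha \;=\; 2\pi\, t_i \int_0^{R_i}\chi(\rho)\,g_i'(\rho)\,d\rho,
\end{equation*}
with the right-hand integral a strictly positive constant (since $\chi\ge 0$ and $g_i'>0$, both strict on a common open subset of $(0,R_i)$). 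Solving for~$t_i$ then gives a unique positive value realizing any prescribed $c_i > 0$. The only real subtlety is the initial shrinking of the neighborhoods so that $g_i'>0$; once that is arranged the period is linear in~$t_i$, and everything else follows from the explicit formula.
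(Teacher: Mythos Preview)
Your proof is correct and follows essentially the same approach as the paper: build $F = 1 + (\text{radial bump})$ in each standard symmetric tube and adjust the amplitude so that the integral over a meridian disk equals~$c_i$. The only presentational differences are that the paper verifies Poincar\'e duality by integrating against an arbitrary closed surface~$S$ (deformed so that $S\cap \nbhd(K_i)$ is a union of meridian disks) rather than invoking the map $H^2_c(V_i)\to H^2_\dR(M)$, and it does not explicitly shrink the tubes to make $g_i'>0$ --- your observation that this can always be arranged near~$K_i$ is a small but useful clarification.
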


\begin{remark}
  Since every oriented link has a $C^0$--small perturbation that makes
  it positively transverse (see for example \cite{Geiges_book}),
  \emph{every} homology class in $H_1(M ; \R)$ can be represented by a
  finite linear combination
    \begin{equation*}
      c_1\, [K_1] + \dotsb + c_n\, [K_n]
    \end{equation*}
    where $c_1,\dotsc,c_n > 0$ and $K_1 \cup \dotsb \cup K_n$ is a
    positively transverse link.
\end{remark}

\begin{remark}
  A few days after the first version of this paper was made public,
  Cieliebak and Volkov unveiled a comprehensive study of stable
  Hamiltonian structures \cite{CieliebakVolkovSHS} which includes an
  existence result closely related to
  Proposition~\ref{prop:cohomology}, and valid also in higher
  dimensions.
\end{remark}

\begin{proof}[Proof of Proposition~\ref{prop:cohomology}]
  We will have $[F\,d\alpha] = \PD\bigl(c_1[K_1] + \dotsb +
  c_n[K_n]\bigr)$ if and only if
  \begin{equation*}
    \int_S F\,d\alpha = \sum_{i=1}^n c_i\, [K_i] \bullet [S]
  \end{equation*}
  for every closed oriented surface $S \subset M$.  Then a
  function~$F$ with the desired properties can be constructed as
  follows.  By assumption, each component $K_i \subset K$ comes with a
  tubular neighborhood $\nbhd(K_i) \subset N_K$ that is identified
  with $\SS^1 \times \Disk \ni (\theta ; \rho,\phi)$, on which
  $\alpha$ has the form
  \begin{equation*}
    \alpha = f_i(\rho)\, d\theta + g_i(\rho) \, d\phi
  \end{equation*}
  for some smooth functions $f_i, g_i :\, [0,1] \to \R$ with $f_i(0) >
  0$ and $g_i(0) = 0$.  Denote the union of all these coordinate
  neighborhoods by $\nbhd(K)$.  Now choose $h:\, M \to (0,\infty)$ to
  be any smooth function with the following properties:
  \begin{enumerate}
  \item The support of~$h$ is in the interior of $\nbhd(K)$.
  \item On each neighborhood $\nbhd(K_i)$, $h$ depends only on the
    $\rho$--coordinate, and restricts to a function $h_i(\rho)$ that
    is constant for $\rho$ near~$0$ and satisfies
    \begin{equation*}
      2\pi \int_0^1 h_i(\rho)\, g_i'(\rho)\,d\rho = c_i \;.
    \end{equation*}
  \end{enumerate}
  Now for any closed oriented surface $S \subset M$, we can deform $S$
  so that its intersection with $\nbhd(K)$ is a finite union of disks
  of the form $\{\theta_0\} \times \Disk \subset \SS^1 \times \Disk$
  for each $x = (\theta_0,0,0) \in K_i \cap S$, each oriented
  according to the intersection index $\sigma(x) = \pm 1$.  Thus if we
  set $F = 1 + h$, then
  \begin{equation*}
    \begin{split}
      \int_S F\,d\alpha &=  \int_S d\alpha +  \int_S h\,d\alpha \\
      &= \sum_{i=1}^n \sum_{x \in K_i \cap S} \sigma(x)
      \int_{\Disk} h_i(\rho)\, g_i'(\rho)\,d\rho \wedge d\phi \\
      &= \sum_{i=1}^n c_i\, [K_i] \bullet [S] \;,
    \end{split}
  \end{equation*}
  as desired.
\end{proof}

\subsection{Collar neighborhoods of weak boundaries}\label{sec: collar
  neighborhood weak boundary}

The application of punctured holomorphic curve methods to weak
fillings is made possible by the following result.

\begin{thm}\label{theorem: stableHypersurface}
  Suppose $(W,\omega)$ is a symplectic $4$--manifold with weakly
  contact boundary $(M,\xi)$, $K = K_1 \cup \dotsb \cup K_n \subset M$
  is a positively transverse link with positive numbers
  $c_1,\dotsc,c_n > 0$ such that the homology class
  \begin{equation*}
    c_1\,[K_1] + \dotsb + c_n\, [K_n] \in H_1(M; \R)
  \end{equation*}
  is Poincaré dual to $[\restricted{\omega}{TM}] \in H^2_\dR(M)$,
  $\nbhd(K)$ is a tubular neighborhood of~$K$, $\lambda$ is a contact
  form for $\xi$ that is in standard symmetric form near~$K$
  (cf.~Definition~\ref{defn:symmetric}), and $\nbhd(M) \subset W$ is a
  collar neighborhood of~$\p W$.  Then there exists a symplectic form
  $\widehat{\omega}$ on~$W$ such that
  \begin{enumerate}
  \item $\widehat{\omega} = \omega$ on $W \setminus \nbhd(M)$,
  \item $M$ is a stable hypersurface in $(W,\widehat{\omega})$, with
    an induced stable Hamiltonian structure of the form $(C\,\lambda,
    F\,d\lambda)$ for some constant $C > 0$ and smooth function $F:\,
    M \to (0,\infty)$ that is constant near~$K$ and outside of
    $\nbhd(K)$.
  \end{enumerate}
\end{thm}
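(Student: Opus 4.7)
The plan is to construct $\widehat{\omega}$ as a compactly supported deformation of $\omega$ inside a collar of $\p W$, engineered so that near the boundary it takes the stable Hamiltonian model $F\,d\lambda + d(t\lambda)$.

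First, invoke Proposition~\ref{prop:cohomology} to obtain a positive function $F : M \to (0,\infty)$---constant near $K$ and outside $\nbhd(K)$---such that $(\lambda, F\,d\lambda)$ is a stable Hamiltonian structure and $[F\,d\lambda] = \PD\bigl(\sum_i c_i [K_i]\bigr) = [\restricted{\omega}{TM}]$ in $H^2_\dR(M)$. The cohomological equality yields a $1$-form $\sigma$ on $M$ with $\restricted{\omega}{TM} = F\,d\lambda + d\sigma$. Then apply Lemma~\ref{normalform for weak collars} to the contact form $\lambda$ to identify a collar $(-\epsilon, 0]\times M \hookrightarrow \nbhd(M)$ on which
\begin{equation*}
  \omega = F\,d\lambda + d\sigma + d(t\lambda).
\end{equation*}

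For the deformation, pick a smooth cutoff $\chi : [-\delta, 0] \to [0, 1]$ with $\chi \equiv 0$ near $t = -\delta$ and $\chi \equiv 1$ near $t = 0$, where $0 < \delta \leq \epsilon$ will be fixed later. Define
\begin{equation*}
  \widehat{\omega} := \omega - d\bigl(\chi(t)\,\sigma\bigr) = (F + t)\,d\lambda + (1-\chi)\,d\sigma + dt \wedge (\lambda - \chi'\,\sigma)
\end{equation*}
on the subcollar, and extend by $\omega$ elsewhere. This is smooth since $\chi$ vanishes to all orders near $t = -\delta$, and closed by construction. On a neighborhood of $\{0\}\times M$ where $\chi \equiv 1$ and $\chi' \equiv 0$, it reduces to $\widehat{\omega} = F\,d\lambda + d(t\lambda)$, so $\restricted{\widehat{\omega}}{TM}|_{t=0} = F\,d\lambda$ and $\iota_{\p_t}\widehat{\omega}|_{t=0} = \lambda$. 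Since $\p_t$ is transverse to $M$ and its flow preserves the characteristic line field (which coincides with the Reeb direction of $\lambda$ at each slice near $t = 0$), $M$ is a stable hypersurface in $(W,\widehat{\omega})$ with induced SHS $(\lambda, F\,d\lambda)$, establishing the theorem with $C = 1$.

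The main technical obstacle is verifying $\widehat{\omega}^2 > 0$ throughout the subcollar. A direct computation yields
\begin{equation*}
  \widehat{\omega}^2 = 2\,dt\wedge\Bigl\{\bigl[F\,d\lambda + (1-\chi)\,d\sigma\bigr]\wedge\lambda + t\,d\lambda\wedge\lambda - \chi'\bigl[(F + t)\,d\lambda + (1-\chi)\,d\sigma\bigr]\wedge\sigma\Bigr\}.
\end{equation*}
The $2$-form $F\,d\lambda + (1-\chi)\,d\sigma = \chi F\,d\lambda + (1-\chi)\,\restricted{\omega}{TM}$ is a convex combination of two $2$-forms positive on $\xi$, hence positive on $\xi$ for all $\chi \in [0, 1]$; wedging with $\lambda$ gives a $3$-form on $M$ bounded below by a uniform positive constant. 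The $t\,d\lambda\wedge\lambda$ correction is pointwise bounded by $\delta$ times $\|d\lambda\wedge\lambda\|_\infty$ and can be absorbed once $\delta$ is small. The delicate $\chi'$-term is bounded by $\|\chi'\|_\infty$ times a constant depending on $F$, $\sigma$, and $\lambda$. The required balance---choosing $\delta$ within the range allowed by Lemma~\ref{normalform for weak collars} so that a cutoff $\chi$ with sufficiently small slope fits inside $[-\delta, 0]$, while keeping $\delta$ small enough to suppress the $t\,d\lambda\wedge\lambda$ correction---is the crux of the argument, and once achieved the theorem follows directly.
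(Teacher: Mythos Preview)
Your approach has a genuine gap in the final ``balance'' step. The cutoff $\chi$ rises from $0$ to $1$ on an interval of length $\delta$, so necessarily $\|\chi'\|_\infty \ge 1/\delta$; the $\chi'$-term in $\widehat{\omega}^2$ is therefore bounded below by a constant times $1/\delta$, not something you can make small by shrinking $\delta$. Meanwhile the $t\,d\lambda\wedge\lambda$ correction forces $\delta$ to be small, and Lemma~\ref{normalform for weak collars} only supplies a collar of some fixed width~$\epsilon$ determined by the geometry, so you cannot take $\delta$ large either. There is no reason the constants arising from $F$, $\sigma$, $\lambda$ should conspire to make both constraints simultaneously satisfiable; the ``crux'' you identify is in fact an obstruction, not a delicate balance.

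The paper's remedy is to introduce an extra degree of freedom: rather than keeping $d(t\lambda)$, it uses $d(\varphi\,\lambda)$ for a function $\varphi$ with $\p_t\varphi$ large wherever the interpolation happens (this is Lemma~\ref{lemma:interpolation}). The dominant positive term then acquires a factor $\p_t\varphi$ while the bad cutoff term does not, so one can always win by inflating $\p_t\varphi$. This is exactly why the statement of the theorem allows the induced stable Hamiltonian structure to be $(C\lambda, F\,d\lambda)$ for some possibly large constant $C > 0$ rather than $(\lambda, F\,d\lambda)$: the constant $C$ absorbs the large slope of $\varphi$ at the boundary. Your ansatz $\widehat{\omega} = \omega - d(\chi\sigma)$ forces $C = 1$, and that is precisely the constraint which makes the argument fail.
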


In light of Proposition~\ref{prop:cohomology}, the result will be an
easy consequence of the lemmas proved below, which construct various
types of symplectic forms on collar neighborhoods, compatible with
given distributions on the boundary.  For later applications
(particularly in \secref{sec:handles}), it will be convenient to
assume that the distribution $\xi = \ker\lambda$ is not necessarily
contact; we shall instead usually assume it is a \emph{confoliation},
which means
\begin{equation*}
  \lambda \wedge d\lambda \ge 0 \;.
\end{equation*}
Observe that if $\Omega$ is the restriction of a symplectic form
$\omega$ on $(-\epsilon,0] \times M$ to the boundary, and $\lambda$ is
a nonvanishing $1$--form on~$M$ with $\xi = \ker\lambda$, then
$\restricted{\omega}{\xi} > 0$ if and only if
\begin{equation*}
  \lambda \wedge \Omega > 0 \;.
\end{equation*}
Conversely, whenever this inequality is satisfied for a $1$--form
$\lambda$ and $2$--form $\Omega$ on~$M$, one can define a symplectic
form on $(-\epsilon,0] \times M$ for sufficiently small $\epsilon > 0$
by the formula
\begin{equation*}
  d(t\,\lambda) + \Omega \;,
\end{equation*}
where $t$ denotes the coordinate on the interval $(-\epsilon,0]$.
Lemma~\ref{normalform for weak collars} shows that $\omega$ can always
be assumed to be of this form in the right choice of coordinates.  The
following lemma then provides a symplectic interpolation between any
two cohomologous symplectic structures of this form for a fixed
confoliation~$\xi$, as long as we are willing to rescale the
$1$--form~$\lambda$.

\begin{lemma}\label{lemma:interpolation}
  Suppose $M$ is a closed oriented $3$--manifold, and fix the
  following data:
  \begin{itemize}
  \item $\uU, \uU' \subset M$ are open subsets with $\overline{\uU}
    \subset \uU'$,
  \item $\xi \subset TM$ is a cooriented confoliation, defined as the
    kernel of a nonvanishing $1$--form $\lambda$ such that $\lambda
    \wedge d\lambda \ge 0$,
  \item $\Omega_0$ and $\Omega_1$ are closed, cohomologous $2$--forms
    that are both positive on~$\xi$ and satisfy
    \begin{equation*}
      \Omega_1 = \Omega_0 + d\eta
    \end{equation*}
    for some $1$--form $\eta$ with compact support in~$\uU$.
  \end{itemize}
  Then for any $\epsilon > 0$ sufficiently small, $[-\epsilon,0]
  \times M$ admits a symplectic form $\omega$ which satisfies
  $\restricted{\omega}{\xi} > 0$ on $\{0\} \times M$ and the following
  additional properties:
  \begin{enumerate}
  \item $\omega = d(t\lambda) + \Omega_0$ in a neighborhood of
    $\{-\epsilon\} \times M$ and outside of $[-\epsilon,0] \times
    \uU'$,
  \item $\omega = d(\varphi\,\lambda) + \Omega_1$ in a neighborhood of
    $\{0\} \times M$, where $\varphi :\, [-\epsilon,0] \times M \to
    [-\epsilon,\infty)$ is a smooth function that depends only on~$t$
    in $[-\epsilon,0] \times \uU$ and satisfies $\p_t\varphi > 0$
    everywhere.
  \end{enumerate}
\end{lemma}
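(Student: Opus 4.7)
The plan is to build $\omega$ explicitly by the ansatz
\[
  \omega = d\bigl(\varphi(t,p)\,\lambda + \chi(t)\,\eta\bigr) + \Omega_0,
\]
where $\chi:[-\epsilon,0]\to[0,1]$ is a smooth cutoff with $\chi\equiv 0$ near $-\epsilon$, $\chi\equiv 1$ near $0$, and $\chi'\ge 0$; $\rho:M\to[0,1]$ is a bump supported in $\uU'$ with $\rho\equiv 1$ on $\overline{\uU}$; and $\varphi(t,p) := t + N\rho(p)\chi(t)$ for a constant $N>0$ to be specified later. All the structural requirements apart from symplecticity follow by inspection: outside $[-\epsilon,0]\times\uU'$ both $\rho$ and $\eta$ vanish, so $\omega = d(t\lambda)+\Omega_0$; near $t=-\epsilon$ all of $\chi$ and $\chi'$ vanish, giving the same; near $t=0$ we have $\chi\equiv 1$ and $\chi'\equiv 0$, so $\omega = d(\varphi\lambda)+\Omega_0+d\eta = d(\varphi\lambda)+\Omega_1$; $\p_t\varphi = 1+N\rho\chi'\ge 1>0$; and on $\uU$ (where $\rho\equiv 1$), $\varphi = t+N\chi(t)$ depends only on~$t$.

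The technical heart of the argument is to show $\omega\wedge\omega > 0$ on the cylinder. Writing $\omega = dt\wedge\alpha+\beta$ with
\[
  \alpha = (1+N\rho\chi')\lambda+\chi'\eta, \qquad \beta = N\chi\,d\rho\wedge\lambda+\varphi\,d\lambda+\chi\,d\eta+\Omega_0,
\]
one has $\omega^2 = 2\,dt\wedge(\alpha\wedge\beta)$, and must check that $\alpha\wedge\beta$ is a positive $3$--form on $M$ for each $t\in[-\epsilon,0]$. The crucial structural observation is that $\Omega_\chi := \Omega_0+\chi\,d\eta = (1-\chi)\Omega_0+\chi\,\Omega_1$ is a convex combination of two $2$--forms positive on~$\xi$, hence $\lambda\wedge\Omega_\chi$ is uniformly bounded below by a positive constant; together with the confoliation inequality $\lambda\wedge d\lambda\ge 0$, this provides the dominant positive contribution.

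Expanding $\alpha\wedge\beta$ in powers of~$N$ after substituting $\varphi = t+N\rho\chi$, and working on the three regions (outside $\uU'$, on $\uU'\setminus\overline{\uU}$, and on $\uU$) separately, one organises the estimate: the $N^2$--coefficient $\rho^2\chi\chi'(\lambda\wedge d\lambda)$ is manifestly nonnegative; the $N^1$--coefficient contains the dominant positive piece $\rho\chi'(\lambda\wedge\Omega_\chi)$ together with $\rho\chi\,\lambda\wedge d\lambda\ge 0$, which for $\epsilon$ small absorbs the $O(\epsilon)$ correction $t\rho\chi'\lambda\wedge d\lambda$ and eventually dominates the bounded $N$--independent errors $\chi\chi'(\eta\wedge d\lambda)$ coming from the $\chi'\eta$ part of~$\alpha$; and the $N^0$--coefficient reduces to $\lambda\wedge\Omega_\chi+t\,\lambda\wedge d\lambda$ plus bounded $\chi'$--corrections. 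Outside $\uU$ both $\eta$ and $d\eta$ vanish and $\alpha\wedge\beta$ reduces to $(1+N\rho\chi')[\varphi\,\lambda\wedge d\lambda+\lambda\wedge\Omega_0]$, positive for small~$\epsilon$. Choosing first $\epsilon$ small enough that $\epsilon$--order corrections do not overwhelm $\lambda\wedge\Omega_\chi$, and then $N$ large enough that the amplified positive term $(1+N\chi')\lambda\wedge\Omega_\chi$ beats the bounded $\chi'$--errors uniformly, yields $\alpha\wedge\beta>0$ everywhere on $[-\epsilon,0]\times M$.

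The main obstacle is exactly this delicate balancing on $\uU$: the error terms proportional to $\chi'$ are bounded by constants depending on $\eta$ that are not reduced by shrinking $\epsilon$, since $\chi'$ itself is of order $1/\epsilon$, and so a naive uncoupled choice of cutoffs produces comparable good and bad contributions. Coupling $\varphi$ and~$\chi$ via $\tau = N\chi$ is precisely what amplifies the positive contribution of $\lambda\wedge\Omega_\chi$ by the factor $(1+N\chi')$, and the nonnegative $N^2\chi\chi'(\lambda\wedge d\lambda)$ provides extra reinforcement in the contact directions where $\lambda\wedge d\lambda > 0$. Finally, the condition $\omega|_\xi > 0$ on $\{0\}\times M$ is immediate from $\omega|_{T(\{0\}\times M)} = N\,d(\rho\lambda)+\Omega_1$: since $\lambda|_\xi = 0$ the term $d\rho\wedge\lambda$ restricts to zero and $\rho\,d\lambda|_\xi\ge 0$ by the confoliation assumption, so $\omega|_\xi\ge\Omega_1|_\xi > 0$, completing the construction.
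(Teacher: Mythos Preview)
Your ansatz $\omega = d(\varphi\lambda + \chi\eta) + \Omega_0$ is exactly the one the paper uses, and your verification of the structural properties (1), (2), the positivity of $\p_t\varphi$, the dependence of $\varphi$ on $t$ alone over~$\uU$, and the condition $\omega|_\xi > 0$ at $t=0$ are all correct. The problem lies in the nondegeneracy argument, specifically in your claim that the $N^1$--coefficient is positive.

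On $\uU$ (where $\rho\equiv 1$) the full expression is
\[
  \alpha\wedge\beta = (1+N\chi')\bigl[(t+N\chi)\,\lambda\wedge d\lambda + \lambda\wedge\Omega_\chi\bigr]
  + \chi'\bigl[(t+N\chi)\,\eta\wedge d\lambda + \eta\wedge\Omega_\chi\bigr],
\]
and the error term $\chi'\varphi\,\eta\wedge d\lambda = \chi'(t+N\chi)\,\eta\wedge d\lambda$ contains the piece $N\chi\chi'\,\eta\wedge d\lambda$, which is of order~$N$, \emph{not} $N$--independent as you assert. So at order~$N^1$ you must compare $\chi'\,\lambda\wedge\Omega_\chi$ against $\chi\chi'\,\eta\wedge d\lambda$: both carry the same factor~$\chi'$, and positivity would require the pointwise bound $\lambda\wedge\Omega_\chi > \chi\,|\eta\wedge d\lambda|$, hence (as $\chi\to 1$) the inequality $\lambda\wedge\Omega_\chi > |\eta\wedge d\lambda|$, which is nowhere in the hypotheses. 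At points where $\lambda\wedge d\lambda = 0$ (permitted by the confoliation assumption) the $N^2$--term vanishes and this is exactly what you would need; it can easily fail for large~$\eta$. In short, your rigid coupling $\varphi = t + N\chi$ amplifies the error $\chi'\varphi\,\eta\wedge d\lambda$ by the same factor~$N$ as the good term, so taking $N$ large gains nothing.

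The paper's fix is to \emph{decouple} $\varphi$ from~$\chi$: fix the cutoff $\chi$ first (so $|\chi'|$ is bounded by some constant~$C_\chi$), and then choose $\varphi$ on~$\uU$ to depend only on~$t$ with $\p_t\varphi$ large. The required inequality becomes $\p_t\varphi\cdot c' > C_\chi(C_2 + |\varphi|\,C_1)$, a first-order differential inequality of the form $\psi' > A + B\psi$ which always admits solutions on the compact interval $[-\epsilon,0]$ (with exponential growth if necessary). The point is that $\p_t\varphi$ can be made large relative to $|\varphi|$ in this sense, whereas your ansatz forces $\p_t\varphi / \varphi \approx \chi'/\chi$, which is \emph{not} uniformly large where $\chi$ is close to~$1$.
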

\begin{proof}
  Assume $\epsilon > 0$ is small enough so that $\lambda \wedge
  (\Omega_1 - \epsilon \,d\lambda)$ and $\lambda \wedge (\Omega_0 -
  \epsilon \,d\lambda)$ are both positive volume forms.  Choose smooth
  functions $\varphi:\,[-\epsilon,0] \times M \to [-\epsilon,\infty)$
  and $f:\,[-\epsilon,0] \to [0,1]$ such that $f(t) =0$ for~$t$
  near~$-\epsilon$ and $f(t) = 1$ for~$t$ near~$0$, while
  $\varphi(t,p) = t$ whenever~$t$ is near $-\epsilon$ or $p \in
  M\setminus \uU'$, and $\p_t\varphi > 0$ everywhere.  The latter
  gives rise to a smooth family of functions
  \begin{equation*}
    \varphi_t = \varphi(t,\cdot) :\, M \to \R \;,
  \end{equation*}
  for which we shall also assume that $d\varphi_t$ vanishes outside of
  $\uU' \setminus \overline{\uU}$ for all $t \in [-\epsilon,0]$.  We
  must then show that under these conditions, $\varphi$ can be chosen
  so that the closed $2$--form
  \begin{equation*}
    \omega := d\bigl(\varphi\, \lambda\bigr) + \Omega_0
    + d\bigl(f\,\eta\bigr)
  \end{equation*}
  is nondegenerate, where $f$ is lifted in the obvious way to a
  function on $[-\epsilon,0] \times M$.  We compute,
  \begin{equation*}
    \begin{split}
      \omega \wedge \omega & = 2 \p_t\varphi\,dt \wedge \lambda \wedge
      \left[ (1-f)\, \Omega_0 + f\, \Omega_1 +
        \varphi_t\, d\lambda \right] \\
      & \qquad + 2 f'\,dt\wedge \eta \wedge \bigl[ (1-f)\, \Omega_0 +
      f\,\Omega_1 + \varphi_t\, d\lambda \bigr] + 2 f' \,dt \wedge
      \eta \wedge d\varphi_t \wedge \lambda \; ,
    \end{split}
  \end{equation*}
  and observe that the first of the three terms is a positive volume
  form, while the second vanishes outside of $[-\epsilon,0] \times
  \uU$ due to the compact support of~$\eta$, and the third vanishes
  everywhere since the supports of $d\varphi_t$ and~$\eta$ are
  disjoint.  Thus if $\varphi$ is chosen with $\p_t\varphi$
  sufficiently large on $[-\epsilon,0]\times \uU$, the first term
  dominates the second and we have $\omega \wedge \omega > 0$
  everywhere.  The condition $\restricted{\omega}{\xi} > 0$ on $\{0\}
  \times M$ is now immediate from the construction.
\end{proof}

Combining Proposition~\ref{prop:cohomology} with this lemma in the
special case $\uU = M$, Theorem~\ref{theorem: stableHypersurface} now
follows from the observation that if $(\lambda,\Omega)$ is a stable
Hamiltonian structure such that $\lambda$ is contact, and $\varphi$ is
a strictly increasing smooth positive function on some interval
in~$\R$, then the level sets $\{T\} \times M$ are all stable
hypersurfaces with respect to the symplectic form $d(\varphi\,
\lambda) + \Omega$, inducing the stable Hamiltonian structure
$(\varphi'(T)\,\lambda , \varphi(T)\, d\lambda + \Omega)$ on such a
hypersurface.

For the handle attaching argument in \secref{sec:handles}, we will
also need a variation on Lemma~\ref{lemma:interpolation} that changes
$\lambda$ instead of~$\omega$.

\begin{lemma}\label{lemma:interpolation2}
  Suppose $M$ is a closed oriented $3$--manifold, and fix the
  following data:
  \begin{itemize}
  \item $\uU, \uU' \subset M$ are open subsets with $\overline{\uU}
    \subset \uU'$,
  \item $\{\xi_\tau\}_{\tau \in [0,1]}$ is a $1$--parameter family of
    confoliations, defined via a smooth $1$--parameter family of
    nonvanishing $1$--forms $\lambda_\tau$ with $\lambda_\tau \wedge
    d\lambda_\tau \ge 0$, all of which are identical outside of~$\uU$,
  \item $\Omega$ is a closed $2$--form that is positive on~$\xi_\tau$
    for all $\tau \in [0,1]$.
  \end{itemize}
  Then for any $\epsilon > 0$ sufficiently small, $[-\epsilon,0]
  \times M$ admits a symplectic form $\omega$ which satisfies
  $\restricted{\omega}{\xi_1} > 0$ on $\{0\} \times M$ and the
  following additional properties:
  \begin{enumerate}
  \item $\omega = d(t\,\lambda_0) + \Omega$ in a neighborhood of
    $\{-\epsilon\} \times M$ and outside of $[-\epsilon,0] \times
    \uU'$,
  \item $\omega = d(\varphi\,\lambda_1) + \Omega$ in a neighborhood of
    $\{0\} \times M$, where $\varphi :\, [-\epsilon,0] \times M \to
    [-\epsilon,\infty)$ is a smooth function that depends only on~$t$
    in $[-\epsilon,0] \times \uU$ and satisfies $\p_t\varphi > 0$
    everywhere.
  \end{enumerate}
\end{lemma}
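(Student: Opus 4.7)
The plan is to mirror the construction of Lemma~\ref{lemma:interpolation}, but now deforming the $1$--form~$\lambda$ in place of the closed $2$--form~$\Omega$. Choose a smooth cutoff $g:\,[-\epsilon,0]\to[0,1]$ with $g\equiv 0$ near $t=-\epsilon$ and $g\equiv 1$ near $t=0$, and set $\tilde\lambda(t,\cdot):=\lambda_{g(t)}$, a $t$--dependent $1$--form on~$M$. Since the family $\lambda_\tau$ is constant outside~$\uU$, so is $\tilde\lambda$, and its $t$--derivative $g'(t)\dot\lambda_{g(t)}$ (with $\dot\lambda_\tau:=\p_\tau\lambda_\tau$) is supported inside~$\uU$. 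I would then seek a symplectic form of the shape
\begin{equation*}
\omega := d(\varphi\,\tilde\lambda) + \Omega
\end{equation*}
on $[-\epsilon,0]\times M$, where $\varphi:\,[-\epsilon,0]\times M\to[-\epsilon,\infty)$ is to be chosen smooth with $\p_t\varphi>0$ everywhere, $\varphi\equiv t$ both near $\{-\epsilon\}\times M$ and outside $[-\epsilon,0]\times\uU'$, and depending only on~$t$ inside $[-\epsilon,0]\times\uU$. The two boundary normal forms are then automatic: near $t=-\epsilon$ one recovers $\omega=d(t\,\lambda_0)+\Omega$, and near $t=0$ (where $g\equiv 1$) one gets $\omega=d(\varphi\,\lambda_1)+\Omega$; the condition $\restricted{\omega}{\xi_1}>0$ on $\{0\}\times M$ then follows from $\restricted{\Omega}{\xi_1}>0$ for $\epsilon$ sufficiently small.

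To verify nondegeneracy, write $\omega=dt\wedge\alpha+\beta$ with
\begin{equation*}
\alpha = (\p_t\varphi)\,\tilde\lambda + \varphi\,g'(t)\,\dot\lambda_{g(t)},\qquad
\beta = d_M\varphi\wedge\tilde\lambda + \varphi\,d_M\tilde\lambda + \Omega,
\end{equation*}
so that $\omega\wedge\omega = 2\,dt\wedge\alpha\wedge\beta$ since $\beta\wedge\beta=0$ on the $3$--manifold~$M$. Outside~$\uU$ the term $\dot\lambda_{g(t)}$ vanishes, and the identity $\lambda_0\wedge d_M\varphi\wedge\lambda_0=0$ reduces $\alpha\wedge\beta$ to $(\p_t\varphi)\bigl[\varphi\,\lambda_0\wedge d\lambda_0+\lambda_0\wedge\Omega\bigr]$, which is positive once $\epsilon$ is small (using $\lambda_0\wedge\Omega>0$ together with the confoliation inequality $\lambda_0\wedge d\lambda_0\geq 0$). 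Inside~$\uU$, where $d_M\varphi=0$, the same computation yields
\begin{equation*}
\alpha\wedge\beta = (\p_t\varphi)\,\lambda_{g(t)}\wedge[\varphi\,d\lambda_{g(t)}+\Omega]
+ \varphi\,g'(t)\,\dot\lambda_{g(t)}\wedge[\varphi\,d\lambda_{g(t)}+\Omega].
\end{equation*}

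By compactness of $M\times[0,1]$ together with $\lambda_\tau\wedge\Omega>0$ and $\lambda_\tau\wedge d\lambda_\tau\geq 0$, the first summand is a positive $3$--form (for $\varphi\geq-\epsilon$ with $\epsilon$ small), while the second is controlled by a fixed multiple of $(1+|\varphi|)\,g'(t)$ times a reference volume form. Choosing $\p_t\varphi$ sufficiently large on the transition region $\{g'(t)\neq 0\}$---concretely, by solving a pointwise inequality of the form $\p_t\varphi > K\,g'(t)(1+|\varphi|)$ as an ODE---and gluing this to the linear boundary value $\varphi=t$ near $t=-\epsilon$ (where $g'\equiv 0$ makes the inequality trivial) and extending freely with $\p_t\varphi>0$ near $t=0$, one obtains $\omega\wedge\omega>0$ throughout. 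I expect the main technical point, in contrast to Lemma~\ref{lemma:interpolation}, to be the fact that the error term $\varphi\,g'\,\dot\lambda_{g(t)}$ scales with~$\varphi$ itself rather than being a fixed correction such as~$\eta$; however, the confoliation inequality $\lambda_\tau\wedge d\lambda_\tau\geq 0$ ensures that any growth of $\varphi$ only reinforces the dominant term, and the unbounded target range $[-\epsilon,\infty)$ for~$\varphi$ provides enough room to integrate the ODE globally over $[-\epsilon,0]$ while respecting all the prescribed boundary and support conditions.
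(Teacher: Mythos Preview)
Your approach is exactly the one the paper takes: define the interpolated $1$--form $\tilde\lambda=\lambda_{g(t)}$ (the paper calls it~$\Lambda$), set $\omega=d(\varphi\,\tilde\lambda)+\Omega$, and arrange $\varphi$ so that the leading term $\p_t\varphi\,dt\wedge\lambda_\tau\wedge(\Omega+\varphi\,d\lambda_\tau)$ dominates the remainder. Your handling of the support conditions and the vanishing of the cross terms involving $d_M\varphi$ is also the same.

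There is, however, a slip in your error estimate. Inside~$\uU$ the second summand is
\[
\varphi\,g'(t)\,\dot\lambda_{g(t)}\wedge\bigl[\varphi\,d\lambda_{g(t)}+\Omega\bigr]
=\varphi^2\,g'\,\dot\lambda_\tau\wedge d\lambda_\tau+\varphi\,g'\,\dot\lambda_\tau\wedge\Omega,
\]
so the error is of order $(|\varphi|+\varphi^2)\,g'$, not $(1+|\varphi|)\,g'$ as you wrote. Your observation that the confoliation inequality makes the dominant term grow with $\varphi$ is correct, but only where $\lambda_\tau\wedge d\lambda_\tau>0$; at points where it vanishes (which the hypothesis allows), the good term is merely $(\p_t\varphi)\,\lambda_\tau\wedge\Omega$, bounded independently of $\varphi$, while the $\varphi^2\,g'\,\dot\lambda_\tau\wedge d\lambda_\tau$ term need not vanish (one can have $d\lambda_\tau=\mu\wedge\lambda_\tau$ with $\dot\lambda_\tau\wedge\mu\wedge\lambda_\tau\neq 0$). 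So the differential inequality you would actually need is of Riccati type, $\p_t\varphi\gtrsim g'\,\varphi^2$, and there is no guarantee it can be integrated over the whole interval.

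The fix uses the smallness of~$\epsilon$ rather than the largeness of~$\p_t\varphi$: rescale by writing $\varphi(t)=\epsilon\,G(t/\epsilon+1)$ and $g(t)=H(t/\epsilon+1)$ for fixed profiles $G,H$ on $[0,1]$. Then $|\varphi|=O(\epsilon)$ while $\p_t\varphi=G'$ and $g'=H'/\epsilon$, so the quadratic error contributes $O(\epsilon)$ and the condition reduces, to leading order, to the \emph{linear} inequality $G'\cdot\lambda_\tau\wedge\Omega+H'\,G\cdot\dot\lambda_\tau\wedge\Omega>0$, which is easily solved (e.g.\ with $G$ of exponential type). The paper's proof is terse on exactly this point, but its opening hypothesis ``$\epsilon$ small enough so that $\lambda_\tau\wedge(\Omega-\epsilon\,d\lambda_\tau)>0$'' and the target range $[-\epsilon,\infty)$ are the hooks for this argument.
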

\begin{proof}
  Assume $\epsilon > 0$ is small enough so that $\lambda_\tau \wedge
  (\Omega - \epsilon\, d\lambda_\tau) > 0$ for all $\tau \in [0,1]$.
  Pick a smooth function
  \begin{equation*}
    [-\epsilon,0] \to [0,1] :\, t \mapsto \tau
  \end{equation*}
  such that $\tau = 0$ for all $t$ near~$-\epsilon$ and $\tau = 1$ for
  all~$t$ near~$0$, and use this to define a $1$--form $\Lambda$ on
  $[-\epsilon,0] \times M$ by
  \begin{equation*}
    \Lambda_{(t,m)} = \left(\lambda_{\tau}\right)_m
  \end{equation*}
  for all $(t,m) \in [-\epsilon,0] \times M$.  Next, choose a smooth
  function $\varphi:\,[-\epsilon,0] \times M \to [-\epsilon,\infty)$
  such that $\varphi(t,m) = t$ whenever~$t$ is near~$-\epsilon$ or $m
  \in M\setminus \uU'$, and $\p_t\varphi > 0$ everywhere.  Denote by
  \begin{equation*}
    \varphi_t = \varphi(t,\cdot) : M \to \R \;,
  \end{equation*}
  the resulting smooth family of functions, and assume also that
  $d\varphi_t$ vanishes outside of $\uU' \setminus \overline{\uU}$ for
  all $t \in [-\epsilon,0]$.  Now set
  \begin{equation*}
    \omega = d\bigl(\varphi\, \Lambda\bigr) + \Omega
  \end{equation*}
  and compute:
  \begin{equation*}
    \omega \wedge \omega = 2 \p_t\varphi \, dt \wedge \lambda_\tau \wedge
    \left(\Omega + \varphi_t\, d\lambda_\tau \right)
    + \left( \varphi_t \, d\Lambda \right)^2 + 2 \varphi_t\,
    d\Lambda \wedge \Omega + 2 \varphi_t\, d\varphi_t \wedge \lambda_\tau
    \wedge d\Lambda \;.
  \end{equation*}
  The first term is a positive volume form and can be made to dominate
  the second and third if $\p_t\varphi$ is large enough; note that the
  second and third terms also vanish completely outside of
  $[-\epsilon,0] \times \uU$ since $\lambda_\tau$ is then independent
  of~$\tau$, so that $\Lambda$ reduces to a $1$--form on~$M$ and both
  terms are thus $4$--forms on a $3$--manifold.  For the same reason,
  the last term vanishes everywhere.
\end{proof}

\subsection{Review of planar torsion}
\label{subsec:review}

In this section we recall the important definitions and properties of
planar torsion; we shall give only the main ideas here, referring to
\cite{ChrisOpenBook2} for further details.

Recall that an \textbf{open book decomposition} of a closed oriented
$3$--manifold $M$ is a fibration $\pi:\, M \setminus B \to \SS^1$,
where the \textbf{binding} $B \subset M$ is an oriented link, and the
fibers are oriented surfaces with embedded closures whose oriented
boundary is~$B$.  The fibers are connected if and only if~$M$ is
connected, and we call the connected components of the fibers
\textbf{pages}.  We wish to consider two topological operations that
can be performed on an open book:
\begin{enumerate}
\item \emph{Blowing up} a binding circle $\gamma \subset B$: this
  means replacing $\gamma$ by the unit circle bundle in its normal
  bundle, or equivalently, removing a small neighborhood of~$\gamma$
  so that $M$ becomes a manifold $\widehat{M}$ with $2$--torus
  boundary.  Defining $\widehat{B} = B \setminus \gamma$, the
  fibration $\pi:\, M \setminus B \to \SS^1$ now induces a fibration
  \begin{equation*}
    \hat{\pi}:\, \widehat{M} \setminus \widehat{B} \to \SS^1 \;.
  \end{equation*}
  The structure associated with this fibration is called a
  \textbf{blown up open book} with binding~$\widehat{B}$.  Observe
  that $\p\widehat{M}$ also carries a distinguished $1$--dimensional
  homology class, arising from the meridian on the tubular
  neighborhood of~$\gamma$.
\item \emph{The binding sum}: consider two distinct binding circles
  $\gamma_1, \gamma_2 \subset B$, which come with distinguished
  trivializations of their normal bundles $\nu\gamma_1,\nu\gamma_2$
  determined by the open book.  Any orientation preserving
  diffeomorphism $\gamma_1 \to \gamma_2$ is then covered by a unique
  (up to homotopy) orientation reversing isomorphism
  \begin{equation*}
    \Phi :\, \nu\gamma_1 \to \nu\gamma_2
  \end{equation*}
  which is constant with respect to the distinguished trivializations.
  Blowing up both $\gamma_1$ and $\gamma_2$, we obtain a manifold
  $\widehat{M}$ with two torus boundary components $\p_1\widehat{M}$
  and $\p_2\widehat{M}$, and $\Phi$ determines a unique (up to
  isotopy) orientation reversing diffeomorphism
  \begin{equation*}
    \widehat{\Phi} :\, \p_1\widehat{M} \to \p_2\widehat{M} \;,
  \end{equation*}
  which we may assume restricts to orientation preserving
  diffeomorphisms between boundary components of fibers
  of~$\hat{\pi}$.  Gluing $\p_1\widehat{M}$ and $\p_2\widehat{M}$
  together via~$\widehat{\Phi}$ then gives a new closed manifold
  $\check{M}$, containing a distinguished torus $\interface \subset
  \check{M}$, called the \textbf{interface}, which also carries
  distinguished $1$--dimensional homology classes (unique up to sign)
  determined by the meridians.  Due to the orientation reversal, the
  fibration is not well defined on the interface, but it determines a
  fibration
  \begin{equation*}
    \check{\pi} :\, \check{M} \setminus (\check{B} \cup \interface)
    \to \SS^1 \;,
  \end{equation*}
  where $\check{B} := B \setminus (\gamma_1 \cup \gamma_2)$.  The
  associated structure is called a \textbf{summed open book} with
  binding~$\check{B}$ and interface~$\interface$.  If $M_1$ and $M_2$
  are two distinct manifolds with open books, one can attach them by
  choosing some collection of binding circles in~$M_1$, pairing each
  with a distinct binding circle in~$M_2$ and constructing the binding
  sum for each pair.  We use the shorthand notation
  \begin{equation*}
    M_1 \boxplus M_2
  \end{equation*}
  for any manifold and summed open book constructed from two open
  books in this way.
\end{enumerate}

Clearly both operations can also be performed on binding components of
blown up or summed open books, so iterating them finitely many times
we can produce a more complicated manifold (possibly with boundary),
carrying a more general decomposition known as a \textbf{blown up
  summed open book}.  If $M$ carries such a structure, then it comes
with a fibration
\begin{equation*}
  \pi :\, M \setminus (B \cup \interface) \to \SS^1 \;,
\end{equation*}
where the \textbf{binding}~$B$ is an oriented link and the
\textbf{interface}~$\interface$ is a disjoint union of tori.  The
connected components of fibers of~$\pi$ are again called
\textbf{pages}, and their closures are generally immersed surfaces, as
they occasionally may have multiple boundary components that coincide
as oriented circles in the interface.  We call a blown up summed open
book \textbf{irreducible} if the fibers $\pi^{-1}(*)$ are all
connected, and \textbf{planar} if they also have genus zero.

Generalizing the standard definition of a contact structure supported
by an open book, we say that a contact form $\alpha$ on $M$ with
induced Reeb vector field~$X_\alpha$ is a \textbf{Giroux form} if it
satisfies the following conditions:
\begin{enumerate}
\item $X_\alpha$ is positively transverse to the interiors of all
  pages,
\item $X_\alpha$ is positively tangent to the boundaries of the
  closures of all pages,
\item The characteristic foliation induced on $\interface \cup \p M$
  by $\ker\alpha$ has closed leaves representing the distinguished
  homology classes determined by meridians.
\end{enumerate}
It follows that the interface and boundary are always foliated by
closed orbits of the Reeb vector field for any Giroux form.  We say
that a contact structure $\xi$ is \textbf{supported} by the summed
open book whenever it is the kernel of a Giroux form.

\begin{example}\label{ex:S1invariant}
  Suppose $\Sigma$ is a compact, connected and oriented surface,
  possibly with boundary, and $\xi$ is a positive, cooriented and
  $\SS^1$--invariant contact structure on $\SS^1 \times \Sigma$, such
  that the curves $\SS^1 \times \{z\}$ are Legendrian for all $z \in
  \p\Sigma$.  We can then divide $\Sigma$ into the following subsets:
  \begin{align*}
    \Sigma_+ &= \{ z \in \Sigma \ |\ \text{$\SS^1 \times \{z\}$ is
      positively transverse} \} \;, \\
    \Sigma_- &= \{ z \in \Sigma \ |\ \text{$\SS^1 \times \{z\}$ is
      negatively transverse} \} \;, \\
    \Gamma &= \{ z \in \Sigma \ |\ \text{$\SS^1 \times \{z\}$ is
      Legendrian} \} \;.
  \end{align*}
  By assumption, $\p\Sigma \subset \Gamma$.  The Lutz construction
  \cite{Lutz_CircleActions} produces such a contact structure for any
  given multicurve $\Gamma$ that contains $\p\Sigma$ and divides
  $\Sigma$ into two separate pieces $\Sigma_+$ and $\Sigma_-$.  In
  fact, one can find a contact form $\alpha$ for $\xi$ such that for
  every $t \in \SS^1$, the Reeb vector field $X_\alpha$ is positively
  transverse to $\{t\} \times \Sigma_+$, negatively transverse to
  $\{t\} \times \Sigma_-$ and tangent to $\{t\} \times \Gamma$.  This
  is thus a Giroux form for a blown up summed open book, whose pages
  are the connected components of $\{t\} \times
  (\Sigma\setminus\Gamma)$, with trivial monodromy.  The interface is
  the union of all the tori $\SS^1 \times \gamma$ for connected
  components $\gamma \subset \Gamma$ in the interior of~$\Sigma$, and
  the binding is empty.
\end{example}

A blown up summed open book is called \textbf{symmetric} if its
boundary and binding are both empty, and it is obtained as a binding
sum of two connected pieces $M_+ \boxplus M_-$, with open books whose
pages are diffeomorphic to each other.  The two simplest examples of
contact structures supported by symmetric summed open books are the
standard contact structures on $\SS^1 \times \SS^2$ and $\T^3$: the
former can be obtained as a binding sum of two open books with
disk-like pages, and the latter as a binding sum of two open books
with cylindrical pages and trivial monodromy.

\begin{defn}\label{defn:planarTorsion}
  A \textbf{planar torsion domain} is any contact $3$-manifold
  $(M,\xi)$, possibly with boundary, together with a supporting blown
  up summed open book that can be obtained as a binding sum of two
  separate nonempty pieces,
  \begin{equation*}
    M = M_0 \boxplus M_1 \;,
  \end{equation*}
  where $M_0$ carries an irreducible planar summed open book without
  boundary, and $M_1$ carries an arbitrary blown up summed open book
  (possibly disconnected), such that the induced blown up summed open
  book on~$M$ is not symmetric.  The interior of~$M$ then contains a
  compact submanifold with nonempty boundary,
  \begin{equation*}
    M^P \subset M \;,
  \end{equation*}
  called the \textbf{planar piece}, which is obtained from $M_0$ by
  blowing up all of its summed binding components.  The closure of $M
  \setminus M^P$ is called the \textbf{padding}.

  We say that a contact $3$--manifold $(M,\xi)$ has \textbf{planar
    torsion} whenever it admits a contact embedding of some planar
  torsion domain.
\end{defn}

Note that the interface of the blown up summed open book on a planar
torsion domain contains the (nonempty) boundary of the planar piece,
and may also have additional components in its interior.

\begin{defn}\label{defn:planarSeparating}
  For any closed $2$--form $\Omega$ on a closed contact $3$--manifold
  $(M,\xi)$, we say that $(M,\xi)$ has \textbf{$\Omega$--separating
    planar torsion} if it contains a planar torsion domain such that
  $\int_L \Omega = 0$ for every interface torus~$L$ in the planar
  piece.  If each of these tori is nullhomologous in $H_2(M;\R)$, then
  we say $(M,\xi)$ has \textbf{fully separating planar torsion}.
\end{defn}

\begin{remark}
  The fully separating condition can only be satisfied when the planar
  piece $M^P \subset M$ has no interface tori in its interior and each
  of its boundary components separates~$M$.  This follows from the
  observation that an interface torus in an irreducible blown up
  summed open book is \emph{always} homologically nontrivial.
\end{remark}

\begin{example}\label{ex:OTGiroux}
  As shown in \cite{ChrisOpenBook2}, any open neighborhood of a Lutz
  twist contains a fully separating planar torsion domain whose planar
  piece has disk-like pages, and in fact planar torsion of this type
  (called \emph{planar $0$--torsion}) is equivalent to
  overtwistedness.  Similarly, a neighborhood of a Giroux torsion
  domain always contains a planar torsion domain whose planar piece
  has cylindrical pages (called \emph{planar $1$--torsion}).
\end{example}

\begin{example}\label{ex:S1invariant2}
  The $\SS^1$--invariant contact manifold $(\SS^1 \times \Sigma,\xi)$
  of Example~\ref{ex:S1invariant} is a planar torsion domain whenever
  $\Sigma\setminus \Gamma$ contains a connected component of genus
  zero whose closure is disjoint from $\p\Sigma$, but which is not
  diffeomorphic to both $\Sigma_+$ and $\Sigma_-$.  The fully
  separating condition is satisfied whenever every boundary component
  of the genus zero piece separates~$\Sigma$.
\end{example}

The following is a combination of two of the main results in
\cite{ChrisOpenBook2}.

\begin{thm}[\cite{ChrisOpenBook2}]\label{thm:holOpenbook}
  If $(M,\xi)$ is a closed contact $3$--manifold with planar torsion
  then it is not strongly fillable.  Moreover, if $M^P \subset M$
  denotes the planar piece of a planar torsion domain in~$M$ and
  $\pi:\, M^P \setminus (B \cup \interface) \to \SS^1$ is the
  associated fibration with binding~$B$ and interface~$\interface$,
  then for any $\epsilon > 0$, $(M,\xi)$ admits a Morse-Bott contact
  form~$\alpha$ and a generic $\R$--invariant almost complex structure
  $J$ on $\R\times M$, compatible with~$\alpha$, such that:
  \begin{itemize}
  \item $\alpha$ is in standard symmetric form (see
    Definition~\ref{defn:symmetric}) near~$B$, and the components
    of~$B$ are nondegenerate elliptic Reeb orbits of Conley-Zehnder
    index~$1$ (with respect to the trivialization determined by the
    open book) and period less than~$\epsilon$.
  \item The interface and boundary tori $\interface \cup \p M \subset
    M^P$ are Morse-Bott submanifolds foliated by Reeb orbits of period
    less than~$\epsilon$.
  \item All Reeb orbits in~$M$ outside of $B \cup \interface \cup \p
    M^P$ have period at least~$1$.
  \item The interior of each planar page $\pi^{-1}(\tau)$ is the
    projection to~$M$ of an embedded finite energy punctured
    $J$--holomorphic curve
    \begin{equation*}
      u_\tau :\, \dot{\Sigma} \to \R \times M \;,
    \end{equation*}
    with only positive ends and Fredholm index~$2$.
  \end{itemize}
\end{thm}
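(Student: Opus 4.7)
The plan is to split the argument into two pieces: first constructing the adapted Morse–Bott contact form and compatible holomorphic foliation, and then using this data to rule out any strong symplectic filling.

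For the construction, I would start from a Giroux form $\alpha_0$ for the supporting blown up summed open book $\pi:\, M\setminus(B\cup\interface)\to\SS^1$. Near each binding component $\gamma\subset B$, the contact neighborhood theorem provides coordinates $(\theta;\rho,\phi)$ in which $\alpha_0 = f(\rho)\,d\theta + g(\rho)\,d\phi$ is in standard symmetric form. A Morse–Bott analysis shows that the linearized Reeb flow around $\gamma$ has Conley–Zehnder index~$1$ in the open-book trivialization provided the page boundary orientation matches the Reeb direction, and rescaling $\alpha_0$ by a suitable $\rho$-dependent positive factor (as in Remark~\ref{remark:nondegenerate}) makes all multiple covers nondegenerate and of arbitrarily short period. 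Near each interface or boundary torus, the Morse–Bott tori of closed Reeb orbits demanded by the Giroux form can similarly be made short by a local rescaling in a tubular neighborhood modeled on $\SS^1\times\SS^1\times(-\delta,\delta)$ with $\alpha_0 = \cos(2\pi r)\,d\theta+\sin(2\pi r)\,d\phi$. Outside the tubular neighborhoods $\nbhd(B)\cup\nbhd(\interface)\cup\nbhd(\p M)$, the Reeb flow of $\alpha_0$ moves transversely through the pages in uniformly bounded time, so a global rescaling $\alpha := h\,\alpha_0$ with $h\gg 1$ on that complement, smoothly matched to the local models, forces every other periodic orbit to have period at least~$1$.

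For the almost complex structure and holomorphic foliation, I would choose $J$ on $\R\times M$ to be $\R$-invariant, compatible with $\alpha$, and, crucially, so that $J(\ker d\pi \cap \xi) = \ker d\pi\cap\xi$ on the planar piece. With this choice, every page $\pi^{-1}(\tau)$ in $M^P$ lifts to an embedded, properly immersed $J$-holomorphic curve $u_\tau:\dot\Sigma\to\R\times M$ whose punctures are positive ends asymptotic to the binding orbits in the boundary of the page and to Morse–Bott orbits on each boundary component of the page that lies in $\interface\cup\p M^P$. The Fredholm index computation then proceeds from Riemann–Roch: since $\dot\Sigma$ is a $k$-punctured sphere with $\chi(\dot\Sigma) = 2-k$, the Conley–Zehnder contribution from binding punctures is $1$ each, and the Morse–Bott punctures contribute in the usual way (each short Morse–Bott orbit contributes the family direction), yielding total index~$2$. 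Embeddedness and genericity of $J$ outside a small neighborhood of the pages follow from standard somewhere-injectivity arguments, aided by the automatic transversality theorem in complex rank one.

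For the non-fillability assertion, suppose $(W,\omega)$ were a strong filling of $(M,\xi)$. I would glue on the cylindrical end $[0,\infty)\times M$ symplectically using the standard Liouville collar, extend $J$ to a tame almost complex structure $\widehat J$ on $W^\infty$ agreeing with the $\R$-invariant~$J$ above on the end, and consider the moduli space $\moduli$ of $\widehat J$-holomorphic curves in $W^\infty$ asymptotic to the same positive ends as some $u_\tau$, in the same relative homology class. The $u_\tau$ themselves (pushed far into the cylindrical end) give an $\SS^1$-family of elements of $\moduli$, forming one end of a $2$-dimensional component $\moduli_0$. The key point is then the analysis of the other end via the SFT compactness theorem of \cite{BourgeoisCompactness}: energy is uniformly bounded because the sum of positive asymptotic periods is small by construction, the short orbits on interface tori cannot appear as negative asymptotic limits on a strongly filled side (there is no cylindrical end to accommodate negative ends), and holomorphic spheres of negative index cannot bubble off since $(W,\omega)$ is semipositive in dimension four. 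Intersection-theoretic positivity together with the uniqueness of the foliation in a neighborhood of any embedded index-$2$ curve forces $\moduli_0$ to foliate an open region in $W^\infty$; tracing the foliation across the filling then requires the planar piece $M_0$ to be matched with another planar piece diffeomorphic to~$M_0$, contradicting the hypothesis that the blown up summed open book on $M = M_0\boxplus M_1$ is not symmetric.

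The main obstacle is the last step: excluding the possible degenerations of $\moduli_0$ in the SFT compactification. The subtle cases involve buildings with levels in the symplectization of $M$ carrying Morse–Bott breaking at interface tori, which must be ruled out by combining the index additivity, the fact that all non-page curves in the symplectization must have strictly positive index, and the asymmetric topology of $M_0$ versus the padding $M_1$. Once this is controlled, the contradiction with non-symmetry of the summed open book closes the argument.
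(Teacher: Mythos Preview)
The paper does not prove this theorem: it is quoted from \cite{ChrisOpenBook2}, introduced by the sentence ``The following is a combination of two of the main results in \cite{ChrisOpenBook2}'', with no argument given afterward. There is thus no proof in this paper to compare your proposal against.

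Your outline is in the right spirit for the construction half. Local models near $B$, $\interface$ and $\p M^P$ with rescaling to separate short orbits from long ones, and $J$ chosen to preserve the vertical tangent spaces of~$\pi$ so that pages lift to holomorphic curves, is the standard mechanism. The index sketch is too vague to stand on its own --- the Morse--Bott ends need an explicit perturbation or end-weight convention before Riemann--Roch applies, and the ``family direction'' remark is not a computation --- but the target value is correct.

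The non-fillability half has a real gap. Your closing assertion, that tracing the foliation across the filling ``requires the planar piece $M_0$ to be matched with another planar piece diffeomorphic to~$M_0$'', is exactly the statement that carries the entire weight of the argument, and you have not supplied a mechanism for it. What must actually be shown is that the curves in $\overline{\moduli}_0$ escaping to $+\infty$ project to embedded surfaces in~$M$ that extend the page foliation from $M^P$ across the padding, and that the resulting global structure on~$M$ is a symmetric summed open book. This uses intersection theory for punctured curves (as in \cite{Siefring_intersection}) to control how the limiting curves sit in~$M$, together with an asymptotic evaluation argument to pin down the topology of $\overline{\moduli}_0$; neither ingredient appears in your sketch. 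You correctly flag this as ``the main obstacle'' in your last paragraph, but naming the obstacle is not the same as overcoming it.
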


\subsection{Proofs of Theorems~\ref{thm:planar} and~\ref{theorem:
    planarTorsion}}
\label{subsec:proof}

The important feature that Theorems~\ref{thm:planar} and~\ref{theorem:
  planarTorsion} have in common is that they involve weak fillings of
contact manifolds that admit regular families of index~$2$ punctured
holomorphic spheres.  For Theorem~\ref{thm:planar}, the idea will be
to stabilize the boundary so that the pages of a given planar open
book can be lifted to holomorphic curves in the cylindrical end---we
can then repeat precisely the argument used for strong fillings in
\cite{ChrisGirouxTorsion}, as the resulting moduli space spreads into
the filling to form the fibers of a symplectic Lefschetz fibration.
The idea for Theorem~\ref{theorem: planarTorsion} is similar, except
that instead of a Lefschetz fibration, we will get a contradiction.
First however we must take care to stabilize the boundary in such a
way that the desired holomorphic curves in the cylindrical end will
actually exist, and this is not trivial since by Theorem~\ref{theorem:
  stableHypersurface}, we can only choose the contact form freely
outside of a neighborhood of a certain transverse link.

\begin{lemma}\label{lemma:mappingDuTore}
  Suppose $\Sigma$ is a compact oriented surface with nonempty
  boundary, $\varphi :\, \Sigma \to \Sigma$ is a diffeomorphism with
  support away from the boundary, and $\Sigma_\varphi$ denotes the
  mapping torus of~$\varphi$, i.e.~the manifold $(\R \times \Sigma) /
  \sim$ where $(t+1,z) \sim (t,\varphi(z))$ for all $t \in \R$, $z \in
  \Sigma$.  Then for any given connected component $L \subset
  \p\Sigma_\varphi$, every homology class $h \in H_1(\Sigma_\varphi)$
  can be represented as a sum of cycles
  \begin{equation*}
    h = h_\Sigma + h_L \;,
  \end{equation*}
  where $h_\Sigma$ lies in a fiber of the natural fibration
  $\Sigma_\varphi \to \SS^1$, and $h_L$ lies in~$L$.
\end{lemma}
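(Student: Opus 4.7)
The plan is to apply the Wang long exact sequence associated to the natural mapping torus fibration $p:\, \Sigma_\varphi \to \SS^1$. Since $\varphi$ is the identity on a neighborhood of $\p\Sigma$, each connected component of $\p\Sigma_\varphi$ is a torus of the form $\SS^1 \times c$, where $c \subset \p\Sigma$ is a boundary circle and the first factor corresponds to the mapping torus direction. Writing the given component as $L = \SS^1 \times c$, I would define the \emph{meridian} $m \subset L$ to be the loop $\SS^1 \times \{*\}$ for an arbitrarily chosen basepoint $* \in c$. By construction, $m$ is a section of $p$ over the entire base $\SS^1$, meeting each fiber of $p$ transversally in a single point.

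Next I would invoke the Wang sequence
\begin{equation*}
  H_1(\Sigma) \xrightarrow{\varphi_* - \id} H_1(\Sigma) \xrightarrow{i_*} H_1(\Sigma_\varphi) \xrightarrow{\delta} H_0(\Sigma) \xrightarrow{\varphi_* - \id} H_0(\Sigma),
\end{equation*}
where $i_*$ is induced by inclusion of a fiber and $\delta$ is the connecting homomorphism, realized geometrically as signed intersection with a fiber. Because $m$ is a section hitting each fiber once transversally at $*$, this immediately gives $\delta([m]) = [*] \in H_0(\Sigma)$.

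Suppose first that $\Sigma$ is connected, so $H_0(\Sigma) \cong \Z$ is generated by $[*]$ and $\delta([m]) = 1$. Then for any $h \in H_1(\Sigma_\varphi)$ one sets $n := \delta(h) \in \Z$ and observes that $\delta(h - n[m]) = 0$. By exactness, $h - n[m] = i_*(h')$ for some $h' \in H_1(\Sigma)$, giving the desired decomposition $h = h_\Sigma + h_L$ with $h_\Sigma := i_*(h')$ represented by a cycle in a fiber of $p$ and $h_L := n[m]$ lying in $L$. The general disconnected case reduces to this by restricting attention to the connected component of $\Sigma_\varphi$ containing $L$ and decomposing $h$ accordingly: pieces of $h$ lying in other components of $\Sigma_\varphi$ already project trivially under $\delta$ into the relevant summand and are absorbed into $h_\Sigma$ via the corresponding Wang sequences.

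No substantial obstacle is anticipated; the only point requiring verification is the geometric formula $\delta([m]) = [*]$, which follows directly from the construction of $\delta$ via the Mayer--Vietoris sequence for the cover of $\SS^1$ by two open arcs, applied to the meridian $m$ meeting the overlap fiber in a single transverse point.
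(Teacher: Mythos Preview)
Your proof is correct and takes essentially the same approach as the paper: both use the Wang exact sequence for the fibration $\Sigma_\varphi \to \SS^1$, identify the connecting map with intersection against a fiber, and observe that a loop in any boundary torus provides a section hitting each fiber once. The paper writes this sequence without naming it and picks a reference cycle $h_0$ playing exactly the role of your meridian $m$.

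One small remark: your treatment of the disconnected case is shaky, since a class supported on a component of $\Sigma_\varphi$ not containing $L$ may have nonzero intersection with a fiber there and hence cannot be absorbed into $h_\Sigma$. But this is moot: the paper tacitly assumes $\Sigma$ is connected (it writes $H_0(\Sigma)\cong\Z$), and that is the only case needed in the application to open book pages.
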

\begin{proof}
  The fibration $\Sigma_\varphi \to \SS^1$ gives rise to an exact
  sequence
  \begin{equation*}
    H_1(\Sigma) \stackrel{\varphi_* - 1}{\longrightarrow} H_1(\Sigma) 
    \stackrel{\iota_*}{\longrightarrow} H_1(\Sigma_\varphi) 
    \stackrel{\Phi}{\longrightarrow} H_0(\Sigma) \cong \Z \;,
  \end{equation*}
  where $\iota :\, \Sigma \to \Sigma_\varphi$ is the inclusion and
  $\Phi$ computes the intersection number of any $1$--cycle in the
  interior of $\Sigma_\varphi$ with a fiber.  Thus if we choose any
  reference cycle $h_0 \in H_1(\Sigma_\varphi)$ that passes once
  transversely through each fiber, the exact sequence implies that any
  $h \in H_1(\Sigma_\varphi)$ decomposes as a sum of the form
  \begin{equation*}
    h = \iota_*(h_\Sigma) + c\, h_0
  \end{equation*}
  for $h_\Sigma \in H_1(\Sigma)$ and $c \in \Z$.  The lemma follows
  since $h_0$ can be represented by a loop in any given connected
  component of~$\p \Sigma_\varphi$.
\end{proof}

Assume $(W,\omega)$ is a weak filling of $(M,\xi)$, and the latter
either is planar or contains a planar torsion domain with planar piece
$M^P \subset M$, whose binding and interface are denoted by
$B^P,\interface^P \subset M^P$ respectively.  In the planar case it
makes sense also to define $M^P = M$ and $\interface^P = \emptyset$,
so in both cases $M^P$ carries a planar blown up summed open book with
binding~$B^P$ and interface $\interface^P$.  After modifying $\omega$
via Theorem~\ref{theorem: stableHypersurface}, we can assume $\p W$ is
a stable hypersurface, with an induced stable Hamiltonian structure of
the form $\mathcal{H} = (\lambda, F\,d\lambda)$, where $\lambda$ is a
contact form for~$\xi$ that is in standard symmetric form near some
positively transverse link $K = K_1 \cup \dotsb \cup K_n$.  The latter
must be chosen so that
\begin{equation}
  \label{eqn:PD}
  \PD\bigl([\restricted{\omega}{TM}]\bigr) = \sum_{i=1}^n c_i\, [K_i]
\end{equation}
for some set of positive real numbers $c_1,\dotsc,c_n > 0$.

\begin{lemma}\label{lemma:outsidePlanar}
  If $\int_L \omega = 0$ for every connected component $L \subset
  \interface^P \cup \p M^P$, then one can choose the positively
  transverse link $K$ to be a disjoint union of three links
  \begin{equation*}
    K = K_B \cup K_P \cup K' \;,
  \end{equation*}
  where $K_B$ is a subcollection of the oriented components of $B^P$,
  $K_P$ lies in a single page in $M^P$ and $K' \subset M \setminus
  M^P$.
\end{lemma}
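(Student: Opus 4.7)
The plan is to split $\PD\bigl([\restricted{\omega}{TM}]\bigr)\in H_1(M;\R)$ into a piece supported inside $M^P$ and a piece supported inside $M\setminus M^P$, and to realise the $M^P$-piece using the open book fibration on the planar piece. The vanishing hypothesis $\int_L\omega=0$ for every connected component $L\subset\interface^P\cup\p M^P$ is equivalent to $[\omega|_L]=0\in H^2(L;\R)$, so by the long exact sequence of the pair $(M^P,\p M^P)$ the class $[\omega|_{M^P}]$ lies in the image of $H^2(M^P,\p M^P;\R)\to H^2(M^P;\R)$. Poincaré--Lefschetz duality then yields a class $h_P\in H_1(M^P;\R)$ whose pushforward contributes to $\PD\bigl([\restricted{\omega}{TM}]\bigr)$; the same argument applied to $\overline{M\setminus M^P}$ produces a complementary class $h'\in H_1(M\setminus\interior(M^P);\R)$, and comparing the two Lefschetz pairings along the common boundary gives
\begin{equation*}
  \PD\bigl([\restricted{\omega}{TM}]\bigr) = (\iota_P)_* h_P + (\iota')_* h'\;.
\end{equation*}

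The class $h'$ will be realised directly by a positively transverse link $K'\subset M\setminus M^P$ with positive real coefficients via the construction in the remark after Proposition~\ref{prop:cohomology}. For $h_P$, I will first blow up every component of $B^P$, turning $M^P$ into a manifold $\hat{M}^P$ that genuinely fibers over $S^1$ with planar fiber $P$ and with monodromy supported away from $\p\hat{M}^P$ (the Reeb flow is tangent both to the interface tori and to the blown-up binding tori). Applying Lemma~\ref{lemma:mappingDuTore} with $L\subset\p\hat{M}^P$ chosen as the torus produced by blowing up one binding component $b_0\subset B^P$ decomposes a lift of $h_P$ as $h_{\text{page}}+h_L$, where $h_{\text{page}}$ lies in a single fiber and $h_L\in H_1(L;\R)$. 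Pushing forward $\hat{M}^P\to M^P$, the meridian factor of $h_L$ dies since it bounds a disk in the neighbourhood of $b_0$, while the longitude factor projects to a multiple of $[b_0]$; so modulo a multiple of $[b_0]$ the class $h_P$ is represented by a cycle lying in a single page $P$.

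Finally, to arrange positivity of coefficients, I write the page cycle as $\sum_i a_i[\gamma_i]$, where the $\gamma_i$ are loops parallel to the boundary components of $P$; the relation $\sum_i[\gamma_i]=0\in H_1(P;\R)$ (valid because $P$ bounds the $\gamma_i$) then lets me add a common constant $C$ to every $a_i$, arranging each new coefficient $a_i+C$ to be strictly positive. The $\gamma_i$ parallel to interface-side boundaries are kept as curves in the interior of $P$ and form $K_P$, while those parallel to binding-side boundaries are absorbed into $K_B$ as the corresponding components of $B^P$ with their natural Reeb orientation, taking $C$ large enough so that the extra $[b_0]$-term coming from $h_L$ also leaves the coefficient of $b_0$ positive. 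In the standard symmetric coordinates of Definition~\ref{defn:symmetric} near the binding or interface, a loop parallel to $\gamma_i$ oriented as part of $\p P$ has $\alpha(\dot\gamma)>0$, so it is automatically positively transverse to $\xi$, and a generic small perturbation turns $K_B\cup K_P\cup K'$ into an embedded link representing $\PD\bigl([\restricted{\omega}{TM}]\bigr)$ with positive real coefficients. The main obstacle is the sign-indefiniteness of the Poincaré--Lefschetz dual of $[\omega|_{M^P}]$, and the planar relation $\sum_i[\gamma_i]=0$ among the page boundaries is precisely the device that provides enough slack to make every coefficient $c_i$ strictly positive.
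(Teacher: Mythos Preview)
Your approach via Mayer--Vietoris and Poincar\'e--Lefschetz duality is different from the paper's, which instead starts from an arbitrary link representing $\PD\bigl([\restricted{\omega}{TM}]\bigr)$ and performs explicit surgeries (splicing components together along paths in each torus~$L$) to eliminate all intersections with $\interface^P\cup\p M^P$. Both routes end with the same positivity device: your relation $\sum_i[\gamma_i]=0$ among the page-boundary classes is precisely what the paper means by ``the total binding is the boundary of a page''.

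However, your step passing to $\hat{M}^P$ has a gap. You blow up only $B^P$ and assert that the result genuinely fibers over~$\SS^1$, but the planar piece can also contain \emph{interior} interface tori $\interface^P$---this happens whenever $M_0$ already carried a nontrivial \emph{summed} open book before being attached to~$M_1$---and the fibration does not extend across these (the paper notes that ``due to the orientation reversal, the fibration is not well defined on the interface''). So $\hat{M}^P$ is not a mapping torus in general and Lemma~\ref{lemma:mappingDuTore} does not apply as you use it. The repair is to also cut along $\interface^P$: your class $h_P$ lifts across each such cut because $h_P\bullet[L]=\int_L\omega=0$, which is exactly the part of the hypothesis you quote but never actually invoke. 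A related oversight: you take $L$ to be the torus over some $b_0\subset B^P$, but $B^P$ may be empty (as in the $\SS^1$--invariant examples of \secref{subsec:review}); in that case choose $L\subset\p M^P$ instead, and push the longitude portion of $h_L$ into $M\setminus M^P$, absorbing it into~$K'$.
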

\begin{proof}
  Note that in the planar case, $M^P = M$ and the condition on the
  boundary and interface is vacuous: then applying
  Lemma~\ref{lemma:mappingDuTore} to the mapping torus of the
  monodromy of the open book, we see that for any oriented binding
  component $\gamma \subset B^P$, any $h \in H_1(M;\R)$ can be written
  as $h = c\,[\gamma] + h_P$ for some $c \in \R$ and $h_P$ is
  represented by a cycle in a page.  If $c < 0$, we can exploit the
  fact that the total binding is the boundary of a page and thus
  rewrite $c\,[\gamma]$ as a positive linear combination of the other
  oriented binding components.

  For the case of a planar torsion domain, we have $\p M^P \ne
  \emptyset$ and must show first that $h =
  \PD\bigl([\restricted{\omega}{TM}]\bigr)$ under the given
  assumptions can be represented by a cycle that does not intersect
  $\interface^P \cup \p M^P$.  The above argument then completes the
  proof.

  To find a representative cycle disjoint from $\interface^P \cup \p
  M^P$, suppose $K = K_1 \cup \dotsb \cup K_n$ is any oriented link
  with $c_1\, [K_1] + \dotsb + c_n\, [K_n]$ Poincaré dual to
  $[\restricted{\omega}{TM}]$ for some real numbers $c_1,\dotsc,c_n
  \ne 0$.  Then for each connected component $L \subset \interface^P
  \cup \p M^P$, Poincaré duality implies
  \begin{equation*}
    \sum_i c_i\, [K_i] \bullet [L] = \int_L \omega = 0 \;.
  \end{equation*}
  We can assume $K$ and~$L$ have only transverse intersections $x \in
  K \cap L$.  Now for each component $K_i$, we can replace $K_i$ by a
  homologous link for which all intersections of $K_i$ with~$L$ have
  the same sign: indeed, if $x , y \in K_i \cap L$ are two
  intersections of opposite sign, we can eliminate both of them by
  splicing $K_i$ with a path between~$x$ and~$y$ along~$L$.  Having
  done this, we can also split $K_i$ into multiple parallel components
  so that each intersects~$L$ either not at all or exactly once.  Then
  by switching orientations of $K_i$ and signs of $c_i$, we can
  arrange for this intersection to be positive.  Let us therefore
  assume that each component $K_i$ has at most one intersection
  with~$L$, which is transverse and positive, so
  \begin{equation*}
    \sum_{\{i ; K_i \cap L \ne \emptyset \} } c_i = 0 \;.
  \end{equation*}
  Now if any intersection $x \in K \cap L$ exists, there must be
  another $y \in K \cap L$ for which the real coefficient has the
  opposite sign; for concreteness let us assume $x \in K_1$, $y \in
  K_2$, $c_1 > 0$ and $c_2 < 0$.  We can then eliminate one of these
  intersections via the following two steps: first, replace $K_2$ by a
  disjoint union of two knots $K_2'$ and $K_2''$, where $K_2' := K_2$
  and $K_2''$ is a parallel copy of it, and set $c_2' := - c_1$,
  $c_2'' := c_2 + c_1$.  This introduces one additional intersection
  $y'' \in K_2'' \cap L$.  But now since $c_2' = -c_1$, we can
  eliminate $x$ and~$y$ by splicing in a path between them along~$L$
  to connect $K_1$ and $K_2'$.  The result of this operation is a new
  link $\tilde{K} = \tilde{K}_1 \cup \dotsb \cup
  \tilde{K}_{\tilde{n}}$ with real numbers
  $\tilde{c}_1,\dotsc,\tilde{c}_{\tilde{n}} \ne 0$ such that
  \begin{equation*}
    \sum_{i=1}^{\tilde{n}} \tilde{c}_i [\tilde{K}_i] =
    \sum_{i=1}^n c_i [K_i]
  \end{equation*}
  and $\tilde{K} \cap L$ contains one point fewer than $K \cap L$.
  One can then repeat this process until the intersection of~$K$ with
  $\interface^P \cup \p M^P$ is empty.  By switching orientations of
  the components $K_i$ again, we can then assume the real coefficients
  $c_1,\dotsc,c_n$ are all positive.
\end{proof}

The lemma has the following consequence: for any fixed page $\Sigma
\subset M^P$, we can now freely choose the contact form $\lambda$ on
some open set~$\uU$,
\begin{equation*}
  \Sigma \cup B^P \cup \interface^P \cup \p M^P \subset \uU \subset M^P \;,
\end{equation*}
to be the one provided by Theorem~\ref{thm:holOpenbook}, for which
there exists a generic almost complex structure $J$ compatible
with~$\mathcal{H}$ such that the pages in $\uU$ lift to embedded
$J$--holomorphic curves of index~$2$ in the symplectization.  Enlarge
$W$ to $W^\infty$ by attaching a cylindrical end, and extend the
compatible $J$ from the end to a generic almost complex structure $J
\in \complex(\omega,\mathcal{H})$ on~$W^\infty$.  After pushing up by
$\R$--translation, the $J$--holomorphic pages in $\R \times \uU$ may
be assumed to live in $[c,\infty) \times M$ for arbitrarily large $c >
0$ and thus can also be regarded as $J$--holomorphic curves in
$W^\infty$.  Since the asymptotic orbits of these curves have much
smaller periods than all other Reeb orbits in~$M$, the connected
$2$--dimensional moduli space $\moduli$ of $J$--holomorphic curves in
$W^\infty$ that contains these curves satisfies a compactness theorem
proved in \cite{ChrisOpenBook2}: namely, $\moduli$ is compact except
for codimension~$2$ nodal degenerations and curves that ``escape'' to
$+\infty$ (and thus converge to curves in $\R\times M$).  Moreover,
the curves in $\moduli$ foliate $W^\infty$ except at a finite set of
nodal singularities, which are transverse intersections of two leaves.
A similar statement holds for the curves in $\R\times M$ that form the
``boundary'' of $\moduli$: observe that for any $m \in M \setminus
(B^P \cup \interface^P \cup \p M^P)$, one can find a sequence $t_k \to
\infty$ such that each of the points $(t_k,m)$ is in the image of a
unique curve $u_k \in \moduli$, and the latter sequence must converge
to a curve in $\R\times M$ whose projection to~$M$ passes through~$m$.
By positivity of intersections using \cite{Siefring_intersection}, any
two of these curves in $\R\times M$ are either identical or disjoint,
and their projections to~$M$ are all embedded, thus forming a
foliation of $M \setminus (B^P \cup \interface^P \cup \p M^P)$ by
holomorphic curves whose asymptotic orbits all lie in the same
Morse-Bott families.  At this point the two proofs diverge in separate
directions.

\begin{proof}[Proof of Theorem~\ref{thm:planar}]
  Following the proof of Theorem~1 in \cite{ChrisGirouxTorsion}, the
  curves in the compactification of the moduli space $\moduli$ form
  the fibers of a Lefschetz fibration
  \begin{equation*}
    \Pi :\, W^\infty \to \Disk \;,
  \end{equation*}
  and the vanishing cycles in this fibration are all homologically
  nontrivial if~$W$ is minimal.  It then follows from Eliashberg's
  topological characterization of Stein manifolds
  \cite{EliashbergSteinManifolds} that $(W,\omega)$ is deformation
  equivalent to a symplectic blow-up of a Stein domain.
\end{proof}

\begin{proof}[Proof of Theorem~\ref{theorem: planarTorsion}]
  Since the planar piece of a planar torsion domain has nonempty
  boundary $\p M^P$ by assumption, one can pick any component $L
  \subset \p M^P$ and define an \emph{asymptotic evaluation map} as in
  \cite{ChrisGirouxTorsion}, which defines an embedding of $\moduli$
  into a certain line bundle over the $\SS^1$--family of orbits
  in~$L$.  It follows that the compactified moduli space
  $\overline{\moduli}$ is diffeomorphic to an annulus, and its curves
  are the fibers of a Lefschetz fibration
  \begin{equation*}
    \Pi :\, W^\infty \to [0,1] \times \SS^1 \;,
  \end{equation*}
  whose boundary is a \emph{symmetric} summed open book.  As shown in
  \cite{ChrisFiberSums} using ideas due to Gompf, such a Lefschetz
  fibration always admits a symplectic structure, unique up to
  symplectic deformation, which produces a \emph{strong} filling of
  the contact manifold supported by the symmetric summed open book.
  But $(M,\xi)$ is not strongly fillable due to
  Theorem~\ref{thm:holOpenbook}, so we have a contradiction.

  It remains to exclude the possibility that $(M,\xi)$ could embed
  into a closed symplectic $4$--manifold $(W,\omega)$ as a
  nonseparating weakly contact hypersurface.  This is ruled out by
  almost the same argument, using the ``infinite chain'' trick of
  \cite{AlbersBramhamWendl}: as explained in
  Remark~\ref{remark:hypersurface}, we can cut $W$ open along~$M$ and
  use it to construct a noncompact but geometrically bounded
  symplectic manifold $(W_\infty,\omega_\infty)$ with weakly contact
  boundary $(M,\xi)$, then attach a cylindrical end and consider the
  above moduli space of holomorphic curves in $W_\infty$.  The
  monotonicity lemma gives a $C^0$--bound for these curves, but the
  same arguments that we used above also imply that they must foliate
  $W_\infty$, which is already a contradiction since $W_\infty$ is
  noncompact by construction.
\end{proof}

\subsection{Contact homology and twisted coefficients}
\label{subsec:SFT}

In this section we will justify Theorem~\ref{thm:SFT} by using the
deformation result Theorem~\ref{theorem: stableHypersurface} to show
that any weak filling $(W,\omega)$ of $(M,\xi)$ gives rise to an
algebra homomorphism from contact homology with suitably twisted
coefficients to a certain Novikov completion of the group ring
${\Q}\big[H_2(M;\R) / \ker[\restricted{\omega}{TM}]\big]$.  Thus if
$\1=0$ in twisted contact homology, the same must be true in the
Novikov ring and we obtain a contradiction.  Since our main goal is to
illustrate the role of twisted coefficients in SFT rather than provide
a rigorous proof, we shall follow the usual custom of ignoring
transversality problems---let us merely point out at this juncture
that abstract perturbations are required (e.g.~within the scheme under
development by Hofer-Wysocki-Zehnder, cf.~\cite{Hofer_polyfoldSurvey})
in order to make the following discussion fully rigorous.

We first briefly review the definition of contact homology, due to
Eliashberg \cite{EliashbergContactInvariants} and
Eliashberg-Givental-Hofer \cite{SymplecticFieldTheory}.  In order to
allow maximal flexibility in the choice of coefficients and avoid
certain complications of bookkeeping (e.g.~torsion in $H_1(M)$), we
will set up the theory with only a $\Z_2$--grading instead of the
usual $\Z$--grading---this choice makes no difference to the vanishing
of the homology and its consequences.  Assume $(M,\xi)$ is a closed
$(2n-1)$--dimensional manifold with a positive and cooriented contact
structure, and $\alpha$ is a contact form for~$\xi$ such that all
closed orbits of the Reeb vector field $X_\alpha$ are nondegenerate.
Each closed Reeb orbit $\gamma$ then has a canonically defined mod~$2$
Conley-Zehnder index, $\CZ(\gamma) \in \Z_2$, which defines the even
or odd \textbf{parity} of the orbit.  An orbit is called \textbf{bad}
if it is the double cover of an orbit with different parity than its
own; all other orbits are called \textbf{good}.  For any linear
subspace $\Ring \subset H_2(M ; \R)$, the group ring $\Q[H_2(M;\R) /
\Ring]$ consists of all finite sums of the form $\sum_{i=1}^N c_i
e^{A_i}$ with $c_i \in \Q$ and $A_i \in H_2(M ; \R) / \Ring$, where
multiplication is defined so that $e^A e^B = e^{A + B}$.  Now let
\begin{equation*}
  \CC{M , \alpha ;\, \Q[H_2(M;\R) / \Ring]}
\end{equation*}
denote the free $\Z_2$--graded supercommutative algebra with unit
generated by the elements of $\Q[H_2(M ; \R) / \Ring]$, which we
define to have even degree, together with the symbols $q_\gamma$ for
every good Reeb orbit~$\gamma$, to which we assign the degree
\begin{equation*}
  \abs{q_\gamma} = n - 3 + \CZ(\gamma) \in \Z_2 \;.
\end{equation*}
Note that orbits with the same image but different periods
(i.e.~distinct covers of the same orbit) give rise to \emph{distinct}
generators in this definition.

To define a differential on $\CC{M,\alpha;\, \Q[H_2(M;\R) / \Ring]}$,
we must make a few more choices.  First, let $C_1,\dotsc,C_N$ denote a
basis of cycles generating $H_1(M ; \R)$, and for each good orbit
$\gamma$, choose a real singular $2$--chain $F_\gamma$ in~$M$ such
that $\p F_\gamma = \gamma - \sum_{i=1}^N d_i C_i$ for a (unique) set
of coefficients $d_i \in \R$.  Choose also an $\R$--invariant almost
complex structure $J$ on $\R\times M$ which is compatible
with~$\alpha$.  Then any punctured finite energy $J$--holomorphic
curve $u : \dot{\Sigma} \to \R\times M$ represents a $2$--dimensional
\emph{relative} homology class, which can be completed uniquely to an
absolute homology class $[u] \in H_2(M ; \R)$ by adding the
appropriate combination of spanning $2$--chains~$F_\gamma$.  Given $A
\in H_2(M ; \R) / \Ring$ and a collection of good Reeb orbits
$\gamma^+, \gamma^-_1,\dotsc,\gamma^-_k$ for some $k \ge 0$, we denote
by
\begin{equation*}
  \moduli^A(\gamma^+ ; \gamma^-_1,\dotsc,\gamma^-_k)
\end{equation*}
the moduli space of unparametrized finite energy punctured
$J$--holomorphic spheres in homology classes representing $A \in
H_2(M;\R) / \Ring$, with one positive cylindrical end approaching
$\gamma^+$, and $k$ ordered negative cylindrical ends approaching
$\gamma^-_1,\dotsc,\gamma^-_k$ respectively.\footnote{Since various
  conflicting conventions appear throughout the literature, we should
  emphasize that our moduli spaces are defined with \emph{ordered}
  punctures and \emph{no asymptotic markers}.  The combinatorial
  factors in \eqref{eqn:differential} and \eqref{eqn:cobordismMap} are
  written with this in mind.}  The components of this moduli space can
be oriented coherently \cite{BourgeoisMohnke}, and we call a curve in
$\moduli^A(\gamma^+ ; \gamma^-_1,\dotsc,\gamma^-_k)$ \textbf{rigid} if
it lives in a connected component of the moduli space that has virtual
dimension~$1$.  The rigid curves in $\moduli^A(\gamma^+ ;
\gamma^-_1,\dotsc,\gamma^-_k)$ up to $\R$--translation can then be
counted algebraically, producing a rational number
\begin{equation*}
  \# \left(\frac{\moduli^A(\gamma^+ ; \gamma^-_1,\dotsc,\gamma^-_k)}{\R}\right)
  \in \Q \;.
\end{equation*}
(Note that since we are allowing the homology class to vary in an
equivalence class within $H_2(M;\R)$, $\moduli^A(\gamma^+ ;
\gamma^-_1,\dotsc,\gamma^-_k)$ may in general contain a mixture of
rigid and non-rigid curves; we ignore the latter in the count.)  We
then define the differential on generators $q_\gamma$ by
\begin{equation}\label{eqn:differential}
  \p q_\gamma = \sum_{k=0}^\infty \sum_{(\gamma_1,\dotsc,\gamma_k)}
  \sum_{A \in H_2(M;\R) / \Ring}
  \frac{\kappa_\gamma}{k!} \cdot
  \#\left( \frac{\moduli^A(\gamma ; \gamma_1,\dotsc,\gamma_k)}{\R} \right)
  e^A q_{\gamma_1} \dotsm q_{\gamma_k},
\end{equation}
where the second sum is over all ordered $k$--tuples
$(\gamma_1,\dotsc,\gamma_k)$ of good orbits, and $\kappa_\gamma \in
\N$ denotes the covering multiplicity of~$\gamma$.  It follows from
the main compactness theorem of Symplectic Field Theory
\cite{BourgeoisCompactness} that this sum is finite, and moreover that
the resulting map
\begin{equation*}
  \p :\, \CC{M,\alpha ;\, \Q[H_2(M;\R) / \Ring]} \to \CC{M,\alpha ;\,
    \Q[H_2(M;\R) / \Ring]} \;,
\end{equation*}
extended uniquely to the complex as a $\Q[H_2(M;\R) / \Ring]$--linear
derivation of odd degree, satisfies $\p^2 = 0$.  The homology of this
complex,
\begin{equation*}
  \HC{M,\xi ;\, \Q[H_2(M;\R) / \Ring]} := 
  H_*\left(\CC{M,\alpha;\, \Q[H_2(M;\R) / \Ring]},\p\right)
\end{equation*}
is a $\Z_2$--graded algebra with unit which is an invariant of the
contact structure~$\xi$, called the \textbf{contact homology} of
$(M,\xi)$ with coefficients in $\Q[H_2(M;\R) / \Ring]$.  We say that
this homology \textbf{vanishes} if it contains only one element; this
is equivalent to the relation $\1=0$, which is true if and only there
exists an element $Q \in \CC{M,\alpha;\, \Q[H_2(M;\R) / \Ring]}$ such
that $\p Q = \1$.  In general, this means there exists a rigid
$J$--holomorphic plane that cannot be ``cancelled'' in an appropriate
sense by other rigid curves with the same positive asymptotic orbit.

Suppose now that $n=2$ and $(W,\omega)$ is a weak filling of
$(M,\xi)$.  By Theorem~\ref{theorem: stableHypersurface}, we can
deform $\omega$ to make the boundary stable, inducing a stable
Hamiltonian structure $\mathcal{H} = (\alpha,\Omega)$ on~$M$ such that
$\alpha$ is a nondegenerate contact form for~$\xi$, and $\Omega$ is a
closed maximal rank $2$--form with
\begin{equation*}
  [\Omega] = [\restricted{\omega}{TM}] \in H^2_\dR(M) \;.
\end{equation*}
We can therefore extend $W$ by attaching a cylindrical end $[0,\infty)
\times M)$ with a symplectic structure of the form $d(\varphi(t)
\alpha) + \Omega$ for some small but increasing function~$\varphi$.
Denote the extended manifold by $W^\infty$, and choose a generic
compatible almost complex structure $J \in
\complex(\omega,\mathcal{H})$ on $W^\infty$.

The following observation is now crucial: since $\Omega$ and $d\alpha$
are conformally equivalent as symplectic structures on~$\xi$, the
compatibility condition for~$J$ on the cylindrical end $[0,\infty)
\times M$ depends only on~$\alpha$, not on~$\Omega$.  Thus~$J$
determines an almost complex structure on the symplectization
$\R\times M$ of precisely the type that is used to define the
differential on $\CC{M,\alpha;\, \Q[H_2(M;\R) / \Ring]}$, and the
breaking of $J$--holomorphic curves in $W^\infty$ into multi-level
curves will generally produce curves that are counted in the
computation of $\HC{M,\xi;\, \Q[H_2(M;\R) / \Ring]}$.  The only
difference between this and the case of a \emph{strong} filling is the
definition of energy, which does involve $\Omega$, but this makes no
difference for the count of curves in $\R\times M$.

Relatedly, one can now define another version of contact homology with
coefficients that depend on the filling: defining a complex
$\CC{M,\alpha ;\, \Q[H_2(W;\R) / \ker\omega]}$ the same way as above
but replacing $\Q[H_2(M;\R) / \Ring]$ with $\Q[H_2(W;\R) /
\ker\omega]$, \eqref{eqn:differential} yields a differential
\begin{equation*}
  \p_W :\, \CC{M,\alpha ;\, \Q[H_2(W;\R) / \ker\omega]}
  \to \CC{M,\alpha ;\, \Q[H_2(W;\R) / \ker\omega]}
\end{equation*}
by interpreting the term $e^A$ as an element of $\Q[H_2(W;\R) /
\ker\omega]$ through the canonical map $H_2(M;\R) \to H_2(W;\R)$
induced by the inclusion $M \hookrightarrow W$.  We denote the
homology of this complex by
\begin{equation*}
  \HC{M,\xi ;\, \Q[H_2(W;\R) / \ker\omega]} =
  H_*\left(\CC{M,\alpha;\, \Q[H_2(W;\R) / \ker\omega]},\p_W\right) \;,
\end{equation*}
and observe that since the canonical map $H_2(M;\R) \to H_2(W;\R)$
takes $\ker\Omega$ into $\ker\omega$, there is also a natural algebra
homomorphism
\begin{equation*}
  \HC{M,\xi ;\, \Q[H_2(M;\R) / \ker\Omega]} \to 
  \HC{M,\xi ;\, \Q[H_2(W;\R) / \ker\omega]} \;.
\end{equation*}
The right hand side therefore vanishes whenever the left hand side
does.

With this understood, we shall now count rigid $J$--holomorphic curves
in $W^\infty$ to define an algebra homomorphism from $\HC{M,\xi ;\,
  \Q[H_2(W;\R) / \ker\omega]}$ into a certain Novikov completion of
$\Q[H_2(W;\R) / \ker\omega]$.  Choose a basis of $1$--cycles
$Z_1,\dotsc,Z_m$ for the image of $H_1(M;\R)$ in $H_1(W;\R)$, and for
each of the basis cycles $C_i$ in~$M$, choose a real $2$--chain $G_i$
in~$W$ such that $\p G_i = C_i - \sum_{j=1}^m d_j Z_j$ for some
(unique) coefficients $d_j \in \R$.  Then for any finite energy
punctured $J$--holomorphic curve $u : \dot{\Sigma} \to W^\infty$ with
positive cylindrical ends approaching Reeb orbits in~$M$, these
choices allow us again to define an absolute homology class $[u] \in
H_2(W;\R)$ by adding the relative homology class to the appropriate
sum of the spanning $2$--chains $F_\gamma$ and~$G_i$.

For any Reeb orbit $\gamma$ in~$M$ and $A \in H_2(W;\R) / \ker\omega$,
denote by
\begin{equation*}
  \moduli^A(\gamma)
\end{equation*}
the moduli space of unparametrized finite energy $J$--holomorphic
planes in $W^\infty$ in homology classes representing~$A$, with a
positive end approaching the orbit~$\gamma$.  We call such a plane
\textbf{rigid} if its connected component of the moduli space has
virtual dimension~$0$.  Since the natural homomorphism $[\omega] :\,
H_2(W;\R) \to \R$ descends to $H_2(W;\R) / \ker\omega$, the
holomorphic curves in $\moduli^A(\gamma)$ satisfy a uniform energy
bound depending on~$A$ and~$\gamma$, thus the compactness theory
implies that $\moduli^A(\gamma)$ contains finitely many rigid curves.
These can again be counted algebraically (ignoring the non-rigid
curves) to define a rational number $\# \moduli^A(\gamma) \in \Q$.
Now for any good Reeb orbit $\gamma$ in~$M$, define the formal sum
\begin{equation}\label{eqn:cobordismMap}
  \Phi_W(q_\gamma) = \sum_{A \in H_2(W) / \ker\omega} 
  \kappa_\gamma \cdot \#\left( \moduli^A(\gamma) \right) e^A \;.
\end{equation}
This sum is not generally finite unless $\omega$ is exact, but it does
belong to the Novikov ring $\Lambda_\omega$, which we define to be the
completion of $\Q[H_2(W;\R) / \ker \omega]$ obtained by including
infinite formal sums
\begin{equation*}
  \left\{ \sum_{i=1}^\infty c_i e^{A_i} \ \Big|\ c_i \in \Q\setminus\{0\},
    \ A_i \in H_2(W;\R) / \ker \omega,
    \ \langle [\omega] , A_i \rangle \to +\infty \right\} \;.
\end{equation*}
One can extend $\Phi_W$ uniquely as an algebra homomorphism
\begin{equation*}
  \Phi_W :\, \CC{M,\alpha ;\, \Q[H_2(W;\R) / \ker\omega]} \to \Lambda_\omega \;,
\end{equation*}
which we claim descends to the homology $\HC{M,\xi ;\, \Q[H_2(W;\R) /
  \ker\omega]}$.  This follows by considering the boundary of the
union of all $1$--dimensional connected components of
$\moduli^A(\gamma)$: indeed, this boundary is precisely the set of all
broken rigid curves, consisting of an upper level in $\R\times M$ that
has a positive end approaching~$\gamma$ and an arbitrary number of
negative ends, which are capped off by a lower level formed by a
disjoint union of planes in~$W^\infty$.  Counting these broken rigid
curves yields the identity
\begin{equation*}
  \Phi_W \circ \p_W = 0 \;,
\end{equation*}
implying that $\Phi_W$ descends to an algebra homomorphism
\begin{equation*}
  \Phi_W :\, \HC{M,\xi ;\, \Q[H_2(W;\R) / \ker\omega]} \to \Lambda_\omega \;.
\end{equation*}
Theorem~\ref{thm:SFT} follows immediately, because we now have a
sequence of algebra homomorphisms
\begin{equation*}
  \HC{M,\xi ;\, \Q[H_2(M;\R) / \ker\Omega]} \to
  \HC{M,\xi ;\, \Q[H_2(W;\R) / \ker\omega]} \to \Lambda_\omega \;,
\end{equation*}
for which $\1 \ne 0$ on the right hand side.

\section{Toroidal symplectic $1$--handles}
\label{sec:handles}

In this section we introduce a symplectic handle attachment technique
that can be used to construct weak fillings of contact manifolds.  To
apply the method in general, we need the following ingredients:
\begin{itemize}
\item A weakly fillable contact manifold $(M,\xi)$, possibly
  disconnected,
\item Two disjoint homologically nontrivial pre-Lagrangian tori $T_+,
  T_- \subset (M,\xi)$ with characteristic foliations that are linear
  and rational,
\item Choices of $1$--cycles $K_\pm \subset T_\pm$ that intersect each
  leaf once,
\item A (possibly disconnected) weak filling $(W,\omega)$ of $(M,\xi)$
  such that $\omega$ restricts to an area form on the tori $T_\pm$ and
  (with appropriate choices of orientations) $\int_{T_+} \omega =
  \int_{T_-} \omega$.
\end{itemize}
Note that examples of this setup are easy to find: for instance if
$(W_\pm,\omega_\pm)$ are a pair of \emph{strong} fillings of contact
manifolds $(M_\pm,\xi_\pm)$ which contain pre-Lagrangian tori $T_\pm
\subset M_\pm$ with $[T_\pm] \ne 0 \in H_2(W_\pm ; \R)$, one may
assume after a perturbation that the characteristic foliations on
$T_\pm$ are rational.  Furthermore one can deform the symplectic
structures $\omega_\pm$ so that they vanish on $T_\pm$, and find
closed $2$--forms $\sigma_\pm$ on $W_\pm$ such that
$\sigma_\pm|_{T_\pm} > 0$ and $\int_{T_\pm} \sigma_\pm = 1$.  Then for
any $\epsilon > 0$ sufficiently small,
\begin{equation*}
  (W_+, \omega_+ + \epsilon\, \sigma_+) \sqcup
  (W_-, \omega_- + \epsilon\, \sigma_-)
\end{equation*}
is a weak filling of $(M,\xi) := (M_+,\xi_+) \sqcup (M_-,\xi_-)$ with
the desired properties.  We will use a construction of this sort in
the proof of Theorem~\ref{thm:weakFillings}.

Given this data, we will show that a new symplectic manifold with
weakly contact boundary can be produced by attaching to $W$ a
\emph{toroidal $1$--handle}
\begin{equation*}
  \T^2 \times [0,1] \times [0,1]
\end{equation*}
along $T_+ \sqcup T_-$.  The effect of this on the contact manifold
can be described as a contact topological operation called
\emph{splicing}, which essentially cuts $(M,\xi)$ open along $T_+$ and
$T_-$ and then reattaches it along a homeomorphism that swaps
corresponding boundary components.  The result of this operation
depends on the isotopy class of the map used when identifying the
boundary tori, but a choice can be specified uniquely by requiring
that this map take the generators of $H_1(T_-, \Z)$ represented by the
cycle $K_-$ and a leaf of the characteristic foliation to the
corresponding generators of $H_1(T_+,\Z)$.

We shall describe this topological operation in
\secref{subsec:splicing}, and prove a general result on toroidal
symplectic handle attaching in \secref{subsec:attaching}, leading in
\secref{subsec:fillingsConstruction} to the proof of
Theorem~\ref{thm:weakFillings}.  As an easy by-product of the setting
we use for handle attaching, we will also see why fillability is
preserved under Lutz twists along symplectic pre-Lagrangian tori.

\subsection{Pre-Lagrangian tori, splicing and Lutz twists}
\label{subsec:splicing}

Assume $(M,\xi)$ is a contact $3$--manifold, let $T\subset M$ be an
embedded and oriented pre-Lagrangian torus with rational linear
characteristic foliation, and choose a $1$--cycle $K \subset T$ that
intersects each characteristic leaf once.  We can find a
contactomorphism between a neighborhood of $T$ and the local model
\begin{equation*}
  \bigl(\T^2 \times (-\epsilon,\epsilon),\,
  \ker (d\theta + r \,d\phi)\bigr) \;,
\end{equation*}
where we use the coordinates $(\phi,\theta;r)$ on the thickened torus
$\T^2 \times (-\epsilon,\epsilon)$, such that $T$ is identified with
$\T^2 \times \{0\}$ with its natural orientation, and the
$\theta$--cycles are homologous to~$K$ up to sign.  This
identification is uniquely defined up to isotopy.  We shall refer to
the coordinates $(\phi,\theta;r)$ chosen in this way as
\textbf{standard coordinates} near $(T,K)$.

Now suppose $(T_+,K_+)$ and $(T_-,K_-)$ are two pairs as described
above, with $T_+ \cap T_- = \emptyset$, and choose disjoint
neighborhoods $\nbhd(T_\pm)$ together with standard coordinates
$(\phi,\theta;r)$.  The coordinates divide each of the neighborhoods
$\nbhd(T_\pm)$ into two halves:
\begin{equation*}
  \nbhd^+(T_\pm) := \bigl\{ r \in [0,\epsilon) \bigr\} \subset \nbhd(T_\pm)
  \quad\text{ and }\quad
  \nbhd^-(T_\pm) := \bigl\{ r \in (-\epsilon,0] \bigr\} \subset \nbhd(T_\pm)\;.
\end{equation*}
We can then construct a new contact manifold $(M',\xi')$ by the
following steps (see Figure~\ref{fig: splicing}):
\begin{enumerate}
\item Cut $M$ open along $T_+$ and $T_-$, producing a manifold with
  four pre-Lagrangian torus boundary components $\p \nbhd^+(T_+)$, $\p
  \nbhd^-(T_+)$, $\p\nbhd^+(T_-)$ and $\p\nbhd^-(T_-)$.
\item Attach $\nbhd^-(T_-)$ to $\nbhd^+(T_+)$ and $\nbhd^-(T_+)$ to
  $\nbhd^+(T_-)$ so that the standard coordinates glue together
  smoothly.
\end{enumerate}
The resulting contact manifold $(M',\xi')$ is uniquely defined up to
contactomorphism, and it also contains a distinguished pair of
pre-Lagrangian tori $T'_\pm$, namely
\begin{equation*}
  T'_+ := \nbhd^+(T_+) \cap \nbhd^-(T_-) \subset M'
  \quad\text{ and }\quad
  T'_- := \nbhd^-(T_+) \cap \nbhd^+(T_-) \subset M' \;.
\end{equation*}

\begin{figure}[htbp]
  \centering
  \includegraphics[width=0.9\textwidth, keepaspectratio]{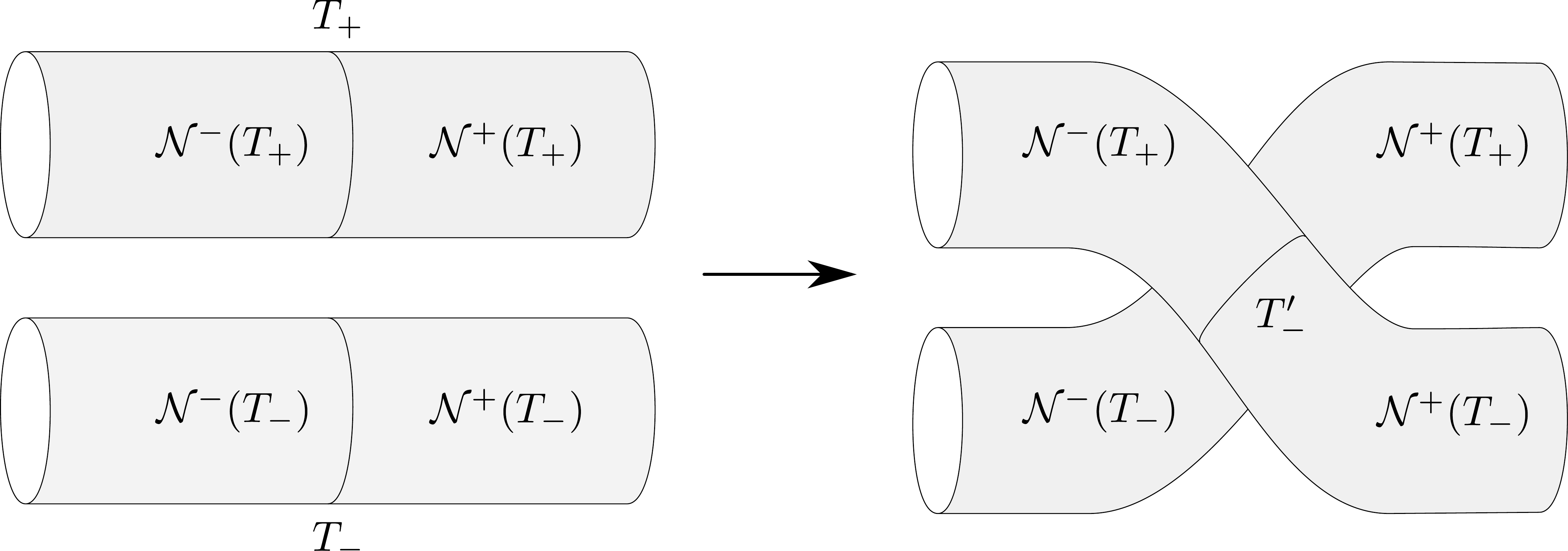}
  \caption{Splicing along tori.}\label{fig: splicing}
\end{figure}

\begin{defn}\label{defn:splicing}
  We will say that $(M',\xi')$ constructed above is the contact
  manifold obtained from $(M,\xi)$ by \textbf{splicing} along
  $(T_+,K_+)$ and $(T_-,K_-)$.
\end{defn}

\begin{example}\label{ex:LutzTwist}
  Consider the tight contact torus $(\T^3,\xi_n)$ for $n \in \N$,
  where
  \begin{equation*}
    \xi_n = \ker\bigl[ \cos(2\pi n \rho)\, d\theta
    + \sin(2\pi n \rho)\, d\phi \bigr]
  \end{equation*}
  in coordinates $(\phi,\theta,\rho) \in \T^3$.  Then $T_0 := \{ \rho
  = 0 \}$ is a pre-Lagrangian torus, to which we assign the natural
  orientation induced by the coordinates $(\phi,\theta)$.  If
  $(M,\xi)$ is another connected contact $3$--manifold with an
  oriented pre-Lagrangian torus $T \subset M$, then splicing $(M,\xi)
  \sqcup (\T^3,\xi_n)$ along $T$ and $T_0$ produces a new connected
  contact manifold, namely the one obtained from $(M,\xi)$ by
  performing~$n$ \textbf{Lutz twists} along~$T$.  If $T \subset M$ is
  compressible then the resulting contact manifold is overtwisted; by
  contrast, Lutz twists along incompressible tori can be used to
  construct tight contact manifolds with arbitrarily large Giroux
  torsion.  Note that in this example the choice of the transverse
  cycles on $T$ and $T_0$ does not influence the resulting manifold.
\end{example}

\begin{remark}\label{remark:circle_action_and_splicing}
  Note that if $(M,\xi)$ is a contact $3$--manifold with an
  $\SS^1$--action such that the oriented pre-Lagrangian tori $T_+,T_-
  \subset M$ consist of Legendrian $\SS^1$--orbits, then the splicing
  operation can be assumed compatible with the circle action, in the
  sense that the manifold $(M',\xi')$ obtained by splicing is then
  also an $\SS^1$--manifold, with the tori $T_\pm'$ consisting of
  Legendrian orbits.

  If sections $\sigma_\pm$ of the $\SS^1$--action are given in a
  neighborhood of the tori $T_+,T_-$ in $(M,\xi)$, then we can obtain
  any desired intersection number $e_+$ between $\sigma_- \cap
  \nbhd^-(T_-)$ and $\sigma_+ \cap \nbhd^+(T_+)$ in $T_+'$ by letting
  the cycle $K_-$ be the intersection $\sigma_- \cap T_-$, and
  choosing a cycle $K_+$ that has intersection number $e_+$ with
  $\sigma_+$.  The intersection number $e_-$ between $\sigma_+ \cap
  \nbhd^-(T_+)$ and $\sigma_- \cap \nbhd^+(T_-)$ in $T_-'$ will always
  be equal to $-e_+$.

  Note in particular that we can arrange for the sections $\sigma_\pm$
  to glue smoothly after splicing by choosing both cycles $K_\pm
  \subset T_\pm$ to be the intersections $\sigma_\pm \cap T_\pm$.
\end{remark}

\subsection{Attaching handles}
\label{subsec:attaching}

Given $\delta > 0$, we will use the term \textbf{toroidal $1$--handle}
to refer to the smooth manifold with boundary and corners,
\begin{equation*}
  \handle_\delta = \T^2 \times [-\delta,\delta] \times [-\delta,\delta] \;.
\end{equation*}
Let $(\phi,\theta;r,r')$ denote the natural coordinates on
$\handle_\delta$, and label the smooth pieces of its boundary
$\p\handle_\delta = \p^N \handle_\delta \cup \p^S\handle_\delta \cup
\p^W\handle_\delta \cup \p^E\handle_\delta$ as follows:
\begin{equation*}
  \p^N\handle_\delta = \{ r' = +\delta \}, \qquad
  \p^S\handle_\delta = \{ r' = -\delta \}, \qquad
  \p^W\handle_\delta = \{ r = -\delta \}, \text{ and }
  \p^E\handle_\delta = \{ r = +\delta \} \;.
\end{equation*}
Observe that if we assign the natural boundary orientations to each of
these pieces, then the induced coordinates $(\phi,\theta;r)$ are
negatively oriented on $\p^N\handle_\delta$ but positively oriented on
$\p^S\handle_\delta$; similarly, the coordinates $(\phi,\theta;r')$
are negatively oriented on $\p^W\handle_\delta$, and positively
oriented on $\p^E\handle_\delta$.

Suppose $(M,\xi)$ is a contact manifold, $W = (-\epsilon,0] \times M$
is a collar neighborhood with $\p W = M$, and $T_+, T_- \subset M$ are
oriented pre-Lagrangian tori with transverse $1$--cycles $K_\pm
\subset T_\pm$ and standard coordinates $(\phi,\theta;r)$ on a pair of
disjoint neighborhoods
\begin{equation*}
  \T^2 \times (-\epsilon,\epsilon) \cong \nbhd(T_\pm) \subset M \;.
\end{equation*}
Choosing $\delta$ with $0 < \delta < \epsilon$, we can \textbf{attach}
$\handle_\delta$ to~$W$ along $(T_+,K_+)$ and $(T_-,K_-)$ via the
orientation reversing embeddings
\begin{equation*}
  \begin{split}
    \Phi :\,& \p^N\handle_\delta \hookrightarrow \nbhd(T_+), \,
    (\phi,\theta;r,\delta)
    \mapsto (\phi,\theta;r) \\
    \Phi :\,& \p^S\handle_\delta \hookrightarrow \nbhd(T_-), \,
    (\phi,\theta;r,-\delta) \mapsto (\phi,\theta;-r) \;.
  \end{split}
\end{equation*}
Then if $W' = W \cup_\Phi \handle_\delta$, after smoothing the
corners, the new boundary $M' = \p W'$ is diffeomorphic to the
manifold obtained from~$M$ by splicing along~($T_+,K_+)$
and~$(T_-,K_-)$, where the distinguished tori $T_\pm' \subset M'$ are
naturally identified with
\begin{equation*}
  T_\pm' = \T^2 \times \bigl\{(\pm \delta,0)\bigr\} \subset
  \p^W \handle_\delta \cup \p^E \handle_\delta \subset M' \;.
\end{equation*}
The main result of this section is that such an operation can also be
defined in the symplectic and contact categories.

\begin{thm}\label{thm:handles}
  Suppose $(W,\omega)$ is a symplectic manifold with weakly contact
  boundary $(M,\xi)$, and $T_+, T_- \subset M$ are disjoint, oriented
  pre-Lagrangian tori with rational linear characteristic foliations
  and transverse $1$--cycles $K_\pm \subset T_\pm$, such that $T_\pm$
  are also symplectic with respect to~$\omega$, with
  \begin{equation*}
    \int_{T_+} \omega = \int_{T_-} \omega > 0\;.
  \end{equation*}
  Then after a symplectic deformation of $\omega$ near $T_+ \cup T_-$,
  $\omega$ extends to a symplectic form $\omega'$ on the manifold
  \begin{equation*}
    W' = W \cup \handle_\delta
  \end{equation*}
  obtained by attaching a toroidal $1$--handle $\handle_\delta$ to~$W$
  along~$(T_+,K_+)$ and $(T_-,K_-)$, so that $(W',\omega')$ then has
  weakly contact boundary $(M',\xi')$, where the latter is obtained
  from $(M,\xi)$ by splicing along $(T_+,K_+)$ and~$(T_-,K_-)$.
\end{thm}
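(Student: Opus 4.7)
The plan is to reduce the problem to a model situation by normalizing $\omega$ in a neighborhood of each $T_\pm$ via Moser-type arguments, and then writing down an explicit symplectic form on the handle $H_\delta$ that agrees with this model along the attaching faces. Conceptually, $H_\delta$ should be regarded as a trivial $\T^2$-fibration over the square $[-\delta,\delta]^2$ in which every fiber is a symplectic submanifold of common area $A := \int_{T_+}\omega = \int_{T_-}\omega$; the equal-area hypothesis is precisely the compatibility condition that lets such an interpolation exist at all.

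For Step~1, the pre-Lagrangian and rational-linearity assumptions provide standard coordinates $(\phi,\theta;r)$ on $\nbhd(T_\pm) \subset M$ with $\xi = \ker(d\theta + r\,d\phi)$ and $T_\pm = \{r = 0\}$. Since the class $[\restricted{\omega}{TM}] \in H^2\bigl(\nbhd(T_\pm);\R\bigr)$ is determined by the area $A$, Lemma~\ref{lemma:interpolation} applied in a thin collar of $\p W$ inside $W$ lets me deform $\omega$ into the explicit normal form
\[
\omega \;=\; A\,d\phi\wedge d\theta - B\,d\phi\wedge dr + d\bigl(t\,(d\theta + r\,d\phi)\bigr),
\]
where $t \in (-\epsilon,0]$ is the collar coordinate, $M = \{t = 0\}$, and $B > \epsilon$ is a fixed positive constant. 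A direct computation gives $\omega\wedge\omega = 2(B+t)\,dt\wedge d\phi\wedge d\theta\wedge dr$, and the weakly contact condition $\restricted{\omega}{\xi} > 0$ reduces to $B > 0$; the equal-areas hypothesis lets me use the same constants $(A,B)$ at both tori.

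For Step~2, on $H_\delta$ with coordinates $(\phi,\theta;r,r')$ I would take the closed ansatz
\[
\omega_H \;=\; A\,d\phi\wedge d\theta + d\bigl(\psi(r,r')\,d\phi + q(r')\,d\theta\bigr) + \gamma(r')\,dr\wedge dr',
\]
which automatically has every $\T^2$-fiber symplectic of area $A$. The normal form of Step~1 together with the attaching maps $\Phi_N$ (identity) and $\Phi_S$ (incorporating $r \mapsto -r$) prescribes the values of $\psi_r, \psi_{r'}, q'$, and $\gamma$ at $r' = \pm\delta$; in particular, $q'$ is forced to change sign as $r'$ traverses the handle, and $\gamma$ vanishes at both ends. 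With the natural choice $\psi(r,r') = -B\,q'(r')\,r + H(r')$ for smooth auxiliary functions $q$ and $H$ matching the prescribed endpoint data, the nondegeneracy $\omega_H \wedge \omega_H > 0$ reduces to a pointwise inequality of the form $B(q'(r'))^2 + A\,\gamma(r') > 0$, satisfied by choosing $\gamma \ge 0$ with $\gamma > 0$ near the unique zero of $q'$.

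Finally, in Step~3, after gluing $H_\delta$ to $W$ via $\Phi_N, \Phi_S$ and smoothing the corners, the new boundary $M' = \p W'$ is naturally diffeomorphic to the spliced manifold of Definition~\ref{defn:splicing}. The spliced contact structure $\xi'$ on the new faces $\p^E H_\delta$ and $\p^W H_\delta$ takes the form $\ker(d\theta \pm r'\,d\phi)$ in handle coordinates, and matches $\xi$ on the unchanged parts of $M$ at the smoothed corners. A direct evaluation shows $\restricted{\omega_H}{\xi'}$ on each new face reduces to a smooth function of $r'$ that vanishes at $r' = \pm\delta$ and can be arranged strictly positive on the interior by suitable choice of the higher-order derivatives of $q$ and $H$. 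The principal obstacle throughout is Step~2: the sign reversal built into $\Phi_S$ forces $\psi_r$ and $q'$ to change sign across $H_\delta$, so no purely contact-type interpolation $A\,d\phi\wedge d\theta + d(\varphi\,\lambda)$ can succeed, and the corrective term $\gamma\,dr\wedge dr'$, only permitted because the toral fibers all carry the common symplectic area $A$, is what restores nondegeneracy.
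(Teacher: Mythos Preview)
Your strategy is genuinely different from the paper's, and the comparison is instructive.  The paper does \emph{not} try to keep $\xi$ contact while building the handle.  Its key maneuver is Proposition~\ref{prop:localModel}, which chains Lemmas~\ref{lemma:interpolation2} and~\ref{lemma:interpolation} to deform $\xi$ near $T_\pm$ through confoliations to $\bar\xi$ with $\bar\xi = \ker dr$ on a smaller neighborhood, while simultaneously deforming $\omega$ to $\bar\omega = C\,dt\wedge dr + A\,d\phi\wedge d\theta$.  The handle then carries the single form $C\,dr'\wedge dr + A\,d\phi\wedge d\theta$, and positivity on the new boundary is checked against the \emph{confoliation} $\bar\xi' = T(\T^2\times\{*\})$, for which it is simply $A\,d\phi\wedge d\theta > 0$.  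Only afterwards is $\bar\xi'$ perturbed $C^0$-small to the spliced contact structure.  Because $\ker dr$ is invariant under $r\mapsto -r$, the sign-reversal issue you correctly identify never arises, and your corrective term $\gamma\,dr\wedge dr'$ is in effect the paper's \emph{entire} symplectic form on the handle (with $\gamma\equiv C$).  What the confoliation buys is that all the functions $\psi,q,H$ in your ansatz collapse to constants.

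Your direct approach may be salvageable, but Step~3 as written has a real gap.  Your computation that $\restricted{\omega_H}{\xi'}$ vanishes at $r'=\pm\delta$ on $\p^E\handle_\delta$ is actually correct (it equals $\psi_{r'}-r'q'$, and your matching forces this to zero there), but this is a problem, not a feature: across the smoothed corner the new boundary meets the old piece of $M$ at $r=\delta$, where $\restricted{\omega}{\xi}=B>0$.  So the quantity you must control is not $\restricted{\omega_H}{\xi'}$ for your fixed $\xi'=\ker(d\theta+r'\,d\phi)$ on the flat face, but rather $\restricted{\omega'}{\xi'}$ on the \emph{smoothed} hypersurface with a contact structure that rotates from containing $\p_r$ to containing $\p_{r'}$, and you have not written down either.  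Moreover, ``suitable choice of the higher-order derivatives of $q$ and $H$'' is not verified: the matching at $r'=\pm\delta$ already pins down $q'(\pm\delta)=\mp 1$, $q''(\pm\delta)=1/B$ and $H'(\pm\delta)=0$, so $q''$ is positive at both ends yet must be negative somewhere in between, and the sign of $-Bq''(r')\delta + H'(r') - r'q'(r')$ on the interior is not obviously controllable.  The paper's confoliation detour is precisely what makes all of these interpolation problems disappear.
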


As we saw in Example~\ref{ex:LutzTwist}, Lutz twists along a
pre-Lagrangian torus $T \subset (M,\xi)$ can always be realized by
splicing $(M,\xi)$ together with a tight contact $3$--torus, and due
to the construction of Giroux \cite{Giroux_plusOuMoins}, the latter
admits weak fillings for which the pre-Lagrangian tori $\{ \rho =
\text{const} \}$ are symplectic.  Thus whenever $(M,\xi)$ has weak
filling $(W,\omega)$ and $T \subset M = \p W$ is a torus that is both
pre-Lagrangian in $(M,\xi)$ and symplectic in $(W,\omega)$, the above
theorem can be used to construct weak fillings of every contact
manifold obtained by performing finitely many Lutz twists along~$T$.
We will see however that the setup needed to prove the theorem yields
a much more concrete construction of such a filling:

\begin{thm}\label{thm:LutzTwists}
  Suppose $(W,\omega)$ is a symplectic manifold with weakly contact
  boundary $(M,\xi)$, and $T \subset M$ is a pre-Lagrangian torus
  which is also symplectic with respect to~$\omega$.  Then for any $n
  \in \N$, $(W,\omega)$ can be deformed symplectically so that it is
  also positive on~$\xi_n$, where the latter is obtained from~$\xi$ by
  performing~$n$ Lutz twists along~$T$.
\end{thm}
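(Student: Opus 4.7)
The plan is to attach an explicit ``Lutz-twist collar'' to $W$ near $T\subset\p W$. Topologically a collar adds nothing, so the resulting four-manifold will be diffeomorphic to~$W$; but by arranging the symplectic structure on the collar appropriately, I can make its new boundary carry $\xi_n$ instead of~$\xi$. First I will identify a tubular neighborhood $\nbhd(T)\cong\T^2\times(-\delta,\delta)$ with standard coordinates $(\phi,\theta;r)$ such that $\xi=\ker\lambda$ for $\lambda:=\cos r\,d\theta+\sin r\,d\phi$. By Lemma~\ref{normalform for weak collars}, a collar of~$\p W$ near~$T$ carries $\omega=d(t\lambda)+\Omega_{\mathrm{orig}}$, and since $T$ is $\omega$-symplectic, $A_0:=\int_T\Omega_{\mathrm{orig}}>0$. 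Applying Lemma~\ref{lemma:interpolation} with $\uU=\nbhd(T)$, I symplectically deform $\omega$ in the collar so that near~$T$,
\begin{equation*}
  \omega|_{TM} \;=\; \Omega_0 \;:=\; A_0\,d\phi\wedge d\theta+g_0\,dr\wedge\nu_0,\qquad \nu_0:=-\sin r\,d\theta+\cos r\,d\phi,
\end{equation*}
for a freely chosen constant $g_0>0$; one checks $\Omega_0$ is cohomologous to $\Omega_{\mathrm{orig}}$ on $\nbhd(T)$ and satisfies $\lambda\wedge\Omega_0>0$.

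Next, construct the model collar $C=\nbhd(T)\times[0,1]$ with coordinates $(\phi,\theta,r,t)$. Choose a smooth family of strictly increasing functions $\{\alpha_t\}_{t\in[0,1]}$ with $\alpha_0(r)=r$, $\alpha_1=\alpha_n$ the $n$-fold Lutz-twist angular function (which agrees with~$r$ near $r=\pm\delta$ but adds $2\pi n$ of winding in between), and all $\alpha_t$ equal to $r$ near $r=\pm\delta$. Setting $\lambda_t:=\cos\alpha_t\,d\theta+\sin\alpha_t\,d\phi$ and $\nu_t:=-\sin\alpha_t\,d\theta+\cos\alpha_t\,d\phi$, define
\begin{equation*}
  \omega_C \;:=\; A_0\,d\phi\wedge d\theta + g(t)\,dr\wedge\nu_t + d\bigl(h(t)\,\lambda_t\bigr)
\end{equation*}
for positive smooth $g,h:[0,1]\to(0,\infty)$. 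Using $\lambda_t\wedge\nu_t=d\theta\wedge d\phi$ and $d\lambda_t=(\p_r\alpha_t)\,dr\wedge\nu_t+(\p_t\alpha_t)\,dt\wedge\nu_t$, a direct calculation yields
\begin{equation*}
  \omega_C\wedge\omega_C \;=\; \pm\,2\bigl(g(t)+h(t)\,\p_r\alpha_t(r)\bigr)\,h'(t)\;dt\wedge d\phi\wedge d\theta\wedge dr,
\end{equation*}
which is a positive volume form (for the appropriate sign in the chosen orientation convention) whenever $g+h\,\p_r\alpha_t>0$ and $h'$ is monotone of the correct sign; both conditions are met by any positive $g$ together with a monotonic positive~$h$. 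Requiring $g(0)+h(0)=g_0$ ensures $\omega_C|_{\{t=0\}}=\Omega_0$; with $\p_t\alpha_t|_{t=0}=0$ and a suitable value of $h'(0)$ the $dt$-component of $\omega_C$ near $t=0$ also matches that of the original $d(t\lambda)+\Omega_0$ to first order; and any $g(1),h(1)>0$ yields $\lambda_n\wedge\omega_C|_{\{t=1\}}>0$, the weak contact condition for~$\xi_n$.

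Because $\alpha_t(r)=r$ for all~$t$ when $|r|$ is close to $\delta$, $\omega_C$ is $t$-independent near the side walls $r=\pm\delta$ of~$C$, and after a mild tapering of $g,h$ to trivial values in a neighborhood of $|r|=\delta$ the form glues smoothly onto the unmodified~$\omega$ on $W\setminus\nbhd(T)$. Attaching $C$ to~$W$ along $\{t=0\}$ produces a new symplectic manifold $W'=W\cup C$, diffeomorphic to~$W$, whose boundary is weakly $\xi_n$-contact. To realize this as a symplectic \emph{deformation} of~$\omega$ on~$W$ itself, replace $C$ with $C_\sigma:=\nbhd(T)\times[0,\sigma]$ for $\sigma\in[0,1]$ and concatenate with the preliminary deformation from Lemma~\ref{lemma:interpolation}; one then obtains a continuous path of symplectic forms on~$W$ starting at the original~$\omega$ and ending at one positive on~$\xi_n$. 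The main technical obstacle is juggling the symplecticity inequality, the pointwise boundary matching at $\{t=0\}$, and the tapering near $|r|=\delta$ simultaneously, but the freedom to take $g_0$ arbitrarily large in the preliminary normalization reduces this to an exercise in elementary choices of $g$ and $h$ rather than a conceptual difficulty.
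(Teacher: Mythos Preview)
There is a genuine gap: your form $\omega_C$ is not closed. The terms $A_0\,d\phi\wedge d\theta$ and $d(h(t)\lambda_t)$ are closed, but
\[
d\bigl(g(t)\,dr\wedge\nu_t\bigr) \;=\; g'(t)\,dt\wedge dr\wedge\nu_t \;-\; g(t)\,\p_t\alpha_t\,dt\wedge dr\wedge\lambda_t,
\]
and since $\p_t\alpha_t$ cannot vanish identically (the family $\alpha_t$ is nonconstant in~$t$), this is nonzero whenever $g\ne 0$. So $\omega_C$ is not symplectic. The remedy is simple: drop the $g$-term entirely and set $\omega_C=A_0\,d\phi\wedge d\theta+d(h(t)\lambda_t)$; then $\omega_C\wedge\omega_C$ is a nonzero multiple of $h\,h'\,\p_r\alpha_t\,dt\wedge dr\wedge d\theta\wedge d\phi$, which is a volume form provided $h'>0$ and $\p_r\alpha_t>0$, and the matching at $t=0$ fixes $h(0)=g_0$.

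There is a secondary issue with the gluing along the side walls. Your claim that $\omega_C$ is $t$-independent near $\abs{r}=\delta$ is false as written: even where $\alpha_t(r)=r$, the coefficients $g(t),h(t),h'(t)$ still vary in~$t$. Letting $g,h$ depend on $r$ in order to ``taper'' them reintroduces both closedness and nondegeneracy questions that you have not addressed; this is where the genuine work of your approach would lie, not in the choices on the interior of the collar. For comparison, the paper's proof avoids both difficulties entirely: it uses Proposition~\ref{prop:localModel} to deform $\omega$ so that it becomes positive on a limiting \emph{confoliation} $\bar\xi$ equal to $\ker dr$ near~$T$, and then invokes Lemma~\ref{lemma:contactForms} to exhibit both $\xi$ and the Lutz-twisted $\xi_n$ as $C^0$-small contact perturbations of~$\bar\xi$, on which the deformed $\omega$ is then automatically positive. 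No collar is attached and no corner smoothing is needed.
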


To prove both of these results, we begin by constructing a suitable
symplectic deformation of a weak filling near any symplectic
pre-Lagrangian torus.  The local setup is as follows: let
\begin{equation*}
  M = \T^2 \times [-5\epsilon,5\epsilon]
\end{equation*}
with coordinates $(\phi,\theta;r)$ and contact structure $\xi =
\ker\lambda$, where
\begin{equation*}
  \lambda = d\theta + r\, d\phi \;.
\end{equation*}
Define also
\begin{equation*}
  W = (-\epsilon,0] \times M
\end{equation*}
with coordinates $(t;\phi,\theta;r)$, and identifying $M$ with $\p W =
\{0\} \times M$, assume $\Omega$ is a closed $2$--form on~$M$ such
that $\restricted{\Omega}{\xi} > 0$, and
\begin{equation*}
  \omega = d(t\, \lambda) + \Omega
\end{equation*}
is a symplectic form on~$W$.  Lemma~\ref{normalform for weak collars}
guarantees that $\omega$ can always be put in this form without loss
of generality.  Moreover, assume $\Omega$ is positive on the torus
\begin{equation*}
  T := \T^2 \times \{0\} \subset M \;.
\end{equation*}
By shrinking $\epsilon$ if necessary, we can then assume without loss
of generality that $\omega$ is positive on each of the tori $\{t\}
\times \T^2 \times \{r\}$ for $t \in (-\epsilon,0]$ and $r \in
[-5\epsilon,5\epsilon]$.  Define the constant
\begin{equation}\label{eqn:area}
  A = \int_T \omega > 0 \;.
\end{equation}

Let us now define a family of $1$--forms on~$M$,
\begin{equation*}
  \lambda_\sigma = d\theta + g_\sigma(r)\, d\phi
\end{equation*}
for $\sigma \in [0,1]$, where $g_\sigma :\, [-5\epsilon,5\epsilon]\to
\R$ is a smooth $1$--parameter family of odd functions such that:
\begin{enumerate}
\item $g_\sigma(r) = r$, and $g_\sigma(0) = 0$ for all $\sigma \in
  [0,1]$ when $\abs{r} \ge 4\epsilon$,
\item $g_\sigma' > 0$ for all $\sigma \in (0,1]$,
\item $g_1(r) = r$ for all~$r$,
\item $g_0(r) = 0$ for all $\abs{r} \le 3\epsilon$.
\end{enumerate}
Then $\lambda_1 = \lambda$, $\lambda_\sigma$ is a contact form for all
$\sigma \in (0,1]$ and $\lambda_0$ defines a confoliation, which is
integrable in the region $\bigl\{ \abs{r} \le 3\epsilon \bigr\}$.  Let
$\xi_\sigma = \ker\lambda_\sigma$.  By shrinking $\epsilon$ again if
necessary, we can assume without loss of generality that each
$\xi_\sigma$ is sufficiently $C^0$--close to $\xi$ so that
\begin{equation*}
  \restricted{\Omega}{\xi_\sigma} > 0
\end{equation*}
for all $\sigma \in [0,1]$.

Next, choose a smooth cutoff function
\begin{equation*}
  \beta :\, [-5\epsilon,5\epsilon] \to [0,1]
\end{equation*}
that has support in $[-3\epsilon,3\epsilon]$ and is identically~$1$ on
$[-2\epsilon,2\epsilon]$.  We use this to define a smooth
$2$--parameter family of $1$--forms for $(\sigma,\tau) \in [0,1]
\times [0,1]$,
\begin{equation}\label{eqn:2parameter}
  \lambda_\sigma^\tau = (1 - \tau) \beta(r) \, dr +
  \bigl[ 1 - (1 - \tau) \beta(r) \bigr]\, \lambda_\sigma,
\end{equation}
and distributions $\xi_\sigma^\tau = \ker \lambda_\sigma^\tau$.  The
following lemma implies that $\xi_\sigma^\tau$ is a contact structure
whenever both $\sigma$ and $\tau$ are positive.

\begin{lemma}\label{lemma:contactForms}
  Suppose $f(r)$ and $g(r)$ are any two smooth real valued functions
  on $[-5\epsilon,5\epsilon]$ such that the $1$--form
  \begin{equation*}
    \alpha = f(r)\, d\theta + g(r)\, d\phi
  \end{equation*}
  on $\T^2 \times [-5\epsilon,5\epsilon]$ is contact.  Then for any $t
  \in [0,1)$, the $1$--form
  \begin{equation*}
    \alpha_t := t \beta(r) \, dr + \bigl[ 1 - t \beta(r) \bigr]\, \alpha
  \end{equation*}
  is also contact.
\end{lemma}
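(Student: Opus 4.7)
The proof will be a direct computation of $\alpha_t \wedge d\alpha_t$, showing that it is a positive multiple of $\alpha \wedge d\alpha$. The plan is as follows. Set $h(r) := t\beta(r)$, so $0 \le h \le t < 1$ everywhere, and rewrite
\begin{equation*}
  \alpha_t = h\, dr + (1-h) f\, d\theta + (1-h) g\, d\phi \;.
\end{equation*}
Since $h$ depends only on~$r$, we have $d(h\,dr) = 0$, so
\begin{equation*}
  d\alpha_t = \bigl[(1-h) f' - h' f\bigr]\, dr\wedge d\theta
  + \bigl[(1-h) g' - h' g\bigr]\, dr\wedge d\phi \;.
\end{equation*}

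The first observation is that every term of $d\alpha_t$ contains the factor $dr$, so the contribution of $h\,dr$ to $\alpha_t \wedge d\alpha_t$ vanishes automatically. Thus only the $d\theta$-- and $d\phi$--components of $\alpha_t$ contribute, and one gets
\begin{equation*}
  \alpha_t \wedge d\alpha_t = (1-h)\Bigl\{ f\bigl[(1-h) g' - h' g\bigr]
  - g\bigl[(1-h) f' - h' f\bigr]\Bigr\}\, d\phi \wedge d\theta \wedge dr \;.
\end{equation*}
The key algebraic point -- which is the only place where anything needs to be checked carefully -- is that the two $h'$ contributions are $-h' f g$ and $+h' g f$, which cancel identically. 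What remains is
\begin{equation*}
  \alpha_t \wedge d\alpha_t = (1-h)^2 \bigl(f g' - g f'\bigr)\,
  d\phi \wedge d\theta \wedge dr = (1-h)^2 \, \alpha \wedge d\alpha \;,
\end{equation*}
after noting that the same computation with $h \equiv 0$ gives $\alpha \wedge d\alpha = (fg' - gf')\, d\phi \wedge d\theta \wedge dr$.

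Since $\alpha$ is assumed contact, $\alpha \wedge d\alpha$ is a positive volume form, and because $t < 1$ and $0 \le \beta \le 1$ we have $1-h \ge 1 - t > 0$, so $(1-h)^2 > 0$ pointwise. Therefore $\alpha_t \wedge d\alpha_t > 0$, proving that $\alpha_t$ is contact. The whole argument is essentially a one-line calculation; the only conceptual content is the cancellation of the $h'$ terms, which reflects the fact that the interpolation in~\eqref{eqn:2parameter} rotates $\alpha$ toward $dr$ through an angle that depends only on~$r$, so the rotation does not generate any new ``twisting'' contribution to $\alpha \wedge d\alpha$.
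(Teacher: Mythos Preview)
Your proof is correct and follows essentially the same approach as the paper: both compute directly that $\alpha_t \wedge d\alpha_t = (1 - t\beta)^2\,\alpha \wedge d\alpha$. The only cosmetic difference is that the paper carries out the computation without expanding $\alpha$ in coordinates, using the observation $dr \wedge d\alpha = 0$ to kill the cross terms, whereas you expand everything in $f$ and $g$ and see the same cancellation as the vanishing of the $h'$ terms.
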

\begin{proof}
  Noting that $dr \wedge d\alpha = 0$, we compute
  \begin{equation*}
    \begin{split}
      \alpha_t \wedge d\alpha_t &= \bigl[ t\beta\, dr + (1 - t\beta)\,
      \alpha \bigr]
      \wedge \bigl[ (1 - t\beta) \, d\alpha - t \beta' \, dr \wedge \alpha \bigr]\\
      &= (1 - t\beta)^2 \, \alpha \wedge d\alpha \ne 0 \; . \qedhere
    \end{split}
  \end{equation*}
\end{proof}

By Gray's stability theorem, each of the contact structures
$\xi_\sigma^\tau$ for $\sigma,\tau > 0$ are related to $\xi = \xi_1^1$
by isotopies with support in $\T^2 \times [-4\epsilon, 4\epsilon]$.
Thus after a compactly supported isotopy, we can view $\xi$ as a small
perturbation of the confoliation $\bar{\xi} := \xi_0^0 =
\ker\bar{\lambda}$, where we define
\begin{equation*}
  \bar{\lambda} = \lambda_0^0 = \beta(r)\, dr + \bigl[ 1 - \beta(r) \bigr]\,
  \left( d\theta + g_0(r)\, d\phi \right) \;.
\end{equation*}
This $1$--form is identical to $\lambda$ in $\bigl\{ \abs{r} \ge
4\epsilon \bigr\}$, but defines a foliation in $\bigl\{ \abs{r} \le
3\epsilon \bigr\}$ and takes the especially simple form
\begin{equation*}
  \bar{\lambda} = dr
  \quad \text{in $\T^2 \times [-2\epsilon,2\epsilon]$} \;.
\end{equation*}
The main technical ingredient we need is then the following
deformation result.

\begin{proposition}\label{prop:localModel}
  Given the local model of a symplectic pre-Lagrangian torus $T
  \subset M = \p W$ described above, for any sufficiently large
  constant $C > 0$ there exists a symplectic form $\bar{\omega}$ on
  $W$ with the following properties:
  \begin{enumerate}
  \item $\bar{\omega} = \omega$ outside some compact neighborhood
    of~$T$ in~$W$,
  \item $\restricted{\bar{\omega}}{\bar{\xi}} > 0$,
  \item $\bar{\omega} = A\, d\phi \wedge d\theta + C\, dt \wedge dr$
    on some neighborhood of~$T$.
  \end{enumerate}
\end{proposition}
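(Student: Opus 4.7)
The plan is to reach the model form $A\,d\phi\wedge d\theta + C\,dt\wedge dr$ through two successive symplectic deformations of~$\omega$ on the collar $W = (-\epsilon,0]\times M$ --- applying Lemma~\ref{lemma:interpolation2} to swap $\lambda$ for the foliation form $\bar\lambda$, then Lemma~\ref{lemma:interpolation} to swap $\Omega$ for a standard form $\tilde\Omega$ --- and finally to select the resulting profile function so that its $t$-derivative equals~$C$ near $t = 0$. The main preparatory obstacle is constructing $\tilde\Omega$ correctly: it must be closed, cohomologous to~$\Omega$, equal to $A\,d\phi\wedge d\theta$ near~$T$, equal to~$\Omega$ outside a compact neighborhood of~$T$, and positive on the confoliation $\bar\xi$.

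To build $\tilde\Omega$ I would proceed as follows. Choose a tubular neighborhood $\uU_0 \cong \T^2\times(-\delta_0,\delta_0)$ of~$T$ with $\delta_0 < 2\epsilon$, so that $\bar\lambda = dr$ and $\bar\xi = \operatorname{span}(\p_\phi,\p_\theta)$ throughout~$\uU_0$. Since $\int_T (\Omega - A\,d\phi\wedge d\theta) = A - A = 0$ and $H^2(\uU_0;\R) \cong \R$ is generated by~$[T]$, the closed form $\Omega - A\,d\phi\wedge d\theta$ is exact on~$\uU_0$, say equal to $d\beta_0$. With $\chi(r)$ a smooth cutoff that vanishes for $|r|\le \delta_0/3$ and equals~$1$ for $|r|\ge 2\delta_0/3$, I set
\begin{equation*}
  \tilde\Omega := A\,d\phi\wedge d\theta + d(\chi\,\beta_0)\ \text{on}\ \uU_0,\qquad \tilde\Omega := \Omega\ \text{on}\ M\setminus\uU_0.
\end{equation*}
The two pieces match smoothly, $\tilde\Omega - \Omega = -d((1-\chi)\beta_0)$ is globally exact on~$M$, and the expansion $\tilde\Omega = (1-\chi)A\,d\phi\wedge d\theta + \chi\,\Omega + \chi'(r)\,dr\wedge\beta_0$ makes positivity on~$\bar\xi$ automatic: the error term $\chi'(r)\,dr\wedge\beta_0$ vanishes on $\bar\xi\subset\ker(dr)$, while the remaining convex combination is manifestly positive.

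With $\tilde\Omega$ in hand, I would deform $\omega$ by applying Lemma~\ref{lemma:interpolation2} and then Lemma~\ref{lemma:interpolation} on two abutting subintervals of the collar (with a minor reparametrisation of~$t$ between the two stages, or equivalently by invoking a combined single-step version whose proof is the same wedge-square computation). The first stage uses the path $\lambda_\sigma^\sigma$ reparametrised from $\lambda$ to $\bar\lambda$ with the unchanged~$\Omega$, after shrinking~$\epsilon$ if necessary so that $\Omega$ stays positive on every $\xi_\sigma^\sigma$; the second stage uses the pair $(\Omega,\tilde\Omega)$ with respect to the confoliation~$\bar\xi$. The combined effect produces a symplectic form of the shape $d(\varphi\bar\lambda) + \tilde\Omega$ in a neighborhood of $\{0\}\times M$ with $\varphi = \varphi(t)$ near~$T$. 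I would finally choose $\varphi$ so that $\varphi'(t) = C$ for $t$ near~$0$; the wedge-square computation in the proof of Lemma~\ref{lemma:interpolation} shows that this is permitted once $\partial_t\varphi$, and hence~$C$, is large enough to dominate the error term involving the primitive, which is precisely the ``sufficiently large $C$'' hypothesis of the proposition. Because $\bar\lambda = dr$ on a neighborhood of~$T$ in~$M$, the resulting $\bar\omega$ equals $A\,d\phi\wedge d\theta + C\,dt\wedge dr$ on a neighborhood of~$T$ in~$W$, and the other two conclusions of the proposition follow immediately from the construction.
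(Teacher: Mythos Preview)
Your proof is correct and follows essentially the same approach as the paper: construct a closed $2$--form cohomologous to~$\Omega$ that equals $A\,d\phi\wedge d\theta$ near~$T$ and remains positive on~$\bar\xi$, then apply Lemma~\ref{lemma:interpolation2} followed by Lemma~\ref{lemma:interpolation} to obtain $\bar\omega = d(\varphi\,\bar\lambda) + \tilde\Omega$ with $\p_t\varphi$ a sufficiently large constant near~$T$. Your localization of the primitive~$\beta_0$ to~$\uU_0$ (rather than finding a global~$\eta$ on~$M$ via $T$ generating $H_2(M)$, as the paper does) and your explicit mention of reparametrising~$t$ between the two stages are minor cosmetic differences only.
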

\begin{proof}
  Note that $\Omega$ restricts on each of the $2$--plane fields
  $\xi_\sigma^\tau = \ker \lambda_\sigma^\tau$ to a positive form.
  This is clear, because $\lambda_\sigma^\tau$ as defined in
  \eqref{eqn:2parameter} is pointwise a convex combination of $dr$ and
  $\lambda_\sigma$, where $\lambda_\sigma \wedge \Omega$ and $dr
  \wedge \Omega$ are both positive, the latter due to the assumption
  that the tori $\T^2 \times \{r\}$ are all symplectic, the former
  because $\epsilon$ was chosen small enough to guarantee that
  $\restricted{\Omega}{\xi_\sigma} > 0$.  We therefore find a smooth
  homotopy from $\xi$ to $\bar{\xi}$, supported in a neighborhood
  of~$T$, through confoliations on which $\Omega$ is always positive.

  Next, let us replace $\Omega$ by a cohomologous closed $2$--form
  that takes a much simpler form near~$T$.  Indeed, since $\int_T
  \Omega = A = \int_T A\, d\phi \wedge d\theta$ and $T$ generates
  $H_2(M)$, there exists a $1$--form $\eta$ on $M$ such that
  \begin{equation*}
    A\, d\phi \wedge d\theta = \Omega + d\eta \;.
  \end{equation*}
  Choose a smooth cutoff function $F : [-5\epsilon,5\epsilon] \to
  [0,1]$ that has compact support in $[-2\epsilon,2\epsilon]$ and
  equals~$1$ on $[-\epsilon,\epsilon]$, and define the closed
  $2$--form
  \begin{equation*}
    \bar{\Omega} = \Omega + d( F(r)\, \eta)\;,
  \end{equation*}
  which equals $\Omega$ outside of $\bigl\{ \abs{r} \le 2\epsilon
  \bigr\}$ and $A\, d\phi \wedge d\theta$ in $\bigl\{ \abs{r} \le
  \epsilon \bigr\}$.  We claim
  \begin{equation*}
    \restricted{\bar{\Omega}}{\bar{\xi}} > 0 \;.
  \end{equation*}
  Indeed, outside of the region $\bigl\{ \abs{r} \le 2\epsilon
  \bigr\}$ this statement is nothing new, and otherwise $\bar{\lambda}
  = dr$, so we compute
  \begin{equation*}
    \bar{\lambda} \wedge \bar{\Omega} = dr \wedge \bigl[ (1 - F(r)) \, \Omega
    + A F(r) \, d\phi \wedge d\theta \bigr] > 0\;.
  \end{equation*}
  
  The result now follows by applying Lemmas~\ref{lemma:interpolation2}
  and~\ref{lemma:interpolation}, in that order.  This deforms $\omega$
  near~$T$ to a symplectic structure of the form
  \begin{equation*}
    \bar{\omega} = d(\varphi\, \bar{\lambda}) + \bar{\Omega} \;,
  \end{equation*}
  where $\varphi(t;\phi,\theta;r)$ depends only on $t$ near $T$ and
  satisfies $\p_t\varphi > 0$, so plugging in the local formulas
  $\bar{\lambda} = dr$ and $\bar{\Omega} = A\, d\phi \wedge d\theta$,
  the above becomes
  \begin{equation*}
    \bar{\omega} = \p_t\varphi\, dt \wedge dr + A\, d\phi \wedge d\theta \;.
  \end{equation*}
  One can also easily arrange for $\p_t\varphi$ to be constant
  near~$T$ so long as it is sufficiently large, and the result is thus
  proved.
\end{proof}

\begin{proof}[Proof of Theorem~\ref{thm:LutzTwists}]
  The following argument generalizes the construction of weak fillings
  on tight $3$--tori described by Giroux \cite{Giroux_plusOuMoins}.
  Consider the confoliation $\bar{\xi}$ and deformed symplectic
  structure $\bar{\omega}$ constructed in
  Proposition~\ref{prop:localModel}.  Then $\bar{\omega}$ is also
  positive on any contact structure $\xi'$ that is sufficiently
  $C^0$--close to~$\bar{\xi}$ as a distribution.  It suffices
  therefore to find, for any $n \in \N$, a contact structure $\xi_n$
  that is $C^0$--close to $\bar{\xi}$ and isotopic to the one obtained
  by performing~$n$ Lutz twists on~$\xi$ along~$T$.  This is easy: for
  $\sigma \in [0,1]$, define a smooth family of confoliation
  $1$--forms $\alpha_\sigma$ which match $\lambda_\sigma$ outside the
  coordinate neighborhood $\T^2 \times [-\epsilon,\epsilon]$, and in
  $\T^2 \times [-\epsilon,\epsilon]$ are contact and take the form
  \begin{equation*}
    F_\sigma(r)\, d\theta + G_\sigma(r)\, d\phi \;,
  \end{equation*}
  such that the curve $r \mapsto (F_0(r),G_0(r)) \in \R^2$ winds~$n$
  times counterclockwise about the origin for $r \in
  [-\epsilon,\epsilon]$.  Then $\alpha_\sigma$ is contact for every
  $\sigma \in (0,1]$ and defines a contact structure isotopic to the
  one we are interested in.  It follows now from
  Lemma~\ref{lemma:contactForms} that for all $\sigma \in (0,1]$ and
  $\tau \in (0,1]$,
  \begin{equation*}
    \alpha_\sigma^\tau := (1 - \tau) \beta(r) \, dr +
    \left[ 1 - (1 - \tau) \beta(r) \right] \alpha_\sigma
  \end{equation*}
  is a contact form, but as $\sigma \to 0$ and $\tau \to 0$ it
  converges to~$\bar{\lambda}$.
\end{proof}

\begin{proof}[Proof of Theorem~\ref{thm:handles}]
  We assume $(W,\omega)$ is a symplectic manifold with weakly contact
  boundary $(M,\xi)$, and $T_+ , T_- \subset M \subset W$ are oriented
  tori which are pre-Lagrangian in $(M,\xi)$ and symplectic in
  $(W,\omega)$, such that
  \begin{equation*}
    \int_{T_-} \omega = \int_{T_+} \omega = A > 0 \;.
  \end{equation*}
  Then for a sufficiently large constant~$C > 0$, we can use
  Proposition~\ref{prop:localModel} to deform $\omega$ near~$T_+$
  and~$T_-$ to a new symplectic structure $\bar{\omega}$, which takes
  the form
  \begin{equation*}
    \bar{\omega} = C\, dt \wedge dr + A\, d\phi \wedge d\theta
  \end{equation*}
  in local coordinates near $T_+$ and $T_-$, and satisfies
  $\restricted{\bar{\omega}}{\bar{\xi}} > 0$.  Here $\bar{\xi}$ is a
  confoliation with the following properties:
  \begin{itemize}
  \item $\bar{\xi} = \xi$ outside a small coordinate neighborhood $N
    \subset M$ of $T_+ \cup T_-$,
  \item $\bar{\xi}$ admits a $C^0$--small perturbation to a contact
    structure, which is isotopic to~$\xi$ by an isotopy supported
    in~$N$,
  \item $\bar{\xi} = \ker dr$ on an even smaller coordinate
    neighborhood of $T_+ \cup T_-$.
  \end{itemize}
  Choose $\delta > 0$ sufficiently small so that the coordinate
  neighborhoods $\T^2 \times [-\delta,\delta]$ of $T_-$ and $T_+$ are
  contained in the region where $\bar{\xi} = \ker dr$ and
  $\bar{\omega} = C\, dt \wedge dr + A\, d\phi \wedge d\theta$.  Then
  we define the following smooth model of a toroidal $1$--handle (see
  Figure~\ref{fig: sketch of level sets}):
  \begin{equation*}
    \handle_\delta = \Bigl\{ (\phi,\theta; r,r') \in \T^2 \times [-\delta,\delta]
    \times [-\delta,\delta] \, \Bigm|\ \abs{r} \le h(r') \Bigr\} \;,
  \end{equation*}
  where $h :\, [-\delta,\delta] \to (0,\delta]$ is a continuous, even
  and convex function that is smooth on $(-\delta,\delta)$ and has its
  derivative blowing up at $r' = \pm\delta$, such that its graph
  merges smoothly into the lines $r' = \pm\delta$.  Denote the smooth
  pieces of $\p\handle_\delta$ by
  \begin{equation*}
    \p^N \handle_\delta = \{ r' = +\delta \}, \, \p^S \handle_\delta =  \{ r' = -\delta \},\,
    \p^W\handle_\delta = \{ r = - h(r') \}, \, \text{ and }\, \p^E\handle_\delta = \{ r = + h(r') \} \;.
  \end{equation*}
  This model can be attached smoothly to~$W$ as in Figure~\ref{fig:
    sketch of level sets}, so that
  \begin{equation*}
    W' := W \cup \handle_\delta
  \end{equation*}
  has smooth boundary $M' := \p W'$.  The symplectic structure
  $\bar{\omega}$ then extends to $W'$ by defining
  \begin{equation*}
    \bar{\omega} = C\, dr' \wedge dr + A\, d\phi \wedge d\theta
  \end{equation*}
  on~$\handle_\delta$, which restricts positively to the smooth
  confoliation $\bar{\xi}'$ on~$M'$ defined by
  \begin{equation*}
    \bar{\xi}' = \begin{cases}
      \bar{\xi} & \text{ on $M \setminus (\p^N \handle_\delta \cup \p^S \handle_\delta)$,}\\
      T(\T^2 \times \{*\}) & \text{ on $\p^W\handle_\delta \cup \p^E \handle_\delta$.}
    \end{cases}
  \end{equation*}
  The latter admits a $C^0$--small perturbation to a contact form
  which is isotopic to the one obtained by splicing $(M,\xi)$ along
  $T_+$ and~$T_-$.
\end{proof}

\begin{wrapfigure}{r}{0.4\textwidth}
  \vspace{-10pt}
  \begin{center}
    \includegraphics[width=0.38\textwidth,
    keepaspectratio]{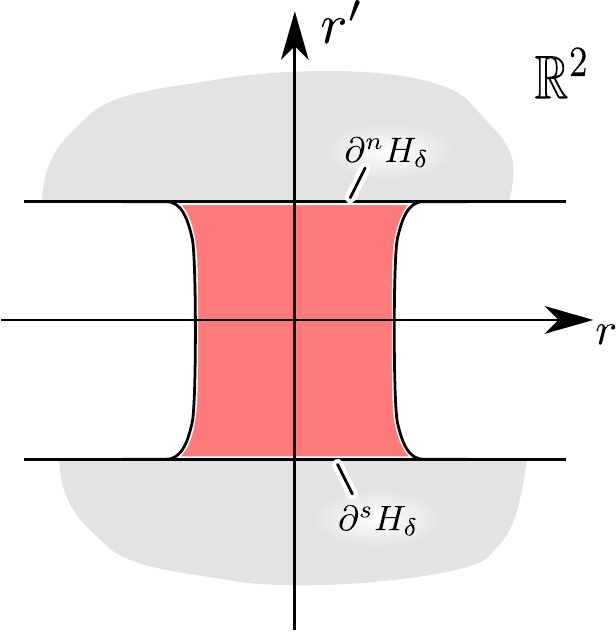}
  \end{center}
  \vspace{-10pt}
  \caption{The handle $\handle_\delta$ is attached in the ambient
    space $\T^2 \times \R^2$ to two model neighborhoods.}
  \label{fig: sketch of level sets}
\end{wrapfigure}

\subsection{Proof of Theorem~\ref{thm:weakFillings}}
\label{subsec:fillingsConstruction}

Assume $\Sigma = \Sigma_+ \cup_\Gamma \Sigma_-$ is a closed oriented
surface that is the union of two surfaces with boundary along a
multicurve $\Gamma \subset \Sigma$ whose connected components are all
nonseparating, and let $\bigl(P_{\Sigma,e}, \xi_{\Gamma,e}\bigr)$
denote the $\SS^1$--principal bundle $P_{\Sigma,e}$ over $\Sigma$ with
Euler number $e$ together with the $\SS^1$--invariant contact
structure $\xi_{\Gamma,e}$ that is everywhere transverse to the
$\SS^1$--fibers with exception of the tori that lie over the
multicurve $\Gamma$.  Under these assumptions, we will use the handle
attaching technique described in \secref{subsec:attaching} to
construct a weak filling of $\bigl(P_{\Sigma,e},
\xi_{\Gamma,e}\bigr)$.  The idea is to obtain $\bigl(P_{\Sigma,e},
\xi_{\Gamma,e}\bigr)$ by a sequence of splicing operations from a
simpler disconnected contact manifold for which a (disconnected)
strong filling is easy to construct by hand.  For this strong filling,
the components of~$\Gamma$ give rise to pre-Lagrangian tori, and the
significance of the nonseparating assumption will be that it allows us
to perturb the strong filling to a weak one for which these tori
become symplectic, and are thus suitable for handle attaching.

The building blocks are obtained in the following way.  Let $S$ be a
connected, oriented compact surface with non-empty boundary.  The
\emph{symmetric double} of $S$ is the closed surface
\begin{equation*}
  S^D := S \cup_{\p S} \overline{S} \;,
\end{equation*}
where $\overline{S}$ is a second copy of $S$ taken with reversed
orientation, and the two are glued along their boundaries via the
identity map.  The multicurve $\Gamma_S := \p S \subset S^D$
determines an $\SS^1$--invariant contact manifold $(\SS^1 \times S^D,
\xi_{\Gamma_S})$ in the standard way.

\begin{proposition}
  The contact manifold $(\SS^1 \times S^D, \xi_{\Gamma_S})$ obtained
  from a symmetric double has a strong symplectic filling homeomorphic
  to $[0,1] \times \SS^1 \times S$.
\end{proposition}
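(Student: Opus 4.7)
The plan is to construct the filling explicitly as a product Liouville domain. Since $S$ is a compact oriented surface with nonempty boundary, $H^2(S) = 0$, so any area form on $S$ is exact; a standard construction (viewing $S$ as a Weinstein surface) yields a primitive $\mu$ for some area form, chosen so that the $d\mu$-dual vector field $Y_S$ (determined by $\iota_{Y_S} d\mu = \mu$) is outward-transverse along $\partial S$. Set
\begin{equation*}
  W = [0,1]_t \times \SS^1_\theta \times S, \qquad
  \lambda = \bigl(t - \tfrac{1}{2}\bigr)\, d\theta + \mu, \qquad
  \omega = d\lambda = dt \wedge d\theta + d\mu,
\end{equation*}
so that $\omega$ is symplectic and the $\omega$-dual Liouville vector field of $\lambda$ is $Z = (t - \tfrac{1}{2})\p_t + Y_S$.

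Next I would smooth the corners of $W$ at $\{0,1\} \times \SS^1 \times \p S$ in an $\SS^1$-equivariant manner. Before smoothing, $Z$ is outward-transverse on each smooth face: at $t=1$ its $\p_t$-component is $+\tfrac{1}{2}$; at $t=0$ the component $-\tfrac{1}{2}$ points out of the lower face; and along $[0,1] \times \SS^1 \times \p S$ the tangential summand $Y_S$ points outward by construction. After equivariant corner-smoothing, the outward normal interpolates between the face normals, so $Z$ remains outward-transverse and defines a strong filling. Topologically, smoothing the corners of $\p([0,1] \times S)$ produces exactly the symmetric double $S^D = S \cup_{\p S} \overline{S}$, so $\p W \cong \SS^1 \times S^D$ and $W \cong [0,1] \times \SS^1 \times S$ as required.

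Finally I would identify the induced contact structure $\xi := \ker(\lambda|_{\p W})$ with $\xi_{\Gamma_S}$. Since $\lambda$ is $\SS^1$-invariant and the corner smoothing is taken $\SS^1$-equivariant, $\xi$ is an $\SS^1$-invariant contact structure. The $\SS^1$-fibers are Legendrian precisely where $\lambda(\p_\theta) = t - \tfrac{1}{2} = 0$, which under the smoothing corresponds to the image of $\SS^1 \times \p S = \SS^1 \times \Gamma_S$ in $\SS^1 \times S^D$. On $\{1\} \times \SS^1 \times S$ the form $\alpha = \tfrac{1}{2} d\theta + \mu$ shows that the fibers are positively transverse to $\xi$; on $\{0\} \times \SS^1 \times S$ they are transverse with opposite sign, matching the picture in which the fibers are positively transverse on the $S$-half and negatively transverse on the $\overline{S}$-half of $S^D$. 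By Lutz's uniqueness theorem \cite{Lutz_CircleActions} for $\SS^1$-invariant contact structures with prescribed dividing multicurve, $\xi$ is isotopic to $\xi_{\Gamma_S}$.

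The main obstacle is the corner-smoothing step: one must verify that the smoothing can be carried out $\SS^1$-equivariantly while keeping $Z$ strictly outward-transverse and that the resulting characteristic foliation on $\p W$, computed from the restriction of $\lambda$, matches the standard Giroux form for $\xi_{\Gamma_S}$ on the neighborhood of each component of $\SS^1 \times \Gamma_S$. Once this local model is checked, the global identification with $\xi_{\Gamma_S}$ follows immediately from Lutz's classification.
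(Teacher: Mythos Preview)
Your proposal is correct and follows the same underlying idea as the paper: build the filling as the product of the Weinstein annulus $[0,1]\times\SS^1$ with the Weinstein surface~$S$, then identify the induced $\SS^1$--invariant contact structure on the boundary using Lutz's classification.

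The paper's execution differs only in how it avoids your ``main obstacle''. Instead of smoothing corners, it picks a plurisubharmonic Morse function $f:\,S\to[0,C]$ with $\p S=f^{-1}(C)$, sets $F(x,\phi,p)=x^2+f(p)$ on $(\R\times\SS^1)\times S$, and takes the filling to be the sublevel set $\{F\le C\}$. Since $C$ is a regular value, the boundary $N=F^{-1}(C)$ is automatically smooth, and the contact form is simply $-d^cF|_{TN}$. The two graphs $x=\pm\sqrt{C-f(p)}$ exhibit $N$ as two copies of $\SS^1\times S$ glued along $\{x=0\}\times\SS^1\times\p S$, which is exactly $\SS^1\times S^D$; the Legendrian locus $\{x=0\}$ sits over $\Gamma_S$ just as your $\{t=\tfrac12\}$ does. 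So the paper's plurisubharmonic function plays the role of a built-in corner smoothing, and the verification you worry about (outward transversality of the Liouville field, local model near $\Gamma_S$) comes for free from the $J$--convexity of a regular level set. Your approach is perfectly fine; the paper's just packages the same construction a little more cleanly.
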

\begin{proof}
  Regard $S$ together with a positive volume form $\Omega_S$ as a
  symplectic manifold.  Choose a plurisubharmonic Morse function $f:\,
  S\to [0,C]$ whose critical values all lie in the interval
  $[0,\epsilon]$ with $\epsilon < C$, such that $f^{-1}(C) = \p S$.
  Take now the annulus $\R\times \SS^1$ with symplectic form $dx\wedge
  d\phi$, and with plurisubharmonic function $g(x,\phi) = x^2$.  The
  product manifold
  \begin{equation*}
    \bigl((\R\times \SS^1) \times S, \Omega + dx\wedge d\phi\bigr)
  \end{equation*}
  is a symplectic manifold with a plurisubharmonic function given by
  $F := f + x^2$.  The critical values of this function all lie in
  $[0,\epsilon]$, so that $N := F^{-1}(C)$ will be a smooth compact
  hypersurface.  In fact, it is easy to see that $N$ is diffeomorphic
  to $\SS^1 \times S^D$.  The standard circle action on the annulus
  $\R\times \SS^1$ splits off naturally, so that $N$ is the product of
  a circle with a closed surface.

  We can explicitly give two embeddings of the $3$--manifold
  $\SS^1\times S$ into $F^{-1}(C) \subset (\R\times \SS^1) \times S$
  as the graphs of the two maps $\SS^1 \times S \to (\R\times \SS^1)
  \times S, \, (\phi, p) \mapsto \bigl(\pm\sqrt{C - f(p)},\phi,
  p\bigr)$ distinguished by the different signs in front of the square
  root.  The boundary of $\SS^1\times S$ is mapped by both maps to the
  set $\{0\}\times \SS^1 \times \p S$ so that the two copies are glued
  along their boundary.

  The contact form is defined as $\alpha := - \restricted{d^cF}{TN} =
  - \restricted{dF\circ J}{TN}$.  It is $\SS^1$--invariant (for the
  standard complex structure on $\R\times \SS^1$), and the vector
  $\p_\phi$ is parallel to $N$, and never lies in the kernel of
  $\alpha$ with the exception of the points where $d(x^2)$ vanishes,
  which happens to be exactly along the boundary of $S$.  By
  \cite{Lutz_CircleActions}, this proves that the hypersurface $N$ is
  contactomorphic to $(\SS^1 \times S^D, \xi_{\Gamma_S})$.
\end{proof}

Now denote by
\begin{equation*}
  \Sigma_1,\dotsc,\Sigma_N
\end{equation*}
the closures of the connected components of $\Sigma \setminus \Gamma$,
whose boundaries $\Gamma_j := \p\Sigma_j$ are all disconnected due to
the assumption that components of~$\Gamma$ are nonseparating.  Then
for each $j=1,\dotsc,N$, construct the doubled manifold $\Sigma_j^D$,
and define the disconnected contact manifold
\begin{equation*}
  (M_0,\xi_0) = \bigsqcup_{j=1}^N (\SS^1 \times \Sigma_j^D,
  \xi_{\Gamma_j}) \;, 
\end{equation*}
which by the proposition above can be strongly filled.  Let
$(W_j,\omega_j)$ denote the resulting strong filling of $\SS^1 \times
\Sigma_j^D$.  For each connected component $\gamma \subset \Gamma_j$,
which is also a component of~$\Gamma$, the torus $\SS^1 \times \gamma
\subset \p W_j$ is a Lagrangian submanifold in $(W_j,\omega_j)$.

\begin{lemma}
  There exists a cohomology class $[\beta] \in
  H^2_\dR\bigl(P_{\Sigma,e}\bigr)$ such that $\int_T \beta \ne 0$ for
  every torus $T$ that lies over a connected component $\gamma \subset
  \Gamma$.
\end{lemma}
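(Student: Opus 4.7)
The plan is to realize $[\beta]$ via Poincaré duality as dual to a cleverly chosen $1$-cycle in $P_{\Sigma,e}$ obtained by lifting curves on the base surface $\Sigma$. Since $P_{\Sigma,e}$ is a closed oriented $3$-manifold, Poincaré duality gives an isomorphism $H^2_\dR(P_{\Sigma,e}) \cong H_1(P_{\Sigma,e}; \R)$, under which the pairing $\int_T \beta$ becomes the intersection number $T \bullet C$, where $C$ is a $1$-cycle representing the Poincaré dual of $[\beta]$. Thus the task reduces to producing a real $1$-cycle $C \subset P_{\Sigma,e}$ whose intersection number with $\pi^{-1}(\gamma)$ is nonzero for every component $\gamma \subset \Gamma$.

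First I would work downstairs. Enumerate the connected components $\gamma_1,\dots,\gamma_k$ of $\Gamma$. Since each $\gamma_i$ is nonseparating in $\Sigma$, standard surface topology provides an embedded simple closed curve $c_i \subset \Sigma$ with $c_i \bullet \gamma_i = 1$ (geometrically dual to $\gamma_i$). The matrix $M_{ij} = c_i \bullet \gamma_j \in \Z$ thus has $1$'s on the diagonal, which means that for a generic choice of real coefficients $(t_1,\dots,t_k) \in \R^k$, the real $1$-cycle $c = \sum_i t_i c_i$ in $\Sigma$ satisfies
\begin{equation*}
  c \bullet \gamma_j = \sum_i t_i M_{ij} \neq 0 \quad \text{for every } j = 1,\dots,k,
\end{equation*}
because the bad locus is a finite union of hyperplanes in $\R^k$.

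Next I would lift upstairs. Because $P_{\Sigma,e} \to \Sigma$ is an $\SS^1$-bundle and the $c_i$ are $1$-dimensional, each $c_i$ admits a smooth section $\tilde{c}_i \subset P_{\Sigma,e}$ (circle bundles are trivial over any $1$-complex). Set $\tilde{c} = \sum_i t_i \tilde{c}_i$ as a real $1$-cycle in $P_{\Sigma,e}$, and define $[\beta] := \PD\bigl([\tilde{c}]\bigr) \in H^2_\dR(P_{\Sigma,e})$. For each component $\gamma_j \subset \Gamma$, the torus $T_j := \pi^{-1}(\gamma_j)$ intersects each $\tilde{c}_i$ transversely in points that project bijectively onto $c_i \cap \gamma_j$ with the same intersection signs, hence
\begin{equation*}
  \int_{T_j} \beta \;=\; [T_j] \bullet [\tilde{c}] \;=\; \sum_i t_i \,(\tilde{c}_i \bullet T_j) \;=\; \sum_i t_i \,(c_i \bullet \gamma_j) \;\neq\; 0,
\end{equation*}
which is exactly what we want.

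The main obstacle is really just the combinatorial/genericity step: one has to know that all components of $\Gamma$ being nonseparating is enough to guarantee simultaneously nonzero intersection with a single curve. This is where the hypothesis of the lemma is used crucially: if some $\gamma_i$ were separating, it would be nullhomologous in $\Sigma$, forcing $c \bullet \gamma_i = 0$ for every cycle $c$ and defeating the construction. Conversely, nonseparating $\Rightarrow$ homologically nontrivial $\Rightarrow$ existence of a geometrically dual curve, and from there the linear-algebra argument above finishes the job.
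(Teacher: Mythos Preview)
Your proof is correct and follows essentially the same strategy as the paper: reduce via Poincar\'e duality to finding a real $1$--cycle with nonzero intersection against every $T_\gamma$, produce dual curves using the nonseparating hypothesis, and then take a generic real linear combination. The only cosmetic differences are that you work on the base $\Sigma$ and lift sections (the paper works directly in $P_{\Sigma,e}$), and you phrase the genericity step as ``avoid a finite union of hyperplanes in $\R^k$'' whereas the paper runs an equivalent iterative algorithm adding small multiples one component at a time.
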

\begin{proof}
  By Poincaré duality, it suffices to find a homology class $A \in
  H_1\bigl(P_{\Sigma,e}; \R\bigr)$ whose intersection number $A
  \bullet [T_\gamma] \in \R$ is nonzero for every torus $T_\gamma$
  lying over a connected component $\gamma \subset \Gamma$.  For each
  component $\gamma\in \Gamma$, pick an oriented loop $C_\gamma$
  in~$P_{\Sigma,e}$ with $[C_\gamma] \bullet [T_\gamma] = 1$; this
  necessarily exists since $\gamma$ and hence also $T_\gamma$ is
  nonseparating.  Then we construct~$A$ by the following algorithm:
  starting with any connected component $\gamma_1 \subset \Gamma$, let
  $A_1 = [C_{\gamma_1}]$.  Then $A_1 \bullet [T_\gamma] \ne 0$ for
  some subcollection of the components $\gamma \subset \Gamma$,
  including~$\gamma_1$.  If there remains a component $\gamma_2
  \subset \Gamma$ such that $A_1 \bullet [T_{\gamma_2}] = 0$, then we
  set
  \begin{equation*}
    A_2 = A_1 + d_2\, [C_{\gamma_2}] \;,
  \end{equation*}
  where $d_2 > 0$ is chosen sufficiently small so that for every
  component $\gamma \subset \Gamma$ with $A_1 \bullet [T_\gamma]$
  nonzero, $A_2 \bullet [T_\gamma]$ is also nonzero.  The result is
  that $A_2 \bullet [T_\gamma]$ is nonzero for a strictly larger set
  of components than $A_1 \bullet [T_\gamma]$.  Thus after repeating
  this process finitely many times, we eventually find $A \in
  H_1\bigl(P_{\Sigma,e}; \R\bigr)$ with all intersection numbers $A
  \bullet [T_\gamma]$ nonzero.
\end{proof}

Using the cohomology class $[\beta]$ given by the lemma, orient every
torus $T_\gamma \subset P_{\Sigma,e}$ that projects onto a connected
component $\gamma \subset \Gamma$ in such a way that $\int_{T_\gamma}
\beta > 0$.  We find a closed $2$--form $\sigma$ representing
$[\beta]$ that is positive on each of the oriented pre-Lagrangian tori
$T_\gamma$.  Since every component $\Sigma_j$ has non-empty boundary,
it follows that the restriction $\restricted{P_{\Sigma,e}}{\Sigma_j}$
is trivial so that we can identify it with
\begin{equation*}
  \restricted{P_{\Sigma,e}}{\Sigma_j} \cong \SS^1\times \Sigma_j \;,
\end{equation*}
and we can then pull-back $\sigma$ to each component $\SS^1 \times
\Sigma_j$ to obtain a collection of $2$--forms $\sigma_j$ on the
fillings $W_j \cong [0,1] \times \SS^1 \times \Sigma_j$, all of which
are positive on the tori $\SS^1 \times \gamma \subset W_j$.  The same
is then true for the $2$--forms $\omega_j + \epsilon\,\sigma_j$, with
$\epsilon > 0$ chosen sufficiently small so that
\begin{equation*}
  (W_0,\omega_0) := \bigsqcup_{j=1}^N (W_j , \omega_j + \epsilon\, \sigma_j)
\end{equation*}
is a weak filling of $(M_0,\xi_0)$.

Observe now that each torus $T_\gamma$ for a connected component
$\gamma \subset \Gamma$ corresponds to \emph{two} pre-Lagrangian tori
in $(M_0,\xi_0)$, which are symplectic in $(W_0,\omega_0)$ and have
matching integrals of~$\omega_0$ by construction.  This allows us to
attach toroidal $1$--handles to $(W_0,\omega_0)$ along corresponding
pairs of tori via Theorem~\ref{thm:handles}, which by
Remark~\ref{remark:circle_action_and_splicing} can be done in a way
that is compatible with circle actions.  To prescribe the isotopy
class of the gluing maps, choose for all except one of the
tori~$T_\gamma \subset M_0$ the curves $\{*\}\times \p\Sigma_j$ as the
transverse cycle.  This way the splicing will glue the sections $\{*\}
\times \Sigma_j$ together smoothly along each of the pre-Lagrangian
tori.  If the transverse cycle is also chosen to be of the form
$\{*\}\times \p\Sigma_j$ on the last torus, then the section will in
fact glue to a global section, and the resulting manifold will be a
weak filling of two disjoint copies of the contact manifold $(\SS^1
\times \Sigma, \xi_\Gamma)$.  If we instead choose a different
transverse cycle on the last torus, we obtain a connected symplectic
manifold with weak contact boundary consisting of the disjoint union
of the circle bundles $\bigl(P_{\Sigma,e}, \xi_{\Gamma,e}\bigr)$ and
$\bigl(P_{\Sigma,-e}, \xi_{\Gamma,-e}\bigr)$ with the corresponding
contact structures.  We claim that the Euler number $e$ is given by
the intersection number of the two sections touching the last
pre-Lagrangian torus $T_0$, which is equal to the intersection number
of the chosen transverse cycle with the curve $\{*\}\times \p\Sigma_j$
.  The Euler number is obtained by chosing a section over a disk $D$,
a section over the complement of this disk, and computing the
intersection number of both sections in the torus that lies over the
boundary $\p D$.  Our construction yields so far a section of the
spliced manifold defined everywhere except at the last pre-Lagrangian
torus $T_0$.  We can push both parts of the section a bit away from
$T_0$, and connect them with a strip crossing this torus.  The new
section obtained this way is defined over the whole surface $\Sigma$
with the exception of a disk~$D$, and it is easy to see that the
intersection number between the section we have just constructed, and
a section over $D$ is equal to the intersection number of the two
initial sections in the pre-Lagrangian torus $T_0$.

Finally, capping the weak contact boundary $\bigl(P_{\Sigma,-e},
\xi_{\Gamma,-e}\bigr)$ using \cite{Eliashberg_capping,
  Etnyre_capping}, we obtain a weak filling of $\bigl(P_{\Sigma,e},
\xi_{\Gamma,e}\bigr)$, thus the proof of
Theorem~\ref{thm:weakFillings} is complete.

\bibliography{main}

%% \printindex

\end{document}